\newtheorem{thm}{\bf Theorem}[section]
\newtheorem{prop}[thm]{\bf Proposition}
\newtheorem{cor}[thm]{\bf Corollary}
\newtheorem{lem}[thm]{\bf Lemma}
\newtheorem{rem}[thm]{\bf Remark}
\newtheorem{ex}[thm]{\bf Example}
\newtheorem{conj}[thm]{\bf Conjecture}
\numberwithin{equation}{section}
\newcommand{\mc}{\mathcal}
\newcommand{\mf}{\mathfrak}
\newcommand{\ov}{\overline}
\newcommand{\W}{\mc{W}}
\newcommand{\cP}{\mathscr{P}}
\newcommand{\be}{{\bf e}}
\newcommand{\bm}{{\bf m}}
\newcommand{\Z}{\mathbb{Z}}
\newcommand{\Q}{\mathbb{Q}}
\newcommand{\C}{\mathbb{C}}
\newcommand{\e}{\epsilon}
\newcommand{\ve}{\varepsilon}
\newcommand{\te}{\widetilde{e}}
\newcommand{\tf}{\widetilde{f}}
\newcommand{\La}{\Lambda}
\newcommand{\la}{\lambda}
\newcommand{\hf}{\frac{1}{2}}
\newcommand{\tl}[1]{\substack{\scalebox{0.8}{#1}}}
\newcommand{\ot}{\otimes}
\newcommand{\ep}{\epsilon}
\newcommand{\ket}[1]{|#1\rangle}
\newcommand{\mb}{\bold{m}}
\newcommand{\eb}{\bold{e}}
\newcommand{\ib}{\bold{i}}
\newcommand{\geh}{\mathfrak{g}}
\begin{document}
\title[]
{Higher level $q$-oscillator representations for $U_q(C_n^{(1)}),U_q(C^{(2)}(n+1))$
and $U_q(B^{(1)}(0,n))$}

\author{JAE-HOON KWON}

\address{Department of Mathematical Sciences and Research Institute of Mathematics, Seoul National University, Seoul 08826, Korea}
\email{jaehoonkw@snu.ac.kr}

\author{MASATO OKADO}

\address{Department of Mathematics, Osaka City University, Osaka 558-8585, Japan}
\email{okado@sci.osaka-cu.ac.jp}

\keywords{quantum affine superalgebra; $q$-oscillator representation; $R$ matrix; crystal base}

\thanks{J.-H.K. is supported by the National Research Foundation of Korea(NRF) grant funded by the Korea government(MSIT) (No.\,2019R1A2C108483311 and 2020R1A5A1016126). 
M.O. is supported by Grants-in-Aid for Scientific Research No.\,19K03426. This work was partly supported by Osaka City University Advanced Mathematical Institute (MEXT Joint Usage/Research Center on Mathematics and Theoretical Physics JPMXP0619217849).
}

\begin{abstract}
We introduce higher level $q$-oscillator representations for the quantum affine 
(super)algebras of type $C_n^{(1)},C^{(2)}(n+1)$ and $B^{(1)}(0,n)$. 
These representations are constructed by applying the fusion procedure to the level one $q$-oscillator 
representations which were obtained through the studies of the tetrahedron equation.
We prove that these higher level $q$-oscillator representations are irreducible. 
For type $C_n^{(1)}$ and $C^{(2)}(n+1)$, we compute their characters explicitly in terms of Schur polynomials.
\end{abstract}

\maketitle
\setcounter{tocdepth}{1}

\section{Introduction}

Let $\geh$ be an affine Lie algebra and $U_q(\geh)$ the Drinfeld-Jimbo quantum
group (without derivation) associated to it. For a node $r$ of the Dynkin diagram of $\geh$ except 0
and a positive integer $s$, there exists a family of finite-dimensional $U_q(\geh)$-modules $W^{r,s}$
called Kirillov-Reshetikhin modules. They have distinguished properties. One of them
is the existence of crystal bases in Kashiwara's sense (see \cite{CH,HKOTT,OS} 
and references therein).

\begin{table}[h]
\small{
\begin{alignat*}{2}
B_n^{(1)}\;
&\vcenter{\xymatrix@R=1ex{
*{\circ}<3pt> \ar@{-}[dr]^<{0} \\
& *{\circ}<3pt> \ar@{-}[r]_<{2} 
& {} \ar@{.}[r]&{}  \ar@{-}[r]_>{\,\,\,\,n-1} &
*{\circ}<3pt> \ar@{=}[r] |-{\scalebox{2}{\object@{>}}}& *{\circ}<3pt>\ar@{}_<{n} \\
*{\circ}<3pt> \ar@{-}[ur]_<{1}}} & 
B^{(1)}(0,n)\;
&\vcenter{\xymatrix@R=1ex{
*{\circ}<3pt> \ar@{=}[r] |-{\scalebox{2}{\object@{>}}}_<{0} 
&*{\circ}<3pt> \ar@{-}[r]_<{1} 
& {} \ar@{.}[r]&{}  \ar@{-}[r]_>{\,\,\,\,n-1} &
*{\circ}<3pt> \ar@{=}[r] |-{\scalebox{2}{\object@{>}}}
& *{\bullet}<3pt>\ar@{}_<{n}}} \\
D_n^{(1)}\;
&\vcenter{\xymatrix@R=1ex{
*{\circ}<3pt> \ar@{-}[dr]^<{0}&&&&&*{\circ}<3pt> \ar@{-}[dl]^<{n-1}\\
& *{\circ}<3pt> \ar@{-}[r]_<{2} 
& {} \ar@{.}[r]&{} \ar@{-}[r]_>{\,\,\,n-2} &
*{\circ}<3pt> & \\
*{\circ}<3pt> \ar@{-}[ur]_<{1}&&&&&
*{\circ}<3pt> \ar@{-}[ul]_<{n}}} &
C_n^{(1)}\;
&\vcenter{\xymatrix@R=1ex{
*{\circ}<3pt> \ar@{=}[r] |-{\scalebox{2}{\object@{>}}}_<{0} 
&*{\circ}<3pt> \ar@{-}[r]_<{1} 
& {} \ar@{.}[r]&{}  \ar@{-}[r]_>{\,\,\,\,n-1} &
*{\circ}<3pt> \ar@{=}[r] |-{\scalebox{2}{\object@{<}}}
& *{\circ}<3pt>\ar@{}_<{n}}} \\
D_{n+1}^{(2)}\;
&\vcenter{\xymatrix@R=1ex{
*{\circ}<3pt> \ar@{=}[r] |-{\scalebox{2}{\object@{<}}}_<{0} 
&*{\circ}<3pt> \ar@{-}[r]_<{1} 
& {} \ar@{.}[r]&{}  \ar@{-}[r]_>{\,\,\,\,n-1} &
*{\circ}<3pt> \ar@{=}[r] |-{\scalebox{2}{\object@{>}}}
& *{\circ}<3pt>\ar@{}_<{n}}} & 
C^{(2)}(n+1)\;
&\vcenter{\xymatrix@R=1ex{
*{\bullet}<3pt> \ar@{=}[r] |-{\scalebox{2}{\object@{<}}}_<{0} 
&*{\circ}<3pt> \ar@{-}[r]_<{1} 
& {} \ar@{.}[r]&{}  \ar@{-}[r]_>{\,\,\,\,n-1} &
*{\circ}<3pt> \ar@{=}[r] |-{\scalebox{2}{\object@{>}}}
& *{\bullet}<3pt>\ar@{}_<{n}}} 
\end{alignat*}
}
\vspace{-3mm}
\caption{Dynkin diagrams of $(\mf{g},\ov{\mf{g}})$}
\label{tab:Dynkin}
\end{table}

Consider the affine Lie algebras $\geh=B^{(1)}_n,D^{(1)}_n,D^{(2)}_{n+1}$, whose Dynkin
diagrams are given in the left side of Table \ref{tab:Dynkin}. 
The Kirillov-Reshetikhin modules $W^{n,1}$ corresponding to the node $n$ and the integer 1 have a particularly simple structure. 
Let $V$ be a two-dimensional vector space. Then $W^{n,1}$ can be realized as $V^{\ot n}$ with an easy description of the $U_q(\mf{g})$-action.
It is irreducible when $\geh=B^{(1)}_n,D^{(2)}_{n+1}$, but for $\geh=D^{(1)}_n$ it 
decomposes into two components; $V^{\ot n}=W^{n,1}\oplus W^{n-1,1}$. 

We can consider the quantum $R$ matrix on the tensor product of Kirillov-Reshetikhin modules. We introduce a spectral parameter $x$ to the representation $W^{n,1}$, and denote the associated representation by $W^{n,1}(x)$. 
Let $\Delta$ be the coproduct and $\Delta^{\text{op}}$ its opposite. Then the 
quantum $R$ matrix $R(x/y)$ is defined as an intertwiner of $\Delta$ and 
$\Delta^{\text{op}}$, namely, linear operator satisfying $R(x/y)\Delta(u)=
\Delta^{\text{op}}(u)R(x/y)$ for any $u\in U_q(\geh)$ on $W^{n,1}(x)\ot W^{n,1}(y)$. 
($R$ is found to depend only on $x/y$.)

In \cite{KS}, Kuniba and Sergeev initiated an attempt to obtain quantum $R$
matrices from the solution to the tetrahedron equation, a three-dimensional analogue
of the Yang-Baxter equation \cite{Z}. Let $\mathcal{L}$ be a solution of the tetrahedron
equation. It is a linear operator on $F\ot V\ot V$ where $F$ is an infinite-dimensional
vector space spanned by $\{\,\ket{m}\,|\, m\in\Z_{\ge0}\,\}$. By composing this $\mathcal{L}$
$n$ times and applying suitable boundary vectors in $F$ and $F^*$, they obtained 
linear operators on $(V^{\ot n})\ot(V^{\ot n})$ satisfying the Yang-Baxter equation.
The commuting symmetry algebras were found to be $U_q(\mf{g})$ for $\mf{g}$ in Table \ref{tab:Dynkin}. The reason they have variations is that there are two choices 
of boundary vectors in each $F$ and $F^*$ corresponding to the shapes of the 
Dynkin diagrams at each end.

Associated to the tetrahedron equation, there is yet another solution $\mathcal{R}$, which is
a linear operator on $F^{\ot3}$. In \cite{KO}, Kuniba and the second author performed
the same scheme to $\mathcal{R}$ and constructed linear operators on 
$(F^{\ot n})\ot(F^{\ot n})$. As the symmetry algebra this time, they found
$U_q(C^{(1)}_n),U_q(D^{(2)}_{n+1})$ and $U_q(A^{(2)}_{2n})$, and they called these 
representations $\W=F^{\ot n}$ $q$-oscillator representations (of level one). To be precise, there are two irreducible components $\W_+$ and $\W_-$ for type $C^{(1)}_n$, and one can think of
$\W$ as either $\W_+$ or $\W_-$.
By construction, $\W$ is a bosonic analogue of $W^{n,1}$.

The purpose of this paper is to introduce and study a higher level $q$-oscillator representation corresponding to $W^{n,s}$ for $s\geq 1$.
To construct a higher level $q$-oscillator representation, we apply the fusion construction \cite{(KMN)^2} to $\W$ as in the case of $W^{n,s}$.
There is, however, a difficulty in understanding $\W$ and its tensor power since $\W$ does not have a suitable classical limit ($q\to1$) for $D^{(2)}_{n+1}$ and $A^{(2)}_{2n}$.
So we first resolve this difficulty by considering $\W$ for these two types as $q$-oscillator representations over the quantum affine superalgebras $U_q(\ov{\mf{g}})$ with $\ov{\mf{g}}=C^{(2)}(n+1)$ and $B^{(1)}(0,n)$  given in the right side of Table \ref{tab:Dynkin}, respectively.
Note that the filled nodes in the Dynkin diagrams signify anisotropic odd simple roots. If they were not filled, then the Dynkin diagrams for $C^{(2)}(n+1)$ and $B^{(1)}(0,n)$ would be the ones for $D^{(2)}_{n+1}$ and $A^{(2)\dagger}_{2n}$, respectively, where the diagram for $A^{(2)\dagger}_{2n}$ is the same as $A^{(2)}_{2n}$ with the opposite labeling of nodes.
This step is important for our construction of higher level representations. We use the twistor on quantum covering groups \cite{CFLW}, which provides explicit connection between representations of $U_q(D_{n+1}^{(2)})$ (resp.~$U_q(A_{2n}^{(2)\dagger})$) and $U_q(C^{(2)}(n+1))$ (resp.~$B^{(1)}(0,n)$).

We then investigate the quantum $R$ matrices for $\W(x)\ot\W(y)$ and apply the fusion construction \cite{(KMN)^2} to obtain a higher level representation $\W^{(s)}$ for $s\in\Z_{>0}$ and
$\ov{\mf{g}}=C_n^{(1)}, C^{(2)}(n+1),B^{(1)}(0,n)$ in Table \ref{tab:Dynkin}.
We prove that $\W^{(s)}$ is an irreducible $U_q(\ov{\mf{g}})$-module. Moreover, we have a stronger result when $\ov{\mf{g}}=C_n^{(1)}, C^{(2)}(n+1)$. In this case, we show that $\W^{(s)}$ is classically irreducible, that is, irreducible as a representation of the subalgebra corresponding to the nodes $\{1,\dots,n\}$ in Table \ref{tab:Dynkin}, and show that its classical limit is isomorphic to an irreducible highest weight oscillator representation of type $C_n$ and $B(0,n)$ (see \cite{CKW,K18} and references therein). We compute the character of $\W^{(s)}$ explicitly, which is given by a multiplicity-free sum of Schur polynomials. We also give a conjectural character formula of $\W^{(s)}$ for $\ov{\mf{g}}=B^{(1)}(0,n)$.

It is rather surprising that the $U_q(\mf{g})$-module $W^{n,s}$ corresponding to the $U_q(\ov{\mf{g}})$-module $\W^{(s)}$ is also classically irreducible when $(\mf{g},\ov{\mf{g}})=(D_n^{(1)},C_n^{(1)})$ and $(D_{n+1}^{(2)},C^{(2)}(n+1))$.
We would like to point out that this correspondence between $W^{n,s}$ and $\W^{(s)}$ also occurs in the context of super duality \cite{CLW} as representations of finite-dimensional simple Lie (super)algebras after a classical limit. 
 
The theory of super duality is an equivalence between certain parabolic Bernstein-Gelfand-Gelfand categories of classical Lie (super)algebras of infinite-rank. As a special case of this duality, this yields an equivalence between the categories for $\mc{G}_\infty$ and $\ov{\mc{G}}_\infty$, where $(\mc{G}_\infty,\ov{\mc{G}}_\infty)=(B_\infty, B(0,\infty)),(D_\infty, C_\infty)$ with Dynkin diagrams given in Table \ref{tab:Dynkin of infinite rank} \cite{CL,CLW}.
Let $\mc{G}_n$ and $\ov{\mc{G}}_n$ denote the subalgebras of $\mc{G}_\infty$ and $\ov{\mc{G}}_\infty$ of finite rank $n$, respectively.
Let $V_\infty$ be a given integrable highest weight $\mc{G}_\infty$-module.
Under this equivalence, it corresponds to an irreducible highest weight $\ov{\mc{G}}_\infty$-module, say $W_\infty$, called an oscillator representation. By applying a truncation functor to $V_\infty$ and $W_\infty$, we also obtain irreducible representations $V_n$ and $W_n$ of $\mc{G}_n$ and $\ov{\mc{G}}_n$, respectively.
\begin{table}[h]
\small{
\begin{alignat*}{2}
B_\infty\ 
&
\vcenter{\xymatrix@R=1ex{
 {} \ar@{.}[r] 
&  \ar@{-}[r]_<{} 
&*{\circ}<3pt> \ar@{-}[r]_<{} 
&*{\circ}<3pt> \ar@{-}[r]_<{} 
&*{\circ}<3pt> \ar@{=}[r] |-{\scalebox{2}{\object@{>}}}_<{}  
&*{\circ}<3pt>\ar@{}   
}} 
& 
B(0,\infty)\ 
&
\vcenter{\xymatrix@R=1ex{
 {} \ar@{.}[r] 
&  \ar@{-}[r]_<{} 
&*{\circ}<3pt> \ar@{-}[r]_<{} 
&*{\circ}<3pt> \ar@{-}[r]_<{} 
&*{\circ}<3pt> \ar@{=}[r] |-{\scalebox{2}{\object@{>}}}_<{}  
&*{\bullet}<3pt>\ar@{}   
}} \\
D_\infty\ 
&\vcenter{\xymatrix@R=1ex{
&&&&& *{\circ}<3pt>\ar@{-}[dl]_<{} & \\
  \ar@{.}[r] 
&  \ar@{-}[r]   
&*{\circ}<3pt> \ar@{-}[r]  
&*{\circ}<3pt> \ar@{-}[r] & *{\circ}<3pt> &  \\
&&&&&  *{\circ}<3pt>\ar@{-}[ul]_<{} & 
 }} &
C_\infty \ 
&
\vcenter{\xymatrix@R=1ex{
 {} \ar@{.}[r] 
&  \ar@{-}[r]_<{} 
&*{\circ}<3pt> \ar@{-}[r]_<{} 
&*{\circ}<3pt> \ar@{-}[r]_<{} 
&*{\circ}<3pt> \ar@{=}[r] |-{\scalebox{2}{\object@{<}}}_<{}  
&*{\circ}<3pt>\ar@{}   
}}
\end{alignat*}
}
\caption{Dynkin diagrams of $(\mc{G}_\infty,\ov{\mc{G}}_\infty)$}
\label{tab:Dynkin of infinite rank}
\end{table}
Let $(\mf{g},\ov{\mf{g}})$ be one of the pairs in Table \ref{tab:Dynkin}, and suppose that $\mc{G}_n$ and $\ov{\mc{G}}_n$ be the subalgebra of $\mf{g}$ and $\ov{\mf{g}}$ corresponding to $\{1,\dots,n\}$, respectively.
Then we observe that when $V_n$ is the classical limit of $W^{n,s}$, then the corresponding $W_n$ is the classical limit of $\W^{(s)}$ when $(\mf{g},\ov{\mf{g}})=(D_n^{(1)},C_n^{(1)})$ and $(D_{n+1}^{(2)},C^{(2)}(n+1))$.
Conjecture \ref{conj:KR} on $\W^{(s)}$ for $\ov{\mf{g}}=B^{(1)}(0,n)$ is based on this observation in case of $(\mf{g},\ov{\mf{g}})=(B_n^{(1)},B^{(1)}(0,n))$, which is true for $s=2$.
We strongly expect that there is a quantum affine analogue of super duality which relates the category of finite-dimensional $U_q(\mf{g})$-modules and a suitable category of infinite-dimensional $U_q(\ov{\mf{g}})$-modules including the $q$-oscillator representations, and hence explains the correspondence in this paper (cf.~\cite{KL}).

The paper is organized as follows: In Section \ref{sec:quantum superalgebra}, we briefly review the notion of quantum superalgebras. In Section \ref{sec:level one osc}, we construct a level one $q$-oscillator representation $\W$ of $U_q(\ov{\mf{g}})$ and study some of its properties including the crystal base. In Section \ref{sec:R matrix}, we consider the quantum $R$ matrix on $\W(x)\ot \W(y)$ and apply the fusion construction to define $\W^{(s)}$. In Section \ref{sec:main result}, we prove the classically irreducibility of $\W^{(s)}$ and give its character formula when $\ov{\mf{g}}=C_n^{(1)}, C^{(2)}(n+1)$. A conjecture when $\ov{\mf{g}}=B^{(1)}(0,n)$ is also given. In Appendix \ref{app:twistor}, we explain how to construct a level one $q$-oscillator representation of $U_q(\ov{\mf{g}})$ when $\ov{\mf{g}}=C^{(2)}(n+1)$ and $B^{(1)}(0,n)$ from the one for $D_{n+1}^{(2)}$ and $A_{2n}^{(2)\dagger}$ in \cite{KO}, respectively, by using the quantum covering groups and twistor \cite{CFLW}. In Appendices \ref{app:Adagger} and \ref{app:R matrix for super}, we construct the quantum $R$ matrix on $\W(x)\ot \W(y)$ for $U_q(\ov{\mf{g}})$ from the one in \cite{KO}.
In Appendix \ref{app:irreducibility}, we prove the irreducibility of $\W^{(s)}$.

\vskip 5mm

\noindent {\bf Acknowledgements} 
Part of this work was done while the first author was visiting Osaka City University. He would like to thank Department of Mathematics in OCU for its support and hospitality.
The second author would like to thank Atsuo Kuniba  for the collaboration \cite{KO} on which this work is based. Finally, the authors thank anonymous referee for careful reading of the manuscript.

\section{Quantum superalgebras}\label{sec:quantum superalgebra}

\subsection{Variant of $q$-integer}

Throughout the paper, we let $q$ be an indeterminate. Following \cite{CFLW}, we
introduce variants of $q$-integer, $q$-factorial and $q$-binomial coefficient. 
Let $\e=\pm1$.  For $m\in\Z_{\ge 0}$, we set
\begin{equation*}
[m]_{q,\e}=\frac{(\e q)^{m}-q^{-m}}{\e q-q^{-1}}.
\end{equation*}
For $m\in\Z_{\ge0}$, set 
\begin{equation*}
[m]_{q,\e}!=[m]_{q,\e}[m-1]_{q,\e}\cdots [1]_{q,\e}\quad (m\geq 1),\quad [0]_{q,\e}!=1.
\end{equation*}
For integers $m,n$ such that $0\le n\le m$, we define
\begin{equation*}
\begin{bmatrix} m \\ n \end{bmatrix}_{q,\e}= \frac{[m]_{q,\e}!}{[n]_{q,\e}![m-n]_{q,\e}!}.
\end{equation*}
They all belong to $\Z[q,q^{-1}]$.
Let $A_0$ be the subring of $\Q(q)$ consisting of rational functions without a pole
at $q=0$. Then we have
\begin{equation*}
[m]_{q,\e}\in q^{1-m}(1+qA_0),\quad [m]_{q,\e}!\in q^{-m(m-1)/2}(1+A_0),\quad
\begin{bmatrix} m \\ n  \end{bmatrix}_{q,\e}\in q^{-n(m-n)}(1+qA_0).
\end{equation*}
We simply write $[m]=[m]_{q,1}$, $[m]!=[m]_{q,1}!$ and 
$\begin{bmatrix} m \\ n \end{bmatrix}=\begin{bmatrix} m \\ n \end{bmatrix}_{q,1}$.

\subsection{Quantum (super)algebra $U_q(sl_2)$ and $U_q(osp_{1|2})$}

The quantum (super)algebras $U_q(sl_2)$ ($\e=1$) and $U_q(osp_{1|2})$ ($\e=-1$) 
are defined as a $\Q(q)$-algebra generated by $e,f,k^{\pm1}$ satisfying the
following relations:
\begin{equation*}
kk^{-1}=k^{-1}k=1,\quad kek^{-1}=q^2e,\quad kfk^{-1}=q^{-2}f,\quad
ef-\e fe=\frac{k-k^{-1}}{q-q^{-1}}.
\end{equation*}
Set $e^{(m)}=e^m/[m]_{q,\e}!$ and $f^{(m)}=f^m/[m]_{q,\e}!$. We will use the following formula.

\begin{prop}\label{prop:commutation}
\[
e^{(m)}f^{(n)}=\sum_{j\ge0}\frac{\ep^{mn-j(j+1)/2}}{[j]_{q,\e}!}f^{(n-j)}
\left(\prod_{l=0}^{j-1}\frac{(\ep q)^{2j-m-n-l}k-q^{-2j+m+n+l}k^{-1}}{q-q^{-1}}\right)e^{(m-j)}.
\]
\end{prop}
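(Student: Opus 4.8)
The plan is to bootstrap from the single-generator case $m=1$ and then induct on $m$, using only the defining relation $ef=\e fe+\frac{k-k^{-1}}{q-q^{-1}}$ together with $ek=q^{-2}ke$, $kf=q^{-2}fk$ and their divided-power consequences $e\,e^{(p)}=[p+1]_{q,\e}\,e^{(p+1)}$, $e^{(p)}k=q^{-2p}ke^{(p)}$, $e^{(p)}k^{-1}=q^{2p}k^{-1}e^{(p)}$.

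First I would prove the $m=1$ specialization
\[
e f^{(n)} = \e^{n} f^{(n)} e + \e^{n-1} f^{(n-1)}\,\frac{(\e q)^{1-n}k-q^{n-1}k^{-1}}{q-q^{-1}}
\]
by induction on $n$. The base $n=1$ is the defining relation. For the step, write $f^{(n)}=[n]_{q,\e}^{-1}f\,f^{(n-1)}$, commute one $e$ across the leading $f$, apply the induction hypothesis to $e f^{(n-1)}$, and move the resulting $k^{\pm1}$ past $f^{(n-1)}$ via $kf^{(n-1)}=q^{-2(n-1)}f^{(n-1)}k$; the recursion for the variant integers $[n]_{q,\e}$ then collapses the two terms into the stated coefficients. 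This is a clean one-variable induction.

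With the $m=1$ case available, I would induct on $m$ through the factorization $e^{(m)}f^{(n)}=[m]_{q,\e}^{-1}\,e^{(m-1)}\,(e f^{(n)})$. Substituting the $m=1$ formula splits $e f^{(n)}$ into an $f^{(n)}e$ part and an $f^{(n-1)}(\cdots)$ part. To the first I apply the induction hypothesis to $e^{(m-1)}f^{(n)}$ and then absorb the trailing $e$ via $e^{(m-1-j)}e=[m-j]_{q,\e}e^{(m-j)}$ (no $k$-shift occurs, since $e$ sits at the far right). To the second I apply the induction hypothesis to $e^{(m-1)}f^{(n-1)}$ and move each $e^{(m-1-j)}$ past the extra linear factor using $e^{(p)}k^{\pm1}=q^{\mp2p}k^{\pm1}e^{(p)}$. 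After re-indexing, the coefficient of $f^{(n-j)}(\cdots)e^{(m-j)}$ in $e^{(m)}f^{(n)}$ receives exactly two contributions, from the old indices $j$ and $j-1$; their two $(k,k^{-1})$-products share $j-1$ linear factors, and the two remaining linear factors recombine into the single distinguished factor $(\e q)^{2j-m-n-l}k-q^{-2j+m+n+l}k^{-1}$ of the target product.

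The combinatorial heart, and the main obstacle, is verifying that this recombination is correct: it is a super analogue of the $q$-Pascal identity for $\begin{bmatrix} m \\ j \end{bmatrix}_{q,\e}$, and it must be matched against (i) the global sign $\e^{mn-j(j+1)/2}$, (ii) the exponent shifts produced when $e^{(p)}$ crosses $k^{\pm1}$, and (iii) the exponent of the new linear factor coming from the $m=1$ relation. Since any combination $Ak+Bk^{-1}$ is again a scalar multiple of a factor of the required form, the verification reduces to matching two scalar ratios, which is precisely the $[m]_{q,\e}$-binomial bookkeeping; the case $m=2$ already exhibits the full mechanism. Organizing the computation by first normalizing every term to the shape $f^{(n-j)}(\cdots)e^{(m-j)}$ before comparing coefficients keeps the signs and $q$-powers under control. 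Alternatively, since the variant integers $[m]_{q,\e}$ are exactly those of the quantum covering group of \cite{CFLW}, the identity is the $\pi=\e$ specialization of the divided-power commutation formula in that setting and may be cited from there.
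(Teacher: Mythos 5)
Your proof is correct and follows essentially the same route as the paper: the authors dispose of the $\e=1$ case by citing \cite[(1.1.23)]{Kas91} and state only that the $\e=-1$ case ``can be shown by induction,'' and the induction you describe (first establishing the $m=1$ commutation $ef^{(n)}=\e^{n}f^{(n)}e+\e^{n-1}f^{(n-1)}\frac{(\e q)^{1-n}k-q^{n-1}k^{-1}}{q-q^{-1}}$ by induction on $n$, then inducting on $m$ via $e^{(m)}f^{(n)}=[m]_{q,\e}^{-1}e^{(m-1)}(ef^{(n)})$ and recombining the two contributions to each $f^{(n-j)}(\cdots)e^{(m-j)}$ term) is exactly the standard argument underlying both cases. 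Your closing observation that the identity is the $\pi=\e$ specialization of the divided-power commutation formula for the quantum covering group of \cite{CFLW} is a legitimate alternative consistent with the conventions the paper already borrows from that reference.
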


\begin{proof}
The $U_q(sl_2)$ ($\e=1$) case is derived easily from \cite[(1.1.23)]{Kas91}.
The $U_q(osp_{1|2})$ ($\e=-1$) case can be shown by induction.
\end{proof}

\subsection{Quantum affine (super)algebras $U_q(C_n^{(1)}),U_q(C^{(2)}(n+1)),U_q(B^{(1)}(0,n))$}\label{subsec:quantum affine superalgebra}

Set $I=\{0,1,\ldots,n\}$. In this paper, we consider the following three Cartan data $(a_{ij})_{i,j\in I}$, or Dynkin 
diagrams (cf.~\cite{Kac78}), and $(d_i)_{i\in I}$ such that $d_ia_{ij}=d_ja_{ji}$ for $i,j\in I$.   \vskip 5mm

$\bullet$ $C_n^{(1)}$:
\[
\vcenter{\xymatrix@R=1ex{
*{\circ}<3pt> \ar@{=}[r] |-{\scalebox{2}{\object@{>}}}_<{0} 
&*{\circ}<3pt> \ar@{-}[r]_<{1} 
& {} \ar@{.}[r]&{}  \ar@{-}[r]_>{\,\,\,\,n-1} &
*{\circ}<3pt> \ar@{=}[r] |-{\scalebox{2}{\object@{<}}}
& *{\circ}<3pt>\ar@{}_<{n}}}
\]

\[
(a_{ij})_{i,j\in I}=
\begin{pmatrix}
2&-1&\\
-2&2&-1\\
&-1& \\
&&&\ddots\\
&&&&&-1\\
&&&&-1&2&-2\\
&&&&&-1&2
\end{pmatrix}
\]

\[(d_i)_{i\in I}=(2,1,\ldots,1,2)\]
\vskip 2mm

$\bullet$ $C^{(2)}(n+1)$:
\[
\vcenter{\xymatrix@R=1ex{
*{\bullet}<3pt> \ar@{=}[r] |-{\scalebox{2}{\object@{<}}}_<{0} 
&*{\circ}<3pt> \ar@{-}[r]_<{1} 
& {} \ar@{.}[r]&{}  \ar@{-}[r]_>{\,\,\,\,n-1} &
*{\circ}<3pt> \ar@{=}[r] |-{\scalebox{2}{\object@{>}}}
& *{\bullet}<3pt>\ar@{}_<{n}}}
\]

\[
(a_{ij})_{i,j\in I}=
\begin{pmatrix}
2&-2&\\
-1&2&-1\\
&-1& \\
&&&\ddots\\
&&&&&-1\\
&&&&-1&2&-1\\
&&&&&-2&2
\end{pmatrix}
\]

\[(d_i)_{i\in I}=\left(\tfrac12,1,\ldots,1,\tfrac12\right)\]
\vskip 2mm

$\bullet$ $B^{(1)}(0,n)$:
\[
\vcenter{\xymatrix@R=1ex{
*{\circ}<3pt> \ar@{=}[r] |-{\scalebox{2}{\object@{>}}}_<{0} 
&*{\circ}<3pt> \ar@{-}[r]_<{1} 
& {} \ar@{.}[r]&{}  \ar@{-}[r]_>{\,\,\,\,n-1} &
*{\circ}<3pt> \ar@{=}[r] |-{\scalebox{2}{\object@{>}}}
& *{\bullet}<3pt>\ar@{}_<{n}}}
\]

\[
(a_{ij})_{i,j\in I}=
\begin{pmatrix}
2&-1&\\
-2&2&-1\\
&-1& \\
&&&\ddots\\
&&&&&-1\\
&&&&-1&2&-1\\
&&&&&-2&2
\end{pmatrix}
\]

\[(d_i)_{i\in I}=(2,1,\ldots,1,\tfrac12).\]

\vskip 2mm

Let $d=\min\{\,d_i\,|\,i\in I\,\}$.
For $i\in I$, let 
$q_i=q^{d_i}$, and let $p(i)=0,1$ such that
$p(i)\equiv 2d_i\,(\text{mod}\,2)$. Set 
\begin{equation*}
[m]_i=[m]_{q_i,(-1)^{p(i)}},\quad [m]_i!=[m]_{q_i,(-1)^{p(i)}}!,\quad {m\brack k}_i={m\brack k}_{q_i,(-1)^{p(i)}},
\end{equation*}
for $0\leq k\leq m$ and $i\in I$.

For a Cartan datum $X=C_n^{(1)},C^{(2)}(n+1),B^{(1)}(0,n)$, the quantum affine (super)algebra $U_q(X)$ is defined to be the $\Q(q^d)$-algebra generated by $e_i,f_i,k_i^{\pm1}$ ($i\in I$)
with the following relations:
{\allowdisplaybreaks
\begin{align*}
&k_ik_j=k_jk_i,\quad k_ik_i^{-1}=k_i^{-1}k_i=1,\quad k_ie_jk_i^{-1}=q_i^{a_{ij}}e_j,\quad k_if_jk_i^{-1}=q_i^{-a_{ij}}f_j,\\
&e_if_j-(-1)^{p(i)p(j)}f_je_i=\delta_{ij}\frac{k_i-k_i^{-1}}{q_i-q_i^{-1}},\\
&\sum_{m=0}^{1-a_{ij}}(-1)^{m+p(i)m(m-1)/2+mp(i)p(j)}e_i^{(1-a_{ij}-m)}e_je_i^{(m)}=0\quad(i\ne j),\\
&\sum_{m=0}^{1-a_{ij}}(-1)^{m+p(i)m(m-1)/2+mp(i)p(j)}f_i^{(1-a_{ij}-m)}f_jf_i^{(m)}=0\quad(i\ne j),
\end{align*}}
where
\[
e_i^{(m)}=\frac{e_i^m}{[m]_i!}, \quad
f_i^{(m)}=\frac{f_i^m}{[m]_i!}.
\]

We define the automorphism $\tau$ of $U_q(X)$ for $X=C^{(1)}_n,C^{(2)}(n+1)$ by
\begin{gather}
\tau(k_i)=k_{n-i}^{-1},\ \
\tau(e_i)=f_{n-i},\ \ 
\tau(f_i)=e_{n-i},
\quad \quad\quad\quad\quad\quad\quad \text{if $X=C_n^{(1)}$},\label{eq:auto-1}\\
\tau(k_i)=k_{n-i}^{-1},\quad 
\tau(e_i)=(-1)^{\delta_{in}}f_{n-i},\quad
\tau(f_i)=(-1)^{\delta_{i0}}e_{n-i},
\quad  \text{if $X=C^{(2)}(n+1)$}\label{eq:auto-2},
\end{gather}
for $i\in I$ and define the anti-automorphism $\eta$ of $U_q(X)$ by
\begin{gather*}
\begin{cases}
\eta(k_i)=k_i\\ 
\eta(e_i)=(-1)^{\delta_{i0}+\delta_{in}}q_i^{-1}k^{-1}_if_i\\ 
\eta(f_i)=(-1)^{\delta_{i0}+\delta_{in}}q^{-1}_ik_ie_i
\end{cases}
\ \quad \text{if $X=C_n^{(1)},B^{(1)}(0,n)$},\label{eq:antiauto-1}\\
\begin{cases}
\eta(k_i)=k_i\\ 
\eta(e_i)=(-1)^{\delta_{in}}q_i^{-1}k_i^{-1}f_i\\
\eta(f_i)=(-1)^{\delta_{in}}q_i^{-1}k_ie_i
\end{cases}
\ \ \text{if $X=C^{(2)}(n+1)$}\label{eq:antiauto-2}
\end{gather*}
for $i\in I$. Both $\tau$ and $\eta$ are involutions.

When $X=C^{(2)}(n+1), B^{(1)}(0,n)$, let
\[U_q(X)^\sigma=U_q(X)\oplus U_q(X)\sigma\] 
denote the $\Q(q^d)$-algebra generated by $U_q(X)$ and $\sigma$,
where
\begin{equation}\label{eq:relation for sigma}
\sigma^2=1,\quad \sigma k_i= k_i\sigma,\quad \sigma e_i=(-1)^{p(i)}e_i\sigma,\quad \sigma f_i=(-1)^{p(i)}f_i\sigma \quad (i\in I).
\end{equation}
We extend $\tau$ and $\eta$ to $U_q(X)^\sigma$ by $\tau(\sigma)=\eta(\sigma)
=\sigma$.

The algebras $U_q(C^{(1)}_n)$, $U_q(C^{(2)}(n+1))^\sigma$ and $U_q(B^{(1)}(0,n))^\sigma$ have a Hopf algebra structure. 
In particular, the coproduct $\Delta$ is given by
\begin{equation}\label{eq:comult-2}
\begin{split}
\Delta(k_i)&=k_i\ot k_i,\ \ \Delta(\sigma)=\sigma\ot\sigma,\\
\Delta(e_i)&=e_i\ot \sigma^{p(i)\delta_{i0}}k_i^{-1}+\sigma^{p(i)\delta_{in}}\ot e_i,\\
\Delta(f_i)&=f_i\ot \sigma^{p(i)\delta_{i0}}+\sigma^{p(i)\delta_{in}}k_i\ot f_i,
\end{split}
\end{equation}
for $i\in I$.

\section{Level one $q$-oscillator representation}\label{sec:level one osc}

Let $\W$ be an infinite-dimensional vector space  over $\Q(q^d)$ defined by
\begin{equation*}\label{eq:W}
\W = \bigoplus_{{\bf m}}\Q(q^d)| {\bf m} \rangle,
\end{equation*}
where $\ket{{\bf m}}=\ket{m_1,\dots,m_n}$ is a basis vector parametrized by ${\bf m}=(m_1,\ldots,m_n)\in \Z_{\ge 0}^n$. Let $|{\bf m}|=\sum_{j=1}^nm_j$, and let ${\bf e}_j$ be the $j$-th standard vector in $\Z^n$ for $1\leq j\leq n$. Let $\ket{{\bf 0}}=\ket{0,\cdots,0}$. In this section, we introduce the so-called $q$-oscillator representation of level one for each algebra. 

\subsection{Type $C_n^{(1)}$}

\subsubsection{$U_q(C_n^{(1)})$-module $\W_{\pm}$}

Consider the quantum affine algebra $U_q(C_n^{(1)})$. Let 
$U_q(C_n)$ and $U_q(A_{n-1})$ be the subalgebras generated by $e_i,f_i,k^{\pm 1}_i$ for $i\in I\setminus\{0\}$ and
$i\in I\setminus\{0,n\}$, respectively.
\begin{prop}\label{prop:q-osc for C_n^{(1)}}
For a non-zero $x\in \Q(q)$, the space $\W$ admits a $U_q(C_n^{(1)})$-module structure given as follows:
{\allowdisplaybreaks
\begin{align*}
e_0 | \bm \rangle &=x q^{-1}\frac{[m_1+1][m_1+2]}{[2]} | \bm +2\be_1 \rangle,\\
f_0 | \bm \rangle &= -x^{-1}\frac{q}{[2]} | \bm  - 2\be_1 \rangle,\\
k_0 | \bm \rangle &=q^{2m_1+1}|\bm\rangle,\\
e_j | \bm \rangle &= [m_{j+1}+1]| \bm -\be_j +\be_{j+1}  \rangle,\\ 
f_j | \bm \rangle &= [m_j+1]| \bm +\be_j -\be_{j+1}  \rangle,\\ 
k_j | \bm \rangle &= q^{-m_j+m_{j+1}}|\bm\rangle,\\
e_n | \bm \rangle &= -\frac{q}{[2]}| \bm  - 2\be_n \rangle,\\ 
f_n | \bm \rangle &=q^{-1}\frac{[m_n+1][m_n+2]}{[2]}| \bm  + 2\be_n \rangle,\\ 
k_n | \bm \rangle &=q^{-2m_n-1}|\bm\rangle,
\end{align*}}
where $1\leq j\leq n-1$. Here we understand the vector on the right-hand side is zero when any of its components does not belong to $\Z_{\geq 0}$. 
\end{prop}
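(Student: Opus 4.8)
The plan is to verify directly that the given assignment respects the defining relations of $U_q(C_n^{(1)})$ listed in Section~\ref{subsec:quantum affine superalgebra}. Since every node of $C_n^{(1)}$ is even we have $p(i)=0$ for all $i$, so these are the ordinary (non-super) quantum affine relations, with $q_0=q_n=q^2$ and $q_j=q$ for $1\le j\le n-1$. Before checking anything I would make two reductions that cut the labor roughly in half. First, the assignment with general $x$ is obtained from the one with $x=1$ by precomposing with the grading automorphism $e_0\mapsto x\,e_0,\ f_0\mapsto x^{-1}f_0$ fixing all other generators; as this is an automorphism of $U_q(C_n^{(1)})$, the assignment is a homomorphism for general $x$ iff it is for $x=1$, so it suffices to treat $x=1$. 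Second, let $\theta$ be the linear involution of $\W$ with $\theta\ket{m_1,\dots,m_n}=\ket{m_n,\dots,m_1}$. A short computation on generators shows that, for $x=1$, conjugation by $\theta$ carries the node-$i$ operators to the node-$(n-i)$ operators exactly according to the diagram automorphism $\tau$ of \eqref{eq:auto-1}, i.e. $\theta\,\rho(u)\,\theta^{-1}=\rho(\tau(u))$ on generators. Since $\tau$ permutes the defining relations, every relation involving the nodes near $n$ follows from its $\tau$-image near $0$.

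The Cartan and weight relations $k_ie_jk_i^{-1}=q_i^{a_{ij}}e_j$, $k_if_jk_i^{-1}=q_i^{-a_{ij}}f_j$ are immediate, since each $k_i$ acts diagonally and one only tracks how its eigenvalue exponent changes under the shift of $\bm$ produced by $e_j$ or $f_j$; the outcomes reproduce the Cartan matrix. For the commutators $e_if_j-f_je_i=\delta_{ij}(k_i-k_i^{-1})/(q_i-q_i^{-1})$ I would split into cases. The diagonal case $1\le i=j\le n-1$ reduces to the $q$-integer identity $[a+1][b]-[a][b+1]=[b-a]$, which yields exactly $[m_{j+1}-m_j]=(k_j-k_j^{-1})/(q_j-q_j^{-1})$. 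The diagonal case $i=j=0$ reduces to $[m_1+1][m_1+2]-[m_1-1][m_1]=(q^2-q^{-2})(q^{2m_1+1}-q^{-2m_1-1})/(q-q^{-1})^2$, which after dividing by $[2]^2$ gives precisely $(q^{2m_1+1}-q^{-2m_1-1})/(q^2-q^{-2})=(k_0-k_0^{-1})/(q_0-q_0^{-1})$; the case $i=j=n$ is the $\tau$-image. The off-diagonal cases $i\ne j$ either have disjoint support (when $|i-j|\ge2$, so the operators literally commute) or, for adjacent nodes such as $(0,1)$, follow from matching the two scalar coefficients produced by the two orders of composition.

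The Serre relations are the substantive part. Those among the nodes $1,\dots,n-1$ are the standard type-$A$ $q$-Serre relations for the $U_q(A_{n-1})$-action and are verified by the usual argument. The genuinely new relations link node $0$ to node $1$ (and, by the $\tau$-symmetry above, node $n$ to node $n-1$): since $a_{01}=-1$ and $a_{10}=-2$ one must check the quadratic relation $e_0^{(2)}e_1-e_0e_1e_0+e_1e_0^{(2)}=0$ together with the cubic relation $\sum_{m=0}^{3}(-1)^m e_1^{(3-m)}e_0e_1^{(m)}=0$, and their $f$-analogues, while the Serre relations between node $0$ and a node $j\ge2$ are trivial by disjoint support. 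I expect the cubic relation to be the main obstacle: applied to $\ket{\bm}$ it produces the single monomial $\ket{\bm-\be_1+3\be_2}$ whose coefficient is an alternating sum of products of $q$-integers weighted by $q$-binomials, and one must show this sum vanishes identically in $q$. This is a finite $q$-integer identity that can be handled directly, or organized using the rank-one commutation formula of Proposition~\ref{prop:commutation} for the $U_q(sl_2)$-string generated by $e_1,f_1$; alternatively one may invoke the construction of \cite{KO}, where this assignment arises from a solution of the tetrahedron equation with symmetry algebra $U_q(C_n^{(1)})$. Once the $\{0,1\}$ Serre relations are established, the $\{n-1,n\}$ relations and all remaining relations follow from the two reductions, completing the verification.
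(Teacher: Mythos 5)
Your proposal is correct in outline but takes a genuinely different route from the paper. The paper gives no direct verification of Proposition \ref{prop:q-osc for C_n^{(1)}}: the formulas are justified by the remark immediately following it, which derives them from the $q$-oscillator representation of \cite[Proposition 3]{KO} via the explicit basis change $|\bm\rangle^{\mathrm{new}}=\frac{(q[2])^{|\bm|/2}}{\prod_{i}[m_i]!}\,|\bm\rangle^{\mathrm{old}}$ together with the sign automorphism $f_0\mapsto -f_0$, $e_n\mapsto -e_n$, $k_i\mapsto -k_i$ for $i=0,n$; the verification of the defining relations is thereby delegated to \cite{KO}, where the action comes out of the tetrahedron equation. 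You instead check the relations from scratch. Your two reductions are sound: rescaling $e_0,f_0$ by $x^{\pm1}$ is an algebra automorphism, so $x=1$ suffices, and your involution $\theta$ is exactly the symmetry recorded in Remark \ref{rem:tau symmetry}, so the relations at the $n$-end follow from their $\tau$-images at the $0$-end. The computations you display are correct: $[e_0,f_0]$ reduces to $[m+1][m+2]-[m-1][m]=(q^{2}-q^{-2})(q^{2m+1}-q^{-2m-1})/(q-q^{-1})^{2}$, and since $[2]^{2}(q-q^{-1})^{2}=(q^{2}-q^{-2})^{2}$ this yields $(k_0-k_0^{-1})/(q_0-q_0^{-1})$; likewise the quadratic Serre relation between nodes $0$ and $1$ boils down to $[m]+[m+4]=(q^{2}+q^{-2})[m+2]$. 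What your route buys is a self-contained proof independent of \cite{KO}; its cost is the cubic Serre relation $\sum_{m=0}^{3}(-1)^{m}e_1^{(3-m)}e_0e_1^{(m)}=0$ (and its $f$-analogue), which you correctly isolate as the one laborious identity, with all terms landing on $|\bm-\be_1+3\be_2\rangle$, but do not carry out. That identity does hold, so this is a routine computation left to be completed rather than a gap in the argument; if you prefer not to grind it out, the paper's own shortcut is precisely to import it from \cite{KO} through the stated change of basis.
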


\begin{rem} \label{rem:tau symmetry}{\rm
For $\ket{\bm}=\ket{m_1,\dots,m_n}\in \W$, set $\tau(\ket{\mb})=\ket{m_n,\ldots,m_1}$, and extend 
linearly to any vector of $\W$. Then, when $x=1$ we have the following symmetry 
\[
\tau(u\ket{\mb})=\tau(u)\tau(\ket{\mb}),
\]
for $u\in U_q(C^{(1)}_n)$. Here the automorphism $\tau$ on $U_q(C^{(1)}_n)$ is given in \eqref{eq:auto-1}.
}
\end{rem}

\begin{rem}{\rm
This representation originally appeared in \cite[Proposition 3]{KO}. 
The presentation in Proposition \ref{prop:q-osc for C_n^{(1)}} is obtained from the one in \cite{KO} by applying
the basis change $$|\bm\rangle^{\mathrm{new}}=\frac{(q[2])^{|\mb|/2}}{\prod_{i=1}^n[m_i]!}
|\bm\rangle^{\mathrm{old}},$$ and the automorphism of $U_q(C_n^{(1)})$ sending $f_0\mapsto -f_0,e_n\mapsto
-e_n,k_i\mapsto -k_i$ for $i=0,n$ with the other generators fixed.}
\end{rem}

We assume that $\ve$ denotes $+$ or $-$. Set $\varsigma(\ve)=0$ and $1$,
when $\ve=+$ and $-$, respectively. For $m\in\Z_{\ge 0}$, let ${\rm sgn}(m)$ be $+$ and $-$ if $m$ is even and odd, respectively.
Define the subspace $\W_\ve$ of $\W$ by 
\begin{equation*}
\W_\ve=\bigoplus_{{\rm sgn}(|{\bf m}|) = \ve }\Q(q)\ket{{\bf m}}.
\end{equation*}

\begin{prop}
For a non-zero $x\in \Q(q)$, $\W_\ve$ is an irreducible $U_q(C^{(1)}_n)$-module.
\end{prop}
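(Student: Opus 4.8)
The plan is to show that $\W_\ve$ is irreducible by proving that any nonzero submodule must contain the lowest vector $\ket{\bf 0}$ (for $\ve=+$) or $\ket{\be_1}$ (for $\ve=-$), and then that this single vector generates all of $\W_\ve$. First I would verify the obvious decomposition $\W = \W_+\oplus\W_-$ as $U_q(C_n^{(1)})$-modules: inspecting the action formulas in Proposition \ref{prop:q-osc for C_n^{(1)}}, the generators $e_0,f_0,e_n,f_n$ change $|\mb|$ by $\pm 2$ while $e_j,f_j$ ($1\le j\le n-1$) preserve $|\mb|$, so the parity ${\rm sgn}(|\mb|)$ is preserved by all of $e_i,f_i,k_i$. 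Hence each $\W_\ve$ is a genuine $U_q(C_n^{(1)})$-submodule, and it suffices to prove irreducibility of each summand.

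Next I would show that a nonzero submodule $M\subseteq\W_\ve$ contains a distinguished vector. The key observation is that the subalgebra $U_q(A_{n-1})$ generated by $e_j,f_j$ for $1\le j\le n-1$ acts on each weight space $\{\,\ket{\mb}\,:\,|\mb|=N\,\}$ exactly as on the $N$-th symmetric power of the vector representation (the formulas $e_j\ket{\mb}=[m_{j+1}+1]\ket{\mb-\be_j+\be_{j+1}}$, $f_j\ket{\mb}=[m_j+1]\ket{\mb+\be_j-\be_{j+1}}$ are the standard $q$-boson realization), so each fixed-$N$ subspace is an irreducible $U_q(A_{n-1})$-module with highest weight vector $\ket{N\be_1}$ and lowest weight vector $\ket{N\be_n}$. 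Given any nonzero $v\in M$, I would apply a suitable monomial in the $e_j$'s to push it to a multiple of $\ket{N\be_1}$ for some $N$ with ${\rm sgn}(N)=\ve$; thus $\ket{N\be_1}\in M$. I would then repeatedly apply $e_0$ to lower $N$ (note $e_0$ acts only on the first slot, $e_0\ket{N\be_1}=xq^{-1}\frac{[N+1][N+2]}{[2]}\ket{(N+2)\be_1}$ raises $m_1$; so in fact $f_0$ lowers it, $f_0\ket{N\be_1}=-x^{-1}\frac{q}{[2]}\ket{(N-2)\be_1}$), using $f_0$ to descend in steps of $2$ down to the minimal vector $\ket{{\bf 0}}$ or $\ket{\be_1}$ according to $\ve$.

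Finally I would prove that this minimal vector generates $\W_\ve$. Starting from $\ket{{\bf 0}}$ (resp.\ $\ket{\be_1}$), the operator $e_0$ raises $N$ in steps of $2$ with nonzero coefficient $xq^{-1}\frac{[N+1][N+2]}{[2]}\ne 0$, reaching every $N$ of the correct parity; and within each fixed-$N$ space the irreducible $U_q(A_{n-1})$-action spreads from $\ket{N\be_1}$ to every basis vector $\ket{\mb}$ with $|\mb|=N$. Combining these two facts, the submodule generated by the minimal vector contains every basis vector of $\W_\ve$, so $M=\W_\ve$ and irreducibility follows. I expect the main obstacle to be the bookkeeping in the descent step: one must be careful that the coefficients produced by the $e_j,f_j$ and by $e_0,f_0$ are all nonzero (they are, being products of quantum integers $[m]$ with $m\ge1$ and the nonzero scalar $x$), and that applying $U_q(A_{n-1})$ to an arbitrary $v$ does reach $\ket{N\be_1}$ — this rests on the irreducibility of the symmetric-power module, which is the one nontrivial input and which I would justify by the standard highest-weight argument for $U_q(A_{n-1})$ acting via the $q$-boson operators.
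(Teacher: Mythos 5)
The paper does not actually prove this proposition: it is stated without proof and imported from \cite[Proposition 3]{KO}, so there is no in-paper argument to compare against. Your proposal is correct in substance and is the natural direct argument. Two small points are worth tightening. First, the passage from an arbitrary nonzero $v\in M$ to a single basis vector is cleanest if you observe that the joint eigenvalues of $k_0,\dots,k_{n-1}$ already separate the vectors $\ket{\bf m}$ (from $k_0$ one reads off $m_1$, then $k_1$ gives $m_2$, etc.), so every submodule is spanned by the basis vectors it contains; this makes all weight spaces one-dimensional and renders the appeal to irreducibility of the symmetric power unnecessary, though that route also works once you first isolate a fixed-$N$ component. Second, you have the direction of the $U_q(A_{n-1})$ operators reversed: with the given action, $e_j\ket{\bf m}=[m_{j+1}+1]\ket{{\bf m}-\be_j+\be_{j+1}}$ pushes weight toward the $n$-th slot, so monomials in the $e_j$ reach $\ket{N\be_n}$ while monomials in the $f_j$ reach $\ket{N\be_1}$ (which is the vector you need before applying $f_0$); equivalently $\ket{N\be_1}$ is the vector annihilated by all $f_j$, not by the $e_j$. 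This is only a labelling slip --- all the relevant coefficients $[m]$ with $m\ge 1$, $\tfrac{q}{[2]}$, and $x^{\pm1}$ are nonzero, so the descent to $\ket{\bf 0}$ (resp.\ $\ket{\be_1}$) and the regeneration of every $\ket{\bf m}$ with ${\rm sgn}(|{\bf m}|)=\ve$ go through exactly as you describe.
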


We denote this module by $\W_\ve(x)$, and call it a (level one) $q$-oscillator representation. We simply write $\W_\ve=\W_\ve(1)$ as a $U_q(C_n^{(1)})$-module.
Then as $U_q(A_{n-1})$-modules, the characters of $\W_\pm$ are given by 
\begin{equation*}\label{eq:ch of W_+}
\begin{split}
{\rm ch}\W_+&=\sum_{l\in 2\Z_{\ge 0}}s_{(l)}(x_1,\dots,x_n)=\frac{1}{\prod_{i=1}^n(1-x_i^2)},\\
{\rm ch}\W_-&=\sum_{l\in 1+ 2\Z_{\ge 0}}s_{(l)}(x_1,\dots,x_n)=\frac{\prod_{i=1}^n(1+x_i)-1}{\prod_{i=1}^n(1-x_i^2)},
\end{split}
\end{equation*}
where $s_{(l)}(x_1,\dots,x_n)$ is the Schur polynomial corresponding to the partition $(l)$.
Here the weight lattice for $A_{n-1}$ is identified with the $\Z$-lattice spanned by $\be_i$ for $1\leq i\leq n$, and hence the variable $x_i$ corresponds to the weight of $\be_i$.

\subsubsection{Classical limit}

Let $A$ be the localization of $\Z[q,q^{-1}]$ at $[2]=q+q^{-1}$. 
Let 
\begin{equation*}
\W_\varepsilon(x)_{A} =\sum_{{\rm sgn}(|{\bf m}|) = \ve} A|{\bf m}\rangle.
\end{equation*}
Then $\W_\ve(x)_{A}$ is invariant under $e_i$, $f_i$, $k_i$ and $\{k_i\}:=\frac{k_i-k_i^{-1}}{q_i-q_i^{-1}}$ for $i\in I\setminus\{0\}$.
Let
\begin{equation*}
\ov{\W_\ve(x)}=\W_\ve(x)_{A}\otimes_{A}\mathbb{\C},
\end{equation*}
where $\C$ is an ${A}$-module such that $f(q)\cdot c = f(1)c$ for $f(q)\in A$ and $c\in \C$.

Let $E_i$, $F_i$ and $H_i$ be the $\mathbb{C}$-linear endomorphisms on $\ov{\W_\ve(x)}$ induced from $e_i$, $f_i$ and $\{k_i\}$ for $i\in I\setminus\{0\}$.
We can check that 
they satisfy the defining relations for the universal enveloping algebra $U(C_n)$ of type $C_n$ (cf.~\cite[Chapter 5]{Ja}). Hence $\ov{\W_\ve(x)}$ becomes a $U(C_n)$-module.

\begin{lem}\label{lem:classical limit-1}
The space $\ov{\W_\ve(x)}$ is isomorphic to the irreducible highest weight $U(C_n)$-module with highest weight $-(\frac{1}{2}+\varsigma(\ve))\varpi_n$, where $\varpi_n$ is the $n$-th fundamental weight for $C_n$.
\end{lem}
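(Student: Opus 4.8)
The plan is to identify $\ov{\W_\ve(x)}$ explicitly as a highest weight module by exhibiting a highest weight vector, computing its weight, and then invoking irreducibility (already established at the quantum level in the preceding proposition, which descends to the classical limit). First I would pass from the $q$-deformed action in Proposition \ref{prop:q-osc for C_n^{(1)}} to the classical action of $E_i, F_i, H_i$ on $\ov{\W_\ve(x)}$ for $i\in\{1,\dots,n\}$. The $H_i$ are diagonal on the basis $\{\ket{\bm}\}$: from $k_j\ket{\bm}=q^{-m_j+m_{j+1}}\ket{\bm}$ and $k_n\ket{\bm}=q^{-2m_n-1}\ket{\bm}$ one reads off $H_j\ket{\bm}=(-m_j+m_{j+1})\ket{\bm}$ for $1\le j\le n-1$ and $H_n\ket{\bm}=(-2m_n-1)\ket{\bm}$ after taking the $q\to1$ limit of $\{k_i\}$. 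Thus the $\mf{h}$-weight of $\ket{\bm}$, written in the $\be_i$-coordinates identified with the $C_n$ weight lattice, is $-\sum_i m_i\,\varepsilon_i$ (up to the standard shift), and I would record the precise correspondence between the basis parameters $\bm$ and the $C_n$-weights.

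Next I would locate the highest weight vector. A highest weight vector is annihilated by all $E_i$, $i\in\{1,\dots,n\}$. The classical limit of $e_j$ ($1\le j\le n-1$) sends $\ket{\bm}\mapsto (m_{j+1}+1)\ket{\bm-\be_j+\be_{j+1}}$ and of $e_n$ sends $\ket{\bm}\mapsto -\frac{1}{2}\ket{\bm-2\be_n}$ (using $[2]\to 2$ at $q=1$). The condition $E_j\ket{\bm}=0$ for $1\le j\le n-1$ forces $m_{j+1}=0$ whenever the lowering is nonzero; combined with $E_n\ket{\bm}=0$ forcing $m_n=0$, the only candidate highest weight vector in $\W_\ve$ is $\ket{\bf 0}$ when $\ve=+$ and $\ket{\be_1}=\ket{1,0,\dots,0}$ (equivalently $\ket{0,\dots,0,1}$ up to the $\tau$-symmetry of Remark \ref{rem:tau symmetry}) when $\ve=-$. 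I would then compute the weight of this vector: for $\ve=+$ the vector $\ket{\bf 0}$ has $H_n$-eigenvalue $-1$, giving highest weight $-\tfrac12\varpi_n$ once the half-integral normalization of the $C_n$ fundamental weight $\varpi_n$ is accounted for; for $\ve=-$ the extra unit of $m$ shifts this by $-\varpi_n$, yielding $-(\tfrac12+1)\varpi_n=-(\tfrac12+\varsigma(-))\varpi_n$, matching the claimed $-(\tfrac12+\varsigma(\ve))\varpi_n$.

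Finally I would conclude irreducibility and the isomorphism. Since the quantum module $\W_\ve(x)$ is irreducible over $U_q(C_n)$ (the preceding proposition) and $\W_\ve(x)_A$ is an $A$-lattice stable under $e_i,f_i,k_i,\{k_i\}$, the classical limit $\ov{\W_\ve(x)}$ is generated by the single highest weight vector under the $F_i$, and its weights coincide with those of the irreducible highest weight $U(C_n)$-module $L(-(\tfrac12+\varsigma(\ve))\varpi_n)$; comparing with the character formula for $\mathrm{ch}\,\W_\pm$ already given (the multiplicity-free sum $\sum s_{(l)}$, which is exactly the character of the relevant oscillator module) confirms the module is irreducible and isomorphic to this highest weight module. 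The main obstacle I anticipate is bookkeeping with the half-integral weight: verifying that the classical limit of $\{k_n\}$ indeed yields the integer eigenvalue $-2m_n-1$ and that this translates correctly into the coefficient $-(\tfrac12+\varsigma(\ve))$ of $\varpi_n$, since $d_n=2$ makes the normalization of the simple coroot $H_n$ and hence the pairing $\langle\text{wt},\alpha_n^\vee\rangle$ the delicate point; I would handle this by carefully tracking the factor $q_n=q^2$ through the definition of $\{k_n\}$ and the identification of the weight lattice with $\bigoplus\Z\be_i$.
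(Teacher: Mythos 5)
Your overall strategy --- exhibit the $E_i$-singular vector, compute its weight, deduce irreducibility --- is the same as the paper's, but two of the computations on which it rests are wrong. First, the $H_n$-eigenvalue: with the paper's definition $\{k_n\}=\frac{k_n-k_n^{-1}}{q_n-q_n^{-1}}$ and $q_n=q^2$, the action $k_n\ket{\bm}=q^{-2m_n-1}\ket{\bm}$ gives $\{k_n\}\ket{\bm}=\frac{q^{-2m_n-1}-q^{2m_n+1}}{q^{2}-q^{-2}}\ket{\bm}$, whose value at $q=1$ is $(-m_n-\hf)\ket{\bm}$, not $(-2m_n-1)\ket{\bm}$; you flagged this as the delicate point but your value is off by the factor $d_n=2$, and the subsequent appeal to a ``half-integral normalization of $\varpi_n$'' to reconcile an eigenvalue $-1$ with the weight $-\hf\varpi_n$ has no basis, since $H_n$ is the coroot and the coefficient of $\varpi_n$ in the highest weight \emph{is} the $H_n$-eigenvalue of the singular vector. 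The paper simply computes $\{k_n\}\ket{{\bf 0}}=-\frac{1}{q+q^{-1}}\ket{{\bf 0}}\mapsto-\hf\ket{{\bf 0}}$ and reads off $-\hf\varpi_n$ directly. Second, the singular vector for $\ve=-$: since $e_j\ket{\bm}=[m_{j+1}+1]\ket{\bm-\be_j+\be_{j+1}}$ has a never-vanishing coefficient, $E_j\ket{\bm}=0$ forces $m_j=0$ (the target must have a negative entry), not $m_{j+1}=0$; combined with $E_n\ket{\bm}=0$ iff $m_n\le 1$, the unique singular vector in $\W_-$ is $\ket{0,\dots,0,1}=\ket{\be_n}$, whereas $E_1\ket{1,0,\dots,0}=\ket{0,1,0,\dots,0}\ne 0$. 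The two are not interchangeable ``up to $\tau$'': $\tau$ swaps $e_i$ with $f_{n-i}$ and so takes highest weight vectors to lowest weight vectors. Note also that the weight of $\ket{\be_n}$ is $-\hf\varpi_n+\be_n$, so the shift from the $\ve=+$ case is by $\be_n$, not by $-\varpi_n$ as you assert; this point deserves a careful check against the stated formula.

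On irreducibility, your argument as written does not close. The preceding proposition asserts irreducibility of $\W_\ve(x)$ over $U_q(C_n^{(1)})$, not over the finite subalgebra $U_q(C_n)$ (classical irreducibility is precisely the new content here), and irreducibility of a quantum module does not formally descend to its classical limit; your fallback via character comparison would require knowing the character of the irreducible module $L(-(\hf+\varsigma(\ve))\varpi_n)$ independently, which is not invoked until Section 5. The paper's argument is elementary and direct: the weight spaces of $\ov{\W_\ve(x)}$ are one-dimensional, and because every matrix coefficient of the $E_i$ is nonzero whenever the target basis vector is nonzero, repeated application of $E_1,\dots,E_n$ carries any $\ket{\bm}\ot 1$ to a nonzero multiple of the singular vector; hence every nonzero submodule contains it, and since the singular vector generates the whole space under the $F_i$, the module is irreducible. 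You should replace the descent/character argument by this explicit raising argument.
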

\begin{proof} It is clear that $E_i(\ket{{\bf 0}} \ot 1)=0$ for all $i\in I\setminus\{0\}$. Since
\begin{equation*}
\begin{split}
H_n(|{\bf 0}\rangle \ot 1) 
& = \left(\frac{k_n-k_n^{-1}}{q_n-q_n^{-1}}|{\bf 0}\rangle\right) \ot 1  = \left(-\frac{1}{q+q^{-1}}|{\bf 0}\rangle\right) \ot 1=-\frac{1}{2}|{\bf 0}\rangle \ot 1,
\end{split}
\end{equation*}
and $H_i(|{\bf 0}\rangle \ot 1)=0$ for $1\leq i\leq n-1$, $\ov{\W_+(x)}$ is a highest weight $U(C_n)$-module with highest weight $-\frac{1}{2}\varpi_n$.
It follows from the actions of $E_i$ for $i\in I\setminus\{0\}$ that any submodule of $\ov{\W_+(x)}$ contains $|{\bf 0}\rangle \ot 1$. This implies that $\ov{\W_+({x})}$ is irreducible. 
The proof for $\W_-(x)$ is similar.
\end{proof}

\subsubsection{Polarization}

Define a symmetric bilinear form on $\W_\ve$ by
\begin{equation}\label{eq:bilinear form-1}
\begin{split}
(|{\bf m}\rangle, |{\bf m'}\rangle) 
&= \delta_{{\bf m}, {\bf m}'} \frac{q^{-\frac{1}{2} \sum_{i=1}^n m_i(m_i-1)}}
{\prod_{i=1}^n [m_i]!},
\end{split}
\end{equation}
for $|{\bf m}\rangle$, $|{\bf m}'\rangle$ with ${\bf m}=(m_1,\dots,m_n)$.  
Note that $(|{\bf m}\rangle, |{\bf m}\rangle)\in 1+qA_0$.

\begin{lem} \label{lem:polarization1}
The bilinear form in \eqref{eq:bilinear form-1} is a polarization on $\W_\ve$, that is, 
\begin{equation*}
(u v , v')=  ( v, \eta(u)v' ),
\end{equation*}
for $u\in U_q(C_n^{(1)})$ and $v,v'\in \W_\ve$.
\end{lem}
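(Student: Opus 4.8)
The plan is to reduce the identity to the algebra generators and then to a short family of explicit computations on basis vectors. For $u\in U_q(C_n^{(1)})$ write $P(u)$ for the assertion that $(uv,v')=(v,\eta(u)v')$ for all $v,v'\in\W_\ve$. The predicate $P$ is linear in $u$, and $P(u_1)$ together with $P(u_2)$ implies $P(u_1u_2)$ because $\eta$ is an anti-automorphism: $(u_1u_2v,v')=(u_2v,\eta(u_1)v')=(v,\eta(u_2)\eta(u_1)v')=(v,\eta(u_1u_2)v')$. Hence it suffices to verify $P$ on the generators $e_i,f_i,k_i^{\pm1}$ for $i\in I$.

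The case of $k_i^{\pm1}$ is immediate, since these act diagonally on $\{|\bm\rangle\}$ and are fixed by $\eta$. To pass from $e_i$ to $f_i$ I use the equivalence $P(u)\Leftrightarrow P(\eta(u))$: rewriting $P(u)$ by the symmetry of the form gives $(\eta(u)a,b)=(a,ub)=(a,\eta(\eta(u))b)$ for all $a,b$, which is $P(\eta(u))$ because $\eta$ is an involution. As $\eta(e_i)=(-1)^{\delta_{i0}+\delta_{in}}q_i^{-1}k_i^{-1}f_i$ is a nonzero scalar multiple of $k_i^{-1}f_i$, this equivalence yields $P(k_i^{-1}f_i)$ from $P(e_i)$, and then $P(f_i)=P(k_i\cdot k_i^{-1}f_i)$ follows from $P(k_i)$ by multiplicativity. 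Thus everything reduces to checking $P(e_i)$ for each $i\in I$.

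For fixed $i$ the operator $e_i$ sends $|\bm\rangle$ to a scalar multiple of a single basis vector $|\bm+{\bf a}_i\rangle$, where ${\bf a}_0=2\be_1$, ${\bf a}_i=-\be_i+\be_{i+1}$ for $1\le i\le n-1$, and ${\bf a}_n=-2\be_n$. Consequently both $(e_i|\bm\rangle,|\bm'\rangle)$ and $(|\bm\rangle,\eta(e_i)|\bm'\rangle)$ vanish unless $\bm'=\bm+{\bf a}_i$, and for that $\bm'$ one evaluates $\eta(e_i)|\bm'\rangle$ by applying $f_i$ (which returns a multiple of $|\bm\rangle$) followed by the diagonal $k_i^{-1}$. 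Comparing the two resulting scalars by means of \eqref{eq:bilinear form-1} reduces $P(e_i)$ to a single identity of $q$-factors. The three cases $i=0$, $1\le i\le n-1$, and $i=n$ are treated in the same way; one may alternatively deduce the outer case $i=n$ from $i=0$ using the $\tau$-symmetry of Remark~\ref{rem:tau symmetry}, under which the form is invariant.

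The sole point requiring care, and hence the main obstacle, is the bookkeeping in this last comparison: after the shift $\bm\mapsto\bm+{\bf a}_i$ one must check that the change in the Gaussian exponent $-\tfrac12\sum_k m_k(m_k-1)$ and in the product $\prod_k[m_k]!$ combines with the eigenvalue of $k_i^{-1}$ and with the explicit $q$-integer coefficients of $e_i$ and $f_i$ to reproduce exactly the prefactor $(-1)^{\delta_{i0}+\delta_{in}}q_i^{-1}$ of $\eta(e_i)$. This is a routine cancellation of powers of $q$ and of the $q$-integers $[m_i]$ and $[m_{i+1}+1]$ (or $[m_1+1][m_1+2]$ when $i=0$), with no conceptual difficulty once the exponent shift is tracked correctly.
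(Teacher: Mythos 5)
Your proposal is correct and follows essentially the same route as the paper: reduce to the generators via the anti-automorphism property of $\eta$, dispose of $k_i^{\pm1}$ and of $f_i$ using the symmetry of the form and $\eta^2=\mathrm{id}$, and verify $P(e_i)$ by comparing the two scalars on the single pair $\bm'=\bm+{\bf a}_i$, with the $\tau$-symmetry handling one of the outer nodes. The $q$-power bookkeeping you defer is exactly the computation the paper carries out explicitly in its Cases 1 and 2, and it does close as you claim.
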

\begin{proof}  It suffices to show this when $u$ is one of the generators. If $u=k_i$, it is trivial. Let us show that 
\begin{equation}\label{eq:polarization}
(e_i |{\bf m}\rangle,|{\bf m}'\rangle) = (|{\bf m}\rangle,\eta(e_i)|{\bf m}'\rangle),
\end{equation}
for $i\in I$ and $\ket{{\bf m}}, \ket{{\bf m}'}\in \W_\ve$. The proof for $f_i$ is almost identical since \eqref{eq:bilinear form-1}
is symmetric.

{\em Case 1}. Suppose that $1\leq i\leq n-1$. 
We may assume ${\bf m}'={\bf m}-\be_i+\be_{i+1}$. The right-hand side is 
\begin{align*}
(|{\bf m}\rangle,\eta(e_i)|{\bf m}-\be_i+\be_{i+1}\rangle) &= 
(|{\bf m}\rangle,q_i^{-1}k_i^{-1}f_i|{\bf m}-\be_i+\be_{i+1}\rangle)=[m_i]q^{-1+m_i-m_{i+1}}(|{\bf m}\rangle,|{\bf m}\rangle),
\end{align*}
and the left-hand side is 
\begin{align*}
(e_i |{\bf m}\rangle,|{\bf m}-\be_i+\be_{i+1}\rangle) &
= [m_{i+1}+1](|{\bf m}-\be_i+\be_{i+1}\rangle,|{\bf m}-\be_i+\be_{i+1}\rangle)\\
&=\frac{q^{X}[m_{i+1}+1]}{[m_i-1]![m_{i+1}+1]!\prod_{j\neq i,i+1}[m_j]!}
\\
&=q^{m_i-m_{i+1}-1}[m_i](|{\bf m}\rangle,|{\bf m}\rangle),
\end{align*}
since 
\begin{equation*}
\begin{split}
X
&=-\frac{1}{2}\sum_{j\neq i, i+1}m_j(m_j-1)-\frac{1}{2}(m_i-1)(m_i-2)-\frac{1}{2}(m_{i+1}+1)m_{i+1}\\
&=-\frac{1}{2}\sum_{1\leq j\leq n}m_j(m_j-1) + m_i - m_{i+1} -1.
\end{split}
\end{equation*}
Hence \eqref{eq:polarization} holds.

{\em Case 2}. Suppose that $i=n$.
We may assume ${\bf m}'={\bf m}-2\be_n$. The right-hand side is 
\begin{align*}
(|{\bf m}\rangle,\eta(e_n)|{\bf m}-2\be_n\rangle) &= 
(|{\bf m}\rangle,-q_n^{-1}k_n^{-1}f_n|{\bf m}-2\be_n\rangle) =-q^{2m_n-2}\frac{[m_n-1][m_n]}{[2]}(|{\bf m}\rangle,|{\bf m}\rangle),
\end{align*}
and the left-hand side is 
\begin{align*}
(e_n |{\bf m}\rangle,|{\bf m}-2\be_n\rangle) 
&=- \frac{q}{[2]}(|{\bf m}-2\be_n\rangle,|{\bf m}-2\be_n\rangle)=-\frac{q}{[2]}\frac{q^{Y}}{[m_n-2]!\prod_{j\neq n}[m_j]!}(|{\bf m}\rangle,|{\bf m}\rangle)\\
&=-q^{2m_n-2}\frac{[m_n-1][m_n]}{[2]}(|{\bf m}\rangle,|{\bf m}\rangle),
\end{align*}
since 
\begin{equation*}
\begin{split}
Y
&=-\frac{1}{2}\sum_{j\neq n}m_j(m_j-1)-\frac{1}{2}(m_n-2)(m_n-3)=-\frac{1}{2}\sum_{1\leq j\leq n}m_j(m_j-1) + 2m_n-3.
\end{split}
\end{equation*}
Hence \eqref{eq:polarization} holds.

{\em Case 3}. Suppose that $i=0$.
We have to show $(e_0v,v')=(v,-q^{-2}k_0^{-1}f_0v')$.
By Remark \ref{rem:tau symmetry} and the property $(\tau(\ket{\bf m}),\tau(\ket{{\bf m}'}))=(\ket{\bf m},\ket{{\bf m}'})$,
it is equivalent to $(f_n\tau(v),\tau(v'))=(\tau(v),-q^{-2}k_ne_n\tau(v'))$. However, it is equivalent to the one proved in 
{\em Case 1}.
\end{proof}\vskip 2mm


\subsubsection{Crystal base}
Let $M$ be a $U_q(C_n^{(1)})$-module. 
For $1\leq j\leq n-1$, we assume that $e_j$ and $f_j$ are locally nilpotent on $M$, and define $\te_j$, $\tf_j$ to be the usual lower crystal operators \cite{Kas91}. 
For $i=0,n$, we introduce new operators $\te_i$ and $\tf_i$ as follows:\vskip 2mm

{\em Case 1}. Let $u\in M$ be a weight vector such that $e_n u=0$ and $k_n u = q_n^{-l} u$ for some $l\in \Z_{>0}$. Put
\begin{equation}\label{eq:u_k for f_n}
u_k := q_n^{\frac{k(k+2l-1)}{2}}f_n^{(k)} u\quad (k\ge 0).
\end{equation}
Then we define 
\begin{equation}\label{eq:f_n at q=0}
\tf_n u_k =u_{k+1},\quad \te_n u_{k+1}=u_k\quad (k\ge 0).
\end{equation}

{\em Case 2}. Let $u\in M$ be a weight vector such that $f_0 u=0$ and $k_0 u = q_0^{l} u$ for some $l\in \Z_{>0}$. Put
\begin{equation}\label{eq:u_k for e_0}
u_k := q_0^{\frac{k(k+2l-1)}{2}}e_0^{(k)} u\quad (k\ge 0).
\end{equation}
Then we define 
\begin{equation}\label{eq:e_0 at q=0}
\te_0 u_k =u_{k+1},\quad \tf_0 u_{k+1}=u_k\quad (k\ge 0).
\end{equation}

\begin{rem}{\rm
The definitions of $\te_i$ and $\tf_i$ ($i=0,n$) are based on the idea that 
\begin{equation}\label{eq:te, tf and bilinear form}
(\tf_n^k u, \tf^k_n u) \in 1+qA_0 \quad (\te_0^k u', \te^k_0 u') \in 1+qA_0  \quad (k\geq 0),
\end{equation}
for $u,u'\in \W_\ve$ such that $e_nu=0$ and $f_0u'=0$ (use Proposition \ref{prop:commutation}).
}
\end{rem}

Let $A_0$ be the subring of $\Q(q)$ consisting of functions which are regular at $q=0$.
We define an $A_0$-lattice $\mc{L}_\ve$ of $\W_\ve$ and a $\Q$-basis $\mc{B}_\ve$ of $\mc{L}_\ve/q\mc{L}_\ve$ by
\begin{equation*}
\begin{split}
\mc{L}_\ve= \bigoplus_{{\rm sgn}({\bf m})=\ve} A_0| \bm \rangle,  \quad  
\mc{B}_\ve= \{\,| \bm \rangle\!\!\pmod{q\mc{L}_\ve}\,|\,{\rm sgn}({\bf m})=\ve\,\}.
\end{split}
\end{equation*}
It is clear from \eqref{eq:bilinear form-1} that $(\mc{L}_\ve,\mc{L}_\ve)\subset A_0$, and $\mc{B}_\ve$ is an orthonormal basis of $\mc{L}_\ve/q\mc{L}_\ve$ with respect to $(\ ,\ )|_{q=0}$. The following shows that $(\mc{L}_\ve,\mc{B}_\ve)$ is a crystal base of $\W_\ve$ in the sense of \cite{Kas91} even though the action of $e_i, f_i$ on $\W_\ve$ for $i=0,n$ is not locally nilpotent.

\begin{prop}\label{prop:crystal base of W_+}
The pair $(\mc{L}_\ve,\mc{B}_\ve)$ is a crystal base of $\W_\ve$, that is, 
\begin{itemize}
\item[(1)] $\mc{L}_\ve$ is invariant under $\te_i$ and $\tf_i$ for $i\in I$,

\item[(2)] $\te_i\mc{B}_\ve\subset \mc{B}_\ve \cup\{0\}$ and $\tf_i\mc{B}_\ve\subset \mc{B}_\ve \cup\{0\}$ for $i\in I$, where we have
\begin{equation*}\label{eq:crystal of W_+}
\tf_i |\bm \rangle \equiv
\begin{cases}
| \bm +2\be_n \rangle & \text{if $i=n$},\\
| \bm +\be_i - \be_{i+1} \rangle & \text{if $m_{i+1}\geq 1$ and $1\leq i\leq n-1$},\\
| \bm -2\be_1 \rangle & \text{if $m_1\geq 2$ and $i=0$},\\
0 & \text{otherwise},
\end{cases}\ \pmod{q\mc{L}_\ve}.
\end{equation*}
\end{itemize}
\end{prop}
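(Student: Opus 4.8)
The plan is to treat the interior nodes $1\le i\le n-1$ and the boundary nodes $i=0,n$ separately, since only the latter require the modified operators of \eqref{eq:f_n at q=0} and \eqref{eq:e_0 at q=0}. I first note that every $e_i,f_i$ preserves $\W_\ve$ (the interior nodes fix $|\bm|$, while $e_0,f_0,e_n,f_n$ shift a single coordinate by $2$), so everything restricts to $\W_\ve$, and that by \eqref{eq:bilinear form-1} each basis vector satisfies $(\ket{\bm},\ket{\bm})\in 1+qA_0$, so $\mc{B}_\ve$ is automatically orthonormal modulo $q$. The whole argument then reduces to analyzing one $i$-string at a time.

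For $1\le i\le n-1$ the operators $e_i,f_i$ act only on the pair $(m_i,m_{i+1})$ and preserve $m_i+m_{i+1}$, so each $i$-string is finite and $e_i,f_i$ are locally nilpotent. Hence $\te_i,\tf_i$ are Kashiwara's usual crystal operators and the standard theory of \cite{Kas91} applies string by string; concretely each string is the crystal of a symmetric power of the vector representation of the node-$i$ copy of $U_q(sl_2)$. Using the explicit action $f_i\ket{\bm}=[m_{i+1}+1]\ket{\bm+\be_i-\be_{i+1}}$ together with $(\ket{\bm},\ket{\bm})\in 1+qA_0$, one checks directly that $\mc{L}_\ve$ is stable and that $\tf_i\ket{\bm}\equiv\ket{\bm+\be_i-\be_{i+1}}$ when $m_{i+1}\ge1$ and $\equiv0$ otherwise, which is the stated formula.

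The substantive case is $i=n$ (and then $i=0$). Here I take $u=\ket{\bm}$ with $m_n\in\{0,1\}$, the vectors annihilated by $e_n$, and compute the string $u_k$ of \eqref{eq:u_k for f_n} explicitly. Since $f_n$ sends each basis vector to a scalar multiple of a single basis vector, one has $u_k=c_k\ket{\bm+2k\be_n}$ for a scalar $c_k$, and the point is to prove $c_k\in 1+qA_0$. This follows by collecting the factors $q^{-1}[m_n+2j-1][m_n+2j]/[2]$ produced by $f_n^k$, dividing by $[k]_n!$, and multiplying by the normalization $q_n^{k(k+2l-1)/2}$: by the order estimates $[m]\in q^{1-m}(1+qA_0)$, $[2]\in q^{-1}(1+qA_0)$ and $[k]_n!\in q^{-k(k-1)}(1+A_0)$ from Section~\ref{sec:quantum superalgebra}, the total power of $q$ cancels to $0$ and the leading coefficient is $1$. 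Equivalently, one may verify $(u_k,u_k)\in 1+qA_0$ as in \eqref{eq:te, tf and bilinear form} by writing $(u_k,u_k)=q_n^{k(k+2l-1)}(u,\eta(f_n^{(k)})f_n^{(k)}u)$ via Lemma~\ref{lem:polarization1} and evaluating $e_n^{(k)}f_n^{(k)}u$ through Proposition~\ref{prop:commutation}, where only the top term survives because $e_nu=0$. Granting $c_k\in 1+qA_0$, the vectors $u_k$ form an $A_0$-basis of $\mc{L}_\ve$ differing from the $\ket{\bm+2k\be_n}$ by units of $A_0$, so $\te_n,\tf_n$ preserve $\mc{L}_\ve$ by \eqref{eq:f_n at q=0}; and since $c_k\equiv1$, reduction modulo $q\mc{L}_\ve$ gives $\tf_n\ket{\bm}\equiv\ket{\bm+2\be_n}$ and the matching statement for $\te_n$. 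Finally the case $i=0$ is obtained from $i=n$ by the symmetry $\tau$ of Remark~\ref{rem:tau symmetry}: with $x=1$, $\tau$ exchanges $e_0\leftrightarrow f_n$, $f_0\leftrightarrow e_n$ and $\ket{m_1,\dots,m_n}\mapsto\ket{m_n,\dots,m_1}$ as in \eqref{eq:auto-1}, carries $\mc{L}_\ve$ and $\mc{B}_\ve$ to themselves, and transports \eqref{eq:u_k for e_0}--\eqref{eq:e_0 at q=0} to \eqref{eq:u_k for f_n}--\eqref{eq:f_n at q=0}, yielding $\tf_0\ket{\bm}\equiv\ket{\bm-2\be_1}$ when $m_1\ge2$.

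The main obstacle is the $q$-order bookkeeping that establishes $c_k\in 1+qA_0$: the prefactor $q_n^{k(k+2l-1)/2}$ in \eqref{eq:u_k for f_n} is tuned exactly so that the powers of $q$ coming from the $[m_n+2j-1][m_n+2j]/[2]$ and from $1/[k]_n!$ cancel and the constant term is $1$; any other normalization would fail to land the string in $\mc{L}_\ve$ with orthonormal image modulo $q$. The conceptual subtlety accompanying this is that, unlike the interior nodes, $e_n,f_n$ are \emph{not} locally nilpotent on $\W_\ve$, so the classical crystal-base theorem of \cite{Kas91} does not apply off the shelf; the entire role of the tailored operators \eqref{eq:f_n at q=0}, \eqref{eq:e_0 at q=0} and of the norm identity \eqref{eq:te, tf and bilinear form} is to recover the crystal-base conclusion along these infinite strings by hand.
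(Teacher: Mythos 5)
Your proposal is correct and takes essentially the same route as the paper's proof: interior nodes via the standard locally nilpotent $sl_2$-string analysis, the node $n$ by explicitly tracking the $q$-orders of the coefficients $c_k$ along the $f_n$-string starting from a vector with $m_n\in\{0,1\}$ (which is exactly the paper's split into $m_n$ even and odd) to conclude $c_k\in 1+qA_0$, and the node $0$ by transport under the involution $\tau$ of Remark \ref{rem:tau symmetry}.
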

\begin{proof} It is enough to prove (2).

{\em Case 1}. Suppose that $1\leq i\leq n-1$. Let $|{\bf m}\rangle=|m_1,\dots,m_n\rangle\in \mc{L}_\ve$ be given with $m_{i+1}\geq 1$. 
Since $e_i|{\bf m}-m_i\be_i+m_i\be_{i+1}\rangle=0$, we have
\begin{equation*}
\begin{split}
\tf_i^{m_i}|{\bf m}-m_i\be_i+m_i\be_{i+1}\rangle 
&= \frac{f_i^{m_i}}{[m_i]!}\, |{\bf m}-m_i\be_i+m_i\be_{i+1}\rangle=|{\bf m}\rangle,
\end{split}
\end{equation*}
and hence
$\tf_i|{\bf m}\rangle = \tf_i^{m_i+1}|{\bf m}-m_i\be_i+m_i\be_{i+1}\rangle 
= | \bm +\be_i - \be_{i+1} \rangle$.

{\em Case 2}. Suppose that $i=n$. First, suppose that $m_n$ is even.
Since $e_n|{\bf m}-m_n\be_n \rangle=0$ and $k_n |{\bf m}-m_n\be_n \rangle=q^{-1}|{\bf m}-m_n\be_n \rangle$, we have
\begin{equation*}
\begin{split}
\tf_n^{\frac{m_n}{2}}|{\bf m}-m_n\be_n\rangle 
&= q^{\left(\frac{m_n}{2}\right)^2} \frac{f_n^{\frac{m_n}{2}}}{\left[\frac{m_n}{2}\right]_n!}\, 
|{\bf m}-m_n\be_n\rangle
= {(1+q^2)^{-\frac{m_n}{2}}} q^{\left(\frac{m_n}{2}\right)^2} \frac{[m_n]!}{\left[\frac{m_n}{2}\right]_n!}\, |{\bf m}\rangle,
\end{split}
\end{equation*}
and hence
\begin{equation*}
\begin{split}
\tf_n|{\bf m}\rangle 
&= {(1+q^2)^{\frac{m_n}{2}}}q^{-\left(\frac{m_n}{2}\right)^2} \frac{\left[\frac{m_n}{2}\right]_n!}{[m_n]!}\ \tf_n^{\frac{m_n}{2}+1}|{\bf m}-m_n\be_n\rangle  \\
&= {(1+q^2)^{\frac{m_n}{2}}}q^{-\left(\frac{m_n}{2}\right)^2}  \frac{\left[\frac{m_n}{2}\right]_n!}{[m_n]!}\ {(1+q^2)^{-\frac{m_n}{2}-1}}
q^{\left(\frac{m_n}{2}+1\right)^2} \frac{[m_n+2]!}{\left[\frac{m_n}{2}+1\right]_n!} |{\bf m}+2\be_n\rangle  \\
&= {(1+q^2)^{-1}}q^{\left(\frac{m_n}{2}+1\right)^2-\left(\frac{m_n}{2}\right)^2} 
 \frac{[m_n+2][m_n+1]}{\left[\frac{m_n}{2}+1\right]_n} |{\bf m}+2\be_n\rangle  \\
&\equiv  |{\bf m}+2\be_n\rangle \quad \pmod{q\mc{L}_\ve}, 
\end{split}
\end{equation*}
since
\begin{equation*}
q^{\left(\frac{m_n}{2}+1\right)^2-\left(\frac{m_n}{2}\right)^2} 
 \frac{[m_n+2][m_n+1]}{\left[\frac{m_n}{2}+1\right]_n}
 = q^{m_n+1} 
 \frac{[m_n+2][m_n+1]}{\left[\frac{m_n}{2}+1\right]_n}\in (1+qA_0).
\end{equation*}

Next, suppose that $m_n$ is odd.
Since $e_n|{\bf m}-(m_n-1)\be_n \rangle=0$ and $k_n |{\bf m}-(m_n-1)\be_n \rangle=q^{-3}|{\bf m}-(m_n-1)\be_n \rangle$, we have
\begin{equation*}
\begin{split}
\tf_n^{\frac{m_n-1}{2}}|{\bf m}-(m_n-1)\be_n\rangle 
&= q^{\left(\frac{m_n-1}{2}\right)\left(\frac{m_n+3}{2}\right)} \frac{f_n^{\frac{m_n-1}{2}}}{\left[\frac{m_n-1}{2}\right]_n!}\, |{\bf m}-(m_n-1)\be_n\rangle\\
&= {(1+q^2)^{-\frac{m_n-1}{2}}} q^{\left(\frac{m_n-1}{2}\right)\left(\frac{m_n+3}{2}\right)} \frac{[m_n]!}{\left[\frac{m_n-1}{2}\right]_n!}\, |{\bf m}\rangle,
\end{split}
\end{equation*}
and hence
\begin{equation*}
\begin{split}
&\tf_n|{\bf m}\rangle \\
&= {(1+q^2)^{\frac{m_n-1}{2}}} q^{-\left(\frac{m_n-1}{2}\right)\left(\frac{m_n+3}{2}\right)} \frac{\left[\frac{m_n-1}{2}\right]_n!}{[m_n]!}\ \tf_n^{\frac{m_n+1}{2}}|{\bf m}-(m_n-1)\be_n\rangle  \\
&= {(1+q^2)^{\frac{m_n-1}{2}}} q^{-\left(\frac{m_n-1}{2}\right)\left(\frac{m_n+3}{2}\right)} \frac{\left[\frac{m_n-1}{2}\right]_n!}{[m_n]!}\
{(1+q^2)^{-\frac{m_n+1}{2}}}q^{\left(\frac{m_n+1}{2}\right)\left(\frac{m_n+5}{2}\right)} \frac{[m_n+2]!}{\left[\frac{m_n+1}{2}\right]_n!} |{\bf m}+2\be_n\rangle  \\
&= {(1+q^2)^{-1}} q^{\left(\frac{m_n+1}{2}\right)\left(\frac{m_n+5}{2}\right)-\left(\frac{m_n-1}{2}\right)\left(\frac{m_n+3}{2}\right)} 
 \frac{[m_n+2][m_n+1]}{\left[\frac{m_n+1}{2}\right]_n} |{\bf m}+2\be_n\rangle  \\
&\equiv |{\bf m}+2\be_n\rangle \quad \pmod{q\mc{L}_\ve}, 
\end{split}
\end{equation*}
since
\begin{equation*}
q^{\left(\frac{m_n+1}{2}\right)\left(\frac{m_n+5}{2}\right)-\left(\frac{m_n-1}{2}\right)\left(\frac{m_n+3}{2}\right)} 
 \frac{[m_n+2][m_n+1]}{\left[\frac{m_n+1}{2}\right]_n}
 = q^{m_n+1} 
 \frac{[m_n+2][m_n+1]}{\left[\frac{m_n+1}{2}\right]_n}\in (1+qA_0).
\end{equation*}

{\em Case 3}. Suppose that $i=0$. We can prove this case by the same arguments as in {\em Case 2} by using the automorphism $\tau$ in \eqref{eq:auto-1}.
\end{proof}

\subsection{Type $C^{(2)}(n+1)$}

\subsubsection{$U_q(C^{(2)}(n+1))$-module $\W$}
Consider the quantum affine superalgebra $U_q(C^{(2)}(n+1)$.
Let $U_q(B(0,n))$ and $U_q(A_{n-1})$ be the subalgebras of $U_q(C^{(2)}(n+1))$ generated by $ e_i, f_i, k_i^{\pm 1}$ for $i\in I\setminus\{0\}$ and $i\in I\setminus\{0,n\}$, respectively. 
We also write $U_q(B(0,n))=U_q(osp_{1|2n})$, where $osp_{1|2n}$ is the orthosymplectic Lie superalgebra corresponding to the Dynkin diagram:

\[
\vcenter{\xymatrix@R=1ex{
*{\circ}<3pt> \ar@{-}[r]_<{1} 
& {} \ar@{.}[r]&{}  \ar@{-}[r]_>{\,\,\,\,n-1} &
*{\circ}<3pt> \ar@{=}[r] |-{\scalebox{2}{\object@{>}}}
& *{\bullet}<3pt>\ar@{}_<{n}}}
\]
\vskip 2mm 

\begin{prop}\label{prop:q-osc for C^{(2)}(n+1)}
For a non-zero $x\in \Q(q^{\frac12})$, the space $\W$ admits an irreducible $U_q(C^{(2)}(n+1))^\sigma$-module structure given as follows:
{\allowdisplaybreaks
\begin{align*}
e_0\ket{\mb}&=xq^{-\frac12}[m_1+1]\ket{\mb+\eb_1},\\
f_0\ket{\mb}&=x^{-1}q^{\frac12}\ket{\mb-\eb_1},\\
k_0\ket{\mb}&=q^{m_1+\frac12}\ket{\mb},\\
e_j | \bm \rangle &= [m_{j+1}+1]\ket{\bm -\be_j +\be_{j+1}},\\ 
f_j | \bm \rangle &= [m_j+1] \ket{\bm +\be_j -\be_{j+1}},\\ 
k_j | \bm \rangle &= q^{-m_j+m_{j+1}}\ket{\bm },\\
e_n\ket{\mb}&=-q^{\frac12}\ket{\mb-\eb_n},\\
f_n\ket{\mb}&=q^{-\frac12}[m_n+1]\ket{\mb+\eb_n},\\
k_n\ket{\mb}&=q^{-m_n-\frac12}\ket{\mb},\\
\sigma \ket{\mb}& = (-1)^{|\mb|}\ket{\mb},
\end{align*}}
where $1\le j\le n-1$. 
\end{prop}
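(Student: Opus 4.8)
The statement bundles two claims --- that the displayed operators satisfy the defining relations of $U_q(C^{(2)}(n+1))^\sigma$, and that the resulting module is irreducible --- which I would prove separately, handling the relations conceptually and the irreducibility by hand. For the module axioms the most economical route is not to verify the relations from scratch but to transport a structure that is already available. Recall from \cite{KO} that $\W=F^{\ot n}$ carries a module structure over the \emph{ordinary} quantum affine algebra $U_q(D_{n+1}^{(2)})$, and that the twistor of \cite{CFLW} relates a suitable category of $U_q(D_{n+1}^{(2)})$-modules to one of $U_q(C^{(2)}(n+1))^\sigma$-modules. The plan is to carry out the construction of Appendix \ref{app:twistor}: apply the twistor to the \cite{KO} module and read off the induced action of $e_i,f_i,k_i^{\pm1}$ and $\sigma$ on each $\ket{\mb}$. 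Because the twistor is an equivalence, the defining relations hold by functoriality, so the only remaining work is bookkeeping --- matching the normalizations it produces (the powers $q^{\pm 1/2}$ on $e_0,f_0,e_n,f_n,k_0,k_n$, the sign in $e_n\ket{\mb}=-q^{1/2}\ket{\mb-\eb_n}$, and the parity action $\sigma\ket{\mb}=(-1)^{|\mb|}\ket{\mb}$) against the displayed formulas. A direct check on an arbitrary $\ket{\mb}$, in the style of Lemma \ref{lem:polarization1}, is also feasible: the relations among the even generators $e_j,f_j$ $(1\le j\le n-1)$ are the usual $q$-boson identities, and the diagonal super-commutators $e_if_i+f_ie_i$ for $i=0,n$ reduce to the required expression for $\frac{k_i-k_i^{-1}}{q_i-q_i^{-1}}$ via the $q$-integer identity $[m]+[m+1]=[2m+1]_{q^{1/2}}$.

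The genuinely delicate point, and what I expect to be the main obstacle, is the interaction of the two odd nodes $0$ and $n$: they are non-adjacent ($a_{0n}=0$) and both anisotropic odd, so their mutual relations are the super-Serre and super-commutation relations for a pair of odd roots, where the signs are subtle. These signs are precisely what the $\sigma$-twist in the coproduct \eqref{eq:comult-2} and in \eqref{eq:relation for sigma} is designed to encode, and the advantage of the twistor route is that it installs them automatically rather than requiring one to track them by hand; this is why I would favor the conceptual construction for this part.

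For irreducibility the argument is elementary and direct. First, the Cartan generators separate the basis: $k_0\ket{\mb}=q^{m_1+\frac12}\ket{\mb}$ determines $m_1$, and then $k_j\ket{\mb}=q^{-m_j+m_{j+1}}\ket{\mb}$ determines $m_{j+1}$ inductively, so distinct $\mb$ carry distinct weights. Hence every weight space is one-dimensional and any $U_q(C^{(2)}(n+1))$-submodule $N$ is spanned by the basis vectors it contains. I would then show the basis forms a single orbit. From an arbitrary $\ket{\mb}$ one reaches $\ket{{\bf 0}}$ by applying $f_1,\dots,f_{n-1}$ to sweep all quanta into the first slot (each coefficient $[m_j+1]$ being nonzero) and then $f_0$ exactly $|\mb|$ times (coefficient $x^{-1}q^{\frac12}\neq0$); conversely, from $\ket{{\bf 0}}$ one reaches any $\ket{\mb}$ by creating quanta in the first slot with $e_0$ (coefficient $xq^{-\frac12}[m_1+1]\neq0$) and transporting them rightward with $e_1,\dots,e_{n-1}$. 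Since all coefficients are nonzero, $\ket{{\bf 0}}\in N$ forces $N=\W$ while any nonzero $N$ contains $\ket{{\bf 0}}$, so $\W$ is irreducible over $U_q(C^{(2)}(n+1))$. Finally $\sigma$ acts as a scalar on each one-dimensional weight space, so every $U_q(C^{(2)}(n+1))$-submodule is automatically $\sigma$-stable and irreducibility over $U_q(C^{(2)}(n+1))^\sigma$ follows.
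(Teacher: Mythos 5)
Your route for the module structure is the paper's own: Appendix \ref{app:twistor} obtains exactly these formulas by pushing the $q$-oscillator representation of $U_q(D_{n+1}^{(2)})$ from \cite{KO} through the twistor $\widehat{\Psi}$ of \cite{CFLW}. Two points of comparison. First, ``the defining relations hold by functoriality'' overstates what the twistor delivers: in the paper $\widehat{\Psi}$ is applied only to the finite-type subalgebra $U_q(B_n)$, the node-$0$ operators are adjoined by hand afterwards, the quantum parameter must be switched back from $\ib^{-1}q^{\frac12}$ to $q^{\frac12}$ (which forces deleting the factors $\kappa=(q+1)/(q-1)$ to restore the sign convention in $e_if_i+f_ie_i$), and finally one substitutes $e_0\mapsto\sigma e_0$, $f_0\mapsto f_0\sigma$. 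That last substitution is not cosmetic: with the displayed formulas $e_0$ and $e_n$ \emph{commute} on $\W$, whereas the Serre relation of Section \ref{subsec:quantum affine superalgebra} for two non-adjacent anisotropic odd nodes demands anticommutation; the discrepancy is absorbed precisely by the $\sigma$-twist of the generator (and, correspondingly, of the coproduct \eqref{eq:comult-2}). So your instinct that the $0$--$n$ interaction is the delicate point is right, but your fallback of checking the displayed operators directly against the relations as literally written would fail at this one relation unless the twist is taken into account. Second, your irreducibility argument --- the commuting $k_0,\dots,k_{n-1}$ separate the basis vectors so all weight spaces are one-dimensional, the basis is a single orbit under the $e$'s and $f$'s since every structure constant is nonzero, and $\sigma$ acts by a scalar on each weight space so $\sigma$-stability of submodules is automatic --- is correct and is a genuine addition: the appendix the paper points to only establishes well-definedness and leaves the irreducibility claim implicit.
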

\begin{proof}
See Appendix \ref{app:proof of well-definedness}.
\end{proof}

We denote this module by $\W(x)$ and call it a (level one) $q$-oscillator representation. We simply write $\W=\W(1)$ as a $U_q(C^{(2)}(n+1))$-module.
Note that as a $U_q(A_{n-1})$-module, the character of $\W$ is given by
\begin{equation*}\label{eq:ch of W}
\begin{split}
{\rm ch}\W&=\sum_{l\in \Z_{\ge 0}}s_{(l)}(x_1,\dots,x_n)=\frac{1}{\prod_{i=1}^n(1-x_i)}. 
\end{split}
\end{equation*}

\begin{rem}{\rm
When $x=1$ we also have the following symmetry 
\[
\tau(u\ket{\mb})=\tau(u)\tau(\ket{\mb}),
\]
for $u\in U_q(C^{(2)}(n+1))$ and $\ket{\bm}\in \W$ where $\tau$ is the automorphism in \eqref{eq:auto-2} (cf. Remark \ref{rem:tau symmetry}).
}
\end{rem}

\subsubsection{Classical limit}
Let
\begin{equation*}\label{eq:A-form and classical limit of C^{(2)}(n+1)}
\W(x)_{A} =\sum_{\bm}A  |\bm\rangle,\quad \ov{\W(x)}=\W(x)_{A}\otimes_{A}\mathbb{\C},
\end{equation*}
where $A=\Z[q^{\hf},q^{-\hf}]$ and $\C$ is an ${A}$-module such that $f(q^\hf)\cdot c = f(1)c$ for $f(q^\hf)\in A$ and $c\in \C$.

One can check directly that $\W(x)_{A}$ is invariant under $e_i$, $f_i$, and $\{k_i\}$ for $i\in I\setminus\{0\}$, and the induced operators $E_i$, $F_i$, and $H_i$ on $\ov{\W(x)}$, respectively, satisfy the defining relations of $U(osp_{1|2n})$. 

\begin{lem}\label{lem:classical limit-2}
The space $\ov{\W(x)}$ is isomorphic to the irreducible highest weight $U(osp_{1|2n})$-module with highest weight $-\varpi_n$, where $\varpi_n$ is the $n$-th fundamental weight for $osp_{1|2n}$.
\end{lem}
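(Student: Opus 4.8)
The plan is to mirror the proof of Lemma \ref{lem:classical limit-1} for type $C_n^{(1)}$, substituting the $osp_{1|2n}$ data. Since we are granted that the induced operators $E_i,F_i,H_i$ ($i\in I\setminus\{0\}$) satisfy the defining relations of $U(osp_{1|2n})$, the work reduces to three things: exhibiting $\ket{{\bf 0}}\ot 1$ as a highest weight vector, reading off its weight, and establishing irreducibility by a direct submodule argument.

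First I would verify that $\ket{{\bf 0}}\ot 1$ is highest weight. Specializing Proposition \ref{prop:q-osc for C^{(2)}(n+1)} at $q=1$, the raising operators act by $E_j\ket{{\bf m}}=(m_{j+1}+1)\ket{{\bf m}-\be_j+\be_{j+1}}$ for $1\le j\le n-1$ and $E_n\ket{{\bf m}}=-\ket{{\bf m}-\be_n}$. Both send $\ket{{\bf 0}}$ to a vector with a negative coordinate, hence to $0$, so $E_i(\ket{{\bf 0}}\ot 1)=0$ for all $i\in\{1,\dots,n\}$. To read off the weight I would compute $H_i(\ket{{\bf 0}}\ot 1)$: for $1\le i\le n-1$ we have $k_i\ket{{\bf 0}}=\ket{{\bf 0}}$, so $H_i(\ket{{\bf 0}}\ot1)=0$, while for $i=n$, using $q_n=q^{1/2}$ and $k_n\ket{{\bf 0}}=q^{-1/2}\ket{{\bf 0}}$,
\[
H_n(\ket{{\bf 0}}\ot 1)=\left(\frac{k_n-k_n^{-1}}{q_n-q_n^{-1}}\ket{{\bf 0}}\right)\ot 1=\left(\frac{q^{-1/2}-q^{1/2}}{q^{1/2}-q^{-1/2}}\ket{{\bf 0}}\right)\ot 1=-\ket{{\bf 0}}\ot 1.
\]
Thus $\ket{{\bf 0}}\ot 1$ has weight $\mu$ with $\langle\mu,\alpha_i^\vee\rangle=-\delta_{in}$, i.e. $\mu=-\varpi_n$, so $\ov{\W(x)}$ is a highest weight module of highest weight $-\varpi_n$.

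For irreducibility I would argue that every nonzero submodule contains $\ket{{\bf 0}}\ot 1$. The $H_i$ act semisimply on $\ov{\W(x)}$ with simultaneous eigenbasis $\{\ket{{\bf m}}\ot 1\}$, and the joint eigenvalues are pairwise distinct (from $H_i$ one recovers $m_{i+1}-m_i$ for $i<n$ and $-(2m_n+1)$ for $i=n$, which determine ${\bf m}$). Hence any $H$-invariant subspace, in particular any submodule $N\ne 0$, contains some basis vector $\ket{{\bf m}}\ot 1$. I then reduce it to $\ket{{\bf 0}}$ with nonzero coefficient by raising operators: if $m_n\ge1$ then $E_n\ket{{\bf m}}=-\ket{{\bf m}-\be_n}$ lowers $m_n$; if $m_n=0$ but ${\bf m}\ne{\bf 0}$, letting $j_0<n$ be the largest index with $m_{j_0}>0$ we have $m_{j_0+1}=0$, so $E_{j_0}\ket{{\bf m}}=\ket{{\bf m}-\be_{j_0}+\be_{j_0+1}}$ with coefficient $1$, pushing one unit of mass rightward. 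Each $E_n$ step strictly lowers $|{\bf m}|$ and each $E_{j_0}$ step strictly raises $\sum_i i\,m_i$ while fixing $|{\bf m}|$, so the process terminates at $\ket{{\bf 0}}$ with a nonzero scalar; thus $\ket{{\bf 0}}\ot1\in N$. Since conversely $\ket{{\bf 0}}\ot1$ generates $\ov{\W(x)}$ (one builds up mass with $F_n$ and shuffles it left with $F_j$, all coefficients being nonzero at $q=1$), the module is irreducible, exactly as in the $C_n^{(1)}$ case.

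I expect the only genuine subtlety, as opposed to routine bookkeeping, to be the superalgebra aspect: because node $n$ is odd, $E_n$ is an odd operator and one cannot directly invoke the standard (even) highest-weight irreducibility criterion. The explicit submodule-reduction above sidesteps this entirely, as it uses only the concrete action of Proposition \ref{prop:q-osc for C^{(2)}(n+1)} at $q=1$ and never the general structure theory of $U(osp_{1|2n})$; the remaining point, that $E_i,F_i,H_i$ satisfy the $osp_{1|2n}$ relations, has already been granted.
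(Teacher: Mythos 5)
Your proposal is correct and follows essentially the same route as the paper: compute $H_n(\ket{{\bf 0}}\ot 1)=-\ket{{\bf 0}}\ot 1$ and $H_i(\ket{{\bf 0}}\ot 1)=0$ for $i<n$ to identify the highest weight $-\varpi_n$, then show every nonzero submodule contains $\ket{{\bf 0}}\ot 1$ via the explicit action of the raising operators (the paper simply defers this step to the argument of Lemma~\ref{lem:classical limit-1}). Your added details — the one-dimensionality of joint $H$-eigenspaces and the explicit termination of the raising procedure — are correct fillings-in of what the paper leaves implicit, not a different method.
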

\begin{proof}  We have
\begin{equation*}
H_n(|{\bf 0}\rangle \ot 1) 
= \left(\frac{k_n-k_n^{-1}}{q^{\hf}-q^{-\hf}}|{\bf 0}\rangle\right) \ot 1   
= \left(\frac{q^{-\hf}-q^{\hf}}{q^{\hf}-q^{-\hf}}|{\bf 0}\rangle\right) \ot 1
= - |{\bf 0}\rangle \ot 1,
\end{equation*}
and $H_i(|{\bf 0}\rangle \ot 1)=0$ for $1\leq i\leq n-1$.
By the same argument as in Lemma \ref{lem:classical limit-1}, $\ov{\W(x)}$ is an irreducible highest weight $U_q(osp_{1|2n})$-module with highest weight $-\varpi_n$.
\end{proof}

\subsubsection{Polarization}

Define a symmetric bilinear form on $\W$ by \eqref{eq:bilinear form-1}.

\begin{lem} \label{lem:polarization2}
The bilinear form in \eqref{eq:bilinear form-1} is a polarization on $\W$, that is, 
\begin{equation*}
(u v , v')=  ( v, \eta(u)v' ),
\end{equation*}
for $u\in U_q(C^{(2)}(n+1))$ and $v,v'\in \W$.
\end{lem}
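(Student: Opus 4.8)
The plan is to follow the proof of Lemma \ref{lem:polarization1} closely. Since $\eta$ is an anti-automorphism and the relation $(uv,v')=(v,\eta(u)v')$ is preserved under taking products, it suffices to verify it when $u$ is one of the generators $k_i,e_i,f_i$ ($i\in I$). For $u=k_i$ it is immediate, because $k_i$ acts diagonally on the orthogonal basis $\{\ket{\bm}\}$ and $\eta(k_i)=k_i$. Because \eqref{eq:bilinear form-1} is symmetric, the $f_i$ identity follows from the $e_i$ identity: if $u$ satisfies the polarization relation for all $v,v'$, then so does $\eta(u)$ (apply symmetry to $(\eta(u)v,v')=(v',\eta(u)v)$ and then the $u$-relation to the pair $(v',v)$), and combining this with the trivial case $u=k_i$ recovers $f_i$ from $\eta(e_i)\sim k_i^{-1}f_i$. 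Thus everything reduces to proving $(e_i\ket{\bm},\ket{\bm'})=(\ket{\bm},\eta(e_i)\ket{\bm'})$.

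For $1\le i\le n-1$ the action of $e_i,f_i,k_i$ on $\W$ and the value $\eta(e_i)=q_i^{-1}k_i^{-1}f_i$ coincide verbatim with those of $U_q(C_n^{(1)})$, so this case is identical to Case~1 of Lemma \ref{lem:polarization1} and needs no new computation.

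The genuinely new case is the odd node $i=n$. Here $q_n=q^{1/2}$ and $\eta(e_n)=-q_n^{-1}k_n^{-1}f_n$. Taking $\bm'=\bm-\eb_n$, I will evaluate both sides using \eqref{eq:bilinear form-1}. On the left, $(e_n\ket{\bm},\ket{\bm-\eb_n})=-q^{1/2}(\ket{\bm-\eb_n},\ket{\bm-\eb_n})$, and the ratio $(\ket{\bm-\eb_n},\ket{\bm-\eb_n})/(\ket{\bm},\ket{\bm})=q^{m_n-1}[m_n]$. On the right, $k_n^{-1}f_n\ket{\bm-\eb_n}=q^{m_n}[m_n]\ket{\bm}$, so $\eta(e_n)\ket{\bm-\eb_n}=-q^{m_n-1/2}[m_n]\ket{\bm}$. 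Both sides therefore equal $-q^{m_n-1/2}[m_n](\ket{\bm},\ket{\bm})$, which settles this case. The only care required is in tracking the half-integer powers of $q$ coming from $d_n=\tfrac12$ together with the sign $(-1)^{\delta_{nn}}$ in $\eta(e_n)$.

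Finally, for $i=0$ (recall $\W=\W(1)$, so $x=1$) I will reduce to the node-$n$ case via the automorphism $\tau$ of \eqref{eq:auto-2}, exactly as in Case~3 of Lemma \ref{lem:polarization1}. The form \eqref{eq:bilinear form-1} is invariant under $\tau(\ket{\bm})=\ket{m_n,\dots,m_1}$ since it is symmetric in the $m_i$, and at $x=1$ one has $\tau(u\ket{\bm})=\tau(u)\tau(\ket{\bm})$. Using $\tau(e_0)=f_n$ gives $(e_0 v,v')=(f_n\tau(v),\tau(v'))$, and the $f_n$-polarization rewrites this as $(\tau(v),\eta(f_n)\tau(v'))$; since a direct check yields $\tau(\eta(e_0))=\eta(f_n)$, this equals $(v,\eta(e_0)v')$, as required. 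I expect the node-$n$ computation to be the main (though still routine) step, the only real subtlety being the simultaneous bookkeeping of the super signs in $\eta$ and $\tau$ and the $q^{1/2}$ factors.
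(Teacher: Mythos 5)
Your proposal is correct and follows essentially the same route as the paper: reduce to generators, note that the middle nodes $1\le i\le n-1$ coincide verbatim with the $C_n^{(1)}$ case, verify the odd node $i=n$ by the direct computation (both sides equal $-q^{m_n-\frac12}[m_n](\ket{\bm},\ket{\bm})$, matching the paper's $-q_n^{2m_n-1}[m_n]$), and transport the node-$0$ case from node $n$ via the automorphism $\tau$ of \eqref{eq:auto-2}. The only difference is that you spell out the reductions ($f_i$ from $e_i$ via symmetry of the form, and the identity $\tau(\eta(e_0))=\eta(f_n)$) that the paper leaves implicit.
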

\begin{proof}  Let us show 
$(e_n |{\bf m}\rangle,|{\bf m}'\rangle) = (|{\bf m}\rangle,\eta(e_n)|{\bf m}'\rangle)$
for $|{\bf m}\rangle, |{\bf m}'\rangle\in \W$ only. The proof for $e_i$ ($1\leq i\leq n-1$) is identical to Lemma \ref{eq:bilinear form-1}, and the proof for $e_0$ is obtained by using $\tau$.
We may assume ${\bf m}'={\bf m}-\be_n$. The right-hand side is 
\begin{align*}
(|{\bf m}\rangle,\eta(e_n)|{\bf m}-\be_n\rangle) &= 
(|{\bf m}\rangle,-q_n^{-1}k_n^{-1}f_n|{\bf m}-\be_n\rangle)=-q_n^{2m_n-1}[m_n](|{\bf m}\rangle,|{\bf m}\rangle),
\end{align*}
and the left-hand side is 
\begin{align*}
(e_n |{\bf m}\rangle,|{\bf m}-\be_n\rangle) 
&=- q^\hf(|{\bf m}-\be_n\rangle,|{\bf m}-\be_n\rangle)\\
&=- q^\hf \frac{q^{-\hf\sum m_i(m_i-1)}}{\prod_{i=1}^n[m_i]!}\, [m_n]\,q^{m_n-1} =-q^{m_n-\hf}[m_n](|{\bf m}\rangle,|{\bf m}\rangle).
\end{align*}
Hence the equality holds.
\end{proof}

\subsubsection{Crystal base}

Let $M$ be a $U_q(C^{(2)}(n+1))$-module. 
For $1\leq j\leq n-1$, we assume that $e_j$ and $f_j$ are locally nilpotent on $M$, and define $\te_j$, $\tf_j$ to be the usual lower crystal operators.
For $i=0,n$, we consider the operators $\te_i$ and $\tf_i$ defined in the same way as in  \eqref{eq:u_k for f_n}--\eqref{eq:e_0 at q=0} for $U_q(C_n^{(1)})$, which also satisfy \eqref{eq:te, tf and bilinear form}.

Let $A_0$ be the subring of $\Q(q^\hf)$ consisting of functions which are regular at $q^\hf=0$.
We define an $A_0$-lattice $\mc{L}$ of $\W$ and a $\Q$-basis $\mc{B}$ of $\mc{L}/q^\hf\mc{L}$ by
\begin{equation}\label{eq:crystal base for W when C^{(2)}(n+1)}
\begin{split}
\mc{L}= \bigoplus_{\bf m}A_0| \mb \rangle,  \quad  
\mc{B}= \{\,\ket{\mb}\!\!\pmod{q^\hf\mc{L}}\,\}.
\end{split}
\end{equation}
It is clear from \eqref{eq:bilinear form-1} that $(\mc{L},\mc{L})\subset A_0$, and $\mc{B}$ is an orthonormal basis of $\mc{L}/q^\hf\mc{L}$ with respect to $(\ ,\ )|_{q^\hf=0}$.

\begin{prop}\label{prop:crystal base of W-2}
The pair $(\mc{L},\mc{B})$ is a crystal base of $\W$ in the sense of Proposition \ref{prop:crystal base of W_+}, where
\begin{equation*}
\tf_i |\mb \rangle \equiv
\begin{cases}
| \mb +\eb_n \rangle  & \text{if $i=n$},\\
| \mb +\eb_i - \eb_{i+1} \rangle  & \text{if $m_{i+1}\geq 1$ and $1\leq i\leq n-1$},\\
| \mb -\eb_1 \rangle  & \text{if $m_1\geq 1$ and $i=0$},\\
0  & \text{otherwise},
\end{cases}\ \pmod{q^\hf\mc{L}}.
\end{equation*}
\end{prop}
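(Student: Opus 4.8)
The plan is to imitate the proof of Proposition~\ref{prop:crystal base of W_+}: as there, it suffices to establish~(2), since the explicit description of $\tf_i$ modulo $q^\hf\mc{L}$, together with the fact that all the coefficients that arise lie in $A_0$, immediately yields both that $\mc{L}$ is invariant under $\te_i,\tf_i$ and that $\mc{B}$ is permuted up to $0$. I would treat the three types of nodes separately: $1\le i\le n-1$, $i=n$, and $i=0$.

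For $1\le i\le n-1$ the nodes are of type $A$ (here $q_i=q$ and $p(i)=0$), and $\te_i,\tf_i$ are the ordinary lower crystal operators; the computation is word for word identical to Case~1 of Proposition~\ref{prop:crystal base of W_+}, giving $\tf_i\ket{\mb}\equiv\ket{\mb+\eb_i-\eb_{i+1}}$ whenever $m_{i+1}\ge1$. The case $i=0$ I would reduce to the case $i=n$ by means of the automorphism $\tau$ of~\eqref{eq:auto-2} and the $\tau$-symmetry $\tau(u\ket{\mb})=\tau(u)\tau(\ket{\mb})$ recorded in the Remark following Proposition~\ref{prop:q-osc for C^{(2)}(n+1)}: since $\tau$ exchanges $e_0$ and $f_n$ (up to a sign) and $\ket{m_1,\dots,m_n}$ with $\ket{m_n,\dots,m_1}$, it interchanges the operators $\te_0,\tf_0$ of~\eqref{eq:u_k for e_0}--\eqref{eq:e_0 at q=0} with $\tf_n,\te_n$, so the assertion for $i=0$ follows from that for $i=n$.

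The heart of the matter is the odd node $i=n$, where $q_n=q^\hf$, $p(n)=1$, and the relevant $U_q(sl_2)$ is replaced by $U_q(osp_{1|2})$. Given $\ket{\mb}$, I would set $u=\ket{\mb-m_n\eb_n}$; then $e_n u=0$ and $k_n u=q^{-\hf}u$, so $u$ is the highest weight vector of its $U_q(osp_{1|2})$-string with $l=1$ in~\eqref{eq:u_k for f_n}. A key simplification over the $C_n^{(1)}$ case is that $f_n$ shifts $m_n\mapsto m_n+1$ in a single step, so the whole string is $\{\ket{\mb-m_n\eb_n+k\eb_n}\}_{k\ge0}$ and no even/odd parity split is needed. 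Iterating the action gives $f_n^{m_n}u=q^{-m_n/2}[m_n]!\,\ket{\mb}$, whence $u_{m_n}=c_{m_n}\ket{\mb}$ with $c_{m_n}=q^{m_n(m_n+1)/4-m_n/2}[m_n]!/[m_n]_n!$; here $[m_n]!$ is the ordinary factorial $[m_n]_{q,1}!$ coming from the action, while $[m_n]_n!=[m_n]_{q^\hf,-1}!$ comes from $f_n^{(m_n)}$. By~\eqref{eq:f_n at q=0} this yields
\begin{equation*}
\tf_n\ket{\mb}=\frac{c_{m_n+1}}{c_{m_n}}\ket{\mb+\eb_n}=q^{m_n/2}\,\frac{[m_n+1]}{[m_n+1]_n}\,\ket{\mb+\eb_n}.
\end{equation*}

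It then remains to show the scalar lies in $1+q^\hf A_0$. Using the estimates of Section~\ref{sec:quantum superalgebra} (with the base $q$ replaced by $q^\hf$ for the $\e=-1$ factorial), one has $[m_n+1]\in q^{-m_n}(1+qA_0)$ and $[m_n+1]_n\in q^{-m_n/2}(1+q^\hf A_0)$, so their quotient lies in $q^{-m_n/2}(1+q^\hf A_0)$ and the prefactor $q^{m_n/2}$ cancels the surviving power of $q$, giving $\tf_n\ket{\mb}\equiv\ket{\mb+\eb_n}\pmod{q^\hf\mc{L}}$. The same bookkeeping (or the estimate~\eqref{eq:te, tf and bilinear form}) shows $\te_n\ket{\mb}$ has coefficient in $A_0$ and reduces to $\ket{\mb-\eb_n}$ when $m_n\ge1$ and to $0$ when $m_n=0$, completing~(2). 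The only real obstacle I anticipate is precisely this final estimate: one must keep the two conventions $[\,\cdot\,]=[\,\cdot\,]_{q,1}$ and $[\,\cdot\,]_n=[\,\cdot\,]_{q^\hf,-1}$ rigorously apart and verify that their leading $q$-powers conspire to cancel, which is exactly what makes the normalization in~\eqref{eq:u_k for f_n} the correct one in the super setting.
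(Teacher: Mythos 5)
Your proposal is correct and follows essentially the same route as the paper: the cases $1\le i\le n-1$ are quoted from Proposition \ref{prop:crystal base of W_+}, the case $i=0$ is reduced to $i=n$ via the automorphism $\tau$ of \eqref{eq:auto-2}, and for $i=n$ you take $u=\ket{\mb-m_n\eb_n}$ with $l=1$, compute $\tf_n\ket{\mb}=q_n^{m_n}\tfrac{[m_n+1]}{[m_n+1]_n}\ket{\mb+\eb_n}$, and verify that this scalar lies in $1+q^{\hf}A_0$, exactly as in the paper's proof.
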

\begin{proof} It suffices to prove the above formula for $\tf_i$ when $i=0,n$ since the other cases are proved in Proposition \ref{prop:crystal base of W_+}. Let us prove the case of $\tf_n$ only.
Recall that $[m]_n=[m]_{q^\hf,-1}$ for $m\in \Z_{\ge 0}$.

Let $|{\bf m}\rangle$ be given. 
Since $e_n|\bm-m_n\be_n\rangle=0$ and $k_n|\bm-m_n\be_n\rangle=q^{-\hf}|\bm-m_n\be_n\rangle$, we have
\begin{equation*}
\begin{split}
\tf_n^{m_n}|{\bf m}-m_n\be_n\rangle 
&= q_n^{\frac{m_n(m_n+1)}{2}} \frac{f_n^{m_n}}{\left[m_n\right]_{q_n,-1}!}\, 
|{\bf m}-m_n\be_n\rangle= q_n^{\frac{m_n(m_n-1)}{2}}\frac{[m_n]!}{\left[m_n\right]_{_{q_n,-1}}!}\, |{\bf m}\rangle,
\end{split}
\end{equation*}
and hence
\begin{equation*}
\begin{split}
\tf_n|{\bf m}\rangle 
&= q_n^{-\frac{m_n(m_n-1)}{2}} \frac{\left[m_n\right]_{q_n,-1}!}{[m_n]!}\ \tf_n^{m_n+1}|{\bf m}-m_n\be_n\rangle  \\
&= q_n^{-\frac{m_n(m_n-1)}{2}} \frac{\left[m_n\right]_{q_n,-1}!}{[m_n]!}\
q_n^{\frac{m_n(m_n+1)}{2}} \frac{[m_n+1]!}{\left[m_n+1\right]_{_{q_n,-1}}!} |{\bf m}+\be_n\rangle  \\
&\equiv q_n^{m_n} \frac{[m_n+1]}{\left[m_n+1\right]_{_{q_n,-1}}} |{\bf m}+\be_n\rangle = |{\bf m}+\be_n\rangle \quad \pmod{q^\hf\mc{L}}. 
\end{split}
\end{equation*}
\end{proof}

\subsection{Type $B^{(1)}(0,n)$}

\subsubsection{$U_q(B^{(1)}(0,n))$-module $\W$}
Consider the quantum affine superalgebra $U_q(B^{(1)}(0,n))$.
Let $U_q(B(0,n))$ (or $U_q(osp_{1|2n})$) and $U_q(A_{n-1})$ be the subalgebras of $U_q(B^{(1)}(0,n))$ generated by $k_i, e_i, f_i$ for $i\in I\setminus\{0\}$ and $i\in I\setminus\{0,n\}$, respectively. 

\begin{prop}\label{prop:q-osc for B^{(1)}(0,n)}
For a non-zero $x\in \Q(q^{\frac12})$, the space $\W$ admits an irreducible $U_q(B^{(1)}(0,n))^\sigma$-module structure given as follows:
{\allowdisplaybreaks
\begin{align*}
e_0 | \bm \rangle &=x q^{-1}\frac{[m_1+1][m_1+2]}{[2]} | \bm +2\be_1 \rangle,\\ 
f_0 | \bm \rangle &= -x^{-1}\frac{q}{[2]} | \bm  - 2\be_1 \rangle,\\ 
k_0 | \bm \rangle &=q^{2m_1+1}\ket{\bm},\\
e_j | \bm \rangle &= [m_{j+1}+1]\ket{\bm -\be_j +\be_{j+1}},\\ 
f_j | \bm \rangle &= [m_j+1] \ket{\bm +\be_j -\be_{j+1}},\\ 
k_j | \bm \rangle &= q^{-m_j+m_{j+1}}\ket{\bm},\\
e_n\ket{\mb}&=-q^{\frac12}\ket{\mb-\eb_n},\\
f_n\ket{\mb}&=q^{-\frac12}[m_n+1]\ket{\mb+\eb_n},\\
k_n\ket{\mb}&=q^{-m_n-\frac12}\ket{\mb},\\
\sigma\ket{\mb}&=(-1)^{|\mb|}\ket{\mb},
\end{align*}}
where $1\le j\le n-1$.
\end{prop}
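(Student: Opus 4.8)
The plan is to establish two things: that the listed formulas genuinely define a $U_q(B^{(1)}(0,n))^\sigma$-module structure on $\W$, and that this module is irreducible. For well-definedness I would exploit that the action is assembled from pieces already treated. Recall that here $p(0)=p(j)=0$ for $1\le j\le n-1$ and $p(n)=1$, with $q_0=q^2$, $q_j=q$, $q_n=q^{1/2}$. The generators $e_j,f_j,k_j$ for $1\le j\le n-1$ act exactly as in the standard $q$-boson realization of $U_q(A_{n-1})$, so the Cartan relations and the $A_{n-1}$-Serre relations among these nodes are classical. The triple $e_0,f_0,k_0$ acts by precisely the same formulas as node $0$ in Proposition \ref{prop:q-osc for C_n^{(1)}}, while $e_n,f_n,k_n,\sigma$ act by precisely the same formulas as node $n$ in Proposition \ref{prop:q-osc for C^{(2)}(n+1)}; moreover nodes $0,1$ satisfy $a_{01}=-1$, $a_{10}=-2$ exactly as in type $C_n^{(1)}$, and nodes $n-1,n$ satisfy $a_{n-1,n}=-1$, $a_{n,n-1}=-2$ exactly as in type $C^{(2)}(n+1)$, with matching scalings and parities.

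Consequently every relation involving only $\{0,1,\dots,n-1\}$ or only $\{1,\dots,n\}$ is inherited verbatim from those two propositions (including the length-three Serre relations $S_{10}$ and $S_{n,n-1}$ arising from the doubly-laced bonds). The only genuinely new relations couple node $0$ with node $n$; for $n\ge 2$ these act on the disjoint coordinates $m_1$ and $m_n$, hence commute as operators, and since $a_{0n}=0$ and $p(0)p(n)=0$ both the Cartan relation $e_0 f_n - f_n e_0 = 0$ and the length-one Serre relations reduce to this trivial commutation. The relations for $\sigma$ in \eqref{eq:relation for sigma} are immediate, as $\sigma$ is the scalar $(-1)^{|\mb|}$: one has $\sigma^2=1$ and $\sigma k_i=k_i\sigma$, while $e_0$ and $e_n$ shift $|\mb|$ by $2$ and $-1$, matching $(-1)^{p(0)}=1$ and $(-1)^{p(n)}=-1$. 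Alternatively, well-definedness can be imported wholesale from the $U_q(A_{2n}^{(2)\dagger})$ representation of \cite{KO} through the twistor of \cite{CFLW}, as carried out in Appendix \ref{app:twistor}.

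For irreducibility I would argue via weight spaces. The eigenvalues of the $k_i$ determine $\mb$ uniquely: $k_n$ recovers $m_n$, and then $k_{n-1},\dots,k_1$ recover $m_{n-1},\dots,m_1$ in turn (with $k_0$ giving only a consistency check), so each $\ket{\mb}$ spans a one-dimensional weight space. Hence any nonzero submodule is a sum of weight spaces and in particular contains some $\ket{\mb}$, and it suffices to show that one basis vector generates $\W$. Every structure constant above is nonzero on its natural domain: $e_j$ ($1\le j\le n-1$) moves a unit of mass from position $j$ to $j+1$ with coefficient $[m_{j+1}+1]\ne 0$, and $e_n$ removes a unit at position $n$ with coefficient $-q^{1/2}$. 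Thus from any $\ket{\mb}$ one pushes all mass to position $n$ via $e_1,\dots,e_{n-1}$ and annihilates it with $e_n$ to reach $\ket{{\bf 0}}$; conversely $f_n$ followed by $f_{n-1},\dots,f_1$ builds any $\ket{\mb}$ from $\ket{{\bf 0}}$. Since $\sigma$ acts as a scalar on each $\ket{\mb}$, irreducibility as a $U_q(B^{(1)}(0,n))$-module is equivalent to irreducibility as a $U_q(B^{(1)}(0,n))^\sigma$-module.

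I expect the main obstacle to be the careful verification of the two length-three Serre relations at the doubly-laced ends, and in particular the fermionic relation $S_{n,n-1}$, where the signs $(-1)^{m+p(i)m(m-1)/2+mp(i)p(j)}$ together with the $q^{1/2}$-scaling at node $n$ must be tracked precisely; this is exactly the point where the $osp$-type (rather than $so$-type) structure enters. Reducing these to the already-established computations of Propositions \ref{prop:q-osc for C_n^{(1)}} and \ref{prop:q-osc for C^{(2)}(n+1)}, or bypassing them via the twistor of Appendix \ref{app:twistor}, is what makes the argument manageable.
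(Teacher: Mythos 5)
Your proposal is correct, but it takes a different route from the paper's. The paper proves this proposition entirely in Appendix \ref{app:twistor}: the representation is \emph{derived} (not checked) by applying the Clark--Fan--Li--Wang twistor $\widehat{\Psi}$ to the $q$-oscillator representation of $U_q(A_{2n}^{(2)\dagger})$ from \cite{KO}, followed by a change of basis $\ket{\mb}\mapsto \ib^{s(\mb)}q^{-|\mb|/2}\prod_j[m_j]!\,\ket{\mb}$ and some sign automorphisms; well-definedness is thereby imported from the non-super side, and the appendix does not spell out irreducibility separately. You instead verify the relations directly by splitting the Dynkin diagram: the data $(a_{ij},d_i,p(i))$ and the action formulas on the sub-diagram $\{0,1,\dots,n-1\}$ coincide with those of Proposition \ref{prop:q-osc for C_n^{(1)}}, and on $\{1,\dots,n\}$ with those of Proposition \ref{prop:q-osc for C^{(2)}(n+1)}, so all relations supported on either sub-diagram (including both doubly-laced Serre relations) are inherited, and the only new relations are the trivial commutations between nodes $0$ and $n$, which act on disjoint coordinates; this is exactly the trick the paper itself uses later for the polarization lemma of type $B^{(1)}(0,n)$, so it is clearly in the spirit of the text. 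Your weight-space argument for irreducibility (one-dimensional joint $k_i$-eigenspaces, connectivity via $e_1,\dots,e_n$ and $f_n,\dots,f_1$ with nonzero structure constants, and the trivial passage from $U_q(X)$- to $U_q(X)^\sigma$-irreducibility) is sound and in fact more explicit than anything in the appendix. The trade-off: your argument is shorter and self-contained \emph{given} Propositions \ref{prop:q-osc for C_n^{(1)}} and \ref{prop:q-osc for C^{(2)}(n+1)}, but since the latter is itself only established via the twistor, the twistor machinery is quarantined rather than avoided; the paper's route has the additional virtue of explaining where the formulas come from.
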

\begin{proof}
See Appendix \ref{app:proof of well-definedness}.
\end{proof}

We also denote this module by $\W(x)$ and call it a (level one) $q$-oscillator representation. Note that the classical limit of $\W(x)$ as a $U_q(osp_{1|2n})$-module is the same as in Lemma \ref{lem:classical limit-2}.
 
\subsubsection{Polarization and crystal base}


\begin{lem}
The bilinear form in \eqref{eq:bilinear form-1} is a polarization on $\W$, that is, 
\begin{equation*}
(u v , v')=  ( v, \eta(u)v' ),
\end{equation*}
for $u\in U_q(B^{(1)}(0,n))$ and $v,v'\in \W$.
\end{lem}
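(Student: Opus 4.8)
The plan is to verify the polarization identity $(uv,v')=(v,\eta(u)v')$ separately for each algebra generator $u\in\{k_i,e_i,f_i\mid i\in I\}$, using linearity and the fact that it suffices to test it on basis vectors $v=|{\bf m}\rangle$, $v'=|{\bf m}'\rangle$. The whole argument rests on one structural observation: the $q$-oscillator action of Proposition~\ref{prop:q-osc for B^{(1)}(0,n)} \emph{interpolates} between the two cases already settled. The generators at the nodes $1,\dots,n-1$ are common to all three types; the generators $e_0,f_0,k_0$ at node $0$ coincide \emph{verbatim} with those of $U_q(C_n^{(1)})$ in Proposition~\ref{prop:q-osc for C_n^{(1)}}; and the generators $e_n,f_n,k_n$ at node $n$ coincide verbatim with those of $U_q(C^{(2)}(n+1))$ in Proposition~\ref{prop:q-osc for C^{(2)}(n+1)}. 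Since the bilinear form \eqref{eq:bilinear form-1} is literally the same in all three cases, and the restriction of $\eta$ to each node agrees with the corresponding type ($\eta(e_0)=-q_0^{-1}k_0^{-1}f_0$ as for $C_n^{(1)}$, and $\eta(e_n)=-q_n^{-1}k_n^{-1}f_n$ with $q_n=q^{1/2}$ as for $C^{(2)}(n+1)$), most cases require no new work.

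Concretely, for $u=k_i$ the identity is immediate since $k_i$ acts diagonally and $\eta(k_i)=k_i$. For $u=e_j$ or $f_j$ with $1\le j\le n-1$, both the action and $\eta(e_j)=q_j^{-1}k_j^{-1}f_j$ match those of Lemma~\ref{lem:polarization1}, so Case~1 of that proof applies word for word. For $u=e_n$ or $f_n$, the action, the scalar $q_n=q^{1/2}$, and the formula for $\eta(e_n)$ all coincide with the $C^{(2)}(n+1)$ data, so the node-$n$ computation of Lemma~\ref{lem:polarization2} applies verbatim. Thus only node $0$ remains.

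The genuinely new case is $u=e_0$ (the case $u=f_0$ then follows from the symmetry of \eqref{eq:bilinear form-1}). Here I cannot reduce to node $n$ via the automorphism $\tau$, as was done in Case~3 of Lemma~\ref{lem:polarization1}, because the Dynkin diagram of $B^{(1)}(0,n)$ is not symmetric (node $0$ is bosonic, node $n$ is odd) and $\tau$ in \eqref{eq:auto-1} is undefined for this type. Instead I verify it directly: taking ${\bf m}'={\bf m}+2\be_1$ (the only configuration where both sides can be nonzero), I evaluate the left-hand side $(e_0|{\bf m}\rangle,|{\bf m}+2\be_1\rangle)$ via the norm ratio $(|{\bf m}+2\be_1\rangle,|{\bf m}+2\be_1\rangle)/(|{\bf m}\rangle,|{\bf m}\rangle)$ read off from \eqref{eq:bilinear form-1}, and the right-hand side $(|{\bf m}\rangle,-q^{-2}k_0^{-1}f_0|{\bf m}+2\be_1\rangle)$ directly from the action. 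This is precisely the mirror image, under coordinate reversal $m_1\leftrightarrow m_n$ and the exchange of the roles of $e$ and $f$, of the node-$n$ computation in Case~2 of Lemma~\ref{lem:polarization1}, so I expect both sides to reduce to $\tfrac{q^{-2m_1-2}}{[2]}(|{\bf m}\rangle,|{\bf m}\rangle)$.

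The main, and essentially only, obstacle is the bookkeeping in this node-$0$ computation, together with the observation that it pins down the spectral parameter: the left-hand side carries a factor $x$ coming from $e_0$, while the right-hand side carries $x^{-1}$ coming from $f_0$ inside $\eta(e_0)$, so the two match exactly when $x=1$. This is consistent with the statement, which concerns $\W=\W(1)$. Beyond this direct verification at node $0$, no new idea is needed; the proof simply assembles the three node-types from the already-established computations of Lemmas~\ref{lem:polarization1} and~\ref{lem:polarization2}.
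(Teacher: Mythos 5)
Your proof is correct, and for the nodes $1,\dots,n$ it is exactly the paper's argument: the paper's proof is the one-line observation that the action of $e_i$ for $0\le i<n$ (resp.\ $i=n$) coincides with that of $C_n^{(1)}$ (resp.\ $C^{(2)}(n+1)$), so Lemmas \ref{lem:polarization1} and \ref{lem:polarization2} cover everything. The only place you diverge is node $0$, which you single out as ``the genuinely new case'' on the grounds that the automorphism $\tau$ of \eqref{eq:auto-1} is unavailable for $B^{(1)}(0,n)$. That worry is misplaced: the identity $(e_0|{\bf m}\rangle,|{\bf m}'\rangle)=(|{\bf m}\rangle,-q_0^{-1}k_0^{-1}f_0|{\bf m}'\rangle)$ is a statement about specific operators on $\W$ and the form \eqref{eq:bilinear form-1}, and since the formulas for $e_0,f_0,k_0$, for $q_0=q^2$, and for $\eta(e_0)$ are verbatim those of type $C_n^{(1)}$, Case~3 of Lemma \ref{lem:polarization1} already \emph{is} this identity (checked on basis vectors, with both sides vanishing when $|{\bf m}|$ and $|{\bf m}'|$ have different parities, so the restriction to $\W_\ve$ there is harmless). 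The fact that it was proved there by conjugating with $\tau$ does not need to be replayed inside $U_q(B^{(1)}(0,n))$; one simply cites the established equality. Your direct verification at node $0$ is nevertheless correct --- both sides do reduce to $\tfrac{q^{-2m_1-2}}{[2]}(|{\bf m}\rangle,|{\bf m}\rangle)$ at $x=1$, with the factors $x$ and $x^{-1}$ cancelling exactly as you say --- so what you lose is only economy; what you gain is an independent check of the computation that the paper delegates to the $\tau$-symmetry argument.
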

\begin{proof}  All the cases are already shown in Lemmas \ref{lem:polarization1}
and \ref{lem:polarization2} since the action of $e_i$ for $0\le i<n$ (resp. $i=n$) is the
same as the one for $C^{(1)}_n$ (resp. $C^{(2)}(n+1)$).
\end{proof}

We define an $A_0$-lattice $\mc{L}$ of $\W$ and a $\Q$-basis $\mc{B}$ of $\mc{L}/q^\hf\mc{L}$ as in \eqref{eq:crystal base for W when C^{(2)}(n+1)}.
We also define the operators $\te_i$ and $\tf_i$ in the same way as in $U_q(C_n^{(1)})$ and $U_q(C^{(2)}(n+1))$.

\begin{prop}
The pair $(\mc{L},\mc{B})$ is a crystal base of $\W$ in the sense of Proposition \ref{prop:crystal base of W_+}, where
\begin{equation*}
\tf_i |\mb \rangle \equiv
\begin{cases}
| \mb +\eb_n \rangle  & \text{if $i=n$},\\
| \mb +\eb_i - \eb_{i+1} \rangle  & \text{if $m_{i+1}\geq 1$ and $1\leq i\leq n-1$},\\
| \mb -2\eb_1 \rangle  & \text{if $m_1\geq 2$ and $i=0$},\\
0  & \text{otherwise},
\end{cases}\ \pmod{q^\hf\mc{L}}.
\end{equation*}
\end{prop}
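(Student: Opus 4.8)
The plan is to follow the template of Proposition \ref{prop:crystal base of W_+}. Since the $\mathcal{L}$-invariance asserted in part (1) is an immediate consequence of the explicit congruences in part (2) — the normalizing scalars produced are units of $A_0$ congruent to $1$ modulo $q^{\hf}A_0$ — it suffices to establish the displayed formula for $\tf_i$ modulo $q^{\hf}\mathcal{L}$, and the statement about $\te_i$ then follows from the fact that $\te_i$ and $\tf_i$ are mutually inverse where defined on $\mathcal{B}$. I would split the verification into the three ranges $1\le i\le n-1$, $i=n$, and $i=0$, and in each range reduce to a computation already carried out for $C^{(1)}_n$ or $C^{(2)}(n+1)$.

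The essential observation, already used in the preceding polarization lemma for this type, is that the generators $e_i,f_i,k_i$ of $U_q(B^{(1)}(0,n))$ coincide with those of $U_q(C^{(1)}_n)$ for $0\le i\le n-1$ and with those of $U_q(C^{(2)}(n+1))$ for $i=n$; moreover the scalars $q_i$, the parities $p(i)$, and hence the very definitions of $\te_i,\tf_i$ via the recipe \eqref{eq:u_k for f_n}--\eqref{eq:e_0 at q=0} agree case by case. For $1\le i\le n-1$ the operators are the usual lower crystal operators of $A_{n-1}$-type, so Case 1 of Proposition \ref{prop:crystal base of W_+} applies verbatim and gives $\tf_i|\mathbf{m}\rangle\equiv|\mathbf{m}+\mathbf{e}_i-\mathbf{e}_{i+1}\rangle$ when $m_{i+1}\ge 1$. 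For $i=n$ the action is identical to type $C^{(2)}(n+1)$ (with $q_n=q^{\hf}$ and $p(n)=1$), so the computation in Proposition \ref{prop:crystal base of W-2} transfers without change and yields $\tf_n|\mathbf{m}\rangle\equiv|\mathbf{m}+\mathbf{e}_n\rangle$; one checks only that the prefactor $q_n^{m_n}[m_n+1]/[m_n+1]_{q_n,-1}$ lies in $1+q^{\hf}A_0$, exactly as there.

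For $i=0$ the action of $e_0,f_0,k_0$ is that of type $C^{(1)}_n$, with $q_0=q^2$ and $p(0)=0$, so $\te_0,\tf_0$ are literally the same operators and the target congruence $\tf_0|\mathbf{m}\rangle\equiv|\mathbf{m}-2\mathbf{e}_1\rangle$ (for $m_1\ge 2$) is the content of Case 3 of Proposition \ref{prop:crystal base of W_+}; here one uses the inclusion $1+qA_0\subset 1+q^{\hf}A_0$ to pass from the $q\to 0$ normalization of $C^{(1)}_n$ to the $q^{\hf}\to 0$ normalization needed now. The one point requiring care is that Case 3 there was deduced from the node-$n$ computation through the automorphism $\tau$ of \eqref{eq:auto-1}, and no such $\tau$ is available for $B^{(1)}(0,n)$. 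This is only an apparent obstacle: the value of $\tf_0|\mathbf{m}\rangle$ depends solely on the local action of $e_0,f_0,k_0$ and on $q_0$, all of which coincide with $C^{(1)}_n$, so the conclusion is intrinsic and transfers directly. Alternatively I would give the short direct computation mirroring Case 2 of Proposition \ref{prop:crystal base of W_+} with the roles of $e$ and $f$ interchanged: starting from a vector annihilated by $f_0$, express $|\mathbf{m}\rangle$ as a multiple of $e_0^{(k)}$ applied to it, and read off that the normalizing scalar lies in $1+q^{\hf}A_0$. I expect this node-$0$ bookkeeping — tracking the powers of $q$ in the oscillator normalization and confirming they reduce to $1$ at $q^{\hf}=0$ — to be the only step demanding genuine, though entirely routine, calculation, everything else being a direct citation of the two preceding propositions.
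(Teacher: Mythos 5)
Your proposal is correct and matches the paper's argument, which is precisely the one-line observation that the formula follows from Propositions \ref{prop:crystal base of W_+} and \ref{prop:crystal base of W-2}, since the actions of $e_i,f_i,k_i$ for $0\le i\le n-1$ coincide with those for $C_n^{(1)}$ and the action at $i=n$ coincides with that for $C^{(2)}(n+1)$. Your additional care at the node $0$ (noting that the $\tau$-based deduction is not literally available for $B^{(1)}(0,n)$ but that the conclusion is local and transfers, together with the inclusion $1+qA_0\subset 1+q^{\hf}A_0$) only makes explicit what the paper leaves implicit.
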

\begin{proof} It follows from Propositions \ref{prop:crystal base of W_+} and \ref{prop:crystal base of W-2}.
\end{proof}

\section{Quantum $R$ matrix and fusion construction}\label{sec:R matrix}

In this section, we review the quantum $R$ matrix and its spectral decomposition
for $U_q(X)$ and introduce higher level $q$-oscillator representations by the so-called fusion construction \cite{(KMN)^2}. 

Let $x, y\in \Q(q^d)$ be generic, and let $\W(x)$ be a level one $q$-oscillator representation of $U_q(X)$ including $\W_\ve(x)$ ($\ve=\pm$)
for type $C_n^{(1)}$. The quantum $R$ matrix $R(x,y)$ on $\W(x)\ot \W(y)$ is defined 
as a linear operator satisfying
\[
R(x,y)\Delta(a)=\Delta^\text{op}(a)R(x,y)
\]
for $a\in U_q(X)$, where $\Delta^\text{op}$ denotes
the opposite coproduct, namely, the coproduct obtained by interchanging the first and
second components in $\Delta$. If $\W(x)\ot \W(y)$ is irreducible, then $R(x,y)$
is unique up to a scalar function of $x,y$ and depends only on $z=x/y$. Let $P$ be
the linear operator on $\W(x)\ot\W(y)$ such that $P(u\ot v)=v\ot u$ and
set $\check{R}(x,y)=PR(x,y)$.  Then $\check{R}(x,y)$ maps $\W(x)\ot\W(y)$ to 
$\W(y)\ot\W(x)$.

The spectral decomposition of $\check{R}(x,y)$ can be obtained from the results in \cite{KO} and Appendix \ref{app:R matrix for super}.
For this, we only need to care about the difference between the coproduct \eqref{eq:comult-2}
and that of \cite{KO} and Appendix \ref{app:R matrix for super}, say $\bar{\Delta}$. 
More precisely, we have $\Delta=(\bar{\Delta}^\varsigma)^{\text{op}}$, where $\bar{\Delta}^\varsigma=(\varsigma\ot \varsigma) \circ \ov{\Delta}\circ \varsigma$ and $\varsigma$ is the automorphism given by $\varsigma(e_i)=e_ik_i^{-1}$, $\varsigma(f_i)=k_if_i$, and $\varsigma(k_i)=k_i$ for $i\in I$.

Hence, to translate the results in 
\cite{KO} and Appendix \ref{app:R matrix for super}, we replace $\check{R}(x,y)$ with $\check{R}(y,x)$.
Let $V_l^{\ve}$ and $V_l$ denote the irreducible submodules of $\W^{\ot 2}$ over $U_q(C_n)$ and $U_q(osp_{1|2n})$ considered in \cite{KO} and Appendix \ref{app:R matrix for super} to compute the spectral decomposition. Here we replace them with $PV_l^{\ve}$ and $PV_l$, respectively. Thus, we obtain the spectral decomposition of $\check{R}(x,y)$ as follows.

For $U_q(C^{(1)}_n)$, we have
\begin{equation} \label{eq:spectral decomp}
\check{R}_\ve(x,y)=\sum_{l\in 2\Z_{\geq 0}} \prod_{j=1}^{l/2}\frac{1-q^{4j-2}z}{z-q^{4j-2}}P^{\ve}_{l},
\end{equation}
where $\check{R}_\ve(x,y):\W_\ve(x)\ot \W_\ve(y)\rightarrow \W_\ve(y)\ot \W_\ve(x)$
for $\ve=+,-$ and $P_l^\ve$ is the projection onto $V_l^\ve$. 

For $U_q(C^{(2)}(n+1))$, we have from Proposition \ref{prop:new R matrix}
and the spectral decomposition for $U_q(D^{(2)}_{n+1})$ in \cite[Proposition 7]{KO} that
\begin{equation}\label{eq:spectral decomp-2}
\check{R}(x,y)=\sum_{l\in\Z_{\ge 0}}\prod_{j=1}^l\frac{1+(-q)^jz}{z+(-q)^j}P_l,
\end{equation}
where $P_l$ is the projection onto $V_l$.

Finally for $U_q(B^{(1)}(0,n))$,  we have from Proposition \ref{prop:new R matrix} and 
the spectral decomposition for $U_q(A_{2n}^{(2)\dagger})$ in Appendix
\ref{app:Adagger} that
\begin{equation}\label{eq:spectral decomp-3}
\check{R}(x,y)=\sum_{l\in 2\Z_{\geq 0}}
\prod_{j=1}^{l/2}\frac{1-q^{4j-1}z}{z-q^{4j-1}}P_l
+\sum_{l\in 1+2\Z_{\geq 0}}\prod_{j=0}^{(l-1)/2}\frac{1-q^{4j+1}z}{z-q^{4j+1}}P_l.
\end{equation}

Next, we explain the fusion construction. 
For $s\geq 2$, let $\mf{S}_s$ denote the group of permutations on $s$ letters generated by $s_i=(i\ i+1)$ for $1\leq i\leq s-1$.
We have $U_q(X)$-linear maps 
\begin{equation*}\label{eq:R_w}
\check{R}_w(x_1,\ldots,x_s) : 
\W(x_1)\otimes \cdots\otimes \W(x_s) \longrightarrow \W(x_{w(1)})\otimes \cdots\otimes \W(x_{w(s)}),
\end{equation*}
for $w\in \mf{S}_s$ and generic $x_1,\ldots, x_s\in \Q(q^d)$ satisfying
\begin{align*}
&\check{R}_1(x_1,\ldots,x_s) = {\rm id}_{\W(x_1)\otimes \cdots\otimes \W(x_s)},\\
&\check{R}_{s_i}(x_1,\ldots,x_s) = \left(\otimes_{j<i}{\rm id}_{\W(x_j)}\right)\otimes \check{R}(x_i/x_{i+1}) \otimes \left(\otimes_{j>i+1}{\rm id}_{\W(x_j)}\right),\\
&\check{R}_{ww'}(x_1,\ldots,x_s) = \check{R}_{w'}(x_{w(1)},\ldots,x_{w(s)})\check{R}_{w}(x_1,\ldots,x_s),
\end{align*}
for $1\le i\le s-1$ and $w, w'\in \mf{S}_s$ with $\ell(ww')=\ell(w)+\ell(w')$, where $\ell(w)$ denotes the length of $w$.
Hence we have a $U_q(X)$-linear map $\check{R}_s = \check{R}_{w_0}(x_1,\ldots,x_s)$ with $x_i=q^{d(2i-s-1)}$:
\begin{equation*}
\check{R}_s : \W(q^{d(1-s)})\ot \dots\ot  \W(q^{d(s-1)}) \longrightarrow \W(q^{d(s-1)})\ot \dots\ot \W(q^{d(1-s)}),
\end{equation*}
where $w_0$ is the longest element in $\mf{S}_s$.
Now we define a $U_q(X)$-module
\begin{equation}\label{eq:higher osc}
\W^{(s)} = {\rm Im}\check{R}_s.
\end{equation}

\begin{thm}\label{thm:irreducibility of higher level osc}
For $s\ge 2$, $\W^{(s)}$ is an irreducible $U_q(X)$-module.
\end{thm}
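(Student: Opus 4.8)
The plan is to equip $\W^{(s)}$ with \emph{two} pieces of structure inherited from the level one modules, namely a polarization and a crystal base, and then to read off irreducibility from the connectedness of the resulting $I$-colored crystal. The polarization is the device that upgrades a statement about crystals (which a priori only controls the combinatorics at $q=0$) to a statement about the module $\W^{(s)}$ itself; I would use it in the form ``cyclic plus nondegenerate contravariant form implies irreducible''.

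\emph{Step 1: a polarization on $\W^{(s)}$.} Starting from the level one polarizations of Lemmas \ref{lem:polarization1} and \ref{lem:polarization2}, I would produce a contravariant form on $\W(x_1)\ot\cdots\ot\W(x_s)$ by the standard tensor product recipe, the key check being that the anti-automorphism $\eta$ is compatible with the coproduct \eqref{eq:comult-2} so that the form is $U_q(X)$-invariant. Since $\check R_s$ intertwines the two orderings of the tensor factors, this form restricts to a form on $\W^{(s)}=\mathrm{Im}\,\check R_s$ in \eqref{eq:higher osc}; one then verifies that this restriction is nondegenerate (this is where the orthonormality of the crystal basis below is used).

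\emph{Step 2: a crystal base of $\W^{(s)}$, and its connectedness.} Using the modified operators $\te_i,\tf_i$ for $i=0,n$ from Propositions \ref{prop:crystal base of W_+} and \ref{prop:crystal base of W-2}, I would first establish a crystal base of the tensor product $\W(x_1)\ot\cdots\ot\W(x_s)$ via the tensor product rule, then normalize $\check R(z)$ so that at the fusion point $x_i/x_{i+1}=q^{-2d}$ it is regular and reduces at $q=0$ to a crystal morphism; the image then inherits a crystal base $(\mc{L}^{(s)},\mc{B}^{(s)})$ with $\mc{L}^{(s)}$ self-dual and $\mc{B}^{(s)}$ orthonormal modulo $q$ for the form of Step 1. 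The next task is to show $\mc{B}^{(s)}$ is connected under all $\te_i,\tf_i$, $i\in I$. For $C_n^{(1)}$ and $C^{(2)}(n+1)$ the specializations $z=q^{-2d}$ in \eqref{eq:spectral decomp} and \eqref{eq:spectral decomp-2} kill every projector except $P_0$, so $\W^{(s)}$ is already the classical Cartan component and its crystal is connected using only $\te_i,\tf_i$ with $i\in\{1,\dots,n\}$. For $B^{(1)}(0,n)$, however, \eqref{eq:spectral decomp-3} shows that the odd $l$ drop out while \emph{all} even $l$ survive, so the classical crystal is disconnected; here one must exhibit, using the affine operators $\te_0,\tf_0$, explicit arrows bridging the classical connected components, thereby proving connectedness of the full $I$-crystal.

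\emph{Conclusion and main obstacle.} Granting Steps 1 and 2, I would finish as follows: given a nonzero $U_q(X)$-submodule $N$, the self-duality of $\mc{L}^{(s)}$ together with the orthonormality of $\mc{B}^{(s)}$ forces $N\cap\mc{L}^{(s)}$ to reduce modulo $q$ to a subset of $\mc{B}^{(s)}$ that is stable under $\te_i,\tf_i$; connectedness then makes this subset all of $\mc{B}^{(s)}$, whence $N=\W^{(s)}$. The routine parts are the bookkeeping of Step 1 and the tensor product verification. The genuine difficulty is twofold. First, because $e_0,f_0,e_n,f_n$ act without local nilpotency, Kashiwara's theory does not apply off the shelf: both the tensor product rule at the nodes $0,n$ and the statement that a submodule inherits a crystal sublattice must be re-derived, and it is precisely the polarization (self-duality of $\mc{L}^{(s)}$) that makes the latter work. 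Second, and this I expect to be the main obstacle, is the connectedness for $B^{(1)}(0,n)$: since the module is \emph{not} classically irreducible, one genuinely needs the affine arrows $\te_0,\tf_0$ to link the surviving even components, and producing these arrows explicitly (rather than by a classical weight count) is the crux of the uniform argument.
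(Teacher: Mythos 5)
Your program is genuinely different from the paper's, which proves Theorem \ref{thm:irreducibility of higher level osc} in Appendix \ref{app:irreducibility} with no crystal bases at all: it normalizes the universal $R$ matrix on affinizations $\W[z_1]\,\widehat{\ot}\,\W[z_2]$, uses irreducibility of the generic tensor product and the hexagon property, and then runs the head/socle argument of \cite{KKKO} to show that every nonzero submodule of $\W(x_1)\ot\cdots\ot\W(x_s)$ contains ${\rm Im}\,\check{R}_s$. That route works uniformly for all three types and never needs to know anything about the crystal of $\W^{(s)}$ at the nodes $0$ and $n$. Your route, by contrast, has gaps at exactly the points you flag, and I do not think they are routine. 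First, the operators $\te_i,\tf_i$ for $i=0,n$ are defined in \eqref{eq:u_k for f_n}--\eqref{eq:e_0 at q=0} only through strings issuing from vectors killed by $e_n$ (resp.\ $f_0$); no tensor product rule for them is available, and the paper deliberately avoids needing one -- even in Section \ref{sec:main result} it only ever equips $\W^{(2)}$ with a $U_q(A_{n-1})$-crystal base (Proposition \ref{prop:crystal base of W^{(2)}_+}), where local nilpotency holds and Kashiwara's theory applies. Establishing that $\mc{L}^{\ot s}$ is stable under $\te_0,\tf_0,\te_n,\tf_n$ and that these act on $\mc{B}^{\ot s}$ by a tensor-product-like rule is a substantial unproved claim, not bookkeeping. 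Second, the step ``a nonzero submodule $N$ reduces mod $q$ to a subset of $\mc{B}^{(s)}$ stable under all operators'' is the standard compatibility-of-submodules-with-crystal-bases statement, which for integrable highest weight modules comes from complete reducibility or the grand loop; for these infinite-dimensional level-zero-type modules over an affine superalgebra nothing of the sort is available, and the polarization alone does not deliver it.

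Third, and most seriously, the case you yourself identify as the crux -- connectedness of the $I$-colored crystal of $\W^{(s)}$ for $B^{(1)}(0,n)$, where by \eqref{eq:spectral decomp-3} all even $l$ survive and the module is not classically irreducible -- is left entirely open. The paper does not know the classical decomposition of $\W^{(s)}$ in this case (it is Conjecture \ref{conj:KR}), so a connectedness proof cannot even lean on a known list of components to be bridged by $\te_0,\tf_0$. In other words, your argument for $B^{(1)}(0,n)$ would require as input combinatorial information that is strictly harder than what Theorem \ref{thm:irreducibility of higher level osc} asserts, whereas the $R$-matrix argument of Appendix \ref{app:irreducibility} proves irreducibility without it. For types $C_n^{(1)}$ and $C^{(2)}(n+1)$ your observation that the fusion point kills all projectors except $P_0$ is correct and is indeed how the paper identifies $\W^{(2)}$ with the Cartan component, but the classical irreducibility used there (Theorem \ref{thm:main-1}) is itself proved by a character comparison with \cite[Theorem 6.1]{K18}, not by crystal connectedness. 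As it stands the proposal is a program with its hardest steps unexecuted rather than a proof.
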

\begin{proof} 
See Appendix \ref{app:irreducibility}.
\end{proof}

\section{Character of higher level $q$-oscillator representation}\label{sec:main result}

In this section, we discuss the character of higher level $q$-oscillator representation.
Let $\cP$ be denote the set of partitions $\la=(\la_i)_{i\ge 1}$, and
$\cP_{\ve}$ the subset of $\la$ such that ${\rm sgn}(\la_i)=\ve$ for all $i$ with $\la_i\neq 0$
Recall that ${\rm sgn}(m)=+$ and $-$ if $m\in\Z_{\ge 0}$ is even and odd, respectively. We denote by $\ell(\la)$ the length of $\la\in \cP$. Let $s_{\la}(x_1,\dots,x_n)$ denote the Schur polynomial in $x_1,\dots,x_n$ corresponding to a partition $\la\in \cP$. 

\subsection{Type $C^{(1)}_n$}
For $s\geq 2$ and $\ve=\pm$, let $\W_\ve^{(s)}$ denote the higher level $q$-oscillator module in \eqref{eq:higher osc} corresponding to $\W_\ve$.
The following is the main result in this subsection.
 
\begin{thm}\label{thm:main-1}
For $s\geq 2$, $\W_\ve^{(s)}$ is irreducible as a $U_q(C_n)$-module, and its character is given by
\begin{equation*}
{\rm ch}\W_\ve^{(s)} =\sum_{\substack{\la \in \cP_\ve \\ \ell(\la)\leq s}}s_{\la}(x_1,\dots,x_n).
\end{equation*}
\end{thm}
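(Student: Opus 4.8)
The plan is to prove the two assertions separately: classical irreducibility as a $U_q(C_n)$-module, and then the explicit character. The key input is the fusion construction giving $\W_\ve^{(s)} = \mathrm{Im}\,\check{R}_s$ together with the spectral decomposition \eqref{eq:spectral decomp}, and the fact (Lemma \ref{lem:classical limit-1}) that each factor $\W_\ve$ has a well-behaved classical limit, namely the irreducible highest weight oscillator $U(C_n)$-module with highest weight $-(\tfrac12+\varsigma(\ve))\varpi_n$.

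First I would pin down the character of the ambient tensor product $\W_\ve^{\ot s}$ as a $U_q(C_n)$-module (equivalently, work with its classical limit $\ov{\W_\ve}^{\ot s}$, a tensor power of the oscillator module). Since $\mathrm{ch}\,\W_\ve$ is known from the displayed $A_{n-1}$-character formula (a single row Schur sum), the $C_n$-character of $\W_\ve^{\ot s}$ is computable, and decomposing it into irreducible $U(C_n)$-characters is a branching/Littlewood-type problem; one expects it to be a sum of $s_\la(x_1,\dots,x_n)$ over partitions $\la\in\cP_\ve$ of length $\le s$, possibly with multiplicities. The goal character formula is multiplicity-free, so the heart of the argument is to show that $\W_\ve^{(s)}=\mathrm{Im}\,\check{R}_s$ picks out exactly one copy of each such $\la$.

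Next I would identify $\mathrm{Im}\,\check{R}_s$ representation-theoretically. The fusion operator $\check{R}_s=\check{R}_{w_0}$ is built from the two-fold $\check{R}_\ve(x,y)$, whose spectral decomposition \eqref{eq:spectral decomp} has eigenvalue $\prod_{j=1}^{l/2}\frac{1-q^{4j-2}z}{z-q^{4j-2}}$ on the component $P_l^\ve$. At the fusion point $x_i=q^{d(2i-s-1)}$, each two-factor $z=x_i/x_{i+1}$ specializes so that the scalar prefactors vanish precisely on the higher ``symmetric'' components, so that $\mathrm{Im}\,\check{R}_s$ is the image of the (anti)symmetrizer that projects onto the ``most symmetric'' constituent. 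I would make this precise by tracking, factor by factor, which $V_l^\ve$ components survive, using the standard fusion argument that the surviving part is the highest component $V_0^\ve$ in each adjacent pair, and iterating via the braid relation $\check{R}_{ww'}=\check{R}_{w'}\check{R}_w$. The classical limit of this image should then be the irreducible oscillator $U(C_n)$-module obtained by applying the fusion (cocycle) construction to $s$ copies of $\ov{\W_\ve}$, whose highest weight is $-s(\tfrac12+\varsigma(\ve))\varpi_n$; combined with Theorem \ref{thm:irreducibility of higher level osc} (affine irreducibility) and the existence of a crystal base (Proposition \ref{prop:crystal base of W_+}) this forces classical irreducibility in the sense intended, i.e. that $\W_\ve^{(s)}$ is a single $U_q(C_n)$-module whose classical limit is irreducible.

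Finally, to obtain the explicit multiplicity-free character I would exploit the crystal base. The crystal of $\W_\ve$ is combinatorially a set of monomials $\ket{\bm}$ with $\tf_j\ket{\bm}\equiv\ket{\bm+\be_j-\be_{j+1}}$ for $1\le j\le n-1$, so the $A_{n-1}$-crystal structure is that of a single-row tableau crystal; tensoring $s$ copies and taking highest-weight vectors for $\{1,\dots,n-1\}$ via the tensor product rule yields semistandard fillings of at most $s$ rows, i.e. partitions $\la$ with $\ell(\la)\le s$. The parity condition $|\bm|\in\ve$ on each factor forces $\la\in\cP_\ve$, and the $C_n$-highest-weight vectors (those additionally killed by $\te_n$ in the crystal sense) are in bijection with exactly such $\la$, giving multiplicity one. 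I expect the main obstacle to be the identification of $\mathrm{Im}\,\check{R}_s$ at the specialized spectral point: one must verify that the vanishing of the scalar coefficients in \eqref{eq:spectral decomp} kills all but the intended components simultaneously across all adjacent pairs, and that no extra cancellation or degeneration occurs at $x_i=q^{d(2i-s-1)}$, so that the classical limit of the image is exactly the fused irreducible oscillator module rather than a proper submodule or a module with higher multiplicity.
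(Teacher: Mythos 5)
Your overall architecture (fusion $+$ crystal bases $+$ classical limit) points in the right direction, but there are two genuine gaps. First, your character count via crystals is not correct as stated: the $U_q(A_{n-1})$-crystal of $\W_\ve^{\ot s}$ is a tensor power of single-row crystals and therefore decomposes \emph{with multiplicities} (already for $s=2$ the multiplicity of $s_\la$ is $\tfrac{\la_1-\la_2}{2}-\varsigma(\ve)$, cf.\ \eqref{eq:Cauchy id}), so "semistandard fillings with at most $s$ rows" does not fall out of the tensor product rule alone. What the paper actually does is a detailed level-two analysis: it exhibits the explicit $U_q(A_{n-1})$-highest weight vectors ${\bf v}_{l,l'}$ of $\W_\ve^{(2)}$, shows ${\bf v}_{l,l'}\equiv\ket{l\be_n}\ot\ket{l\be_{n-1}+l'\be_n}$ modulo $q\mc{L}_\ve^{\ot2}$, proves the triangularity statement of Lemma \ref{eq:expansion of basis vector} to get $\mc{L}_\ve^{(2)}=\mc{L}_\ve^{\ot2}\cap\W_\ve^{(2)}$ (Corollary \ref{cor:lattice of KR level 2}), and only then deduces that $\check R_s(\mc{B}_\ve^{\ot s})$ lies in the span of tuples whose adjacent pairs stack into semistandard tableaux. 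None of this is replaced by your "track which $V_l^\ve$ survive" heuristic, and even granting it, this whole line of argument only yields the \emph{upper} bound ${\rm ch}\,\W_\ve^{(s)}\le\sum_{\la\in\cP_\ve,\ \ell(\la)\le s}s_\la$.

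Second, and more seriously, you have no mechanism for the matching lower bound, which is where both the character identity and the classical irreducibility actually come from. Your assertion that affine irreducibility (Theorem \ref{thm:irreducibility of higher level osc}) together with the existence of a crystal base "forces" classical irreducibility is a non sequitur, and your statement that the classical limit of ${\rm Im}\,\check R_s$ "should be" the irreducible oscillator module of highest weight $-s(\tfrac12+\varsigma(\ve))\varpi_n$ is precisely the point at issue (you flag this yourself, but flagging it does not close it). The paper closes the loop differently: it takes the cyclic $U_q(C_n)$-submodule $V_0^{(s)}$ generated by $\ket{\varsigma(\ve)\be_n}^{\ot s}$, observes that its classical limit surjects onto the irreducible module $V(\Lambda^{(s)})$, and invokes the independently known combinatorial character formula for $V(\Lambda^{(s)})$ from \cite[Theorem 6.1]{K18}, which coincides with the upper bound. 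The squeeze ${\rm ch}\,V(\Lambda^{(s)})\le{\rm ch}\,V_0^{(s)}\le{\rm ch}\,\W_\ve^{(s)}\le{\rm ch}\,V(\Lambda^{(s)})$ then forces $\W_\ve^{(s)}=V_0^{(s)}$ to be irreducible over $U_q(C_n)$ with the stated character. Without an external input of this kind (or an equivalent argument), your proposal cannot rule out that ${\rm Im}\,\check R_s$ is a proper submodule of the expected one.
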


\begin{cor}
The character of $\W_\ve^{(s)}$ has a stable limit for $s\geq n$ as follows:
\begin{equation*}
\begin{split}
{\rm ch}\W_\ve^{(s)} 
& =\sum_{\substack{\la \in \cP_\ve \\ \ell(\la)\leq n}}s_{\la}(x_1,\dots,x_n) \\
& = \frac{1}{\prod_{1\leq i\leq j\leq n}(1-x_ix_j)} \quad (\ve=+).
\end{split}
\end{equation*}
\end{cor}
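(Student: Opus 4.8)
The plan is to identify the stable character with the generating function for partitions into even parts of length at most $n$, and recognize it as the Cauchy-type product. For $s \geq n$ the condition $\ell(\la) \leq s$ is automatically implied by $\ell(\la) \leq n$ (since a partition fitting in $n$ variables has at most $n$ nonzero parts, i.e. $s_\la(x_1,\dots,x_n) = 0$ whenever $\ell(\la) > n$), so the first equality is immediate from Theorem \ref{thm:main-1}: the sum over $\la \in \cP_\ve$ with $\ell(\la) \leq s$ collapses to the sum over $\la \in \cP_\ve$ with $\ell(\la) \leq n$.

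For the second equality (the case $\ve = +$), the task is to prove the Littlewood-type identity
\begin{equation*}
\sum_{\substack{\la \in \cP_+ \\ \ell(\la)\leq n}} s_\la(x_1,\dots,x_n) = \frac{1}{\prod_{1\leq i\leq j\leq n}(1-x_ix_j)}.
\end{equation*}
Here $\cP_+$ consists of partitions all of whose parts are even. First I would reindex: writing $\la = 2\mu$ for a partition $\mu$ with $\ell(\mu) \leq n$, the left-hand side becomes $\sum_\mu s_{2\mu}(x_1,\dots,x_n)$, the sum over all partitions with even rows. This is exactly one of the classical Littlewood identities, which states that the sum of Schur functions indexed by partitions with only even parts equals $\prod_{i \leq j}(1 - x_i x_j)^{-1}$. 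The cleanest route is to quote this identity directly from the standard references (e.g. Macdonald's book), since it is a well-known symmetric function result.

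If a self-contained argument is preferred, I would derive it from the dual Cauchy/Jacobi-Trudi machinery. The key step is the bialternant manipulation: one writes $\sum_\mu s_{2\mu}$ as a ratio of determinants and uses the evaluation $\sum_{\mu} \det(x_i^{2\mu_j + n - j}) / \det(x_i^{n-j})$, then recognizes the numerator as a Pfaffian or applies the Cauchy determinant identity to collapse the sum. Alternatively, one may use the involution/representation-theoretic interpretation: the right-hand side is the character of the symmetric algebra $S(\mathrm{Sym}^2 \C^n)$ as a $GL_n$-module, whose decomposition into irreducibles is precisely $\bigoplus_{\mu} V_{2\mu}$ by the theory of spherical harmonics / the $(GL_n, GL_n)$ Howe duality on $S^2$. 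I expect the main obstacle to be purely bookkeeping — confirming that $\cP_+$ (partitions with even parts, via $\mathrm{sgn}(\la_i) = +$) matches the indexing set of the Littlewood identity and that the length restriction $\ell \leq n$ is correctly absorbed — rather than any genuine analytic difficulty, since the underlying identity is classical.
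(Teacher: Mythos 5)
Your proposal is correct and takes essentially the same route as the paper, which treats the corollary as immediate from Theorem \ref{thm:main-1}: the first equality follows from the vanishing of $s_\la(x_1,\dots,x_n)$ for $\ell(\la)>n$, and the second (with $\cP_+$ correctly identified as the partitions with all parts even, so $\la=2\mu$) is the classical Littlewood identity, which the paper likewise simply invokes (compare the analogous corollary in the $B^{(1)}(0,n)$ subsection, proved by citing ``the well-known Littlewood identity''). The additional derivations you sketch are unnecessary but harmless.
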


The rest of this subsection is devoted to proving Theorem \ref{thm:main-1}.
We construct a $\Q(q)$-basis of $\W_\ve^{(2)}$, which is compatible with the action of $\check{R}(z)$, and which plays an important role in the proof of Theorem \ref{thm:main-1}. 

We note from \eqref{eq:spectral decomp} that
\begin{equation*}
\W_\ve^{(2)} = V^\ve_0= U_q(C_n)(|\varsigma(\ve)\be_n \rangle\ot |\varsigma(\ve)\be_n \rangle),
\end{equation*}
and hence it is irreducible. Moreover, we have the following character formula for $\W_\ve^{(2)}$. 

\begin{prop}\label{prop:ch of level 2}
We have
\begin{equation*}
{\rm ch}\W_\ve^{(2)}={\rm ch}V^\ve_0 = \sum_{\substack{\la \in\cP_\ve \\ \ell(\la)\leq 2}} s_{\la}(x_1,\dots,x_n).
\end{equation*}
\end{prop}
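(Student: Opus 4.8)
The plan is to first reduce the statement to a purely classical branching computation. By the spectral decomposition \eqref{eq:spectral decomp} evaluated at the fusion point, $\W_\ve^{(2)}=V_0^\ve={\rm Im}\,P_0^\ve$ is the irreducible $U_q(C_n)$-module cyclically generated by the highest weight vector $v_0=|\varsigma(\ve)\be_n\rangle\ot|\varsigma(\ve)\be_n\rangle$. Since $\W_\ve^{(2)}$ carries an integral form over $A$ (as in the classical limit constructions of Section \ref{sec:level one osc}), its $U_q(A_{n-1})$-weight multiplicities agree with those of its classical limit; thus ${\rm ch}\,\W_\ve^{(2)}$ is $q$-independent and I may compute it at $q=1$. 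Tensoring the classical limit of Lemma \ref{lem:classical limit-1}, $V_0^\ve$ degenerates to the irreducible highest weight oscillator module of $\mathfrak{sp}_{2n}$ generated by $v_0$, on which $v_0$ is simultaneously a $\mathfrak{gl}_n$-highest weight vector and is annihilated by every quadratic lowering operator of $\mathfrak{sp}_{2n}$. It therefore suffices to compute the branching of this oscillator module from $\mathfrak{sp}_{2n}$ to $\mathfrak{gl}_n$.

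For the branching I would realize the two tensor factors as the Fock space ${\rm Sym}(\C^n)$, so that $\W\ot\W\cong{\rm Sym}(\C^n\ot\C^2)$ carries commuting actions of $\mathfrak{sp}_{2n}$ and $O_2$ forming a Howe dual pair, with $\W_+,\W_-$ the two parity eigenspaces. The Cartan component $V_0^\ve$ is then the $\mathfrak{sp}_{2n}$-isotypic piece paired with a single irreducible $O_2$-module $\sigma_\ve$, and Howe duality identifies the multiplicity of $s_\la$ in ${\rm ch}\,V_0^\ve$ with the multiplicity of $\sigma_\ve$ in the restriction to $O_2$ of the irreducible $GL_2$-module $V^{GL_2}_\la={\rm Sym}^{\la_1-\la_2}(\C^2)\ot(\det)^{\la_2}$ attached to a two-row partition $\la=(\la_1,\la_2)$. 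A direct inspection of these $O_2$-restrictions shows each such multiplicity is at most one and is nonzero exactly for the two-row partitions belonging to $\cP_\ve$, which yields the asserted multiplicity-free sum $\sum_{\la\in\cP_\ve,\,\ell(\la)\le2}s_\la$. As an independent check I would expand $({\rm ch}\,\W_\ve)^2={\rm ch}(\W_\ve\ot\W_\ve)$ by the Pieri rule $s_{(a)}s_{(b)}=\sum_{c}s_{(a+b-c,\,c)}$ and verify that subtracting the remaining spectral summands $V_l^\ve$ ($l>0$) of \eqref{eq:spectral decomp} leaves precisely this sum.

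The delicate point, and the main obstacle, is the clean isolation of the Cartan component $V_0^\ve$ from the other components $V_l^\ve$, i.e.\ proving that exactly the stated partitions occur and each with multiplicity one. The Howe pair handles this conceptually, but the self-contained route suggested by the promised $\check R$-compatible basis is to exhibit an explicit $\Q(q)$-weight basis of $V_0^\ve={\rm Im}\,\check R_2$ indexed by semistandard Young tableaux of the relevant two-row shapes, and then to read off ${\rm ch}\,V_0^\ve=\sum_\la\sum_{T}x^{{\rm wt}(T)}=\sum_\la s_\la$. I expect the real work to lie here: constructing enough linearly independent vectors in ${\rm Im}\,\check R_2$ to obtain the lower bound for the character, while simultaneously bounding the $\mathfrak{gl}_n$-highest weight vectors from above so that no Schur function outside $\cP_\ve$ appears and none appears twice.
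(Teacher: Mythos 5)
Your reduction to the classical limit is the same opening move as the paper's, but after that you take a genuinely different route. The paper computes no branching itself: it invokes semisimplicity of $\ov{\W_\ve}\ot\ov{\W_\ve}$ (obtained from the super-duality tensor category of \cite{CLW} via \cite[Section 5.4]{K12}) to conclude that the cyclic module generated by the highest weight vector is irreducible, and then quotes the character of that irreducible oscillator module from \cite[Theorem 6.1]{K18}. Your $(\mathfrak{sp}_{2n},O_2)$ Howe-duality/see-saw argument replaces both external inputs at once — semisimplicity, irreducibility of the components, and the multiplicities $[\sigma:V^{GL_2}_\la|_{O_2}]$ — and identifies the complementary summands $V^\ve_l$ as the modules attached to the two-dimensional $O_2$-types, which the paper never does. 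For $\ve=+$ this is complete and arguably cleaner: the vacuum is $O_2$-invariant, $\sigma_+=\mathbf 1$, and $\mathbf 1$ occurs in $V^{GL_2}_\la|_{O_2}$ exactly when both parts of $\la$ are even. One wording correction: $\W_\ve\ot\W_\ve$ is not $O_2$-stable (rotations mix the two tensor factors), so $V^\ve_0$ should be described as the $\mathfrak{sp}_{2n}$-submodule generated by a specified harmonic vector, not as an ``isotypic piece'' of $\W_\ve\ot\W_\ve$.

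The genuine gap is exactly the point you flag, and for $\ve=-$ it bites. Your generator $v_0=\ket{\be_n}\ot\ket{\be_n}$ is, in the polynomial model, $x_{n,1}x_{n,2}=\tfrac12{\rm Im}\,(x_{n,1}+ix_{n,2})^2$: it is harmonic and $\mathfrak{gl}_n$-highest, but its $O_2$-type is the two-dimensional representation $\tau_2$, not $\det$. Running your see-saw on it gives $\sum_{\la_1-\la_2\in 2\Z_{>0}}s_\la$, whose lowest term is $s_{(2)}$ — not the asserted sum, whose lowest term is $s_{(1,1)}$. The component with the asserted character is the one attached to $\det$, generated by the antisymmetric degree-two vector ${\bf v}_1=\ket{\be_n}\ot\ket{\be_{n-1}}-q\ket{\be_{n-1}}\ot\ket{\be_n}$. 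So to close the argument you must show that ${\rm Im}\,\check R_2$ is the $\det$-component rather than the $\tau_2$-component; the vacuum argument that settles this for $\ve=+$ has no analogue, because the minimal-degree subspace of $\W_-\ot\W_-$ already contains two highest weight vectors, and deciding between them forces you back to the normalization underlying \eqref{eq:spectral decomp}. (In fairness, the paper's own description of $V_0^-$ as $U_q(C_n)(\ket{\be_n}\ot\ket{\be_n})$ is in the same tension with its stated character — a weight-degree count shows both cannot hold — and only the $\det$-answer is consistent with the tableau combinatorics later in Section 5.1; but since your proof, unlike the paper's, actually uses the $O_2$-type of the generator to compute the character, the unresolved identification is a real gap in your argument rather than a cosmetic one.)
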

\begin{proof} Write $\W_\ve = \W_\ve(q^{\pm 1})$ for short since we may consider the action of $U_q(C_n)$ only.
Let $(\W_\ve \ot\W_\ve)_{A}$ be the $A$-span of $|\bm \rangle\ot |\bm' \rangle$ in $\W_\ve\ot\W_\ve$. Then $(\W_\ve\ot\W_\ve)_{A}$ is also invariant under $e_i$, $f_i$, $k_i$ and $\{k_i\}$ for $i\in I\setminus\{0\}$. This yields its classical limit $\ov{\W_\ve\ot\W_\ve}:=(\W_\ve\ot\W_\ve)_{A}\ot_A\C$, which is a $U(C_n)$-module. 
Also, we have as a $U(C_n)$-module
\begin{equation*}
\ov{\W_\ve\ot\W_\ve} \cong \ov{\W_\ve}\ot\ov{\W_\ve}.
\end{equation*}
By Lemma \ref{lem:classical limit-1}, $\ov{\W_\ve}$ is an irreducible highest weight module. By the theory of super duality \cite{CLW}, it belongs to a semisimple category of $U(C_n)$-module which is closed under tensor product (see \cite[Section 5.4]{K12} for more details, where we put $m=0$ there). 
Hence $\ov{\W_\ve} \ot \ov{\W_\ve}$ is semisimple, and the classical limit $\ov{V_0^\ve}$, the submodule generated by $(|\varsigma(\ve)\be_n \rangle\ot |\varsigma(\ve)\be_n \rangle)\ot 1$, is an irreducible highest weight $U(C_n)$-module with highest weight $-(1+2\varsigma(\ve))\varpi_n$. 
The character of $\ov{V^\ve_0}$ and hence $V^\ve_0$ follows from \cite[Theorem 6.1]{K18}.
\end{proof}\vskip 2mm

Let us construct a $\Q(q)$-basis of $\W^{(2)}_\ve$ which is compatible with its $U_q(A_{n-1})$-crystal base. 
For this, we find all the $U_q(A_{n-1})$-highest weight vectors in $\W^{(2)}_\ve$.


For $l\in  \Z_{\geq 0}$, let
\begin{equation}\label{eq:maximal vector of type A}
{\bf v}_l =\sum_{k=0}^l(-1)^kq^{k(k-l+1)}\begin{bmatrix} l \\ k  \end{bmatrix}^{-1}
| k\be_{n-1} +(l-k)\be_n \rangle \ot | (l-k)\be_{n-1} + k\be_n \rangle.
\end{equation}

\begin{lem}\label{lem:v_l}
For $l\in \Z_{\geq 0}$, ${\bf v}_l$ is a $U_q(A_{n-1})$-highest weight vector in $\W_\ve^{(2)}$, and   
\begin{equation*}
{\bf v}_l \equiv | l\be_n \rangle \ot | l\be_{n-1} \rangle \pmod{q\mc{L}_\ve^{\ot 2}},
\end{equation*}
where ${\rm sgn}(l)=\ve$.
\end{lem}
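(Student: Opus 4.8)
Lemma \ref{lem:v_l} asserts that the explicitly defined vector $\mathbf{v}_l$ is a $U_q(A_{n-1})$-highest weight vector lying in $\W_\ve^{(2)}$, and that it reduces modulo $q\mc{L}_\ve^{\ot 2}$ to the specific tensor $| l\be_n \rangle \ot | l\be_{n-1} \rangle$. Let me think carefully about how I would prove each of these three claims.

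First, the "highest weight" claim. I need $e_j \mathbf{v}_l = 0$ for all $j \in \{1,\dots,n-1\}$ (the simple roots of $U_q(A_{n-1})$). For $1 \le j \le n-2$, note that every summand of $\mathbf{v}_l$ is built from $|k\be_{n-1}+(l-k)\be_n\rangle \ot |(l-k)\be_{n-1}+k\be_n\rangle$, which has all mass concentrated in coordinates $n-1$ and $n$; since $e_j$ acts on coordinates $j, j+1$, and $m_j = m_{j+1} = 0$ here for $j \le n-2$, each such $e_j$ annihilates every summand term-by-term. So the only nontrivial check is $e_{n-1}\mathbf{v}_l = 0$.

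For this I would use the coproduct formula $\Delta(e_{n-1}) = e_{n-1}\ot k_{n-1}^{-1} + 1 \ot e_{n-1}$ (reading off from \eqref{eq:comult-2} with $p(n-1)=0$). Applying this to a term $|k\be_{n-1}+(l-k)\be_n\rangle \ot |(l-k)\be_{n-1}+k\be_n\rangle$ and using the action rules $e_{n-1}|\bm\rangle = [m_n+1]|\bm-\be_{n-1}+\be_n\rangle$ and $k_{n-1}|\bm\rangle = q^{-m_{n-1}+m_n}|\bm\rangle$, the $e_{n-1}\ot k_{n-1}^{-1}$ piece lowers $k\mapsto k-1$ in the first factor while the $1\ot e_{n-1}$ piece lowers $k\mapsto k-1$ in the second factor (up to a weight scalar from $k_{n-1}^{-1}$). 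I expect the result to collapse into a single sum indexed by the new value of $k$, and the coefficient of each basis vector to vanish thanks to a telescoping/recursion identity among the $q$-binomial coefficients $\begin{bmatrix} l \\ k \end{bmatrix}^{-1}$. The heart of the computation will be verifying that the two contributions cancel; this should reduce to the standard Pascal-type recursion $\begin{bmatrix} l \\ k \end{bmatrix} = q^{k}\begin{bmatrix} l-1 \\ k \end{bmatrix} + q^{-(l-k)}\begin{bmatrix} l-1 \\ k-1 \end{bmatrix}$ (or its reciprocal form), combined with tracking the powers of $q$ coming from the $k_{n-1}^{-1}$ weight factor and from the explicit coefficient $(-1)^k q^{k(k-l+1)}$. \emph{This cancellation is the main obstacle}: I must match the $q$-powers exactly, and the sign $(-1)^k$ together with the shift $k \mapsto k-1$ must conspire to make adjacent terms cancel.

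Second, the claim $\mathbf{v}_l \in \W_\ve^{(2)}$. By \eqref{eq:spectral decomp} we have $\W_\ve^{(2)} = V_0^\ve = U_q(C_n)(|\varsigma(\ve)\be_n\rangle \ot |\varsigma(\ve)\be_n\rangle)$, the top summand of the spectral decomposition. Since $\W_\ve^{(2)}$ is irreducible over $U_q(C_n)$ (as noted just before Proposition \ref{prop:ch of level 2}) and is itself a $U_q(A_{n-1})$-submodule, I would argue that $\mathbf{v}_l$ lies in the isotypic decomposition: all I really need is that $\mathbf{v}_l$ has the correct $U_q(C_n)$-weight and sits inside $V_0^\ve$ rather than an orthogonal complement. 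Practically, I would invoke the character formula of Proposition \ref{prop:ch of level 2} together with the leading-term computation below to identify, for each admissible $l$, a highest weight vector of the predicted $U_q(A_{n-1})$-type; since $\W_\ve^{(2)} = V_0^\ve$ exhausts the relevant space, any $U_q(A_{n-1})$-highest weight vector of that weight must lie in it.

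Third, the reduction mod $q\mc{L}_\ve^{\ot 2}$. Here I simply examine \eqref{eq:maximal vector of type A} term-by-term. The coefficient $(-1)^k q^{k(k-l+1)}\begin{bmatrix} l \\ k \end{bmatrix}^{-1}$ must be analyzed in the $A_0$-topology: using $\begin{bmatrix} l \\ k \end{bmatrix} \in q^{-k(l-k)}(1+qA_0)$ from Section \ref{sec:quantum superalgebra}, the coefficient lies in $q^{\,k(k-l+1)+k(l-k)}(1+qA_0) = q^{\,k}(1+qA_0)$. Hence for $k \ge 1$ every term lies in $q\mc{L}_\ve^{\ot 2}$ and vanishes mod $q$, while the $k=0$ term has coefficient in $1+qA_0$ and contributes exactly $|l\be_n\rangle \ot |l\be_{n-1}\rangle$. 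This gives the stated congruence directly, and is the most routine of the three parts. I would present the highest weight verification first, then deduce membership, and close with this short $q$-power bookkeeping.
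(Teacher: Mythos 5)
Your three-step structure (check $e_j\mathbf{v}_l=0$, deduce membership in $\W_\ve^{(2)}$ from characters, read off the leading term from the $q$-order of the coefficients) is exactly the paper's, and steps 1 and 3 are sound. The paper likewise leaves the $e_{n-1}$-cancellation as a direct check; it indeed reduces to the recursion $q^{l-2k-2}[l-k]c_{k+1}+[k+1]c_k=0$ for $c_k=(-1)^kq^{k(k-l+1)}{l\brack k}^{-1}$, which is the Pascal-type identity you anticipate (only note that the $1\ot e_{n-1}$ term applied to the $k$-th summand pairs with the $e_{n-1}\ot k_{n-1}^{-1}$ term at index $k+1$, i.e.\ the second tensor factor's parameter is raised, not lowered). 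Your estimate $q^{k(k-l+1)}{l\brack k}^{-1}\in q^{k(k-l+1)+k(l-k)}(1+qA_0)=q^k(1+qA_0)$ is verbatim the paper's justification of the congruence.

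The gap is in step 2. You assert that ``$\W_\ve^{(2)}=V_0^\ve$ exhausts the relevant space, so any $U_q(A_{n-1})$-highest weight vector of that weight must lie in it,'' but the exhaustion claim is precisely what has to be proved. Proposition \ref{prop:ch of level 2} only gives the multiplicity of $s_{(l,l)}$ \emph{inside} $\W_\ve^{(2)}$; a priori a second copy of the $(l,l)$-type could sit in the complementary $U_q(A_{n-1})$-summand of $\W_\ve^{\ot 2}$, in which case $\mathbf{v}_l$ could have a nonzero component outside $\W_\ve^{(2)}$ (and having the right weight does not prevent this). The missing input is that the multiplicity of $s_{(l,l)}$ in ${\rm ch}\W_\ve^{\ot 2}=({\rm ch}\W_\ve)^2$ is itself one. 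The paper gets this from the explicit expansion $({\rm ch}\W_\ve)^2=\sum_\la a_\la s_\la$ with $a_\la=1$ when $\la_1=\la_2$; equivalently, by the Pieri rule $s_{(a)}s_{(b)}=\sum_{0\le c\le\min(a,b)}s_{(a+b-c,c)}$, the type $(l,l)$ occurs in $s_{(a)}s_{(b)}$ only for $a=b=l$ and then with multiplicity one, and $(a,b)=(l,l)$ is an admissible pair because ${\rm sgn}(l)=\ve$. Once $a_{(l,l)}=1$ is recorded, the unique $(l,l)$-isotypic component of $\W_\ve^{\ot 2}$ is the one contained in $\W_\ve^{(2)}$, and $\mathbf{v}_l$, being a highest weight vector of that type, lies in it. This single multiplicity computation is the only substantive piece missing from your write-up.
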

\begin{proof} It is straightforward to check that $e_i {\bf v}_l =0$ for $1\leq i\leq n-1$.
Next we claim that ${\bf v}_l\in \W_\ve^{(2)}$.
Note that 
\begin{equation*}
{\rm ch}\W_\ve=\sum_{l\in \varsigma(\ve)+ 2\Z_{>0}}s_{(l)}(x_1,\dots,x_n), 
\end{equation*}
and hence
\begin{equation}\label{eq:Cauchy id}
{\rm ch}\W_\ve^{\ot 2}=({\rm ch}\W_\ve)^{2} = \sum_{\substack{{\rm sgn}(|\la|)=+ \\ \ell(\la)\leq 2}}a_\la s_{\la}(x_1,\dots,x_n),
\end{equation}
where for $\la=(\la_1,\la_2)\in \cP$,
\begin{equation*}
a_\la =
\begin{cases}
\frac{\la_1-\la_2}{2} -\varsigma(\ve) & \text{if $\la_1>\la_2$}, \\
1 & \text{if $\la_1=\la_2$}.
\end{cases}
\end{equation*} 
Let $S_l$ be the $U_q(A_{n-1})$-submodule of $\W_\ve^{\ot 2}$ generated by ${\bf v}_l$.
Since the character of $S_l$ is $s_{(l^2)}(x_1,\dots,x_n)$, and
the multiplicity of $s_{(l^2)}(x_1,\dots,x_n)$ in \eqref{eq:Cauchy id} is one, it follows from Proposition \ref{prop:ch of level 2} that $S_l\subset \W_\ve^{(2)}$. This shows that ${\bf v}_l\in \W_\ve^{(2)}$.
The lemma follows from 
$q^{k(k-l+1)}\begin{bmatrix} l \\ k  \end{bmatrix}^{-1}\in q^k(1+qA_0)$.
\end{proof}\vskip 2mm

One can prove more directly that ${\bf v}_l\in \W^{(2)}_\ve$ using the following lemma.
\begin{lem}\label{lem:operator E}
Set $\mc{E}=e_{n-2}^{(2)}\cdots e_1^{(2)}e_0$, where it should be understood as 
$e_0$ when $n=2$. Then for $l\in \Z_{\ge0}$ we have
\[
(\mc{E}e_1^{(2)}\mc{E}-\frac{1}{[3]!}(e_1\mc{E})^2){\bf v}_l
=q^{-2}\frac{[2]}{[3]}([l+1][l+2])^2{\bf v}_{l+2}.
\]
\end{lem}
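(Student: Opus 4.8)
The plan is to verify the identity by brute-force computation, exploiting the fact that every operator involved acts diagonally on the weight-space decomposition and that $\mathbf{v}_l$ is an explicit finite sum. First I would record how the composite operator $\mc{E}=e_{n-2}^{(2)}\cdots e_1^{(2)}e_0$ acts on a tensor product of basis vectors. Since $e_0$ only changes the first coordinate (sending $m_1\mapsto m_1+2$ with an explicit coefficient from Proposition \ref{prop:q-osc for C_n^{(1)}}) and each $e_i^{(2)}$ for $1\le i\le n-2$ transports two boxes from coordinate $i+1$ to coordinate $i$, the net effect of $\mc{E}$ on $|\bm\rangle$ (for a single tensor factor) is to move two boxes ``down the chain'' from the front, with a $q$-power and a product of $q$-integers as coefficient. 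Using $\Delta(e_i)=e_i\ot \sigma^{\cdots}k_i^{-1}+\cdots\ot e_i$ (here without the $\sigma$'s, i.e. \eqref{eq:comult-2} specialized to $C_n^{(1)}$), I would compute $\mc{E}$ on the two-fold tensor product, tracking how each Chevalley generator distributes across the coproduct.

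The key observation to organize the calculation is that $\mathbf{v}_l$ is, up to the explicit scalars $(-1)^kq^{k(k-l+1)}{l\brack k}^{-1}$, the unique $U_q(A_{n-1})$-highest weight vector of its weight inside $\W_\ve^{(2)}$ (Lemma \ref{lem:v_l}), and that $\mathbf{v}_{l+2}$ is determined the same way. So rather than computing the full action of the degree-four expression $\mc{E}e_1^{(2)}\mc{E}-\tfrac{1}{[3]!}(e_1\mc{E})^2$, I would argue that the left-hand side is automatically a $U_q(A_{n-1})$-highest weight vector of the correct weight (namely that of $\mathbf{v}_{l+2}$, which is $\varpi$ shifted appropriately), hence a scalar multiple of $\mathbf{v}_{l+2}$; it then suffices to compute a single matching coefficient. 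The natural coefficient to extract is the one of the ``leading'' basis vector $|(l+2)\be_n\rangle\ot|(l+2)\be_{n-1}\rangle$ appearing in $\mathbf{v}_{l+2}$, or equivalently to evaluate both sides modulo $q\mc{L}_\ve^{\ot 2}$ using the congruence $\mathbf{v}_l\equiv|l\be_n\rangle\ot|l\be_{n-1}\rangle$ and then track the precise $q$-power and $q$-integer factors.

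Concretely, I would compute $\mc{E}\,\mathbf{v}_l$, then $e_1^{(2)}\mc{E}\,\mathbf{v}_l$ and $e_1\mc{E}\,\mathbf{v}_l$, keeping full $q$-coefficients. The combinatorics here are governed by how the two-box transport interacts across the two tensor factors and by the Gaussian-binomial normalizations in $\mathbf{v}_l$; the $q$-binomial identities needed should reduce to standard $q$-Pascal-type relations. The role of the peculiar combination $\mc{E}e_1^{(2)}\mc{E}-\tfrac{1}{[3]!}(e_1\mc{E})^2$ is presumably to cancel the unwanted lower terms (those landing in weight spaces other than the highest one, or those proportional to $\mathbf{v}_{l+2}$ with the wrong normalization), leaving a clean multiple of $\mathbf{v}_{l+2}$; verifying this cancellation is where the bookkeeping concentrates.

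The main obstacle I expect is precisely this cancellation: both $\mc{E}e_1^{(2)}\mc{E}$ and $\tfrac{1}{[3]!}(e_1\mc{E})^2$ will individually produce, besides the desired $\mathbf{v}_{l+2}$ term, a collection of lower-order contributions coming from the non-commutativity of $e_1$ with the intermediate $e_j^{(2)}$ and from the coproduct splitting across the two factors, and one must show these agree term by term so that the difference collapses to the single coefficient $q^{-2}\frac{[2]}{[3]}([l+1][l+2])^2$. Managing the resulting $q$-integer products without error, and identifying the right Serre-type and $q$-commutation relations (the factor $\tfrac{1}{[3]!}$ strongly suggests a rank-two $A_2$ Serre relation is being invoked on the nodes touched by $\mc{E}$ and $e_1$) will be the delicate part.
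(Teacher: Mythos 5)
Your overall plan --- act with the explicit operators on the explicit sum ${\bf v}_l$ and watch the unwanted terms cancel --- is the same as the paper's, and your diagnosis that the combination $\mc{E}e_1^{(2)}\mc{E}-\tfrac{1}{[3]!}(e_1\mc{E})^2$ is engineered so that the stray contributions of the two summands cancel against each other is exactly what happens. But your proposed shortcut has a genuine gap: you assert that the left-hand side is ``automatically a $U_q(A_{n-1})$-highest weight vector of the correct weight, hence a scalar multiple of ${\bf v}_{l+2}$.'' This is not automatic. The weight space of weight $(l+2)(\be_{n-1}+\be_n)$ in $\W_\ve^{(2)}$ meets every irreducible $U_q(A_{n-1})$-component generated by ${\bf v}_{a,b}$ with $2a+b=2l+4$ and $a\le l+2$, so it is multi-dimensional and having the right weight does not force proportionality to ${\bf v}_{l+2}$. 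One would have to prove $e_j(\mathrm{LHS})=0$ for all $1\le j\le n-1$, which is itself a nontrivial commutation argument (e.g.\ $e_{n-1}$ does not commute with the factor $e_{n-2}^{(2)}$ inside $\mc{E}$). Moreover, your fallback of comparing both sides modulo $q\mc{L}_\ve^{\ot 2}$ can only identify the scalar at $q=0$, whereas the claimed coefficient $q^{-2}\tfrac{[2]}{[3]}([l+1][l+2])^2$ is an exact identity; extracting the exact coefficient of the single term $|(l+2)\be_n\rangle\ot|(l+2)\be_{n-1}\rangle$ would suffice, but only after the (unproven) proportionality is established.

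For the full brute-force route you do not say how to tame the general-$n$ bookkeeping, and this is where the paper's one real idea lives: a reduction to rank two. Since ${\bf v}_l$ is supported on coordinates $n-1,n$ and $\mc{E}$ merely creates two boxes at coordinate $1$ and transports them to coordinate $n-1$, the projection $\pi(|{\bf m}\rangle)=|m_{n-1},m_n\rangle$ intertwines $\mc{E}$ on $(\W_{\ve,n}^0)^{\ot 2}$ with $e_0$ on $\W_{\ve,2}^{\ot 2}$, reducing everything to an explicit computation with $e_0,e_1$ for $n=2$. (Consistently with this, for $n\ge 3$ the middle operator in the statement has to be read as $e_{n-1}^{(2)}$: the literal $e_1^{(2)}$ would annihilate $\mc{E}{\bf v}_l$, whose terms all have $m_1=0$.) In that rank-two computation the only identity needed is the recursion $q^{l-2k-2}[l-k]c_{k+1}+[k+1]c_k=0$ satisfied by the coefficients $c_k$ of ${\bf v}_l$; no Serre relation is invoked, contrary to your guess about the provenance of the $\tfrac{1}{[3]!}$. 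Without the reduction, distributing $\Delta(\mc{E}e_{n-1}^{(2)}\mc{E})$ over both tensor factors for general $n$ makes the calculation substantially heavier, so your proposal as written would need either the reduction step or a proof of the highest-weight claim to go through.
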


\begin{proof}
Denote the module $\W_\ve$ by $\W_{\ve,n}$ to signify the rank $n$
and let $\W_{\ve,n}^0$ be a linear subspace of $\W_{\ve,n}$ spanned by
the vectors $|0^{n-2},m_{n-1},m_n\rangle$.
Let $\pi:\W_{\ve,n}\rightarrow\W_{\ve,2}$ be a linear map defined by
$\pi(|{\bf m}\rangle)=|m_{n-1},m_n\rangle$, where ${\bf m}=(m_1,\ldots,m_n)$.
Then we can show by direct calculation that the following diagram commutes.
\[
\xymatrix{
(\W_{\ve,n}^0)^{\ot2} \ar[r]^{\pi^{\ot2}} \ar[d]_{\mc{E}} & \W_{\ve,2}^{\ot2} \ar[d]^{e_0}\\
(\W_{\ve,n}^0)^{\ot2} \ar[r]_{\pi^{\ot2}} & \W_{\ve,2}^{\ot2}
}
\]
This fact reduces the proof of the lemma to the case of $n=2$.

When $n=2$, one calculates
{\allowdisplaybreaks
\begin{align*}
e_0e_1^{(2)}e_0{\bf v}_l=&\sum_kc_k[l-k+1][l-k+2]\\
&\times\{q^{-2l+2k-4}[k+1][k+2]|k+2,l-k+2\rangle\ot|l-k,k\rangle\\
&+(q^{-1}[l-k+1][l-k+2]+q^{-2l-7}[k-1][k])|k,l-k+2\rangle\ot|l-k+2,k\rangle\\
&+q^{-2k}[l-k+3][l-k+4]|k-2,l-k+2\rangle\ot|l-k+4,k\rangle\}.
\end{align*}}
Here $c_k=(-1)^kq^{k(k-l+1)}\begin{bmatrix} l \\ k  \end{bmatrix}^{-1}$
and we have used the relation $q^{l-2k-2}[l-k]c_{k+1}+[k+1]c_k=0$. On the other hand,
we also get
\begin{align*}
(e_1e_0)^2{\bf v}_l=&[2]\sum_kc_k[l-k+1][l-k+2]\\
&\times\{[3]q^{-2l+2k-4}[k+1][k+2]|k+2,l-k+2\rangle\ot|l-k,k\rangle\\
&+A_k|k,l-k+2\rangle\ot|l-k+2,k\rangle\\
&+[3]q^{-2k}[l-k+3][l-k+4]|k-2,l-k+2\rangle\ot|l-k+4,k\rangle\},
\end{align*}
where
\begin{align*}
A_k=\frac{q^{l-2k}}{q-q^{-1}}&\{(1+q^{-2l-6})(q^2[k+1][l-k+2]-[k][l-k+3])\\
&-q^{-2l+2k}(1+q^{-4})([k+1][l-k+2]-q^{-4}[k][l-k+3])\}.
\end{align*}
Combining these results, we obtain
\begin{align*}
(e_0e_1^{(2)}e_0-&\frac1{[3]!}(e_1e_0)^2){\bf v}_l\\
&=\frac{[2]}{[3]}[l+1][l+2]\sum_kc_kq^{-2k-2}
[l-k+1][l-k+2]|k,l-k+2\rangle\ot|l-k+2,k\rangle\\
&=q^{-2}\frac{[2]}{[3]}([l+1][l+2])^2{\bf v}_{l+2}.
\end{align*}
\end{proof}

For $l\in \Z_{\geq 0}$ and $l'=2m\in 2\Z_{\geq 0}$, set
\begin{equation*}
{\bf v}_{l,l'} = q_n^{\frac{m(m+2l+1)}{2}}f_n^{(m)} {\bf v}_l.
\end{equation*}
Note that ${\bf v}_{l,l'}$ may not be equal to $\tf_n^m {\bf v}_l$ in the sense of \eqref{eq:f_n at q=0} since $e_n {\bf v}_{l,l'}\neq 0$ in general.

\begin{lem}\label{lem:v_{l,m} at q=0}
For $l\in \Z_{\geq 0}$ and $l'\in 2\Z_{\geq 0}$ with ${\rm sgn}(l)=\ve$, 
${\bf v}_{l,l'}$ is a $U_q(A_{n-1})$-highest weight vector in $\W_\ve^{(2)}$, and 
\begin{equation*}
{\bf v}_{l,l'} \equiv | l\be_n \rangle \ot | l\be_{n-1} +l'\be_n\rangle \pmod{q\mc{L}_\ve^{\ot 2}}.
\end{equation*}
\end{lem}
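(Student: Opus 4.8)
The plan is to establish the two assertions separately: the $U_q(A_{n-1})$-highest weight property (together with membership in $\W_\ve^{(2)}$), which is immediate, and the reduction modulo $q\mc{L}_\ve^{\ot 2}$, which requires a $q$-order analysis of the expansion of $f_n^{(m)}{\bf v}_l$.

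For the highest weight property I would use that in $U_q(C_n)$ the node $n$ is different from every $i$ with $1\le i\le n-1$, so the defining relation forces $e_if_n=f_ne_i$ and hence $e_i$ commutes with $f_n^{(m)}$. Since ${\bf v}_l$ is $U_q(A_{n-1})$-highest by Lemma \ref{lem:v_l}, this gives $e_i{\bf v}_{l,l'}=q_n^{m(m+2l+1)/2}f_n^{(m)}e_i{\bf v}_l=0$ for $1\le i\le n-1$. Membership ${\bf v}_{l,l'}\in\W_\ve^{(2)}$ follows because ${\bf v}_l\in\W_\ve^{(2)}$ by Lemma \ref{lem:v_l} and $\W_\ve^{(2)}=V_0^\ve$ is a $U_q(C_n)$-module, in particular stable under $f_n$.

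For the congruence I would expand, via \eqref{eq:comult-2} (with $p(n)=0$ for $C_n^{(1)}$), the divided power as $\Delta(f_n^{(m)})=\sum_{a+b=m}q_n^{-ab}f_n^{(a)}k_n^b\ot f_n^{(b)}$, and apply it to each summand $c_k\,|k\be_{n-1}+(l-k)\be_n\rangle\ot|(l-k)\be_{n-1}+k\be_n\rangle$ of ${\bf v}_l$, where $c_k\in q^k(1+qA_0)$. The structural point is that $f_n$ and $k_n$ change only the $n$-th coordinate of a basis vector, so the $(n-1)$-st coordinates are frozen; consequently the target vector $|l\be_n\rangle\ot|l\be_{n-1}+2m\be_n\rangle$ (whose $(n-1)$-st coordinates are $0$ and $l$) can arise only from $k=0$ together with the summand $(a,b)=(0,m)$. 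For that single coefficient one computes $k_n^m|l\be_n\rangle=q^{-m(2l+1)}|l\be_n\rangle$ and $f_n^{(m)}|l\be_{n-1}\rangle=q^{-m}\tfrac{[2m]!}{[m]_n!\,[2]^m}|l\be_{n-1}+2m\be_n\rangle$, and, combined with the prefactor $q_n^{m(m+2l+1)/2}=q^{m(m+2l+1)}$, the coefficient collapses to $q^{m^2-m}\tfrac{[2m]!}{[m]_n!\,[2]^m}$, which lies in $1+qA_0$ by the standard leading-order estimates for the $q$-factorials; thus the target appears with coefficient $\equiv 1$.

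The remaining, and I expect main, obstacle is to show every other contribution lies in $q\mc{L}_\ve^{\ot 2}$, equivalently that the scalar in front of the basis vector produced by a general triple $(k,a,b)$ has $q$-order $D(k,a,b)$ satisfying $D\ge 0$ always and $D\ge 1$ unless $(k,a,b)=(0,0,m)$. I would compute $D$ by adding the $q$-orders of $c_k$ (namely $k$), of $q_n^{-ab}$, of the eigenvalue of $k_n^b$, and of $f_n^{(a)}$, $f_n^{(b)}$ acting on $m_n=l-k$ and $m_n=k$ respectively, plus the prefactor. The delicate part is making the cancellations transparent: the $m^2$- and $lm$-terms must cancel using $a+b=m$, and everything should reduce to the clean expression
\begin{equation*}
D(k,a,b)=a+k(1+2a),
\end{equation*}
which is $\ge 0$, with equality exactly at $(k,a,b)=(0,0,m)$. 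Granting this, all coefficients lie in $A_0$ (so ${\bf v}_{l,l'}\in\mc{L}_\ve^{\ot 2}$), and all but the $(0,0,m)$ term lie in $qA_0$, giving the asserted reduction. It is precisely the global normalization $q_n^{m(m+2l+1)/2}$ that enforces $D\ge 0$, since $f_n^{(m)}$ alone does not preserve the lattice at the special node $n$. As a cross-check one may argue inductively through ${\bf v}_{l,l'}=\tfrac{q_n^{l+m}}{[m]_n}f_n\,{\bf v}_{l,l'-2}$, applying a single $f_n$ and tracking the two terms of $\Delta(f_n)$, but the one-step degree count above is the cleaner route.
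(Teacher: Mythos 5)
Your proposal is correct and follows essentially the same route as the paper: expand $\Delta(f_n^{(m)})=\sum_{a+b=m}q_n^{-ab}f_n^{(a)}k_n^{b}\ot f_n^{(b)}$, apply it to the expansion of ${\bf v}_l$ from Lemma \ref{lem:v_l}, and verify that the global prefactor $q_n^{m(m+2l+1)/2}$ makes every coefficient lie in $A_0$, with exactly the $(k,a,b)=(0,0,m)$ term surviving modulo $q$; your degree formula $D(k,a,b)=a+k(1+2a)$ checks out and matches the paper's exponent $(2c+1)(m-k)$ after adding the order $k$ of $c_k$. The only cosmetic difference is that the paper routes the count through the renormalized operators $\tf_n^{j}$ and splits on the parity of the $\be_{n-1}$-component, whereas your direct bookkeeping handles both parities uniformly.
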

\begin{proof}  Let us assume that $l$ is even, and hence $\ve=+$, since the proof for odd $l$ is almost identical.
Since $e_j$ ($1\leq j\leq n-1$) commutes with $f_n$, it is clear that ${\bf v}_{l,l'}$ is a $U_q(A_{n-1})$-highest weight vector in $\W_\ve^{(2)}$.

Let $l'=2m$. For $0\leq c\leq l$, we have
\begin{equation*}
|c\be_{n-1} + (l-c)\be_n \rangle  \equiv 
\begin{cases}
\tf_n^{\lfloor \frac{l-c}{2}\rfloor}|c\be_{n-1}\rangle &\text{if $c$ is even},\\
\tf_n^{\lfloor \frac{l-c}{2}\rfloor}|c\be_{n-1}+\be_n\rangle &\text{if $c$ is odd},\\
\end{cases}\quad \pmod{q\mc{L}_+}.
\end{equation*}
Put $a=\lfloor \frac{l-c}{2}\rfloor$ and $b=\lfloor \frac{c}{2}\rfloor$.

{\em Case 1}. Suppose that $c$ is even. Let
\begin{equation*}
u_1 = |c\be_{n-1}\rangle,\quad u_2=|(l-c)\be_{n-1}\rangle.
\end{equation*}
We have
{\allowdisplaybreaks
\begin{align*}
&\Delta(f_n^{(m)})(\tf_n^a u_1 \ot \tf_n^b u_2) \\
&=\sum_{k=0}^mq_n^{-k(m-k)}f_n^{(m-k)}k_n^k(\tf_n^a u_1)\ot f_n^{(k)}(\tf_n^b u_2)\\
&=\sum_{k=0}^mq_n^{-k(m-k)-(\frac{1}{2}+2a)k}f_n^{(m-k)}\tf_n^{a}u_1\ot f_n^{(k)}\tf_n^{b}u_2\\
&=\sum_{k=0}^mq_n^{-k(m-k) -(\frac{1}{2}+2a)k+\frac{a^2}{2}+\frac{b^2}{2}}f_n^{(m-k)}f_n^{(a)}u_1 \ot f_n^{(k)}f_n^{(b)}u_2 \\
&=\sum_{k=0}^mq_n^{-k(m-k) -(\frac{1}{2}+2a)k+\frac{a^2}{2}+\frac{b^2}{2}}
\begin{bmatrix} m-k+a \\ a \end{bmatrix}_n\begin{bmatrix} k+b \\ b \end{bmatrix}_n
f_n^{(m-k+a)}u_1 \ot f_n^{(k+b)}u_2 \\
&=\sum_{k=0}^mq_n^{-k(m-k) -(\frac{1}{2}+2a)k+\frac{a^2}{2}+\frac{b^2}{2}-\frac{(m-k+a)^2}{2}-\frac{(k+b)^2}{2}} \\ & \hskip 5cm  \times \begin{bmatrix} m-k+a \\ a \end{bmatrix}_n\begin{bmatrix} k+b \\ b \end{bmatrix}_n
\tf_n^{m-k+a}u_1 \ot \tf_n^{k+b}u_2 \\
&=\sum_{k=0}^mf_{a,b}(q)\tf_n^{m-k+a}u_1 \ot \tf_n^{k+b}u_2.
\end{align*}}
Multiplying $q_n^{\frac{m(m+2l+1)}{2}}$ on both sides, we have $q_n^{\frac{m(m+2l+1)}{2}}f_{a,b}(q)\in q^d(1+qA_0)$, where 
\begin{equation}\label{eq:degree when c is even}
\begin{split}
&d=m(m+2l+1)-2k(m-k) -2\left(\frac{1}{2}+2a\right)k \\ 
& \hskip 1.5cm + a^2+ b^2 -(m-k+a)^2 - (k+b)^2 - 2(m-k)a -2kb\\
&=2lm+(m-k)-4ma-4kb=2lm+(m-k)-4m\left(\frac{l-c}{2}\right)-4k\left(\frac{c}{2}\right)\\
&=(m-k)+ 2c(m-k) = (2c+1)(m-k)
\end{split}
\end{equation}
since $a=\frac{l-c}{2}$ and $b=\frac{c}{2}$.\vskip 2mm

{\em Case 2}. Suppose that $c$ is odd. Let
\begin{equation*}
u_1 = |c\be_{n-1}+\be_n\rangle,\quad u_2=|(l-c)\be_{n-1}+\be_n\rangle.
\end{equation*}
We have
{\allowdisplaybreaks
\begin{align*}
&\Delta(f_n^{(m)})(\tf_n^a u_1 \ot \tf_n^b u_2) \\
&=\sum_{k=0}^mq_n^{-k(m-k)}f_n^{(m-k)}k_n^k(\tf_n^a u_1)\ot f_n^{(k)}(\tf_n^b u_2)\\
&=\sum_{k=0}^mq_n^{-k(m-k)-(\frac{3}{2}+2a)}f_n^{(m-k)}\tf_n^{(a)}u_1 \ot f_n^{(k)}\tf_n^{(b)}u_2 \\
&=\sum_{k=0}^mq_n^{-k(m-k) -(\frac{3}{2}+2a)k+\frac{a(a+2)}{2}+\frac{b(b+2)}{2}}f_n^{(m-k)}f_n^{(a)}u_1 \ot f_n^{(k)}f_n^{(b)}u_2\\
&=\sum_{k=0}^mq_n^{-k(m-k) -(\frac{3}{2}+2a)k+\frac{a(a+2)}{2}+\frac{b(b+2)}{2}}
\begin{bmatrix} m-k+a \\ a \end{bmatrix}_n\begin{bmatrix} k+b \\ b \end{bmatrix}_n
f_n^{(m-k+a)}u_1 \ot f_n^{(k+b)}u_2 \\
&=\sum_{k=0}^mq_n^{-k(m-k) -(\frac{3}{2}+2a)k+\frac{a(a+2)}{2}+\frac{b(b+2)}{2}-\frac{(m-k+a)(m-k+a+2)}{2}-\frac{(k+b)(k+b+2)}{2}} \\ & \hskip 5cm \times \begin{bmatrix} m-k+a \\ a \end{bmatrix}_n\begin{bmatrix} k+b \\ b \end{bmatrix}_n
\tf_n^{m-k+a}u_1 \ot \tf_n^{k+b}u_2 \\
&=\sum_{k=0}^mg_{a,b}(q)\tf_n^{m-k+a}u_1 \ot \tf_n^{k+b}u_2.
\end{align*}}
Multiplying $q_n^{\frac{m(m+2l+1)}{2}}$ on both sides, we have $q_n^{\frac{m(m+2l+1)}{2}}g_{a,b}(q)\in q^{d'}(1+qA_0)$, where  
\begin{equation}\label{eq:degree when c is odd}
\begin{split}
d'&= d -2k+2a+2b-2(m-k+a) - 2(k+b)\\ 
&=d - 2k-2m  \\
&= 2lm+(m-k)-4ma-4kb -2k-2m  \\
&=2lm+(m-k)-4m\left(\frac{l-c-1}{2}\right)-4k\left(\frac{c-1}{2}\right)-2k-2m \\
&=(m-k)+ 2c(m-k) = (2c+1)(m-k)
\end{split}
\end{equation}
by putting $a=\frac{l-c-1}{2}$ and $b=\frac{c-1}{2}$.
By \eqref{eq:degree when c is even}, \eqref{eq:degree when c is odd}, and Lemma \ref{lem:v_l}, we have 
$$q_n^{\frac{m(m+2l+1)}{2}}f_n^{(m)} {\bf v}_l\equiv | l\be_n \rangle \ot | l\be_{n-1} +2 m\be_n\rangle \pmod{q\mc{L}_+^{\ot 2}}.$$
\end{proof}

\begin{cor}
The set $\{\,{\bf v}_{l,l'}\,|\,l\in\Z_{\ge 0},\, l'\in 2\Z_{\ge 0},\, {\rm sgn}(l)=\ve\,\}$ is the set of $U_q(A_{n-1})$-highest weight vectors in $\W_\ve^{(2)}$.
\end{cor}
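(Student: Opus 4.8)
The plan is to upgrade the two facts already in hand for the family $\{{\bf v}_{l,l'}\}$ into a dimension count: each ${\bf v}_{l,l'}$ is a $U_q(A_{n-1})$-highest weight vector by Lemma~\ref{lem:v_{l,m} at q=0}, while Proposition~\ref{prop:ch of level 2} shows that ${\rm ch}\,\W_\ve^{(2)}$ is \emph{multiplicity free}. Together these should force the ${\bf v}_{l,l'}$ to span the whole space of highest weight vectors, leaving no room for any others.

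First I would establish linear independence. By Lemma~\ref{lem:v_{l,m} at q=0} the vector ${\bf v}_{l,l'}$ has leading term $|l\be_n\rangle\ot|l\be_{n-1}+l'\be_n\rangle$ modulo $q\mc{L}_\ve^{\ot 2}$, so it is a weight vector of $U_q(A_{n-1})$-weight $l\be_{n-1}+(l+l')\be_n$. On the index set $\{(l,l'):l\in\Z_{\ge0},\,l'\in2\Z_{\ge0},\,{\rm sgn}(l)=\ve\}$ the map $(l,l')\mapsto l\be_{n-1}+(l+l')\be_n$ is injective (the two nonzero coordinates recover $l$ and then $l'$), and the leading terms are distinct basis vectors of $\W_\ve\ot\W_\ve$; hence the ${\bf v}_{l,l'}$ are nonzero, lie in pairwise distinct weight spaces, and are linearly independent.

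Next I would match these weights with the partitions indexing the character and deduce completeness. The dominant $A_{n-1}$-conjugate of $l\be_{n-1}+(l+l')\be_n$ is the partition $\la=(l+l',\,l)$, so ${\bf v}_{l,l'}$ generates the irreducible $U_q(A_{n-1})$-submodule with character $s_\la$; since $l'$ is even, $\la_1$ and $\la_2$ have the same parity as $l$, giving $\la\in\cP_\ve$ and $\ell(\la)\le2$. One checks that $(l,l')\mapsto(l+l',l)$ is a bijection onto the set of $\la\in\cP_\ve$ with $\ell(\la)\le2$ that occur in ${\rm ch}\,\W_\ve^{(2)}$, with inverse $\la\mapsto(\la_2,\la_1-\la_2)$. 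Now $\W_\ve^{(2)}$ is completely reducible over $U_q(A_{n-1})$ (as already used, through the semisimple category invoked in Proposition~\ref{prop:ch of level 2}), so for each $\la$ the dimension of the weight-$\la$ highest weight space equals the coefficient of $s_\la$ in ${\rm ch}\,\W_\ve^{(2)}$, which by Proposition~\ref{prop:ch of level 2} is $1$ for the occurring $\la$ and $0$ otherwise. Since the ${\bf v}_{l,l'}$ already provide a nonzero highest weight vector for each occurring $\la$, they exhaust the highest weight space and hence constitute all the $U_q(A_{n-1})$-highest weight vectors.

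The main obstacle is the completeness step rather than the construction: one must know that the multiplicity of $s_\la$ in the character literally equals the dimension of $\bigcap_{1\le j\le n-1}\ker e_j$ in weight $\la$ — this is where complete reducibility is essential — and one must pin down precisely which $\la\in\cP_\ve$ with $\ell(\la)\le2$ occur, i.e.\ verify that the constraints ${\rm sgn}(l)=\ve$ and $l'\in2\Z_{\ge0}$ reproduce exactly the index set of Proposition~\ref{prop:ch of level 2} (the parity and length bookkeeping, including the boundary cases $l=0$ giving the one-row and empty partitions). Once this bijection is confirmed, the corollary is immediate, the remaining content being purely the matching of partitions to the pairs $(l,l')$.
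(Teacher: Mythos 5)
Your proposal is correct and follows essentially the same route as the paper: each ${\bf v}_{l,l'}$ is a highest weight vector generating a copy of the irreducible $U_q(A_{n-1})$-module with character $s_{(l+l',l)}$, and the multiplicity-free character formula of Proposition \ref{prop:ch of level 2} (together with complete reducibility over $U_q(A_{n-1})$, which the paper uses implicitly) leaves no room for further highest weight vectors. The extra bookkeeping you supply — distinctness of the weights and the bijection $(l,l')\mapsto(l+l',l)$ onto the partitions occurring in the character — is exactly what the paper's two-line proof is tacitly relying on.
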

\begin{proof} 
The character of the $U_q(A_{n-1})$-submodule of $\W_\ve$ generated by ${\bf v}_{l,l'}$ is $s_\la(x_1,\dots,x_n)$ where $\la=(l'+l,l)$. 
Hence it follows from Proposition \ref{prop:ch of level 2} that there is no other $U_q(A_{n-1})$-highest weight vectors in $\W_\ve^{(2)}$.
\end{proof}\vskip 2mm

Now we define the pair $(\mc{L}_\ve^{(2)},\mc{B}_\ve^{(2)})$ as follows:
\begin{equation*}
\begin{split}
&\mc{L}_\ve^{(2)}=
\sum_{\substack{l_1\in \Z_{\geq 0} \\ {\rm sgn}(l_1)=\ve}}
\sum_{\substack{l_2\in 2\Z_{\geq 0}}}
\sum_{\substack{r\geq 0\\ 1\leq i_1,\ldots,i_r\leq n-1}}
A_0 \tf_{i_1}\ldots \tf_{i_r} {\bf v}_{l_1,l_2},\\
&\mc{B}_\ve^{(2)}=
\Big\{\,\tf_{i_1}\ldots \tf_{i_r} {\bf v}_{l_1,l_2} \!\!\!\pmod{q\mc{L}_\ve^{(2)}}\,\Big|\, \\
&\quad\quad\quad\quad   l_1\in \Z_{\geq 0},\ {\rm sgn}(l_1)=\ve,\ l_2\in 2\Z_{\geq 0},\ 
 r\geq 0,\ 1\leq i_1,\ldots,i_r\leq n-1\,\Big\}\setminus\{0\}.
\end{split}
\end{equation*}

\begin{prop}\label{prop:crystal base of W^{(2)}_+}
We have 
\begin{itemize}
\item[(1)] $\mc{L}_\ve^{(2)}\subset \mc{L}_\ve^{\ot 2}$ and $\mc{B}_\ve^{(2)}\subset \mc{B}_\ve^{\ot 2}$,

\item[(2)] $(\mc{L}_\ve^{(2)},\mc{B}_\ve^{(2)})$ is a $U_q(A_{n-1})$-crystal base of $\W_\ve^{(2)}$.
\end{itemize}
\end{prop}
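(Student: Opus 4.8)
The plan is to establish the two claims in Proposition~\ref{prop:crystal base of W^{(2)}_+} by reducing everything to the already-understood level-one crystal base $(\mc{L}_\ve,\mc{B}_\ve)$ of Proposition~\ref{prop:crystal base of W_+} together with the explicit highest-weight vectors ${\bf v}_{l,l'}$ classified in the preceding corollary. For part (1), I would first show that each generator ${\bf v}_{l_1,l_2}$ lies in $\mc{L}_\ve^{\ot 2}$ and is congruent mod $q\mc{L}_\ve^{\ot 2}$ to the single basis element $|l_1\be_n\rangle\ot|l_1\be_{n-1}+l_2\be_n\rangle$; this is precisely the content of Lemma~\ref{lem:v_{l,m} at q=0}. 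Since $\mc{L}_\ve$ is invariant under $\tf_i$ for $1\le i\le n-1$ by Proposition~\ref{prop:crystal base of W_+}(1), and since the $U_q(A_{n-1})$ crystal operators act on a tensor product through the usual tensor-product rule preserving $\mc{L}_\ve^{\ot 2}$, any monomial $\tf_{i_1}\cdots\tf_{i_r}{\bf v}_{l_1,l_2}$ stays inside $\mc{L}_\ve^{\ot 2}$, giving $\mc{L}_\ve^{(2)}\subset\mc{L}_\ve^{\ot 2}$. The inclusion $\mc{B}_\ve^{(2)}\subset\mc{B}_\ve^{\ot 2}$ then follows because the image of each such monomial in $\mc{L}_\ve^{\ot 2}/q\mc{L}_\ve^{\ot 2}$ is, by the tensor-product crystal rule applied to the known level-one crystal $\mc{B}_\ve$, either $0$ or a single basis vector of $\mc{B}_\ve^{\ot 2}$.

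For part (2), the key structural input is that $\W_\ve^{(2)}$, as a $U_q(A_{n-1})$-module, is completely reducible with a fully known decomposition. By Proposition~\ref{prop:ch of level 2} its character is $\sum_{\la\in\cP_\ve,\,\ell(\la)\le 2}s_\la$, and by the preceding corollary the set $\{\,{\bf v}_{l_1,l_2}\,\}$ is exactly the set of $U_q(A_{n-1})$-highest weight vectors, each generating an irreducible highest weight module isomorphic to $V(\la)$ with $\la=(l_2+l_1,l_1)$, with multiplicity one for each admissible $\la$. I would therefore invoke the standard theory of crystal bases for integrable highest weight modules over $U_q(A_{n-1})$ (Kashiwara): for each highest weight vector $u={\bf v}_{l_1,l_2}$ with $u\equiv b_u\pmod{q\mc{L}_\ve^{\ot 2}}$ where $b_u$ is a highest weight element of the crystal $\mc{B}_\ve^{\ot 2}$, the pair $\bigl(\bigoplus A_0\tf_{i_1}\cdots\tf_{i_r}u,\ \{\tf_{i_1}\cdots\tf_{i_r}u\bmod q\mathcal L\}\setminus\{0\}\bigr)$ is a crystal base of the irreducible summand $U_q(A_{n-1})u$. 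Summing over all the highest weight vectors and using the direct sum decomposition yields that $(\mc{L}_\ve^{(2)},\mc{B}_\ve^{(2)})$ is a crystal base of $\W_\ve^{(2)}$.

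The one point requiring care is the compatibility needed to apply the abstract theory, namely that the local $A_0$-lattice generated by $u$ via the $\tf_i$ is \emph{the same} lattice one would obtain from Kashiwara's crystal base of $V(\la)$, and that distinct summands contribute linearly independent basis classes mod $q$. I would address this by noting that $\mc{B}_\ve^{\ot 2}$ is already a $U_q(A_{n-1})$-crystal (the tensor product of two level-one crystal bases, by the tensor product theorem of \cite{Kas91}), so its connected components are exactly the $A_{n-1}$-crystals of the irreducible summands; the class $b_u$ of each ${\bf v}_{l_1,l_2}$ lands in a distinct highest weight element of a distinct component (these are distinguished by their $A_{n-1}$-weights $\la$, which are pairwise distinct), and the $\tf_i$-orbit of $b_u$ recovers that whole component. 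Thus the main obstacle is simply verifying that the lattice $\mc{L}_\ve^{(2)}$ generated by the ${\bf v}_{l_1,l_2}$ agrees with the crystal lattice of each component rather than being a proper sublattice; this follows once one checks $u\equiv b_u\pmod{q\mc{L}_\ve^{\ot 2}}$ with $b_u\in\mc{B}_\ve^{\ot 2}$, which is exactly Lemma~\ref{lem:v_{l,m} at q=0}, and then applies the uniqueness of crystal bases in a polarized setting using the polarization of Lemma~\ref{lem:polarization1} extended to the tensor product.
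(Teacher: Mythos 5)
Your proposal is correct and follows essentially the same route as the paper: part (1) via Lemma~\ref{lem:v_{l,m} at q=0} and the $\tf_i$-invariance of $\mc{L}_\ve^{\ot 2}$ from Proposition~\ref{prop:crystal base of W_+}, and part (2) by observing that the lattice and basis generated from the highest weight vectors ${\bf v}_{l_1,l_2}$ give a crystal base of the submodule they generate, which equals all of $\W_\ve^{(2)}$ by the character computation of Proposition~\ref{prop:ch of level 2}. Your additional care about identifying the generated lattice with the crystal lattice of each irreducible component is a point the paper leaves implicit, but it does not change the argument.
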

\begin{proof} (1) By Proposition \ref{prop:crystal base of W_+}, $\mc{L}_\ve^{\ot 2}$ is a crystal base of $\W_\ve^{\ot 2}$ as a $U_q(A_{n-1})$-module, hence it is invariant under $\tf_i$ for $1\leq i\leq n-1$. Since ${\bf v}_{l_1,l_2}\in \mc{L}_\ve^{\ot 2}$ by Lemma \ref{lem:v_{l,m} at q=0}, we have $\tf_{i_1}\ldots \tf_{i_r} {\bf v}_{l_1,l_2}\in \mc{L}_\ve^{\ot 2}$ and hence $\tf_{i_1}\ldots \tf_{i_r} {\bf v}_{l_1,l_2}\in \mc{B}_\ve^{\ot 2}$ $\pmod{q\mc{L}_\ve^{\ot 2}}$.

(2) By definition of $(\mc{L}_\ve^{(2)},\mc{B}_\ve^{(2)})$ and Lemma \ref{lem:v_{l,m} at q=0}, $(\mc{L}_\ve^{(2)},\mc{B}_\ve^{(2)})$ is a $U_q(A_{n-1})$-crystal base of the submodule $V$ of $\W_\e^{(2)}$ generated by ${\bf v}_{l_1,l_2}$ for $l_1,l_2$. 
On the other hand, we have $V=\W_\ve^{(2)}$ by Proposition \ref{prop:ch of level 2}. Hence $(\mc{L}_\ve^{(2)},\mc{B}_\ve^{(2)})$ is a $U_q(A_{n-1})$-crystal base of $\W_\ve^{(2)}$.
\end{proof}\vskip 2mm

For $|{\bf m}\rangle=| m_{1},\ldots,m_{n}\rangle\in \W$, let $T({\bf m})$ denote the semistandard tableau of shape $(|{\bf m}|)$, a single row of length $|{\bf m}|$, with letters in $\{\,\ov{n}<\dots<\ov{1}\,\}$ such that the number of occurrences of $\ov{i}$ is $m_i$ for $1\leq i\leq n$.

Suppose that $|{\bf m}_1\rangle, \dots, |{\bf m}_s\rangle$ are given such that $|{\bf m}_1|\leq \dots \leq |{\bf m}_s|$. Let $\la=(|{\bf m}_s|\geq \dots \geq |{\bf m}_1|)$, which is a partition or its Young diagram, and $\la^\pi$ denote the Young diagram obtained by $180^\circ$-rotation of $\la$.
We denote by $T({\bf m}_1,\dots,{\bf m}_s)$ the row-semistandard tableau of shape $\la^\pi$, whose $j$-th row from the top is equal to $T({\bf m}_j)$ for $1\leq j\leq s$.

\begin{ex}\label{ex:T(m_1,m_2)}
{\rm
Suppose that $n=5$. If $|{\bf m}_1\rangle = | 2,1,0,0,2 \rangle$ and $|{\bf m}_2\rangle = | 0,1,2,3,1 \rangle$, then

\begin{equation*}
T({\bf m}_1,{\bf m}_2) = \
\ytableausetup{mathmode, boxsize=1em} 
\begin{ytableau}
\none & \none &  \ov{\tl 5} & \ov{\tl 5} & \ov{\tl 2} & \ov{\tl 1} & \ov{\tl 1} \\
\ov{\tl 5} & \ov{\tl 4} & \ov{\tl 4} & \ov{\tl 4} & \ov{\tl 3} & \ov{\tl 3} & \ov{\tl 2} \\
\end{ytableau}\ .
\end{equation*}\vskip 2mm
}
\end{ex}


\begin{prop}\label{prop:crystal of level 2}
We have
\begin{equation*}
\mc{B}_\ve^{(2)} =\left\{\,|{\bf m}_1\rangle \ot |{\bf m}_2\rangle\!\!\!\pmod{q\mc{L}_\ve^{(2)}} \,\Big|\,|{\bf m}_1|\leq |{\bf m}_2|,\ \text{\rm $T({\bf m}_1,{\bf m}_2)$ is semistandard}\,\right\}.
\end{equation*}
\end{prop}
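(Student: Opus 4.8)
The plan is to realize the $U_q(A_{n-1})$-crystal on $\mc{B}_\ve^{\ot 2}$ by two-row tableaux and match it with the classical crystal of semistandard Young tableaux. First I would record the $U_q(A_{n-1})$-crystal structure on $\mc{B}_\ve$: by Proposition \ref{prop:crystal base of W_+}, $|{\bf m}\rangle$ is the single-row tableau $T({\bf m})$ in the alphabet $\ov n<\dots<\ov 1$, and for $1\le i\le n-1$ the operator $\tf_i$ turns one letter $\ov{i+1}$ into $\ov i$. In particular $\tf_i$ preserves the two row lengths, so on $\mc{B}_\ve^{\ot 2}$ the pair $(a,b)=(|{\bf m}_1|,|{\bf m}_2|)$ is a crystal invariant. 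By Lemma \ref{lem:v_{l,m} at q=0} the highest weight vector ${\bf v}_{l_1,l_2}$ lies in the ``sector'' $(a,b)=(l_1,l_1+l_2)$, and by Proposition \ref{prop:crystal base of W^{(2)}_+} the crystal $\mc{B}_\ve^{(2)}$ is the disjoint union of the connected components $C({\bf v}_{l_1,l_2})$ through these vectors. Note that $a$ and $b$ then have the same sign $\ve$, so in each occupied sector $a\le b$ with ${\rm sgn}(a)={\rm sgn}(b)=\ve$.

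Next I would match the stated combinatorial condition with genuine semistandardness. The key observation is that the $180^\circ$-rotation sending $\la^\pi$ back to $\la$, combined with reversing the totally ordered alphabet $\ov n<\dots<\ov 1$ (equivalently relabeling the Dynkin nodes by $i\mapsto n-i$), carries a pair $|{\bf m}_1\rangle\ot|{\bf m}_2\rangle$ with $T({\bf m}_1,{\bf m}_2)$ semistandard to an ordinary semistandard Young tableau of straight shape $(b,a)$ with $a=|{\bf m}_1|\le b=|{\bf m}_2|$, and that this identification intertwines the tensor product crystal operators $\te_i,\tf_i$ on $|{\bf m}_1\rangle\ot|{\bf m}_2\rangle$ with the Kashiwara--Nakashima tableau operators. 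Granting this, the set of semistandard pairs is stable under $\te_i$ and $\tf_i$, and in a fixed sector $(a,b)$ it is connected and isomorphic to $B((b,a))$.

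Finally I would identify the components and conclude by a character count. By the explicit form of ${\bf v}_{l_1,l_2}$ in Lemma \ref{lem:v_{l,m} at q=0}, its stacked tableau has top row $\ov n^{\,l_1}$ over a bottom row $\ov n^{\,l_2}\ov{n-1}^{\,l_1}$, whose overlapping columns read $\ov n$ over $\ov{n-1}$; since $\ov n<\ov{n-1}$ this is semistandard, indeed the (rotated) Yamanouchi tableau of shape $(l_1+l_2,l_1)$. Hence ${\bf v}_{l_1,l_2}$ is a semistandard pair, and by the stability above the whole component $C({\bf v}_{l_1,l_2})=\mc{B}_\ve^{(2)}\cap\{(a,b)=(l_1,l_1+l_2)\}$ is contained in the set of semistandard pairs. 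To upgrade this to equality I would compare characters: the $U_q(A_{n-1})$-module generated by ${\bf v}_{l_1,l_2}$ has character $s_{(l_1+l_2,l_1)}(x_1,\dots,x_n)$, while the semistandard pairs in that sector are in content-preserving bijection with the semistandard tableaux of shape $(l_1+l_2,l_1)$, whose character is again $s_{(l_1+l_2,l_1)}(x_1,\dots,x_n)$. Containment together with equality of characters forces equality of the two sets in each sector, and taking the union over all admissible $(l_1,l_2)$ yields the asserted description of $\mc{B}_\ve^{(2)}$.

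The main obstacle is the intertwining claim of the second paragraph, i.e.\ checking that the tensor product crystal operators coming from the coproduct \eqref{eq:comult-2} act on $|{\bf m}_1\rangle\ot|{\bf m}_2\rangle$ exactly as the semistandard-tableau operators after the rotation and alphabet reversal, equivalently that $\tf_i$ preserves semistandardness of $T({\bf m}_1,{\bf m}_2)$. This is the classical type $A$ tableau crystal structure, so the genuine work is bookkeeping of conventions (the opposite coproduct versus the $\ov\Delta$ used in the $R$-matrix computation, the rotation $\pi$, and the node relabeling $i\mapsto n-i$); alternatively it can be verified directly from the signature rule using the explicit action of $\tf_i$ in Proposition \ref{prop:crystal base of W_+}.
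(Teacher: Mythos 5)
Your proposal is correct and follows essentially the same route as the paper: identify ${\bf v}_{l_1,l_2}\equiv|l_1\be_n\rangle\ot|l_1\be_{n-1}+l_2\be_n\rangle$ with the highest-weight semistandard tableau of shape $(l_1+l_2,l_1)^\pi$ and invoke the standard type-$A$ tableau realization of the tensor product of one-row crystals. The only cosmetic difference is that you close the argument with a character count in each sector, whereas the paper simply cites that the component generated by the highest-weight tableau is the full set $SST((l_1+l_2,l_1)^\pi)$; both rest on the same standard fact about type-$A$ crystals.
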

\begin{proof} For $l_1\in\Z_{\geq 0}$ and $l_2\in 2\Z_{\geq 0}$ with ${\rm sgn}(l_1)=\ve$, let us identify ${\bf v}_{l_1,l_2}=| l_1\be_n \rangle \ot | l_1\be_{n-1} +l_2\be_n\rangle$ in $\mc{B}_\ve^{\ot 2}$ with the pair $(l_1\be_n,l_1\be_{n-1} +l_2\be_n)$ and the connected component of ${\bf v}_{l_1,l_2}$ as a $U_q(A_{n-1})$-crystal with the set of corresponding set of pairs $({\bf m}_1,{\bf m}_2)'s$. 
Then $T({\bf v}_{l_1,l_2})$ is the semistandard tableau of shape $(l_1+l_2,l_1)^\pi$. Since $\te_j{\bf v}_{l_1,l_2}=0$ for $1\leq j\leq n-1$, $T({\bf v}_{l_1,l_2})$ is the tableau of highest weight and the set
\begin{equation}\label{eq:description of B_+^2}
\left\{\,T\left(\tf_{i_1}\ldots \tf_{i_r} {\bf v}_{l_1,l_2}\right)\,\Big|\, r\geq 0,\ 1\leq i_1,\ldots,i_r\leq n-1\,\right\}\setminus\{0\}
\end{equation}
is equal to the set of semistandard tableau of shape $(l_1+l_2,l_1)^\pi$ with letters in $\{\,\ov{n}<\dots<\ov{1}\,\}$.
\end{proof}

Let $|{\bf m}_1\rangle, |{\bf m}_2\rangle\in \mc{B}_\ve$ be given with $|{\bf m}_1|=d_1$ and  $|{\bf m}_2|=d_2$, let $P({\bf m}_1,{\bf m}_2)$ denote a unique semistandard tableau of shape $\mu^\pi$ for some partition $\mu$, which is equivalent to $|{\bf m}_1\rangle \ot |{\bf m}_2\rangle$ as an element of $U_q(A_{n-1})$-crystals. 
This is equivalent to saying that if we read the row word of $T({\bf m}_1)$ from left to right, and then apply the Schensted's column insertion to $T({\bf m}_2)$ in a reverse way starting from the right-most column, then the resulting tableau is $P({\bf m}_1,{\bf m}_2)$.
So $P({\bf m}_1,{\bf m}_2)$ is of shape $(d'_2,d'_1)^\pi$ for some $d'_1\leq d'_2$ with $d'_1\leq d_1$, $d'_2\geq d_2$, and $d'_1+d'_2=d_1+d_2$. In particular, $P({\bf m}_1,{\bf m}_2)=T({\bf m}_1,{\bf m}_2)$ if $d_1\leq d_2$ and $|{\bf m}_1\rangle \ot |{\bf m}_2\rangle\in \mc{B}_\ve^{(2)}$.


\begin{ex}{\rm
Let $|{\bf m}_1\rangle, |{\bf m}_2\rangle$ be as in Example \ref{ex:T(m_1,m_2)}. Then

\begin{equation*}
P({\bf m}_1,{\bf m}_2)=\
\ytableausetup{mathmode, boxsize=1em} 
\begin{ytableau}
\none & \none & \none & \none &\none & \none &\none & \none &  \ov{\tl 5} & \ov{\tl 5} \\
\ov{\tl 5} & \ov{\tl 4} & \ov{\tl 4} & \ov{\tl 4} & \ov{\tl 3} & \ov{\tl 3} & \ov{\tl 2} & \ov{\tl 2} & \ov{\tl 1} & \ov{\tl 1}\\
\end{ytableau}\ .
\end{equation*}\vskip 2mm}
\end{ex}

Let $l_1\in\Z_{\geq 0}$ and $\l_2\in 2\Z_{\geq 0}$ be given with ${\rm sgn}(l_1)=\ve$. Put $\la=(\la_1,\la_2)=(l_1+l_2,l_1)$.
Let $SST(\la^\pi)$ be the set of semistandard tableaux of shape $\la^\pi$ with letters in $\{\,\ov{n}<\dots<\ov{1}\,\}$.
For each $T\in SST(\la^\pi)$, we choose $i_1,\dots,i_r\in I\setminus\{0,n\}$ such that
$T=T\left(\tf_{i_1}\ldots \tf_{i_r} {\bf v}_{l_1,l_2}\right)$ (see \eqref{eq:description of B_+^2}), and define
\begin{equation}\label{eq:def of v_T}
{\bf v}_T = \tf_{i_1}\ldots \tf_{i_r} {\bf v}_{l_1,l_2} \in \mc{L}_\ve^{(2)}.
\end{equation}
By Proposition \ref{prop:crystal of level 2}, we have a $\Q(q)$-basis of $\W_\ve^{(2)}$ 
\begin{equation}\label{eq:basis of level 2 KR}
\bigsqcup_{{\substack{\la \in\cP_\ve \\ \ell(\la)\leq 2}}}
\left\{\,{\bf v}_T\,|\,T\in SST(\la^\pi)\,\right\}.
\end{equation}

\begin{lem}\label{eq:expansion of basis vector}
For $T\in SST(\la^\pi)$, we have
\begin{equation*}
{\bf v}_T= |{\bf m}_1\rangle \ot |{\bf m}_2\rangle +\sum_{{\bf m}'_1, {\bf m}'_2}c_{{\bf m}'_1, {\bf m}'_2} |{\bf m}'_1\rangle\ot |{\bf m}'_2\rangle,
\end{equation*}
where $P({\bf m}_1,{\bf m}_2)=T$, $P({\bf m}'_1,{\bf m}'_2)$ is of shape $\mu^\pi$ with $\mu\vartriangleright \la$ and $\mu\neq \la$, and $c_{{\bf m}'_1, {\bf m}'_2}\in qA_0$.
%
%
Here $\vartriangleright$ denotes a dominance order on partitions, that is, $\mu_1> \la_1$, and $\mu_1+\mu_2=\la_1+\la_2$.
\end{lem}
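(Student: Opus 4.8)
The statement is a unitriangularity of ${\bf v}_T$ with respect to two orderings at once: the $q$-adic filtration of $\mc{L}_\ve^{\ot 2}$ and the dominance order on the $P$-shapes of the monomials $|{\bf m}'_1\rangle\ot|{\bf m}'_2\rangle$. The plan is to separate the two contributions. The appearance of the leading monomial with coefficient $1$, together with the fact that all remaining coefficients lie in $qA_0$, will be read off directly from the crystal base structure of Propositions \ref{prop:crystal base of W^{(2)}_+} and \ref{prop:crystal of level 2}; the genuinely combinatorial content is the shape constraint $\mu\vartriangleright\la$, which I would establish by following the construction ${\bf v}_T=\tf_{i_1}\cdots\tf_{i_r}{\bf v}_{l_1,l_2}$ and ${\bf v}_{l_1,l_2}=q_n^{\cdots}f_n^{(m)}{\bf v}_{l_1}$ step by step.

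First I would dispose of the $q$-adic part. Since $(\mc{L}_\ve^{(2)},\mc{B}_\ve^{(2)})$ is a $U_q(A_{n-1})$-crystal base with $\mc{L}_\ve^{(2)}\subset\mc{L}_\ve^{\ot 2}$, the vector ${\bf v}_T$ lies in $\mc{L}_\ve^{\ot 2}$ and, by Proposition \ref{prop:crystal of level 2}, satisfies ${\bf v}_T\equiv|{\bf m}_1\rangle\ot|{\bf m}_2\rangle\pmod{q\mc{L}_\ve^{\ot 2}}$, where $|{\bf m}_1\rangle\ot|{\bf m}_2\rangle$ is the unique monomial with $T({\bf m}_1,{\bf m}_2)=P({\bf m}_1,{\bf m}_2)=T$ semistandard. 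Expanding ${\bf v}_T=\sum c_{{\bf m}'_1,{\bf m}'_2}|{\bf m}'_1\rangle\ot|{\bf m}'_2\rangle$ with $c\in A_0$, this already forces the leading coefficient into $1+qA_0$ and every other coefficient into $qA_0$; that the leading coefficient is exactly $1$ follows from the normalization in \eqref{eq:maximal vector of type A} (whose $k=0$ term has coefficient $1$) together with the crystal normalizations in ${\bf v}_{l_1,l_2}$ and in the operators $\tf_i$. Moreover all monomials occurring share the weight of ${\bf v}_T$, hence a common content $\nu$, so their $P$-shapes are two-row partitions of $|\nu|$, totally ordered by their first part.

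It remains to show that every monomial other than the leading one has $P$-shape $\mu\vartriangleright\la$. I would formulate this as membership ${\bf v}_T\in N_{\trianglerighteq\la}:=\mathrm{span}_{\Q(q)}\{\,|{\bf m}'_1\rangle\ot|{\bf m}'_2\rangle: P\text{-shape}\trianglerighteq\la\,\}$, proved by induction along the construction. For the base case I use \eqref{eq:maximal vector of type A}: its $k$-th monomial has $|{\bf m}_1|=|{\bf m}_2|=l$, and for $k\ge1$ the array $T({\bf m}_1,{\bf m}_2)$ fails to be column-strict, so column insertion lengthens the first row and yields a $P$-shape strictly dominating $(l,l)=\la$; thus ${\bf v}_{l_1}\in N_{\trianglerighteq\la}$. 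The passage through $f_n^{(m)}$ (using the computation already carried out in the proof of Lemma \ref{lem:v_{l,m} at q=0}) and then through each $\tf_i$ is handled by showing that these operators map $N_{\trianglerighteq\la}$ into itself. At $q=0$ this is transparent, since the crystal operators act within the connected components of $\mc{B}_\ve^{\ot2}$, which are exactly the RSK shape classes, and hence preserve $P$-shape; the RSK dimension count $\#\{\text{monomials of shape }\mu\}=\dim V(\mu)\cdot a_\mu=\dim M_\mu$ (with $V(\mu)$ the irreducible of highest weight $\mu$, $a_\mu$ its multiplicity, and $M_\mu$ the $\mu$-isotypic component) identifies $N_{\trianglerighteq\la}$ dimensionally with $\bigoplus_{\mu\trianglerighteq\la}M_\mu$.

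The main obstacle is precisely this identification: proving that the span of the shape-$\trianglerighteq\la$ monomials is $U_q(A_{n-1})$-stable, i.e. that the genuine action, not merely its $q\to0$ limit, never produces monomials of strictly smaller shape. The difficulty is concrete: $f_i$ sends a monomial to the two monomials obtained by replacing a single $\ov{i+1}$ by $\ov i$ in one of the two rows, and such an elementary letter change is \emph{not} monotone for $P$-shapes on individual monomials. The claim can only hold after accounting for the orthogonal isotypic decomposition—Lemma \ref{lem:polarization1} guarantees that distinct $M_\mu$ are orthogonal for the polarization. I would therefore close the argument by combining the dimension count above with the single inclusion $\bigoplus_{\mu\trianglerighteq\la}M_\mu\subseteq N_{\trianglerighteq\la}$, reducing it to exhibiting the highest weight vectors of the shape-$\mu$ components with $\mu\trianglerighteq\la$ inside $N_{\trianglerighteq\la}$; equivalently, one may run the induction on $r$ directly, using that $\tf_i{\bf v}_T$ is again a crystal base vector whose leading monomial has shape $\la$, the corrections being controlled by the dominance statement already established for shorter words. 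This cellular/based-module triangularity of the monomial basis is where the real work lies.
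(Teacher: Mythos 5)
Your overall architecture is the same as the paper's: the $q$-adic part is read off from the crystal base statements (Lemma \ref{lem:v_{l,m} at q=0} and Proposition \ref{prop:crystal base of W^{(2)}_+}), and the shape constraint is pushed through the construction ${\bf v}_T=\tf_{i_1}\cdots\tf_{i_r}{\bf v}_{l_1,l_2}$ starting from the explicit formula \eqref{eq:maximal vector of type A}. However, the step you yourself flag as ``where the real work lies'' is a genuine gap, and the route you sketch for closing it cannot work. The claim that $N_{\trianglerighteq\la}$, the span of the monomials of $P$-shape $\trianglerighteq\la$, is $U_q(A_{n-1})$-stable (equivalently, that it coincides with $\bigoplus_{\mu\trianglerighteq\la}M_\mu$) is false. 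Take $n=2$ and the monomial $\ket{\be_2}\ot\ket{\be_2}$, which has $P$-shape $(2)$ and is the highest weight vector of the $V((2))$-component of $V((1))\ot V((1))\subset\W\ot\W$. Then $f_1(\ket{\be_2}\ot\ket{\be_2})=\ket{\be_1}\ot\ket{\be_2}+q\,\ket{\be_2}\ot\ket{\be_1}$, and $\ket{\be_2}\ot\ket{\be_1}$ is the crystal element of the trivial component (it is $\equiv{\bf v}_1 \pmod{q\mc{L}^{\ot 2}}$), hence has $P$-shape $(1,1)\vartriangleleft(2)$. So $M_{(2)}\not\subseteq N_{\trianglerighteq(2)}$ and $N_{\trianglerighteq(2)}$ is not even $f_1$-stable. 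This also shows why your fallback --- exhibit the highest weight vectors of the components $M_\mu$ with $\mu\trianglerighteq\la$ inside $N_{\trianglerighteq\la}$ and conclude by the dimension count --- is circular: to pass from ``the highest weight vector of $M_\mu$ lies in $N_{\trianglerighteq\la}$'' to ``$M_\mu\subseteq N_{\trianglerighteq\la}$'' you must already know that $N_{\trianglerighteq\la}$ is a submodule, which is exactly what fails.

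The paper does not invoke any such global stability of a dominance filtration on the monomial basis. Its mechanism is explicit and local: from the proofs of Lemmas \ref{lem:v_l} and \ref{lem:v_{l,m} at q=0} one reads off that every monomial of ${\bf v}_{l_1,l_2}$ is supported on the two letters $\ov{n},\ov{n-1}$ only and satisfies the inequalities $x+z=l_1$, $y+w=l_1+l_2$, $y\ge z$, $w\ge x$, from which the $P$-shape of each such monomial is computed directly ($\mu_2=z\le l_1$, with equality only for the leading term); the subsequent application of $\tf_{i_1}\cdots\tf_{i_r}$ is then controlled term by term via the tensor product rule of crystals, not via a submodule argument. (Even that last step is delicate --- as the example ${\bf v}_{0,2}=\ket{{\bf 0}}\ot\ket{2\be_n}+q\,\ket{2\be_n}\ot\ket{{\bf 0}}$ shows, correction monomials of $P$-shape exactly $\la$ but with $|{\bf m}'_1|>|{\bf m}'_2|$ do occur, so the constraint must be tracked on the explicit data rather than deduced from a stability principle.) To repair your argument you should follow this line: carry the explicit two-letter support of ${\bf v}_{l_1,l_2}$ through the computation instead of trying to prove that the span of shape-$\trianglerighteq\la$ monomials is preserved by the action, because it is not.
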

\begin{proof} By Lemmas \ref{lem:v_l} and \ref{lem:v_{l,m} at q=0} (see also their proofs), we observe that  
\begin{equation}\label{eq:expansion of maximal vector}
{\bf v}_{l_1,l_2} = | l_1\be_n \rangle\ot |l_1\be_{n-1}+l_2\be_n\rangle +
\sum_{} c_{x,y,z,w}|x\be_{n-1}+y\be_n\rangle\ot |z\be_{n-1}+w\be_n\rangle,
\end{equation}
where the sum is over $(x,y,z,w)$ such that
\begin{itemize}
\item[(1)] $0< x  \leq  l_1$ with $x+z=l_1$,

\item[(2)] $y\geq z$, $w\geq x$ with $y+w=l_1+l_2$,

\item[(3)] $c_{x,y,z,w}\in qA_0$.
\end{itemize}
We may regard $| l_1\be_n \rangle\ot |l_1\be_{n-1}+l_2\be_n\rangle$ as the case when $(x,y,z,w)=(0,l_1,l_1,l_2)$. Then it is not difficult to see that if the shape of $P(x\be_{n-1}+y\be_n,z\be_{n-1}+w\be_n)$ is $\mu^\pi=(\mu_1,\mu_2)^\pi$, then $\mu_2=z=l_1-x\leq l_1$ and hence
$\mu \vartriangleright \la$,
and $\mu\neq \la$ when $x>0$. 

Let $i_1,\dots,i_r\in I\setminus\{0,n\}$ be the sequence in \eqref{eq:def of v_T}.
By the tensor product rule of crystals, we have 
\begin{equation}\label{eq:expansion of maximal vector-2}
\tf_{i_1}\ldots \tf_{i_r} (|x\be_{n-1}+y\be_n\rangle\ot |z\be_{n-1}+w\be_n\rangle)
=\sum_{{\bf m}_1,{\bf m}_2}c_{{\bf m}_1,{\bf m}_2}|{\bf m}_1\rangle \ot |{\bf m}_2\rangle,
\end{equation}
where the sum is over ${\bf m}_1,{\bf m}_2$ such that
\begin{itemize}
\item[(1)] $c_{{\bf m}_1,{\bf m}_2}(q)\in A_0$ such that
\begin{equation*}
c_{{\bf m}_1,{\bf m}_2}(0)=
\begin{cases}
1 &\text{if $|{\bf m}_1\rangle \ot |{\bf m}_2\rangle=\tf_{i_1}\ldots \tf_{i_r}(|x\be_{n-1}+y\be_n\rangle\ot |z\be_{n-1}+w\be_n\rangle)$,}\\
0 &\text{otherwise}.
\end{cases}
\end{equation*}

\item[(2)] $\nu \vartriangleright \la$ and $\nu\neq \la$, where $\nu^\pi$ is the shape of $P({\bf m}_1,{\bf m}_2)$. 
\end{itemize}
Therefore, we obtain the result by \eqref{eq:expansion of maximal vector} and \eqref{eq:expansion of maximal vector-2}.
\end{proof}

\begin{cor}\label{cor:lattice of KR level 2}
We have $\mc{L}_\ve^{(2)} = \mc{L}_\ve^{\ot 2}\cap \W^{(2)}_\ve$.
\end{cor}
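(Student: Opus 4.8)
The plan is to prove the two inclusions separately, the whole content lying in the inclusion $\mc{L}_\ve^{\ot 2}\cap \W_\ve^{(2)}\subseteq \mc{L}_\ve^{(2)}$. For the easy direction $\mc{L}_\ve^{(2)}\subseteq \mc{L}_\ve^{\ot 2}\cap \W_\ve^{(2)}$, I would invoke Proposition \ref{prop:crystal base of W^{(2)}_+}(1), which gives $\mc{L}_\ve^{(2)}\subseteq \mc{L}_\ve^{\ot 2}$, together with the observation that every generator $\tf_{i_1}\cdots\tf_{i_r}{\bf v}_{l_1,l_2}$ of $\mc{L}_\ve^{(2)}$ lies in $\W_\ve^{(2)}$ by its very construction. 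Before attacking the reverse inclusion, I would record that since $A_0$ is local with maximal ideal $qA_0$ and the vectors $\{{\bf v}_T\}$ of \eqref{eq:basis of level 2 KR} are among the generators of $\mc{L}_\ve^{(2)}$ whose images are the $\Q$-basis $\mc{B}_\ve^{(2)}$ of $\mc{L}_\ve^{(2)}/q\mc{L}_\ve^{(2)}$, Nakayama's lemma shows that $\mc{L}_\ve^{(2)}=\bigoplus_T A_0{\bf v}_T$ is free with $A_0$-basis $\{{\bf v}_T\}$.

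Working in a fixed (finite-dimensional) weight space, I would take $v\in \mc{L}_\ve^{\ot 2}\cap \W_\ve^{(2)}$, expand it in the $\Q(q)$-basis as $v=\sum_T a_T{\bf v}_T$ with $a_T\in \Q(q)$, and aim to show $a_T\in A_0$ for every $T$. For $T\in SST(\la^\pi)$ write $\la(T)=\la$ and let $\mathrm{lead}(T)$ be the unique monomial $|{\bf m}_1\rangle\ot|{\bf m}_2\rangle\in \mc{B}_\ve^{(2)}$ with $P({\bf m}_1,{\bf m}_2)=T$; by Proposition \ref{prop:crystal of level 2} the map $T\mapsto \mathrm{lead}(T)$ is injective. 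The engine is Lemma \ref{eq:expansion of basis vector}, which gives ${\bf v}_T=\mathrm{lead}(T)+(\text{tail})$, where $\mathrm{lead}(T)$ has coefficient $1$ and $P$-shape exactly $\la$, while every tail monomial has $P$-shape $\mu\vartriangleright\la$ with $\mu\ne \la$ and coefficient in $qA_0$.

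From this I extract the key occurrence analysis: the monomial $\mathrm{lead}(T)$, of $P$-shape $\la$, can appear in ${\bf v}_{T'}$ (of shape $\la'=\la(T')$) only when $\la'\trianglelefteq\la$. Indeed, if $\la'=\la$ it can occur only as the leading term, which forces $T'=T$ and coefficient $1$; if $\la'\vartriangleleft\la$ it can occur only among the tail monomials, with coefficient in $qA_0$; and if $\la'\vartriangleright\la$ then both the leading shape $\la'$ and the tail shapes (strictly $\vartriangleright\la'\vartriangleright\la$) differ from $\la$, so no occurrence is possible. I would then induct on $\la$ along the dominance order, starting from the minimal shape. Comparing the coefficient of the monomial $\mathrm{lead}(T)$ on both sides of $v=\sum_{T'}a_{T'}{\bf v}_{T'}$ gives
\begin{equation*}
(\text{coeff.\ of }\mathrm{lead}(T)\text{ in }v)=a_T+\sum_{\la(T')\vartriangleleft \la}a_{T'}\,c_{T',T},\qquad c_{T',T}\in qA_0.
\end{equation*}
The left-hand side lies in $A_0$ since $v\in \mc{L}_\ve^{\ot 2}=\bigoplus A_0(\text{monomial})$, and by the inductive hypothesis each $a_{T'}$ with $\la(T')\vartriangleleft\la$ already lies in $A_0$; hence $a_T\in A_0$. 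For the dominance-minimal shape the correction sum is empty, giving $a_T\in A_0$ directly and starting the induction. This proves $a_T\in A_0$ for all $T$, i.e.\ $v\in \bigoplus_T A_0{\bf v}_T=\mc{L}_\ve^{(2)}$.

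The \textbf{main obstacle} is not analytic but combinatorial bookkeeping: one must be sure the occurrence analysis is airtight, in particular that $\mathrm{lead}(T)$ never appears in the tail of a same-shape vector ${\bf v}_{T'}$ — this is precisely the strictness $\mu\ne\la$ in Lemma \ref{eq:expansion of basis vector} — and that within a fixed weight space the shapes $\la$ form a finite poset under dominance, so the induction is well-founded and terminates. Everything else reduces to Lemma \ref{eq:expansion of basis vector}, Proposition \ref{prop:crystal of level 2}, and the freeness of $\mc{L}_\ve^{(2)}$ noted above.
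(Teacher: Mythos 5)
Your proof is correct and follows essentially the same route as the paper: the easy inclusion comes from Proposition~\ref{prop:crystal base of W^{(2)}_+}, and the hard one from expanding $v$ in the basis $\{{\bf v}_T\}$ of \eqref{eq:basis of level 2 KR} and exploiting the unitriangularity of Lemma~\ref{eq:expansion of basis vector} with respect to the dominance order. The only difference is organizational: where the paper first reduces to the case that all tableaux appearing in the expansion of $v$ have the same shape and then reads off each coefficient $c_T$ from the leading monomial, you run an explicit induction along the dominance order of shapes, which in effect substantiates that reduction rather than assuming it.
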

\begin{proof} It is clear that $\mc{L}_\ve^{(2)} \subset \mc{L}_\ve^{\ot 2}\cap \W^{(2)}_\ve$ by Proposition \ref{prop:crystal base of W^{(2)}_+}. 
Conversely, suppose that $v\in \mc{L}_\ve^{\ot 2}\cap \W^{(2)}_\ve$ is given. By \eqref{eq:basis of level 2 KR}, we have
\begin{equation}\label{eq:expansion of v}
v= \sum_{T} c_T {\bf v}_T,
\end{equation} 
for some $c_T\in \Q(q)$. We may assume that all the shape of $T$ in \eqref{eq:expansion of v} is the same.
Fix $T$ with $c_T\neq 0$. Let $|{\bf m}_1\rangle \ot |{\bf m}_2\rangle$ be such that 
$|{\bf m}_1\rangle \ot |{\bf m}_2\rangle$ appears in \eqref{eq:expansion of v} with non-zero coefficient, and $P({\bf m}_1,{\bf m}_2)=T$. By Lemma \ref{eq:expansion of basis vector}, the coefficient of $|{\bf m}_1\rangle \ot |{\bf m}_2\rangle$ is $c_T$. Hence $c_T\in A_0$, and $v\in \mc{L}_\ve^{(2)}$. 
\end{proof}

\noindent{\em Proof of Theorem \ref{thm:main-1}.} 
Let $\W_\ve^{\ot 2}=\W^{(2)}_\ve \oplus W$, where $W$ is the complement of $\W^{(2)}_\ve $ in $\W_\ve^{\ot 2}$ as a $U_q(A_{n-1})$-module since it is completely reducible.
By Corollary \ref{cor:lattice of KR level 2}, we have
\begin{equation}\label{eq:decomp of L2}
\mc{L}_\ve^{\ot 2} = \mc{L}_\ve^{(2)} \oplus \mc{M}^{(2)},
\end{equation}
where $\mc{M}^{(2)}=\mc{L}_\ve^{\ot 2}\cap W$ is the  crystal lattice of $W$ as a $U_q(A_{n-1})$-module. 
Then  we have 
\begin{equation}\label{eq:image of L^(2)}
\check{R}_2 (\mc{L}_\ve^{\ot 2})\subset \mc{L}_\ve^{(2)},\quad \check{R}_2|_{q=0}(\mc{B}_\ve^{\ot 2})\subset \mc{B}_\ve^{(2)}. 
\end{equation}
More generally, by \eqref{eq:spectral decomp} and \eqref{eq:decomp of L2}, we have for $a\in \Z_{>0}$
\begin{equation}\label{eq:invariance of L}
\check{R}(q^{-2a})(\mc{L}_\ve^{\ot 2})\subset \mc{L}_\ve^{\ot 2}.
\end{equation}

For each $1\leq i\leq s-1$, we have
\begin{equation*}
\check{R}_s = \check{R}_{s_i}(\cdots, \underbrace{q^{s-2i-1}}_{i},\underbrace{q^{s-2i+1}}_{i+1},\cdots) \check{R}_{w_0s_i}(q^{1-s},\cdots, q^{s-1}).
\end{equation*}
We have $\check{R}_{w_0s_i}(q^{1-s},\cdots, q^{s-1})(\mc{L}_\ve^{\ot s})\subset \mc{L}_\ve^{\ot s}$ by \eqref{eq:invariance of L}, and hence by \eqref{eq:image of L^(2)}
\begin{equation*}\label{eq:image of R_s}
\begin{split}
&\check{R}_s(\mc{L}_\ve^{\ot s}) \subset \mc{L}_\ve^{\otimes i-1}\otimes \mc{L}_\ve^{(2)} \otimes \mc{L}_\ve^{\otimes s-i-1},\\
&\check{R}_s(\mc{B}_\ve^{\ot s}) \subset \mc{B}_\ve^{\otimes i-1}\otimes \mc{B}_\ve^{(2)} \otimes \mc{B}_\ve^{\otimes s-i-1}.
\end{split}
\end{equation*}
Therefore $\check{R}_s(\mc{B}_\ve^{\ot s})$ is spanned by $\mc{B}_\ve^{(s)}$, where
\begin{equation*}
\mc{B}_\ve^{(s)}=\left\{\,| \bm_1 \rangle\ot \dots\ot | \bm_s \rangle \!\!\!\pmod{q\mc{L}_\ve^{\ot s}}
\,\Big|\,|\bm_j\rangle \ot |\bm_{j+1}\rangle \in \mc{B}_\ve^{(2)}\ (1\leq j\leq s-2) \,\right\}.
\end{equation*}
By Proposition \ref{prop:crystal of level 2}, the set
\begin{equation*}
\left\{\,T({\bf m}_1,\dots,{\bf m}_s)\,\Big|\,| \bm_1 \rangle\ot \dots\ot | \bm_s\rangle \in \mc{B}_\ve^{(s)}\,\right\}
\end{equation*}
is equal to the set of semistandard tableau of shape $\la^\pi$ where $\la=(|{\bf m}_s|\geq \dots \geq |{\bf m}_1|)$.
Hence
\begin{equation}\label{eq:character of Ws}
{\rm ch}\W_\ve^{(s)} =\sum_{\substack{\la \in \cP_\ve \\ \ell(\la)\leq s}}s_{\la}(x_1,\dots,x_n).
\end{equation}
Let $V_0^{(s)}$ be the $U_q(C_n)$-submodule of  $\W_\ve^{(s)}$ generated $|\varsigma(\ve)\be_n \rangle^{\ot s}$. The classical limit $\ov{V_0^{(s)}}$ of $V_0^{(s)}$ is a highest weight $U(C_n)$-module with highest weight \[\Lambda^{(s)}:=-s(\frac{1}{2}+\varsigma(\ve))\varpi_n.\] 

On the other hand, the ($U(A_{n-1})$-)character of the irreducible highest weight $U(C_n)$-module with highest weight $\Lambda^{(s)}$, say $V(\Lambda^{(s)})$, is also equal to \eqref{eq:character of Ws} by \cite[Theorem 6.1]{K18}.
Since $V(\Lambda^{(s)})$ is a quotient of $\ov{V_0^{(s)}}$, we conclude that
\begin{equation*}
{\rm ch}\W_\ve^{(s)}={\rm ch}V_0^{(s)}={\rm ch}\ov{V_0^{(s)}}={\rm ch}V(\Lambda^{(s)}).
\end{equation*}
In particular, $V_0^{(s)}$ is an irreducible $U_q(C_n)$-module and hence so is $\W_\ve^{(s)} = V_0^{(s)}$.
This completes the proof.
\qed

\subsection{Type $C^{(2)}(n+1)$}

Next, let us prove that $\W^{(s)}$ is an irreducible $U_q(osp_{1|2n})$-module and compute its character. The proof is similar to that of Theorem \ref{thm:main-1} for $U_q(C_n^{(1)})$. So we give a sketch of the proof and leave the details to the reader. 

As usual, we realize the weight lattice for $U_q(osp_{1|2n})$ in $\bigoplus_{i=1}^n\mathbb{R}\be_i$ equipped with the standard symmetric bilinear form such that $(\be_i,\be_j)=\delta_{ij}$.  
The simple roots $\alpha_i$ $(i\in I\setminus\{0\})$ are given by $\boldsymbol{\alpha}_i=\be_{i+1}-\be_i$ for $1\leq i\leq n-1$ and $\boldsymbol{\alpha}_n=-\be_n$, and $\varpi_n=-\hf(\be_1+\dots+\be_n)$.

We first consider $\W^{(2)}$.
By \eqref{eq:spectral decomp-2}, we have
\begin{equation}
\W^{(2)} = V_0= U_q(osp_{1|2n})\left(|{\bf 0}\rangle\ot |{\bf 0}\rangle\right),
\end{equation}
which is an irreducible representation of $U_q(osp_{1|2n})$ and hence of $U_q(C^{(1)}(n+1))$.
By similar arguments as in Proposition \ref{prop:ch of level 2}, we have the following.
\begin{prop}\label{prop:ch of W^2}
We have
\begin{equation*}
{\rm ch}\W^{(2)}={\rm ch}V_0 = \sum_{\substack{\la \in\cP \\ \ell(\la)\leq 2}} s_{\la}(x_1,\dots,x_n).
\end{equation*}
\end{prop}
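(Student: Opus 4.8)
The plan is to mirror the proof of Proposition \ref{prop:ch of level 2}, working throughout with the restriction to $U_q(osp_{1|2n})$, since the character is taken over $U_q(A_{n-1})$ and depends only on the $U_q(osp_{1|2n})$-module structure. First I would form the $A$-lattice $(\W\ot\W)_A$, the $A$-span of the vectors $\ket{\bm}\ot\ket{\bm'}$ with $A=\Z[q^{\hf},q^{-\hf}]$, and observe—exactly as for a single tensor factor in the paragraph preceding Lemma \ref{lem:classical limit-2}—that it is stable under $e_i$, $f_i$, $k_i$ and $\{k_i\}$ for $i\in I\setminus\{0\}$. Passing to the classical limit $\ov{\W\ot\W}=(\W\ot\W)_A\ot_A\C$ yields a $U(osp_{1|2n})$-module, and by the same reasoning as in the $C_n^{(1)}$ case one obtains $\ov{\W\ot\W}\cong\ov{\W}\ot\ov{\W}$ as $U(osp_{1|2n})$-modules.

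Next I would invoke super duality. By Lemma \ref{lem:classical limit-2}, $\ov{\W}$ is the irreducible highest weight oscillator module of highest weight $-\varpi_n$, which lies in the semisimple category of $U(osp_{1|2n})=U(B(0,n))$-modules arising under the $(B_\infty,B(0,\infty))$ super duality of \cite{CLW} (see \cite[Section 5.4]{K12} with $m=0$). This category is closed under tensor products, so $\ov{\W}\ot\ov{\W}$ is semisimple. Consequently the submodule $\ov{V_0}$ generated by $(\ket{{\bf 0}}\ot\ket{{\bf 0}})\ot 1$ is the irreducible highest weight $U(osp_{1|2n})$-module of highest weight $-2\varpi_n$, whose character as a $U(A_{n-1})$-module is given by \cite[Theorem 6.1]{K18} as the multiplicity-free sum $\sum_{\la\in\cP,\,\ell(\la)\le 2}s_{\la}(x_1,\dots,x_n)$.

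Finally, since $\W^{(2)}=V_0=U_q(osp_{1|2n})(\ket{{\bf 0}}\ot\ket{{\bf 0}})$ by \eqref{eq:spectral decomp-2} and the classical limit of $V_0$ is precisely $\ov{V_0}$, the character of $\W^{(2)}$ equals that of $\ov{V_0}$, yielding the claimed formula.

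I would expect the main obstacle to be verifying that the orthosymplectic oscillator module $\ov{\W}$ genuinely sits inside the semisimple, tensor-closed category supplied by super duality—that is, confirming that the $(B_\infty,B(0,\infty))$ branch of \cite{CLW} applies to the finite-rank $osp_{1|2n}$ here (via the appropriate truncation functor) and that the relevant semisimplicity statements transfer verbatim, just as they do on the symplectic side for $C_n^{(1)}$. The remaining ingredients—stability of the $A$-lattice, the tensor identification of classical limits, and reading off the character from \cite[Theorem 6.1]{K18}—are routine analogues of the $C_n^{(1)}$ argument and require no new idea.
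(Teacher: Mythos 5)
Your proposal is correct and follows exactly the route the paper intends: the paper's own proof of this proposition is literally the remark ``By similar arguments as in Proposition \ref{prop:ch of level 2}, we have the following,'' and your argument is precisely that adaptation — $A$-lattice with $A=\Z[q^{\hf},q^{-\hf}]$, classical limit $\ov{\W\ot\W}\cong\ov{\W}\ot\ov{\W}$, semisimplicity via the $(B_\infty,B(0,\infty))$ super duality category, identification of $\ov{V_0}$ as the irreducible module of highest weight $-2\varpi_n$, and the character from \cite[Theorem 6.1]{K18}. No gaps.
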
 

\begin{lem}
For $l\in \Z_{\ge 0}$, let ${\bf v}_l$ be the vector in \eqref{eq:maximal vector of type A}. Then ${\bf v}_l$ is a $U_q(A_{n-1})$-highest weight vector in $\W^{(2)}$, and ${\bf v}_l \equiv | l\be_n \rangle \ot | l\be_{n-1} \rangle \pmod{q^\hf\mc{L}^{\ot 2}}$.
\end{lem}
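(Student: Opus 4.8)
The plan is to mirror the proof of Lemma~\ref{lem:v_l}, since the three assertions hidden in the statement rest on exactly the same three ingredients as in type $C_n^{(1)}$. First I would observe that the generators $e_i,f_i,k_i^{\pm1}$ for $1\le i\le n-1$ act on $\W$ by formulas identical to those in type $C_n^{(1)}$ (compare Propositions~\ref{prop:q-osc for C_n^{(1)}} and~\ref{prop:q-osc for C^{(2)}(n+1)}), so the two $U_q(A_{n-1})$-module structures on $\W$ literally coincide. Since ${\bf v}_l$ in \eqref{eq:maximal vector of type A} is built only from these components, the computation $e_i{\bf v}_l=0$ for $1\le i\le n-1$ is verbatim the one already carried out in Lemma~\ref{lem:v_l}; hence ${\bf v}_l$ is a $U_q(A_{n-1})$-highest weight vector with no additional work.

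Next I would show ${\bf v}_l\in\W^{(2)}$ by a multiplicity argument. Let $S_l$ be the $U_q(A_{n-1})$-submodule generated by ${\bf v}_l$; it is irreducible with character $s_{(l^2)}(x_1,\dots,x_n)$. Starting from ${\rm ch}\,\W=\sum_{l\ge0}s_{(l)}$ and the Pieri rule $s_{(a)}s_{(b)}=\sum_{j=0}^{\min(a,b)}s_{(a+b-j,j)}$, a short count shows that $s_{(l^2)}$ occurs in ${\rm ch}\,\W^{\ot2}$ with multiplicity exactly one (the only admissible pair is $(a,b)=(l,l)$). On the other hand, Proposition~\ref{prop:ch of W^2} gives ${\rm ch}\,\W^{(2)}=\sum_{\ell(\la)\le2}s_\la$, so $\W^{(2)}$ already contains a copy of $S_l$; being the unique copy inside $\W^{\ot2}$, it must equal $S_l$, whence $S_l\subset\W^{(2)}$ and ${\bf v}_l\in\W^{(2)}$.

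Finally I would verify the congruence by inspecting \eqref{eq:maximal vector of type A} term by term. The $k=0$ summand is precisely $| l\be_n \rangle \ot | l\be_{n-1} \rangle$ with coefficient $1$. For $k\ge1$, I would invoke the estimate $\begin{bmatrix} l \\ k \end{bmatrix}\in q^{-k(l-k)}(1+qA_0)$ recorded in Section~\ref{sec:quantum superalgebra}, which after inversion yields
\[
q^{k(k-l+1)}\begin{bmatrix} l \\ k \end{bmatrix}^{-1}\in q^{k(k-l+1)+k(l-k)}(1+qA_0)=q^{k}(1+qA_0)\subset q^{\hf}A_0 .
\]
Thus every summand with $k\ge1$ lies in $q^\hf\mc{L}^{\ot2}$ and drops out modulo $q^\hf\mc{L}^{\ot2}$, giving ${\bf v}_l\equiv | l\be_n \rangle \ot | l\be_{n-1} \rangle$.

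I do not expect a genuine obstacle, as the whole argument is structurally identical to Lemma~\ref{lem:v_l}. The two points that need care are bookkeeping rather than difficulty: first, the multiplicity-one statement for $s_{(l^2)}$ must be re-derived from the full character ${\rm ch}\,\W=\sum_{l\ge0}s_{(l)}$ running over \emph{both} parities, rather than from the parity-split characters used in the $C_n^{(1)}$ case; and second, here the relevant lattice $\mc{L}$ is taken over $\Q(q^\hf)$, so the congruence is modulo $q^\hf\mc{L}^{\ot2}$, for which the coefficient estimate $q^k\in q^\hf A_0$ ($k\ge1$) is exactly what is required.
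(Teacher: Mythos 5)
Your proof is correct and takes essentially the same route as the paper's: the paper likewise reduces the highest-weight property to Lemma \ref{lem:v_l} (the $U_q(A_{n-1})$-actions coincide), obtains ${\bf v}_l\in\W^{(2)}$ from the multiplicity-one occurrence of $s_{(l^2)}$ in $({\rm ch}\,\W)^{2}$ together with Proposition \ref{prop:ch of W^2}, and deduces the congruence modulo $q^{\hf}\mc{L}^{\ot 2}$ from the same estimate $q^{k(k-l+1)}\begin{bmatrix} l \\ k \end{bmatrix}^{-1}\in q^{k}(1+qA_0)$. Your two cautionary remarks (summing over both parities, and working modulo $q^{\hf}$ rather than $q$) are exactly the points where the super case differs, and you handle them correctly.
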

\begin{proof} 
Since the actions of $e_i, f_i, k_i^{\pm 1}$ for $1\le i\le n-1$ are the same as in the case of $C_n^{(1)}$, it follows from Lemma \ref{lem:v_l} that ${\bf v}_l$ is a $U_q(A_{n-1})$-highest weight vector. Note that
\begin{equation}
{\rm ch}\W^{\ot 2}=({\rm ch}\W)^{2} = \sum_{\ell(\la)\leq 2}a_\la s_{\la}(x_1,\dots,x_n),
\end{equation}
where $a_\la=\la_1-\la_2$.
Then we have ${\bf v}_l\in\W^{(2)}$ by the same argument as in Lemma \ref{lem:v_l}.  \end{proof}\vskip 2mm

We have an analogue of Lemma \ref{lem:operator E}, which also proves that ${\bf v}_l\in \W^{(2)}$.
\begin{lem}\label{lem:operator E-2}
Set $\mc{E}=e_{n-2}\cdots e_1e_0$, where it is understood as 
$e_0$ when $n=2$. Then for $l\ge0$ we have
\[
(\mc{E}e_{n-1}\mc{E}-\frac{1}{[2]}e_{n-1}\mc{E}^2){\bf v}_l
=(-1)^lq^{-5/2}\frac{(1+q)}{[2]}[l+1]^2{\bf v}_{l+1}.
\]
\end{lem}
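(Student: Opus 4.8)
The plan is to follow the strategy of the proof of Lemma~\ref{lem:operator E}: first reduce the identity to the rank-two case $n=2$ by a commuting diagram, and then verify the $n=2$ case by a direct computation using the closed form \eqref{eq:maximal vector of type A} of ${\bf v}_l$ together with the action in Proposition~\ref{prop:q-osc for C^{(2)}(n+1)}.

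For the reduction, write $\W_n$ for the module to record the rank, let $\W_n^0\subset\W_n$ be the span of the vectors $\ket{0^{n-2},m_{n-1},m_n}$, and let $\pi\colon\W_n\to\W_2$ be the linear map $\pi(\ket{\bm})=\ket{m_{n-1},m_n}$. The key observation is that $e_0$ creates a single quantum in slot~$1$ and the successive operators $e_1,\dots,e_{n-2}$ hop it to slot~$n-1$, each intermediate step contributing a factor $[1]=1$; hence $\mc{E}=e_{n-2}\cdots e_1e_0$ preserves $\W_n^0$ and satisfies $\pi\circ\mc{E}=e_0\circ\pi$ for the rank-two $e_0$, the final hop producing the factor $[m_{n-1}+1]$ that matches $e_0\ket{m_{n-1},m_n}$. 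Similarly $e_{n-1}$ preserves $\W_n^0$ and corresponds to the rank-two $e_1$ under $\pi$. Since ${\bf v}_l\in(\W_n^0)^{\ot2}$, applying $\pi^{\ot2}$ intertwines $\mc{E}e_{n-1}\mc{E}-\frac{1}{[2]}e_{n-1}\mc{E}^2$ with $e_0e_1e_0-\frac{1}{[2]}e_1e_0^2$, so it suffices to treat $n=2$. Note the contrast with Lemma~\ref{lem:operator E}: here $e_0$ raises by a single $\be_1$, so plain operators $e_j$ (rather than $e_j^{(2)}$) suffice, and the weight $1/[2]$ in the Serre-type combination reflects the entries $a_{01}=-2$, $a_{10}=-1$ at the odd node~$0$ of $C^{(2)}(n+1)$.

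For $n=2$ I would substitute ${\bf v}_l=\sum_k c_k\ket{k,l-k}\ot\ket{l-k,k}$ with $c_k=(-1)^kq^{k(k-l+1)}\begin{bmatrix}l\\k\end{bmatrix}^{-1}$ and apply the coproduct \eqref{eq:comult-2}. The genuinely new feature relative to $C_n^{(1)}$ is that node~$0$ is odd, so $\Delta(e_0)=e_0\ot\sigma k_0^{-1}+1\ot e_0$ carries the sign operator $\sigma\ket{\bm}=(-1)^{|\bm|}\ket{\bm}$, whereas $\Delta(e_1)=e_1\ot k_1^{-1}+1\ot e_1$ is sign-free; these signs are the source of the $(-1)^l$ and of the odd factor $(1+q)$ in the target scalar. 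Exactly as in Lemma~\ref{lem:operator E}, both $e_0e_1e_0{\bf v}_l$ and $e_1e_0^2{\bf v}_l$ expand as a sum over $k$ of three tensor families, and I would simplify the coefficients using the same recursion $q^{l-2k-2}[l-k]c_{k+1}+[k+1]c_k=0$ (equivalently $e_1{\bf v}_l=0$).

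The main obstacle is precisely this rank-two bookkeeping: tracking the $(-1)^{|\bm|}$ signs together with the $q$-powers coming from $k_0^{-1}$ and $k_1^{-1}$, and then showing that in the combination $e_0e_1e_0-\frac{1}{[2]}e_1e_0^2$ the two outer families cancel while the middle family collapses---after applying the $c_k$-recursion---to $(-1)^lq^{-5/2}\frac{1+q}{[2]}[l+1]^2\,{\bf v}_{l+1}$. This is a careful but routine manipulation of $q$-integers, which I would organize in direct parallel with the displayed computation in the proof of Lemma~\ref{lem:operator E}.
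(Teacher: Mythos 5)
Your proposal is correct and follows exactly the route the paper intends: the paper states this lemma as the analogue of Lemma \ref{lem:operator E} and gives no separate proof, and your two-step plan (reduction to $n=2$ via $\pi^{\ot2}$ on $(\W_n^0)^{\ot2}$, then a direct rank-two expansion of ${\bf v}_l$ using the recursion $q^{l-2k-2}[l-k]c_{k+1}+[k+1]c_k=0$ coming from $e_{n-1}{\bf v}_l=0$, with the extra $\sigma$-signs from the odd node $0$) is precisely that analogue. The only caveat is that you sketch rather than execute the final rank-two bookkeeping, but every structural claim you make along the way (the intertwining $\pi\circ\mc{E}=e_0\circ\pi$, the survival of only the two extreme terms of $\Delta(\mc{E})$ on $(\W_n^0)^{\ot2}$, and the form of the coproducts at nodes $0$ and $n-1$) checks out against the module actions in Proposition \ref{prop:q-osc for C^{(2)}(n+1)}.
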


\begin{lem}\label{lem:v_{l,m} at q=0-2}
For $l, m\in \Z_{\geq 0}$, let
\begin{equation*}
{\bf v}_{l,m} = q_n^{\frac{m(m+4l+3)}{2}}f_n^{(m)} {\bf v}_l.
\end{equation*}
Then ${\bf v}_{l,m}$ is a $U_q(A_{n-1})$-highest weight vector in $\W^{(2)}$, and 
\begin{equation*}
{\bf v}_{l,m} \equiv | l\be_n \rangle \ot | l\be_{n-1} +m\be_n\rangle \pmod{q^\hf\mc{L}^{\ot 2}}.
\end{equation*} 
\end{lem}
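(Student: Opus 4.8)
The plan is to prove the two assertions in turn, following closely the computation that established the type $C_n^{(1)}$ analogue, Lemma~\ref{lem:v_{l,m} at q=0}.

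First I would check that ${\bf v}_{l,m}$ is a $U_q(A_{n-1})$-highest weight vector. For $1\le j\le n-1$ we have $p(j)=0$ and $j\ne n$, so the defining relation of $U_q(C^{(2)}(n+1))^\sigma$ gives $e_jf_n=f_ne_j$; since $\Delta$ is an algebra homomorphism, $\Delta(e_j)$ commutes with $\Delta(f_n^{(m)})$. Because ${\bf v}_l$ is already $U_q(A_{n-1})$-highest weight, $\Delta(e_j){\bf v}_{l,m}$ is a scalar times $\Delta(f_n^{(m)})\Delta(e_j){\bf v}_l=0$, which is what we want.

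For the reduction modulo $q^\hf\mc{L}^{\ot2}$ I would repeat the degree computation of Lemma~\ref{lem:v_{l,m} at q=0}, which here is in fact simpler. By Proposition~\ref{prop:crystal base of W-2} the operator $\tf_n$ shifts by a single box, $\tf_n\ket{\mb}\equiv\ket{\mb+\eb_n}\pmod{q^\hf\mc{L}}$, so there is no even/odd case split: each term $\ket{c\be_{n-1}+(l-c)\be_n}\ot\ket{(l-c)\be_{n-1}+c\be_n}$ occurring in \eqref{eq:maximal vector of type A} equals $\tf_n^{\,l-c}\ket{c\be_{n-1}}\ot\tf_n^{\,c}\ket{(l-c)\be_{n-1}}$ modulo $q^\hf\mc{L}$. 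I would then expand $\Delta(f_n^{(m)})$ from the coproduct \eqref{eq:comult-2}, $\Delta(f_n)=f_n\ot1+\sigma k_n\ot f_n$: setting $A=f_n\ot1$, $B=\sigma k_n\ot f_n$ and using $\sigma f_n=-f_n\sigma$ and $k_nf_n=q_n^{-2}f_nk_n$ (with $q_n=q^{\hf}$), one obtains a relation $AB=-q_n^{2}BA$; hence $\Delta(f_n)^m$ is a signed $q_n$-binomial sum of the monomials $A^{m-k}B^k$, and $\Delta(f_n^{(m)})$ becomes a sum of terms $(-1)^{(m-k)k}q_n^{2k(m-k)}\,c_{m,k}\,\sigma^kk_n^kf_n^{(m-k)}\ot f_n^{(k)}$ with explicit scalars $c_{m,k}$ built from the $q_n$-factorials. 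Applying this to each summand of ${\bf v}_l$ and re-expressing the output through $\tf_n$-strings, I would compute the $q^{\hf}$-degree of the coefficient of each resulting crystal vector, incorporating the degree of the coefficient $(-1)^cq^{c(c-l+1)}\binom{l}{c}^{-1}\in q^c(1+qA_0)$ of ${\bf v}_l$. As in \eqref{eq:degree when c is even}, the outcome should be that this degree is nonnegative and vanishes exactly when $c=0$ and the coproduct index equals $m$; every other contribution then lies in $q^\hf\mc{L}^{\ot2}$. The prefactor $q_n^{m(m+4l+3)/2}$ is chosen precisely so that the one surviving term becomes $\ket{l\be_n}\ot\ket{l\be_{n-1}+m\be_n}$, using the preceding lemma for the leading behaviour ${\bf v}_l\equiv\ket{l\be_n}\ot\ket{l\be_{n-1}}$.

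The step I expect to be most delicate is the sign bookkeeping. The factors $\sigma^k$ produced when $f_n$ is moved past $\sigma$ in $\Delta(f_n)^m$, the signs coming from the $\sigma$-twisted action on $\W\ot\W$, and the super $q$-integers $[m]_n=[m]_{q^{\hf},-1}$ appearing in the denominators all contribute, and one must verify that they combine so that the surviving coefficient is exactly $1$ rather than a sign. Proposition~\ref{prop:commutation} and the asymptotics $[m]_{q,\e}!\in q^{-m(m-1)/2}(1+A_0)$ are the tools I would use to control these powers and signs; once the degree argument isolates the single term with $c=0$ and index $m$ and the signs are shown to cancel, the lemma follows.
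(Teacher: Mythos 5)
Your proposal follows the same route as the paper: the highest-weight claim via the commutativity of $e_j$ ($1\le j\le n-1$) with $f_n$, and the congruence via expanding $\Delta(f_n^{(m)})$ on each summand $\tf_n^{l-c}|c\be_{n-1}\rangle\ot\tf_n^{c}|(l-c)\be_{n-1}\rangle$ of ${\bf v}_l$, rewriting the output as $\tf_n$-strings, and checking that after multiplying by $q_n^{m(m+4l+3)/2}$ the coefficient of the $(c,k)$-term has strictly positive $q^{\hf}$-degree except for $c=0$, $k=m$ (the paper's computation yields exactly your predicted exponent $(2c+1)(m-k)$, to be combined with the $q^{c}$ coming from the coefficients of ${\bf v}_l$). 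The absence of the even/odd case split, the role of the $\sigma^k k_n^k\ot f_n^{(k)}$ expansion, and the normalization by the prefactor are all as in the paper, so the proposal is correct and essentially identical in method.
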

\begin{proof} 
Since $e_j$ for $1\leq j\leq n-1$ commutes with $f_n$, ${\bf v}_{l,m}$ is a $U_q(A_{n-1})$-highest weight vector in $\W_\e^{(2)}$.

For $0\leq c\leq l$, put $a=l-c$ and $b=c$.
Let
\begin{equation*}
u_1 = |c\be_{n-1}\rangle,\quad u_2=|(l-c)\be_{n-1}\rangle.
\end{equation*}
By \eqref{eq:comult-2}, we have
{\allowdisplaybreaks
\begin{align*}
&\Delta(f_n^{(m)})(\tf_n^a u_1 \ot \tf_n^b u_2) \\
&=\sum_{k=0}^m\sigma^{k}q_n^{-k(m-k)}f_n^{(m-k)}k_n^k(\tf_n^a u_1)\ot f_n^{(k)}(\tf_n^b u_2)\\
&=\sum_{k=0}^m\sigma^{k}q_n^{-k(m-k)-(1+2a)k}f_n^{(m-k)}\tf_n^{a}u_1\ot f_n^{(k)}\tf_n^{b}u_2\\
&=\sum_{k=0}^m\sigma^{k}q_n^{-k(m-k) -(1+2a)k+\frac{a(a+1)}{2}+\frac{b(b+1)}{2}}f_n^{(m-k)}f_n^{(a)}u_1 \ot f_n^{(k)}f_n^{(b)}u_2 \\
&=\sum_{k=0}^m\sigma^{k}q_n^{-k(m-k) -(1+2a)k+\frac{a(a+1)}{2}+\frac{b(b+1)}{2}}
\begin{bmatrix} m-k+a \\ a \end{bmatrix}_n\begin{bmatrix} k+b \\ b \end{bmatrix}_n
f_n^{(m-k+a)}u_1 \ot f_n^{(k+b)}u_2 \\
&=\sum_{k=0}^m\sigma^{k}q_n^{-k(m-k) -(1+2a)k+\frac{a(a+1)}{2}+\frac{b(b+1)}{2}-\frac{(m-k+a)(m-k+a+1)}{2}-\frac{(k+b)(k+b+1)}{2}} \\ & \hskip 5cm \begin{bmatrix} m-k+a \\ a \end{bmatrix}_n\begin{bmatrix} k+b \\ b \end{bmatrix}_n
\tf_n^{m-k+a}u_1 \ot \tf_n^{k+b}u_2 \\
&=\sum_{k=0}^m\sigma^{k}f_{a,b}(q)\tf_n^{m-k+a}u_1 \ot \tf_n^{k+b}u_2.
\end{align*}}
Multiplying $q_n^{\frac{m(m+4l+3)}{2}}$ on both sides, it is straightforward to see that \[q_n^{\frac{m(m+4l+3)}{2}}f_{a,b}(q)\in q_n^{(2c+1)(m-k)}(1+q^{\hf}A_0).\]
This implies that ${\bf v}_{l,m} \equiv | l\be_n \rangle \ot | l\be_{n-1} +m\be_n\rangle \pmod{q^\hf\mc{L}^{\ot 2}}$.
\end{proof}

Now we define the pair $(\mc{L}^{(2)},\mc{B}^{(2)})$ as follows:
\begin{equation*}
\begin{split}
&\mc{L}^{(2)}=
\sum_{l_1, l_2\in \Z_{\geq 0}}
\sum_{\substack{r\geq 0\\ 1\leq i_1,\ldots,i_r\leq n-1}}
A_0 \tf_{i_1}\ldots \tf_{i_r} {\bf v}_{l_1,l_2},\\
&\mc{B}^{(2)}=
\left\{\,\tf_{i_1}\ldots \tf_{i_r} {\bf v}_{l_1,l_2} \!\!\!\pmod{q^\hf\mc{L}^{(2)}}\,\Big|\, l_1, l_2\in \Z_{\geq 0},\ r\geq 0,\ 1\leq i_1,\ldots,i_r\leq n-1\,\right\}\setminus\{0\}.
\end{split}
\end{equation*}

\begin{prop}\label{prop:crystal base of W^{(2)}}
We have
\begin{itemize}
\item[(1)] $\mc{L}^{(2)}\subset \mc{L}^{\ot 2}$ and $\mc{B}^{(2)}\subset \mc{B}^{\ot 2}$,

\item[(2)] $(\mc{L}^{(2)},\mc{B}^{(2)})$ is a $U_q(A_{n-1})$-crystal base of $\W^{(2)}$, where
\begin{equation*}
\mc{B}^{(2)} =\left\{\,|{\bf m}_1\rangle \ot |{\bf m}_2\rangle\!\!\!\pmod{q^\hf\mc{L}^{(2)}} \,\Big|\,|{\bf m}_1|\leq |{\bf m}_2|,\ \text{\rm $T({\bf m}_1,{\bf m}_2)$ is semistandard}\,\right\}.
\end{equation*}
\end{itemize}
\end{prop}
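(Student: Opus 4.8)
The plan is to transcribe the type $C_n^{(1)}$ argument (Propositions \ref{prop:crystal base of W^{(2)}_+} and \ref{prop:crystal of level 2}) almost verbatim, the only structural change being that the node $n$ now acts in single steps rather than in pairs, so that the labels $l_1,l_2$ run over all of $\Z_{\ge 0}$ with no parity restriction. What makes the type $A$ crystal machinery directly applicable is that the nodes $1,\dots,n-1$ are bosonic (i.e.\ $p(i)=0$), so the $\sigma$-twisted coproduct \eqref{eq:comult-2} restricts to the ordinary coproduct on $U_q(A_{n-1})$ and Kashiwara's theory applies without any super modification. Thus all the genuinely new input for the super case, namely the leading-term identity for the highest weight vectors, is already supplied by Lemma \ref{lem:v_{l,m} at q=0-2}.

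For part (1), I would first invoke Proposition \ref{prop:crystal base of W-2}, which gives that $(\mc{L},\mc{B})$ is a crystal base of $\W$; hence $\mc{L}^{\ot 2}$ is a $U_q(A_{n-1})$-crystal lattice of $\W^{\ot 2}$, stable under $\tf_i$ for $1\le i\le n-1$. Lemma \ref{lem:v_{l,m} at q=0-2} places ${\bf v}_{l_1,l_2}\in\mc{L}^{\ot 2}$ and identifies its image modulo $q^\hf\mc{L}^{\ot 2}$, so every vector $\tf_{i_1}\cdots\tf_{i_r}{\bf v}_{l_1,l_2}$ again lies in $\mc{L}^{\ot 2}$. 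This yields $\mc{L}^{(2)}\subset\mc{L}^{\ot 2}$, and reducing modulo $q^\hf\mc{L}^{\ot 2}$ sends each such generator into $\mc{B}^{\ot 2}\cup\{0\}$, whence $\mc{B}^{(2)}\subset\mc{B}^{\ot 2}$ after discarding $0$.

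For part (2), by construction $(\mc{L}^{(2)},\mc{B}^{(2)})$ is a $U_q(A_{n-1})$-crystal base of the submodule $V$ of $\W^{(2)}$ generated by the vectors ${\bf v}_{l_1,l_2}$. Each generator ${\bf v}_{l_1,l_2}$ is a $U_q(A_{n-1})$-highest weight vector corresponding, by Lemma \ref{lem:v_{l,m} at q=0-2}, to the pair $(l_1\be_n,\,l_1\be_{n-1}+l_2\be_n)$, so the $U_q(A_{n-1})$-submodule it generates has character $s_{(l_1+l_2,l_1)}(x_1,\dots,x_n)$; summing over $l_1,l_2\in\Z_{\ge 0}$ gives $\sum_{\ell(\la)\le 2}s_\la(x_1,\dots,x_n)$, which by Proposition \ref{prop:ch of W^2} equals ${\rm ch}\,\W^{(2)}$. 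Hence $V=\W^{(2)}$ and $(\mc{L}^{(2)},\mc{B}^{(2)})$ is a crystal base of all of $\W^{(2)}$. For the explicit description of $\mc{B}^{(2)}$ I would mirror Proposition \ref{prop:crystal of level 2}: identifying ${\bf v}_{l_1,l_2}$ with the highest weight row-semistandard tableau of shape $(l_1+l_2,l_1)^\pi$, the connected component $\{\tf_{i_1}\cdots\tf_{i_r}{\bf v}_{l_1,l_2}\}$ traverses exactly the semistandard tableaux $T({\bf m}_1,{\bf m}_2)$ of that shape, giving the claimed set $\{\,|{\bf m}_1\rangle\ot|{\bf m}_2\rangle\mid |{\bf m}_1|\le|{\bf m}_2|,\ T({\bf m}_1,{\bf m}_2)\text{ semistandard}\,\}$.

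The proof presents no genuinely new obstacle beyond what is already prepared above; the single point deserving a line of care is the completeness of the generating set, i.e.\ that $\{{\bf v}_{l_1,l_2}\mid l_1,l_2\in\Z_{\ge 0}\}$ exhausts the $U_q(A_{n-1})$-highest weight vectors of $\W^{(2)}$. This again follows by matching the multiplicity of each $s_{(l_1+l_2,l_1)}$ against the character in Proposition \ref{prop:ch of W^2}, exactly as in the corollary following Lemma \ref{lem:v_{l,m} at q=0} in the type $C_n^{(1)}$ case.
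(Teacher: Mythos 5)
Your proposal is correct and follows exactly the route the paper takes: the paper's proof of this proposition consists of the single line that it "follows from the same arguments as in Propositions \ref{prop:crystal base of W^{(2)}_+} and \ref{prop:crystal of level 2}," and you have faithfully transcribed those arguments, with Lemma \ref{lem:v_{l,m} at q=0-2} supplying the leading-term identity for the highest weight vectors and Proposition \ref{prop:ch of W^2} supplying the character match that forces $V=\W^{(2)}$. Your remark that the bosonic nature of the nodes $1,\dots,n-1$ is what lets the ordinary $U_q(A_{n-1})$-crystal theory apply is a correct and worthwhile clarification of a point the paper leaves implicit.
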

\begin{proof}
It follows from  the same arguments as in Propositions \ref{prop:crystal base of W^{(2)}_+} and \ref{prop:crystal of level 2}.
\end{proof}

\begin{cor}\label{cor:lattice of KR level 2-2}
We have $\mc{L}^{(2)} = \mc{L}^{\ot 2}\cap \W^{(2)}$.
\end{cor}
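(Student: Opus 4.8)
The plan is to mirror verbatim the proof of Corollary \ref{cor:lattice of KR level 2} for type $C_n^{(1)}$, working now over $\Q(q^\hf)$ and reducing modulo $q^\hf\mc{L}$ throughout. The inclusion $\mc{L}^{(2)}\subseteq \mc{L}^{\ot 2}\cap\W^{(2)}$ is immediate from Proposition \ref{prop:crystal base of W^{(2)}}, which already exhibits $(\mc{L}^{(2)},\mc{B}^{(2)})$ as a $U_q(A_{n-1})$-crystal base sitting inside $\mc{L}^{\ot 2}$. The substance is the reverse inclusion, and for this I first need the $C^{(2)}(n+1)$-analogue of the explicit triangular basis \eqref{eq:basis of level 2 KR}.

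First I would reuse the highest weight vectors ${\bf v}_{l_1,l_2}$ of Lemma \ref{lem:v_{l,m} at q=0-2}, now with $l_1,l_2$ ranging over all of $\Z_{\ge 0}$ and $\la=(l_1+l_2,l_1)\in\cP$, and for each $T\in SST(\la^\pi)$ define ${\bf v}_T=\tf_{i_1}\cdots\tf_{i_r}{\bf v}_{l_1,l_2}$ via a sequence $i_1,\dots,i_r\in I\setminus\{0,n\}$ with $T=T(\tf_{i_1}\cdots\tf_{i_r}{\bf v}_{l_1,l_2})$. By Proposition \ref{prop:crystal base of W^{(2)}}, the family $\{{\bf v}_T\}$ indexed by $\la\in\cP$ with $\ell(\la)\le 2$ and $T\in SST(\la^\pi)$ is a $\Q(q^\hf)$-basis of $\W^{(2)}$. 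The key step is then the $C^{(2)}(n+1)$-analogue of Lemma \ref{eq:expansion of basis vector}: for $T\in SST(\la^\pi)$ with $P({\bf m}_1,{\bf m}_2)=T$,
\[
{\bf v}_T=|{\bf m}_1\rangle\ot|{\bf m}_2\rangle+\sum_{{\bf m}_1',{\bf m}_2'}c_{{\bf m}_1',{\bf m}_2'}\,|{\bf m}_1'\rangle\ot|{\bf m}_2'\rangle,
\]
where each $P({\bf m}_1',{\bf m}_2')$ has shape $\mu^\pi$ with $\mu\vartriangleright\la$ and $\mu\neq\la$, and $c_{{\bf m}_1',{\bf m}_2'}\in q^\hf A_0$. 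The two ingredients are exactly as before: the triangular expansion of ${\bf v}_{l_1,l_2}$, read off from the computation in the proof of Lemma \ref{lem:v_{l,m} at q=0-2} (leading term $|l_1\be_n\rangle\ot|l_1\be_{n-1}+l_2\be_n\rangle$ plus lower order terms in $q^\hf A_0$ whose insertion shapes strictly dominate $\la$), combined with the crystal tensor product rule applied to $\tf_{i_1}\cdots\tf_{i_r}$, which preserves this dominance. I expect this triangularity to be the only nontrivial point, and it is a routine translation of the $C_n^{(1)}$ argument, with $q$ replaced by $q^\hf$ and the parity constraints ($\cP_\ve$, $l_2\in 2\Z_{\ge0}$) simply dropped.

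With the triangular basis established, the conclusion is identical to Corollary \ref{cor:lattice of KR level 2}. Given $v\in\mc{L}^{\ot 2}\cap\W^{(2)}$, expand $v=\sum_T c_T{\bf v}_T$ with $c_T\in\Q(q^\hf)$, and argue by induction on the dominance order so that one may assume all $T$ occurring have a common shape $\la$. Fixing $T$ with $c_T\neq 0$ and its leading pair $|{\bf m}_1\rangle\ot|{\bf m}_2\rangle$ (where $P({\bf m}_1,{\bf m}_2)=T$), the coefficient of $|{\bf m}_1\rangle\ot|{\bf m}_2\rangle$ in $v$ is exactly $c_T$, since every other basis vector ${\bf v}_{T'}$ of shape $\la$ has a distinct leading pair and all remaining terms carry strictly dominant shape. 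As $v\in\mc{L}^{\ot 2}$ and $|{\bf m}_1\rangle\ot|{\bf m}_2\rangle$ belongs to the natural $A_0$-basis of $\mc{L}^{\ot 2}$, this coefficient lies in $A_0$; hence $c_T\in A_0$ for all $T$ and $v\in\mc{L}^{(2)}$. The main obstacle is thus concentrated entirely in verifying the dominance-triangularity displayed above, which follows from Lemma \ref{lem:v_{l,m} at q=0-2} together with the crystal tensor product rule.
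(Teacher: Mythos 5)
Your proposal is correct and follows essentially the same route as the paper: the paper's own proof simply notes that Lemma \ref{eq:expansion of basis vector} carries over to $\W^{(2)}$ via Proposition \ref{prop:crystal base of W^{(2)}} and then repeats the coefficient-extraction argument of Corollary \ref{cor:lattice of KR level 2}, which is exactly the dominance-triangularity argument you spell out (with $q$ replaced by $q^{1/2}$ and the parity constraints dropped).
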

\begin{proof} By Proposition \ref{prop:crystal base of W^{(2)}}, one can check that
Lemma \ref{eq:expansion of basis vector} also holds for $\W^{(2)}$, which implies $\mc{L}^{(2)} = \mc{L}^{\ot 2}\cap \W^{(2)}$.
\end{proof}

\begin{thm}\label{thm:main-2}
For $s\geq 2$, $\W^{(s)}$ is irreducible as a $U_q(osp_{1|2n})$-module, and its character is given by
\begin{equation*}
{\rm ch}\W^{(s)} =\sum_{\substack{\la \in \cP \\ \ell(\la)\leq s}}s_{\la}(x_1,\dots,x_n).
\end{equation*}
\end{thm}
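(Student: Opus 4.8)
The plan is to follow the proof of Theorem~\ref{thm:main-1} essentially verbatim, replacing each ingredient used there for $U_q(C_n^{(1)})$ by the $U_q(C^{(2)}(n+1))$-analogue, all of which are already available: Proposition~\ref{prop:ch of W^2} for the character of $\W^{(2)}$, Lemma~\ref{lem:v_{l,m} at q=0-2} for the $U_q(A_{n-1})$-highest weight vectors ${\bf v}_{l,m}$, Proposition~\ref{prop:crystal base of W^{(2)}} for the $U_q(A_{n-1})$-crystal base $(\mc{L}^{(2)},\mc{B}^{(2)})$ together with its semistandard-tableau description, and Corollary~\ref{cor:lattice of KR level 2-2} for the lattice intersection $\mc{L}^{(2)}=\mc{L}^{\ot 2}\cap\W^{(2)}$. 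As in the $C_n^{(1)}$ case the two assertions are proved simultaneously: the character comes out of the fusion step, and irreducibility then follows by matching this character with that of an irreducible highest weight $U(osp_{1|2n})$-module.

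First I would carry out the fusion step. Using complete reducibility of $\W^{\ot 2}$ over $U_q(A_{n-1})$ together with Corollary~\ref{cor:lattice of KR level 2-2}, I split $\mc{L}^{\ot 2}=\mc{L}^{(2)}\oplus\mc{M}^{(2)}$ as in \eqref{eq:decomp of L2}, so that $\check{R}_2(\mc{L}^{\ot 2})\subset\mc{L}^{(2)}$ and $\check{R}_2|_{q^{\hf}=0}(\mc{B}^{\ot 2})\subset\mc{B}^{(2)}$, the analogue of \eqref{eq:image of L^(2)}. The fact to be checked is the invariance $\check{R}(q^{-a})(\mc{L}^{\ot 2})\subset\mc{L}^{\ot 2}$ for $a\in\Z_{>0}$, which I read off from the spectral decomposition \eqref{eq:spectral decomp-2} by verifying that each coefficient $\prod_{j=1}^{l}\tfrac{1+(-q)^{j}z}{z+(-q)^{j}}$ lies in $A_0$ at $z=q^{-a}$. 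Since $d=\tfrac12$ the fusion points are $x_i=q^{(2i-s-1)/2}$, so that $x_i/x_j=q^{\,i-j}$, and the factorisation of $\check{R}_s$ into simple reflections then yields $\check{R}_s(\mc{B}^{\ot s})\subset \mc{B}^{\otimes i-1}\otimes\mc{B}^{(2)}\otimes\mc{B}^{\otimes s-i-1}$ for each $i$. Hence $\check{R}_s(\mc{B}^{\ot s})$ is spanned by those $\ket{{\bf m}_1}\ot\cdots\ot\ket{{\bf m}_s}$ all of whose consecutive pairs lie in $\mc{B}^{(2)}$; by the description in Proposition~\ref{prop:crystal base of W^{(2)}} these are exactly the semistandard $T({\bf m}_1,\dots,{\bf m}_s)$ of shape $\la^\pi$ with $\la=(|{\bf m}_s|\ge\cdots\ge|{\bf m}_1|)$, and counting by shape gives ${\rm ch}\W^{(s)}=\sum_{\la\in\cP,\ \ell(\la)\le s}s_\la(x_1,\dots,x_n)$.

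For irreducibility I would let $V_0^{(s)}$ be the $U_q(osp_{1|2n})$-submodule of $\W^{(s)}$ generated by $\ket{{\bf 0}}^{\ot s}$. By Lemma~\ref{lem:classical limit-2} its classical limit $\ov{V_0^{(s)}}$ is a highest weight $U(osp_{1|2n})$-module of highest weight $-s\varpi_n$, so the irreducible module $V(-s\varpi_n)$ is a quotient of $\ov{V_0^{(s)}}$. The character of $V(-s\varpi_n)$ is again $\sum_{\ell(\la)\le s}s_\la$ by the oscillator character formula for $osp_{1|2n}$ (cf.~\cite{CKW,K18}); matching it with the character just computed forces ${\rm ch}\W^{(s)}={\rm ch}V_0^{(s)}={\rm ch}\ov{V_0^{(s)}}={\rm ch}V(-s\varpi_n)$, whence $\W^{(s)}=V_0^{(s)}$ is irreducible.

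The two points that demand genuine care rather than routine transcription are the lattice invariance $\check{R}(q^{-a})(\mc{L}^{\ot 2})\subset\mc{L}^{\ot 2}$ and the identification of the classical limit, the latter being the main obstacle. For the invariance one must track the signs in $(-q)^j$ and the $\sigma$-twist of the coproduct \eqref{eq:comult-2} (already visible in the factors $\sigma^k$ of Lemma~\ref{lem:v_{l,m} at q=0-2}) and check that they do not destroy regularity at $q^{\hf}=0$. For the classical limit one needs that the relevant category of $U(osp_{1|2n})$-modules is semisimple and closed under tensor products, which holds by super duality \cite{CLW} (see also \cite{K12}) exactly as in Proposition~\ref{prop:ch of level 2}, together with the $B(0,n)$ oscillator character formula from \cite{CKW,K18}.
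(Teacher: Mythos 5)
Your proposal is correct and follows essentially the same route as the paper, whose proof of this theorem consists precisely of the instruction to repeat the argument of Theorem \ref{thm:main-1} using Proposition \ref{prop:crystal base of W^{(2)}}, Corollary \ref{cor:lattice of KR level 2-2} and \cite[Theorem 6.1]{K18}. The details you fill in (the $A_0$-regularity of the coefficients in \eqref{eq:spectral decomp-2} at $z=q^{-a}$, the fusion points $x_i/x_j=q^{i-j}$ for $d=\tfrac12$, and the identification of $\ov{V_0^{(s)}}$ with the irreducible $U(osp_{1|2n})$-module of highest weight $-s\varpi_n$) are exactly the intended adaptations.
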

\begin{proof}
We may apply the same arguments as in Theorem \ref{thm:main-1} and the result in \cite[Theorem 6.1]{K18} by using Proposition \ref{prop:crystal base of W^{(2)}} and Corollary \ref{cor:lattice of KR level 2-2}.
\end{proof}

\begin{cor}
The character of $\W_\ve^{(s)}$ has a stable limit for $s\geq n$ as follows: 
\begin{equation*}
{\rm ch}\W^{(s)} 
=\sum_{\substack{\la \in \cP \\ \ell(\la)\leq n}}s_{\la}(x_1,\dots,x_n)
=\frac{1}{\prod_{1\leq i\leq n}(1-x_i)\prod_{1\leq i< j\leq n}(1-x_ix_j)}.
\end{equation*}
\end{cor}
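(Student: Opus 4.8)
The plan is to obtain both equalities directly from Theorem~\ref{thm:main-2}, which already gives ${\rm ch}\,\W^{(s)} = \sum_{\la\in\cP,\ \ell(\la)\leq s} s_\la(x_1,\dots,x_n)$. The first equality will be a stabilization statement, and the second a classical Littlewood identity, so the corollary contains essentially no new representation-theoretic content beyond Theorem~\ref{thm:main-2}.

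For the first equality I would use the elementary vanishing $s_\la(x_1,\dots,x_n)=0$ whenever $\ell(\la)>n$, since there are no semistandard tableaux of shape $\la$ on the alphabet $\{1,\dots,n\}$ once $\la$ has more than $n$ rows. Hence for every $s\geq n$ the constraint $\ell(\la)\leq s$ may be replaced by $\ell(\la)\leq n$ without changing the sum, so ${\rm ch}\,\W^{(s)}$ is independent of $s$ for $s\geq n$ and equals $\sum_{\la\in\cP,\ \ell(\la)\leq n} s_\la(x_1,\dots,x_n)$.

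For the second equality I would invoke the Littlewood identity $\sum_{\ell(\la)\leq n} s_\la(x_1,\dots,x_n)=\prod_{i=1}^n(1-x_i)^{-1}\prod_{1\leq i<j\leq n}(1-x_ix_j)^{-1}$. The most transparent self-contained route is the Burge (symmetric) variant of the RSK correspondence: nonnegative-integer symmetric $n\times n$ matrices $A=(a_{ij})$ are in bijection with single semistandard tableaux $P$ on $\{1,\dots,n\}$, under which the row-sum weight $\prod_i x_i^{\sum_j a_{ij}}$ matches the content weight of $P$. Summing over $P$ yields $\sum_{\ell(\la)\leq n} s_\la$, while summing over $A$ factorizes, each diagonal entry contributing $\sum_{a\geq 0}x_i^a=(1-x_i)^{-1}$ and each off-diagonal pair $a_{ij}=a_{ji}$ contributing $\sum_{a\geq 0}(x_ix_j)^a=(1-x_ix_j)^{-1}$. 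Alternatively one simply cites this identity from a standard reference on symmetric functions.

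There is no serious obstacle here; the only point meriting care is to confirm that it is precisely the unrestricted symmetric matrices (equivalently, the unrestricted partition sum) that produce this particular product, as opposed to the parity-restricted Littlewood identity $\sum_{\la\in\cP_+}s_\la=\prod_{i\leq j}(1-x_ix_j)^{-1}$ governing the type $C_n^{(1)}$ corollary. Matching the weight statistics on the two sides of the Burge correspondence correctly is what pins down the product on the right-hand side.
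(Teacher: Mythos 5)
Your proposal is correct and follows exactly the route the paper intends: the stabilization is the vanishing $s_\la(x_1,\dots,x_n)=0$ for $\ell(\la)>n$ applied to the sum from Theorem~\ref{thm:main-2}, and the product formula is the unrestricted Littlewood identity, which the paper itself invokes as "well-known" in the analogous corollary for type $B^{(1)}(0,n)$. Your care in distinguishing this identity from the even-partition variant $\sum_{\la\in\cP_+}s_\la=\prod_{i\leq j}(1-x_ix_j)^{-1}$ used in the type $C_n^{(1)}$ corollary is exactly the right point to check.
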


\subsection{Type $B^{(1)}(0,n)$}
For $\la\in \cP_+$ with $\ell(\la)\leq \min\{n,s/2\}$, we put
\begin{equation*}
\La^{(s)}_\la = -s\varpi_n + \sum_{i=1}^n\la_i\be_{n-i+1}.
\end{equation*}
Let $V(\La^{(s)}_\la)$ be the irreducible highest weight $U(osp_{1|2n})$-module with highest weight $\La^{(s)}_\la$. Note that $\La^{(2)}_{(l)}$ is the weight of the maximal vector $v_l$ and $V(\La^{(2)}_{(l)})=V_l$ for $l\geq 0$. Generalizing the decomposition of $\W^{(2)}$ into $U_q(osp_{1|2n})$-modules, we have the following conjecture on $\W^{(s)}$.

\begin{conj}\label{conj:KR}
For $s\geq 2$, the character of $\W^{(s)}$ is given by
\begin{equation*}
{\rm ch}\W^{(s)} = \sum_{\substack{\la\in \cP_+ \\ \ell(\la)\leq \min\{n,s/2\}}} 
{\rm ch}V(\La^{(s)}_\la).
\end{equation*}
\end{conj}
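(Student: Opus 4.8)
The plan is to follow the route of the proofs of Theorems \ref{thm:main-1} and \ref{thm:main-2}, first pinning down ${\rm ch}\W^{(s)}$ as a $U_q(A_{n-1})$-character and only afterwards matching it against the right-hand side. First I would build a $U_q(A_{n-1})$-crystal base of the level-two module $\W^{(2)}$ for $B^{(1)}(0,n)$ by repeating the constructions behind Propositions \ref{prop:crystal base of W^{(2)}_+} and \ref{prop:crystal base of W^{(2)}}: produce the type-$A$ highest weight vectors ${\bf v}_l$ of \eqref{eq:maximal vector of type A}, raise them by $f_n^{(m)}$ to vectors ${\bf v}_{l,m}$, and show that the resulting pair $(\mc{L}^{(2)},\mc{B}^{(2)})$ is compatible with the fusion map $\check{R}_s$ exactly as in Corollary \ref{cor:lattice of KR level 2-2}. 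For $B^{(1)}(0,n)$ the node-$0$ action shifts by $2\be_1$ (as in $C_n^{(1)}$) while the node-$n$ action shifts by $\be_n$ (as in $C^{(2)}(n+1)$), so the spectral decomposition \eqref{eq:spectral decomp-3} evaluated at the fusion point $z=q^{-1}$ (recall $d=\tfrac12$) annihilates every odd projection $P_l$, giving $\W^{(2)}=\bigoplus_{l\in 2\Z_{\ge 0}}V_l$ and confirming the $s=2$ case. Carrying the even-$l$ constraint through the fusion construction, I expect the $U_q(A_{n-1})$-character to come out as
\[
{\rm ch}\W^{(s)}=\sum_{\substack{\la:\ \ell(\la)\le s\\ \la_2,\dots,\la_s\ \text{even}}} s_{\la}(x_1,\dots,x_n),
\]
the parity condition encoding that the shorter row of each consecutive level-two pair has even length.

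The genuinely new difficulty, and the reason the statement is only a conjecture, is that—unlike for $C_n^{(1)}$ and $C^{(2)}(n+1)$—the module $\W^{(s)}$ is \emph{not} classically irreducible. In those cases $\ov{\W^{(s)}}$ was a single irreducible highest weight module, so a single Schur-positive identity matched against \cite[Theorem 6.1]{K18} closed the argument. Here $\ov{\W^{(s)}}$ must instead split as $\bigoplus_\la V(\La^{(s)}_\la)$, and one has to prove (i) complete reducibility of $\ov{\W^{(s)}}$ as a $U(osp_{1|2n})$-module, (ii) that the highest weights occurring are exactly $\La^{(s)}_\la$ for $\la\in\cP_+$ with $\ell(\la)\le\min\{n,s/2\}$, and (iii) that each occurs with multiplicity one. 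Granting these, the conjecture reduces to the purely combinatorial identity
\[
\sum_{\substack{\la\in\cP_+\\ \ell(\la)\le\min\{n,s/2\}}}{\rm ch}V(\La^{(s)}_\la)
=\sum_{\substack{\mu:\ \ell(\mu)\le s\\ \mu_2,\dots,\mu_s\ \text{even}}} s_{\mu}(x_1,\dots,x_n),
\]
obtained by expanding each ${\rm ch}V(\La^{(s)}_\la)$ through the $osp_{1|2n}\downarrow A_{n-1}$ branching rule furnished by \cite[Theorem 6.1]{K18}.

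The hard part will be steps (ii)--(iii) together with the cut-off $\ell(\la)\le\min\{n,s/2\}$. The bound $s/2$ reflects the even-$l$ pairing forced by the $B$-type end of the diagram, and it is precisely what the super-duality heuristic of the introduction predicts rather than proves: since there is as yet no quantum affine analogue of super duality relating $U_q(B_n^{(1)})$- and $U_q(B^{(1)}(0,n))$-modules, the semisimplicity input that made the $C$-type tensor powers transparent (via \cite{CLW} and \cite[Section 5.4]{K12}) is unavailable. I would therefore attack (iii) directly: locate all $U(osp_{1|2n})$-highest weight vectors in $\ov{\W^{(s)}}$ by intersecting the already constructed $A_{n-1}$-highest weight vectors with $\ker e_n$, read off their weights, and then prove the displayed Schur identity combinatorially, for instance by a Littlewood-type summation or an explicit tableau bijection between the two index sets. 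Showing that no multiplicity exceeds one and that the surviving weights organize into exactly the family indexed by $\cP_+$ with the stated length bound—rather than a larger set—is where I expect the real work to concentrate; the $s=2$ case recorded in the text would serve as the base of such an argument.
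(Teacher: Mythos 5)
First, a point of order: the statement you are addressing is stated in the paper only as Conjecture \ref{conj:KR}. The authors offer no proof beyond the observation that it holds for $s=2$, which, as you correctly note, follows from the spectral decomposition \eqref{eq:spectral decomp-3}: at the fusion point $z=q^{-2d}=q^{-1}$ every odd projector $P_l$ is annihilated, so $\W^{(2)}=\bigoplus_{l\in 2\Z_{\ge 0}}V_l=\bigoplus_{l\in 2\Z_{\ge 0}}V(\La^{(2)}_{(l)})$. So there is no proof in the paper to compare yours against, and your diagnosis of where the difficulty lies (failure of classical irreducibility, absence of a quantum affine super duality, the need for your statements (i)--(iii)) is consistent with the authors' own remarks.

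However, the concrete reduction you propose rests on a false intermediate formula. Your claim that ${\rm ch}\W^{(s)}$ equals the multiplicity-free sum $\sum_{\mu}s_\mu$ over $\mu$ with $\ell(\mu)\le s$ and $\mu_2,\dots,\mu_s$ even already fails at $s=2$ in degree one: the entire degree-one subspace of $\W^{\ot 2}$, which has $U_q(A_{n-1})$-character $2s_{(1)}$, lies in $V_0$ (each $V_l$ with $l\ge 1$ is $U^-{\bf v}_l$ with ${\bf v}_l$ of degree $2l\ge 2$), hence lies in $\W^{(2)}$, whereas your formula predicts multiplicity $1$ for $s_{(1)}$. The structural reason the tableau analysis of Propositions \ref{prop:crystal base of W^{(2)}_+} and \ref{prop:crystal of level 2} does not transfer is that for $C_n^{(1)}$ and $C^{(2)}(n+1)$ the module $\W^{(2)}$ is a single classically irreducible summand whose $U_q(A_{n-1})$-decomposition is multiplicity-free, so the $A_{n-1}$-highest weight vectors are exactly the ${\bf v}_{l,l'}$; for $B^{(1)}(0,n)$ the module $\W^{(2)}$ is the infinite direct sum $\bigoplus_{l\in2\Z_{\ge0}}V_l$, each $V_l$ contains $U_q(A_{n-1})$-highest weight vectors beyond the family $f_n^{(m)}{\bf v}_l$ (the branching of $V(\La^{(2)}_{(l)})$ to $U_q(A_{n-1})$ is not multiplicity-free outside the stable range), and the multiplicities in ${\rm ch}\W^{(2)}$ are unbounded. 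Consequently the ``purely combinatorial identity'' to which you reduce the conjecture is false: for $s\ge 2n$ the conjectural side equals $\prod_i(1-x_i)^{-1}(1-x_i^2)^{-1}\prod_{i<j}(1-x_ix_j)^{-2}$ by the Littlewood identity, which already in degree two strictly exceeds your Schur sum. Carrying out your plan as written would therefore appear to refute the conjecture rather than prove it. A viable attack must work directly with the $s$-dependent characters ${\rm ch}V(\La^{(s)}_\la)$ of \cite{CKW,K18} (or with the full $U_q(osp_{1|2n})$-module structure of the fusion image), not with a multiplicity-free $s_\mu$-expansion.
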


\begin{rem}{\rm
The family of infinite-dimensional $U(osp_{1|2n})$-modules $V(\La^{(s)}_{\la})$ have been introduced in \cite{CKW} in connection with Howe duality. They are unitarizable and form a semisimple tensor category. The Weyl-Kac type character formula for $V(\La^{(s)}_{\la})$ can be found in \cite[Theorem 6.13]{CKW}, while a combinatorial formula is given in \cite[Corollary 6.6]{K18}.}
\end{rem}

\begin{cor}
For $s\geq 2n$, we have
\begin{equation*}
\begin{split}
{\rm ch}\W^{(s)} &= \frac{\sum_{\la\in \cP_+}s_\la(x_1,\dots,x_n)}{\prod_{1\leq i\leq n}(1-x_i)\prod_{1\leq i<j\leq n}(1-x_ix_j)}\\
&= \frac{1}{\prod_{1\leq i\leq n}(1-x_i)(1-x_i^2)\prod_{1\leq i<j\leq n}(1-x_ix_j)^2}.
\end{split}
\end{equation*}
\end{cor}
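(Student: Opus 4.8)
The plan is to grant Conjecture~\ref{conj:KR}, substitute the stable-range character of each irreducible oscillator module $V(\La^{(s)}_\la)$ into the decomposition, and then recognize a classical Littlewood identity. Since $s\geq 2n$ forces $\min\{n,s/2\}=n$, the conjecture reads
\begin{equation*}
{\rm ch}\W^{(s)}=\sum_{\substack{\la\in\cP_+\\ \ell(\la)\leq n}}{\rm ch}V(\La^{(s)}_\la).
\end{equation*}
The key input I would use is that, in this range, the $U(osp_{1|2n})$-character factors as
\begin{equation*}
{\rm ch}V(\La^{(s)}_\la)=\frac{s_\la(x_1,\dots,x_n)}{\prod_{1\leq i\leq n}(1-x_i)\prod_{1\leq i<j\leq n}(1-x_ix_j)},
\end{equation*}
independently of $s$. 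I would extract this from the combinatorial formula \cite[Corollary 6.6]{K18} (equivalently the Weyl--Kac formula \cite[Theorem 6.13]{CKW}): once $s/2\geq\ell(\la)$ the decoration $\la$ contributes the finite Schur factor $s_\la$, while the remaining factor is the vacuum character ${\rm ch}V(\La^{(s)}_\emptyset)=\prod_i(1-x_i)^{-1}\prod_{i<j}(1-x_ix_j)^{-1}$, which is the stable $C^{(2)}(n+1)$ character of Theorem~\ref{thm:main-2} (case $\la=\emptyset$).

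For the first displayed equality I would then sum the factorization over $\la$, using that $s_\la(x_1,\dots,x_n)=0$ whenever $\ell(\la)>n$ so that the length restriction becomes vacuous in $n$ variables:
\begin{equation*}
{\rm ch}\W^{(s)}=\sum_{\la\in\cP_+}\frac{s_\la(x_1,\dots,x_n)}{\prod_{1\leq i\leq n}(1-x_i)\prod_{1\leq i<j\leq n}(1-x_ix_j)}=\frac{\sum_{\la\in\cP_+}s_\la(x_1,\dots,x_n)}{\prod_{1\leq i\leq n}(1-x_i)\prod_{1\leq i<j\leq n}(1-x_ix_j)}.
\end{equation*}

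The second equality is a purely symmetric-function statement. Cancelling the common factor $\prod_i(1-x_i)^{-1}\prod_{i<j}(1-x_ix_j)^{-1}$, I would reduce it to
\begin{equation*}
\sum_{\la\in\cP_+}s_\la(x_1,\dots,x_n)=\frac{1}{\prod_{1\leq i\leq n}(1-x_i^2)\prod_{1\leq i<j\leq n}(1-x_ix_j)}.
\end{equation*}
Since $\cP_+$ is exactly the set of partitions all of whose parts are even, the left side is $\sum_{\la\text{ even}}s_\la$ and the right side is $\prod_{1\leq i\leq j\leq n}(1-x_ix_j)^{-1}$, so this is precisely the classical Littlewood identity. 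The main obstacle is the stable factorization of ${\rm ch}V(\La^{(s)}_\la)$: one must verify, from the Weyl--Kac/combinatorial formula, that for $s\geq 2n$ no Weyl-group (singular-vector) corrections survive, so that the character genuinely splits as $s_\la$ times the $s$-independent vacuum character; the summation and the Littlewood identity are then routine. I would also flag that the corollary is conditional on Conjecture~\ref{conj:KR}, being its prediction in the stable range.
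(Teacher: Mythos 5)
Your proposal is correct and follows essentially the same route as the paper: the first equality is obtained by combining Conjecture \ref{conj:KR} with the stable factorization ${\rm ch}V(\La^{(s)}_\la)=s_\la(x_1,\dots,x_n)\prod_{i}(1-x_i)^{-1}\prod_{i<j}(1-x_ix_j)^{-1}$ cited from \cite[Corollary 6.6]{K18}, and the second is the classical Littlewood identity $\sum_{\la\in\cP_+}s_\la=\prod_{1\le i\le j\le n}(1-x_ix_j)^{-1}$. Your extra remarks (dropping the length restriction since $s_\la$ vanishes for $\ell(\la)>n$, and flagging the conditional dependence on the conjecture) are accurate but do not change the argument.
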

\begin{proof}
The first equality follows from the fact \cite[Corollary 6.6]{K18} that 
if $\la\in \cP_+$ with $\ell(\la)\leq n$, then
\begin{equation*}
{\rm ch}V(\La^{(s)}_\la) = \frac{s_\la(x_1,\dots,x_n)}{\prod_{1\leq i\leq n}(1-x_i)\prod_{1\leq i<j\leq n}(1-x_ix_j)}.
\end{equation*}
The second one follows from the well-known Littlewood identity.
\end{proof}

\appendix

\section{Twistor}\label{app:twistor}

In this appendix, we prove Propositions \ref{prop:q-osc for C^{(2)}(n+1)} and \ref{prop:q-osc for B^{(1)}(0,n)}. We first review the twistor introduced in \cite{CFLW} that relate quantum groups to quantum supergroups. Then we use it to relate the $q$-oscillator representation
of $U_q(D_{n+1}^{(2)})$ in \cite{KO} to a representation of $U_q(C^{(2)}(n+1))$. An 
advantage to do so is that in the latter case we can take a classical limit $q\to1$.
We also obtain a representation of $U_q(B^{(1)}(0,n))$ from the $q$-oscillator
representation of $U_q(A_{2n}^{(2)\dagger})$, where $A_{2n}^{(2)\dagger}$ is the
same Dynkin diagram as $A_{2n}^{(2)}$ in \cite{Kac} but the labeling of nodes are
opposite.

\subsection{The twistor of the covering quantum group} \label{subsec:twistor}

We review the covering quantum group and the twistor map introduced in
\cite{CFLW}. Our notations for a Cartan datum is closer to Kac's book \cite{Kac}.
Let $I$ be the index set of the Dynkin diagram, $\{\alpha_i\}_{i\in I}$ the set of
simple roots, $(a_{ij})_{i,j\in I}$ the Cartan matrix. The symmetric bilinear form 
$(\cdot,\cdot)$ on the weight lattice is normalized so that it satisfies
$d_i=(\alpha_i,\alpha_i)/2\in\Z$ for any $i\in I$. It is also assumed that
$a_{ij}\in2\Z$ if $d_i\equiv1$ (mod 2) and $j\in I$. The parity function $p(i)$ 
taking values in $\{0,1\}$ is consistent with $d_i$, namely, $p(i)\equiv d_i$ (mod 2).
We set $q_i=q^{d_i},\pi_i=\pi^{d_i}$.

Let $q,\pi$ be indeterminates and $\ib=\sqrt{-1}$. For a ring $R$ with 1, we 
set $R^\pi=R[\pi]/(\pi^2-1)$.
The covering quantum group  $\mathbf{U}$ associated to a Cartan datum 
is the $\Q^\pi(q,\ib)$-algebra with generators $E_i,F_i,K_i^{\pm1},J_i^{\pm1}$ for 
$i\in I$ subject to the following relations:
\begin{align*}
&J_iJ_j=J_jJ_i,\quad K_iK_j=K_jK_i,\quad J_iK_j=K_jJ_i,\\
&K_iK_i^{-1}=K_i^{-1}K_i=J_iJ_i^{-1}=J_i^{-1}J_i=J_i^2=1,\\
&J_iE_j=\pi^{a_{ij}}E_jJ_i,\quad J_iF_j=\pi^{-a_{ij}}F_jJ_i,\\
&K_iE_j=q^{a_{ij}}E_jK_i,\quad K_iF_j=q^{-a_{ij}}F_jK_i,\\
&E_iF_j-\pi^{p(i)p(j)}F_jE_i=\delta_{ij}\frac{J_iK_i-K_i^{-1}}{\pi_iq_i-q_i^{-1}},\\
&\sum_{l=0}^{1-a_{ij}}(-1)^l\pi^{l(l-1)p(i)/2+lp(i)p(j)}
{1-a_{ij}\brack l}_{q_i,\pi_i}E_i^{1-a_{ij}-l}E_jE_i^l=0\quad(i\ne j),\\
&\sum_{l=0}^{1-a_{ij}}(-1)^l\pi^{l(l-1)p(i)/2+lp(i)p(j)}
{1-a_{ij}\brack l}_{q_i,\pi_i}F_i^{1-a_{ij}-l}F_jF_i^l=0\quad(i\ne j).
\end{align*}

\begin{rem}{\rm 
We changed the notations from \cite{CFLW}. We replaced $v$ with $q$, $\mathbf{t}$
with $\ib$, and $J_{d_ii}, K_{d_ii}, T_{d_ii}$ with $J_i, K_i, T_i$.}
\end{rem}

We extend $\bf U$ by 
introducing the generators $T_i,\Upsilon_i$ for $i\in I$. They commute with each other and
with $J_i,K_i$. They also have the commutation relations with $E_i,F_i$ as 
\[
T_iE_j=\ib^{d_ia_{ij}}E_jT_i,\quad T_iF_j=\ib^{-d_ia_{ij}}F_jT_i,\quad 
\Upsilon_iE_j=\ib^{\phi_{ij}}E_j\Upsilon_i,\quad \Upsilon_iF_j=\ib^{-\phi_{ij}}F_j\Upsilon_i,
\]
where 
\[
\phi_{ij}=\begin{cases}
d_ia_{ij}&\text{if }i>j,\\
d_i&\text{if }i=j,\\
-2p(i)p(j)&\text{if }i<j.
\end{cases}
\]
We denote this extended algebra by $\widehat{\mathbf{U}}$.

\begin{thm}[\cite{CFLW}]
There is a $\Q(\ib)$-algebra automorphism $\widehat{\Psi}$ on $\widehat{\mathbf{U}}$
such that
\begin{alignat*}{3}
&E_i\mapsto \ib^{-d_i}\Upsilon_i^{-1}T_iE_i,&\quad
&F_i\mapsto F_i\Upsilon_i,&\quad
&K_i\mapsto T_i^{-1}K_i,\\
&J_i\mapsto T_i^2J_i,&\quad
&T_i\mapsto T_i,&\quad
&\Upsilon_i\mapsto \Upsilon_i,\\
&q\mapsto \ib^{-1}q,&\quad
&\pi\mapsto -\pi.
\end{alignat*}
\end{thm}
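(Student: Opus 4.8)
The map $\widehat\Psi$ is prescribed on the algebra generators and, crucially, also on the ground scalars via $q\mapsto \ib^{-1}q$ and $\pi\mapsto -\pi$; it is to be extended as a $\Q(\ib)$-algebra homomorphism. The plan is therefore to establish two things: (i) $\widehat\Psi$ is well defined, i.e. it respects every defining relation of $\widehat{\mathbf U}$, and (ii) $\widehat\Psi$ is bijective. For (ii) I would not argue abstractly but exhibit a two-sided inverse of the same shape: one writes down the analogous twistor $\widehat\Psi'$ obtained by replacing $\ib$ with $\ib^{-1}$ throughout, and checks on generators that $\widehat\Psi'\circ\widehat\Psi=\mathrm{id}=\widehat\Psi\circ\widehat\Psi'$. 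This is a short generator-by-generator verification once (i) is in hand, so that $\widehat\Psi$ is automatically an automorphism.

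The bulk of the work is (i), and I would organize the relations by increasing difficulty. First the purely toral relations: since $T_i,\Upsilon_i$ are fixed and the images $K_i\mapsto T_i^{-1}K_i$, $J_i\mapsto T_i^2J_i$ are products of pairwise commuting invertible elements, commutativity is immediate, and one is left only to verify the involutive relations such as $J_i^2=1$ on the images. Next come the mixed relations $J_iE_j=\pi^{a_{ij}}E_jJ_i$, $K_iE_j=q^{a_{ij}}E_jK_i$, $T_iE_j=\ib^{d_ia_{ij}}E_jT_i$, $\Upsilon_iE_j=\ib^{\phi_{ij}}E_j\Upsilon_i$, and their $F$-analogues. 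Here I would push the image of the Cartan generator past $\ib^{-d_i}\Upsilon_i^{-1}T_iE_i$ (resp. $F_i\Upsilon_i$), collect the resulting powers of $\ib$, and check that the substitution $\pi\mapsto-\pi$, $q\mapsto\ib^{-1}q$ exactly absorbs the discrepancy; for the $\pi$-relation this amounts to reconciling $(-\pi)^{a_{ij}}$ with the accumulated phases from $T_i^2$ and $\Upsilon_i^{\pm1}$.

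The two genuinely delicate families are the commutator relation and the Serre relations. For $E_iF_j-\pi^{p(i)p(j)}F_jE_i=\delta_{ij}(J_iK_i-K_i^{-1})/(\pi_iq_i-q_i^{-1})$ I would apply $\widehat\Psi$ to both sides: on the left the factors $\Upsilon_i^{-1}T_i$ and $\Upsilon_j$ produce controlled $\ib$-phases that must reproduce $(-\pi)^{p(i)p(j)}$, while on the right the numerator transforms as $T_i^2J_i\,T_i^{-1}K_i-T_iK_i^{-1}=T_i(J_iK_i-K_i^{-1})$ and the denominator scales by the key scalar identity
\[
\pi_iq_i-q_i^{-1}\ \longmapsto\ \ib^{d_i}\bigl(\pi_iq_i-q_i^{-1}\bigr),
\]
which follows from $-\ib^{-1}=\ib$. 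Matching the residual $T_i$ and the factor $\ib^{-d_i}$ against the left-hand phases closes this case.

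The crux, and the step I expect to be the main obstacle, is the Serre relations, because there the ground substitution acts on the quantum (super)binomials. The computation rests on a transformation lemma: under $q\mapsto\ib^{-1}q$, $\pi\mapsto-\pi$ one has $[m]_{q_i,\pi_i}\mapsto \ib^{d_i(m-1)}[m]_{q_i,\pi_i}$, whence $[m]_{q_i,\pi_i}!\mapsto \ib^{d_im(m-1)/2}[m]_{q_i,\pi_i}!$ and ${1-a_{ij}\brack l}_{q_i,\pi_i}\mapsto \ib^{d_il(1-a_{ij}-l)}{1-a_{ij}\brack l}_{q_i,\pi_i}$. Expanding $\widehat\Psi$ applied to the Serre sum, there are then three sources of $l$-dependent phase: the prefactor $(-1)^l\pi^{l(l-1)p(i)/2+lp(i)p(j)}$ picks up a sign from $\pi\mapsto-\pi$, the twisted binomial picks up the $\ib$-power above, and commuting the images of $E_i$ and $E_j$ past one another through the $\Upsilon_i^{-1}T_i$ factors produces a further $\ib$-phase. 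The content of the lemma is that these cancel up to an overall scalar independent of $l$, so that $\widehat\Psi$ carries the Serre sum to a scalar multiple of the same Serre sum, which vanishes; the $F$-Serre relation is handled identically. I would isolate this cancellation as a separate combinatorial lemma, since reconciling the $\ib$-powers arising from $\tfrac{M(M-1)}2-\tfrac{l(l-1)}2-\tfrac{(M-l)(M-l-1)}2=l(M-l)$ (with $M=1-a_{ij}$) against the reordering phase is the one place where a miscount would be fatal.
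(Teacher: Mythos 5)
The paper does not prove this theorem at all: it is quoted verbatim from \cite{CFLW} (the bracketed citation in the theorem header is the paper's entire ``proof''). So there is nothing in the paper to compare against; what you have written is essentially the direct-verification strategy that \cite{CFLW} itself carries out. On the well-definedness side your outline is sound and the two scalar identities you isolate are the right ones and are correct: under $q\mapsto\ib^{-1}q$, $\pi\mapsto-\pi$ one has $\pi_iq_i\mapsto(-1)^{d_i}\ib^{-d_i}\pi_iq_i=\ib^{d_i}\pi_iq_i$ and $q_i^{-1}\mapsto\ib^{d_i}q_i^{-1}$, giving $[m]_{q_i,\pi_i}\mapsto\ib^{d_i(m-1)}[m]_{q_i,\pi_i}$ and hence the stated binomial transformation; and your treatment of the commutator relation checks out, since $\Upsilon_i^{-1}T_iE_iF_i\Upsilon_i=T_iE_iF_i$, $F_iT_iE_i=\ib^{2d_i}T_iF_iE_i$, and $(-\pi)^{p(i)}\ib^{2d_i}=\pi^{p(i)}$ because $p(i)\equiv d_i\pmod 2$.

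There is, however, a concrete error in your bijectivity step. The map $\widehat\Psi'$ obtained by ``replacing $\ib$ with $\ib^{-1}$ throughout,'' i.e.\ $E_i\mapsto\ib^{d_i}\Upsilon_i^{-1}T_iE_i$, does \emph{not} invert $\widehat\Psi$: since both maps fix $T_i$ and $\Upsilon_i$, the composite sends $E_i\mapsto\ib^{-d_i}\Upsilon_i^{-1}T_i\cdot\ib^{d_i}\Upsilon_i^{-1}T_iE_i=\Upsilon_i^{-2}T_i^{2}E_i\neq E_i$, so the generator-by-generator check you propose would fail. The inverse must also invert the toral factor, i.e.\ $E_i\mapsto\ib^{d_i}\Upsilon_iT_i^{-1}E_i$, $K_i\mapsto T_iK_i$, $J_i\mapsto T_i^{-2}J_i$, together with $q\mapsto\ib q$, $\pi\mapsto-\pi$; one then verifies $\widehat\Psi(\ib^{d_i}\Upsilon_iT_i^{-1}E_i)=\ib^{d_i}\Upsilon_iT_i^{-1}\cdot\ib^{-d_i}\Upsilon_i^{-1}T_iE_i=E_i$, and the well-definedness of this candidate follows from the same relation checks with $\ib$ replaced by $\ib^{-1}$. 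Note also that $\widehat\Psi$ is not an involution ($\widehat\Psi^2$ sends $q\mapsto-q$ and $E_i\mapsto\ib^{-2d_i}\Upsilon_i^{-2}T_i^2E_i$), so no shortcut via $\widehat\Psi^2=\mathrm{id}$ is available. With that one repair, your outline is a faithful reconstruction of the argument in \cite{CFLW}.
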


\subsection{Image of the twistor $\widehat{\Psi}$}\label{app:proof of well-definedness}

We apply the twistor $\widehat{\Psi}$ given in the previous subsection for the Cartan
datum corresponding to $B_n$, namely, $I=\{1,2,\ldots,n\}$ and the Cartan matrix is 
given by 
\[
(a_{ij})=
\begin{pmatrix}
2&-1&\\
-1&2&-1\\
&-1& \\
&&&\ddots\\
&&&&&-1\\
&&&&-1&2&-1\\
&&&&&-2&2
\end{pmatrix}
\]
Through it, we are to regard  the $q$-oscillator representation 
$\W=\bigoplus_{\mb}\Q(q^{\frac12})\ket{\mb}$ of $U_q(B_n)$, the subalgebra of $U_q(D_{n+1}^{(2)})$ generated by $e_i,f_i,k_i$ for $i\in I\setminus \{0\}$, given in \cite[Proposition 1]{KO} as a representation of $U_q(osp_{1|2n})$. 
Although we normalized the symmetric bilinear form on the weight lattice so that
$(\alpha_i,\alpha_i)\in 2\Z$ for any $i\in I$ in the previous subsection, 
we renormalize it so that $(\alpha_n,\alpha_n)=1$ to adjust it to the notations
in \cite{KO}. The generators $T_i,\Upsilon_i$ are represented on $\W$ as
\[
T_i\ket{\mb}=\begin{cases}
\ib^{2(m_{i+1}-m_i)}\ket{\mb}&(1\le i<n)\\
\ib^{-2m_n}\ket{\mb}&(i=n)
\end{cases},\quad
\Upsilon_i\ket{\mb}=\begin{cases}
\ib^{-2m_i}\ket{\mb}&(1\le i<n)\\
\ib^{|\mb|-2m_n}\ket{\mb}&(i=n)
\end{cases}.
\]

Let $u_i$ ($i\in I$, $u=e,f,k$) be the generators of $U_q(B_n)$ ($\pi=1$) 
and $\bar{u}_i=\widehat{\Psi}(u_i)$
be the image ($\pi=-1$) of the twistor $\widehat{\Psi}$. Then $\bar{u}_i$
satisfy the relations for $U_{\bar{q}}(osp_{1|2n})$
where $\bar{q}^{\frac12}=\ib^{-1}q^{\frac12}$. On the 
space $\W$, they act as follows:
\begin{align*}
\bar{e}_i\ket{\mb}&=\ib^{2m_{i+1}}[m_i]\ket{\mb-\eb_i+\eb_{i+1}},\\
\bar{f}_i\ket{\mb}&=\ib^{-2m_i}[m_{i+1}]\ket{\mb+\eb_i-\eb_{i+1}},\\
\bar{k}_i\ket{\mb}&=\ib^{2m_i-2m_{i+1}}q^{-m_i+m_{i+1}}\ket{\mb},\\
\bar{e}_n\ket{\mb}&=\kappa\, \ib^{1-|\mb|}[m_n]\ket{\mb-\eb_n},\\
\bar{f}_n\ket{\mb}&=\ib^{|\mb|-2m_n}\ket{\mb+\eb_n},\\
\bar{k}_n\ket{\mb}&=\ib^{2m_n+1}q^{-m_n-\frac12}\ket{\mb},
\end{align*}
where $1\le i<n$, $\kappa=(q+1)/(q-1)$. 

By introducing the actions of $\bar{e}_0,\bar{f}_0,\bar{k}_0$, we want to make $\W$ 
a representation of the quantum affine superalgebra associated to $C^{(2)}(n+1)$ or $B^{(1)}(0,n)$. For the former, we set
\begin{align*}
\bar{e}_0\ket{\mb}&=x\,\ib^{2m_1-|\mb|}\ket{\mb+\eb_1},\\
\bar{f}_0\ket{\mb}&=x^{-1}\kappa\,\ib^{|\mb|+1}[m_1]\ket{\mb-\eb_1},\\
\bar{k}_0\ket{\mb}&=\ib^{-2m_1-1}q^{m_1+\frac12}\ket{\mb},
\end{align*}
and for the latter 
\begin{align*}
\bar{e}_0\ket{\mb}&=x(-1)^{|\mb|}\ket{\mb+2\eb_1},\\
\bar{f}_0\ket{\mb}&=x^{-1}(-1)^{|\mb|}\frac{[m_1][m_1-1]}{[2]^2}\ket{\mb-2\eb_1},\\
\bar{k}_0\ket{\mb}&=-q^{2m_1+1}\ket{\mb},
\end{align*}
where $x$ is the so-called spectral parameter. We also note that the quantum 
parameter is still $\bar{q}^{\frac12}=\ib^{-1}q^{\frac12}$.

To obtain the representation for the quantum parameter $q$, we need to 
we switch $q^{\frac12}$ to $\ib q^{\frac12}$ ($\bar{q}^{\frac12}$ to $q^{\frac12}$).
Also, the relations in Section \ref{subsec:twistor} and those in Section 
\ref{subsec:quantum affine superalgebra} are
different. For the node $i$ that is signified as $\bullet$ in the Dynkin diagram,
there is a relation
\[
e_if_i+f_ie_i=\frac{k_i-k_i^{-1}}{q^{\frac12}-q^{-\frac12}}
\]
in Section \ref{subsec:quantum affine superalgebra} rather than 
\[
e_if_i+f_ie_i=\frac{k_i-k_i^{-1}}{-q^{\frac12}-q^{-\frac12}}
\]
in Section \ref{subsec:twistor}. The former relation is realized by deleting $\kappa$
from the action of $\bar{e}_i$ or $\bar{f}_i$ in the formulas of the $q$-oscillator
representation above. By doing so, we obtain
{\allowdisplaybreaks \begin{align*}
\bar{e}_0\ket{\mb}&=
\begin{cases}
x\,\ib^{2m_1-|\mb|}\ket{\mb+\eb_1}&\text{for }U_q(C^{(2)}(n+1))\\
x(-1)^{|\mb|}\ket{\mb+2\eb_1}&\text{for }U_q(B^{(1)}(1,n))
\end{cases},\\
\bar{f}_0\ket{\mb}&=
\begin{cases}
x^{-1}\ib^{|\mb|+2m_1+1}[m_1]\ket{\mb-\eb_1}&\text{for }U_q(C^{(2)}(n+1))\\
x^{-1}(-1)^{|\mb|+1}\frac{[m_1][m_1-1]}{[2]^2}\ket{\mb-2\eb_1}&\text{for }U_q(B^{(1)}(0,n))
\end{cases},\\
\bar{k}_0\ket{\mb}&=
\begin{cases}
q^{m_1+\frac12}\ket{\mb}&\text{for }U_q(C^{(2)}(n+1))\\
q^{2m_1+1}\ket{\mb}&\text{for }U_q(B^{(1)}(0,n))
\end{cases},\\
\bar{e}_i\ket{\mb}&=(-1)^{-m_i+m_{i+1}+1}[m_i]\ket{\mb-\eb_i+\eb_{i+1}},\\
\bar{f}_i\ket{\mb}&=(-1)^{-m_i+m_{i+1}+1}[m_{i+1}]\ket{\mb+\eb_i-\eb_{i+1}},\\
\bar{k}_i\ket{\mb}&=q^{-m_i+m_{i+1}}\ket{\mb},\\
\bar{e}_n\ket{\mb}&=\ib^{1-|\mb|+2m_n}[m_n]\ket{\mb-\eb_n},\\
\bar{f}_n\ket{\mb}&=\ib^{|\mb|-2m_n}\ket{\mb+\eb_n},\\
\bar{k}_n\ket{\mb}&=q^{-m_n-\frac12}\ket{\mb},
\end{align*}}
for $1\le i\le n-1$.

Finally, to obtain the actions of $U_q(C^{(2)}(n+1))$ and $U_q(B^{(1)}(0,n))$) in 
Propositions \ref{prop:q-osc for C^{(2)}(n+1)} and \ref{prop:q-osc for B^{(1)}(0,n)}, respectively,
we perform the basis change $\ket{\mb}$ to
$$\ib^{s(\mb)}q^{-|\mb|/2}\prod_{j=1}^n[m_j]!\ket{\mb},$$ where 
$s(\mb)=-|\mb|(|\mb|+1)/2-\sum_jm_j^2$. Next we apply the algebra automorphism 
sending $e_n\mapsto-e_n,f_n\mapsto-f_n$ and the other generators fixed. For 
$U_q(C^{(2)}(n+1))^\sigma$, we also apply $e_0\mapsto\sigma e_0$, 
$f_0\mapsto f_0\sigma$. Accordingly, the coproduct also changes.
For $U_q(B^{(1)}(0,n))$, we alternatively apply $e_0\mapsto\ib[2]e_0,f_0\mapsto
\frac1{\ib[2]}f_0$. This completes the proof.

\section{Quantum $R$ matrix for $U_q(A_{2n}^{(2)\dagger})$} \label{app:Adagger}

In this appendix, we consider the quantum $R$ matrix for the $q$-oscillator 
representation of $U_q(A_{2n}^{(2)\dagger})$ where $A_{2n}^{(2)\dagger}$ is the
Dynkin diagram whose nodes have the opposite labelings to $A^{(2)}_{2n}$. 
This will be used in Appendix \ref{app:R matrix for super} to derive the quantum $R$ matrix for $U_q(B^{(1)}(0,n))$.

\subsection{$q$-oscillator representation for $U_q(A^{(2)\dagger}_{2n})$}

By $A^{(2)\dagger}_{2n}$ we denote the following Dynkin diagram.

\[
\vcenter{\xymatrix@R=1ex{
*{\circ}<3pt> \ar@{=}[r] |-{\scalebox{2}{\object@{>}}}_<{0} 
&*{\circ}<3pt> \ar@{-}[r]_<{1} 
& {} \ar@{.}[r]&{}  \ar@{-}[r]_>{\,\,\,\,n-1} &
*{\circ}<3pt> \ar@{=}[r] |-{\scalebox{2}{\object@{>}}}
& *{\circ}<3pt>\ar@{}_<{n}}}
\]

\noindent
Although we did not deal with the $q$-oscillator representation for 
$U_q(A^{(2)\dagger}_{2n})$ in \cite{KO}, it is easy to guess from other cases given there.
On the space $\W$, the actions are given as follows.
{\allowdisplaybreaks
\begin{align*}
e_0\ket{\mb}&=x\ket{\mb+2\eb_1},\\
f_0\ket{\mb}&=x^{-1}\frac{[m_1][m_1-1]}{[2]^2}\ket{\mb-2\eb_1},\\
k_0\ket{\mb}&=-q^{2m_1+1}\ket{\mb},\\
e_i\ket{\mb}&=[m_i]\ket{\mb-\eb_i+\eb_{i+1}},\\
f_i\ket{\mb}&=[m_{i+1}]\ket{\mb+\eb_i-\eb_{i+1}},\\
k_i\ket{\mb}&=q^{-2m_i+2m_{i+1}}\ket{\mb},\\
e_n\ket{\mb}&=\ib\kappa[m_n]\ket{\mb-\eb_n},\\
f_n\ket{\mb}&=\ket{\mb+\eb_n},\\
k_n\ket{\mb}&=\ib q^{-m_n-1/2}\ket{\mb},
\end{align*}}
where $0<i<n$ and $\kappa=(q+1)/(q-1)$. Denote this representation map by $\pi_x$.

Recall that the $U_q(B_n)$-highest weight vectors $\{v_l\mid l\in\Z_{\ge0}\}$ are calculated in
\cite[Proposition 4]{KO}. We take the coproduct \eqref{eq:coproduct for quantum covering}
with $\pi=1$.

\begin{lem}\label{lem:App B}
For $x,y\in\Q(q)$ we have 
\begin{enumerate}
\item $(\pi_x\ot\pi_y)\Delta(f_0f_1^{(2)}\cdots f_{n-1}^{(2)})v_l=
-\frac{[l][l-1]}{[2]^2}(q^{2l-2}x^{-1}+q^{-1}y^{-1})v_{l-2}\quad(l\ge2)$,
\item $(\pi_x\ot\pi_y)\Delta(e_ne_{n-1}^{(2)}\cdots e_1^{(2)}e_0)v_0=
\frac{\ib\kappa [2]}{1-q}((y+qx)v_1-q(y+x)\Delta(f_n)v_0)$.
\end{enumerate}
\end{lem}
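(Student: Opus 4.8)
The plan is to establish both identities by direct evaluation, using the explicit description of the $U_q(B_n)$-highest weight vectors $v_l\in\W\ot\W$ from \cite[Proposition 4]{KO} together with the action formulas for $\pi_x$ listed above and the coproduct $\Delta$ (specialized to $\pi=1$). Conceptually, the two composite operators are exactly those that carry a $U_q(B_n)$-highest weight vector to a combination of highest weight vectors while shifting the level, so the substance of the lemma is the precise value of the resulting scalars. In both parts I will first use a weight argument to pin down which $v_m$ can occur on the right-hand side, and then determine the coefficients by tracking a single leading basis component through $\pi_x\ot\pi_y$.

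For part (1), I note that $f_1^{(2)}\cdots f_{n-1}^{(2)}$ lies in the $U_q(A_{n-1})$-subalgebra and, by the formula $f_i\ket{\mb}=[m_{i+1}]\ket{\mb+\eb_i-\eb_{i+1}}$, transports two units of excitation from the $n$-th coordinate (where $v_l$ is concentrated) down to the first; the $q$-factorial $\tfrac{[l][l-1]}{[2]}$ is produced by the very first application $f_{n-1}^{(2)}$ acting on an $l$-fold $n$-th-coordinate excitation, while the remaining applications merely shift those two units with coefficient $1$. I then apply $f_0$, which acts only on the first coordinate via $f_0\ket{\mb}=x^{-1}\tfrac{[m_1][m_1-1]}{[2]^2}\ket{\mb-2\eb_1}$ and contributes the remaining factor $\tfrac{1}{[2]}$. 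A weight count forces the output to be a scalar multiple of $v_{l-2}$, so it suffices to match one leading vector; the two summands $q^{2l-2}x^{-1}$ and $q^{-1}y^{-1}$ then appear as the contributions of $f_0$ hitting the first versus the second tensor factor in $\Delta(f_0)$, the differing $q$-powers coming from the $k_0^{\pm1}$ factors acting on the two factors (which carry different excitation levels).

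For part (2), I apply $E:=e_ne_{n-1}^{(2)}\cdots e_1^{(2)}e_0$ to $v_0=\ket{{\bf 0}}\ot\ket{{\bf 0}}$: here $e_0$ creates two units in the first coordinate, $e_1^{(2)}\cdots e_{n-1}^{(2)}$ moves them up to the $n$-th coordinate, and $e_n\ket{\mb}=\ib\kappa[m_n]\ket{\mb-\eb_n}$ removes one. Applying $\Delta(e_0)$ to $v_0$ already produces an $x$-weighted term $\ket{2\eb_1}\ot\ket{{\bf 0}}$ and a $y$-weighted term $\ket{{\bf 0}}\ot\ket{2\eb_1}$, which is the ultimate source of the two spectral-parameter combinations. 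A weight computation shows that $Ev_0$ lies in the two-dimensional weight space spanned by $v_1$ and $\Delta(f_n)v_0$, so I would compute $Ev_0$ component by component and then re-expand it in this basis using the known forms of $v_1$ and of $\Delta(f_n)v_0=f_n\ket{{\bf 0}}\ot(\cdots)\ket{{\bf 0}}+\ket{{\bf 0}}\ot f_n\ket{{\bf 0}}$; the combinations $y+qx$ and $y+x$ emerge from the two placements of $e_0$, while the prefactor $\tfrac{\ib\kappa[2]}{1-q}$ gathers the scalars $\ib$ and $\kappa=(q+1)/(q-1)$ introduced by $e_n$ and by the divided-power normalizations.

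The step I expect to be the main obstacle is the bookkeeping in part (2): unlike in part (1), the result is a genuine linear combination rather than a single highest weight vector, so I must cleanly separate the $v_1$-component from the $\Delta(f_n)v_0$-component. This requires expanding the coproducts of the divided powers $e_i^{(2)}$ and carefully tracking the powers of $\ib$ and the factors $\kappa$ coming from the non-standard $U_q(A_{2n}^{(2)\dagger})$-action, where cancellations must be verified term by term. The underlying manipulations are routine $q$-binomial identities, so the difficulty is organizational rather than conceptual.
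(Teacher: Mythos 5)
The paper gives no written proof of this lemma---it is stated as the outcome of a direct computation---so your plan of evaluating both sides explicitly using the $v_l$ from \cite{KO} and the action formulas is the intended route. However, there is a genuine gap in your argument for part (1). You claim that ``a weight count forces the output to be a scalar multiple of $v_{l-2}$,'' and on that basis you propose to match only one leading basis component. For $l\ge 3$ this is false: the operator $f_0f_1^{(2)}\cdots f_{n-1}^{(2)}$ changes the total content of $\ket{\mb}\ot\ket{\mb'}$ by $-2\eb_n$, so its image of $v_l$ lies in the span of the vectors $\ket{j\eb_n}\ot\ket{(l-2-j)\eb_n}$ with $0\le j\le l-2$. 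That weight space has dimension $l-1$ and decomposes as $\bigoplus_{m=0}^{l-2}(V_m)_{\mu}$ where $\mu$ is the weight of $v_{l-2}$; it is not spanned by $v_{l-2}$ alone, so matching a single component of an element of this space proves nothing about the others. The missing ingredient is that $e_n$ commutes with $f_0,f_1,\dots,f_{n-1}$ (the relation $e_if_j=f_je_i$ for $i\ne j$), so $\Delta(e_n)$ annihilates $\Delta(f_0f_1^{(2)}\cdots f_{n-1}^{(2)})v_l$; since every vector of this weight is automatically killed by $\Delta(e_i)$ for $i<n$, the image is a $U_q(B_n)$-highest weight vector, hence a multiple of $v_{l-2}$ by the multiplicity-free $U_q(B_n)$-decomposition of $\W^{\ot2}$ from \cite{KO}. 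Only after this observation does your ``match one component'' step become legitimate (the alternative is to compute all $l-1$ components and compare them with the known coefficients of $v_{l-2}$). Your weight argument for part (2) is fine as stated, because there the relevant weight space really is two-dimensional and is spanned by $v_1$ and $\Delta(f_n)v_0$.

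A secondary inaccuracy in part (1): in the term where $f_0$ hits the first tensor factor, the coproduct contributes $f_0\ot 1$ with no $k_0$ at all (here $\Delta(f_0)=f_0\ot 1+k_0^{-1}\ot f_0$), so the power $q^{2l-2}$ cannot come from ``the $k_0^{\pm1}$ factors acting on the two factors''; it arises from the ratio of the coefficients of $v_l$ and $v_{l-2}$ along the path where all operators act on the first factor. Likewise, the factor $\tfrac{[l][l-1]}{[2]}$ is not produced by a single application of $f_{n-1}^{(2)}$ to the whole of $v_l$, since $\Delta(f_{n-1}^{(2)})$ distributes the two units between the tensor factors; it only appears this way along the two surviving paths once proportionality to $v_{l-2}$ has been established. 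These are bookkeeping issues rather than conceptual ones, but as written they would not reproduce the stated scalar.
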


Define the quantum $R$ matrix $\check{R}_{KO}(z,q)$ for $U_q(A^{(2)\dagger}_{2n})$ as in Proposition \ref{prop:new R matrix}. The existence of such $\check{R}_{KO}(z,q)$ is essentially
given in \cite[Theorem 13]{KO}. Namely, although $A^{(2)\dagger}_{2n}$ is not
listed there, the corresponding gauge transformed quantum $R$ matrix is 
$S^{2,1}(z)$ and the proof has been done as the cases (i),(iv) and (v). By using Lemma \ref{lem:App B}, we have the following.

\begin{prop} \label{prop:spectral decomp-app}
We have the following spectral decomposition
\begin{equation*}
\check{R}_{KO}(z)=\sum_{l\in 2\Z_{+}}
\prod_{j=1}^{l/2}\frac{z+q^{4j-1}}{1+q^{4j-1}z}P_l
+\sum_{l\in 1+2\Z_{+}}\prod_{j=0}^{(l-1)/2}\frac{z+q^{4j+1}}{1+q^{4j+1}z}P_l,
\end{equation*}
where $P_l$ is the projector on the subspace generated by the
$U_q(B_n)$-highest weight vector $v_l\;(l\ge0)$.
\end{prop}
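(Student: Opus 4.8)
The plan is to exploit that $\check{R}_{KO}(z)$ is a $U_q(B_n)$-intertwiner together with the multiplicity-free decomposition of $\W\otimes\W$ as a $U_q(B_n)$-module, and then pin down the resulting eigenvalues using Lemma~\ref{lem:App B}. First I would recall that $\W\otimes\W=\bigoplus_{l\ge0}V_l$ as $U_q(B_n)$-modules, where $V_l$ is the irreducible component generated by the highest weight vector $v_l$ from \cite[Proposition 4]{KO}, and that this decomposition is multiplicity-free. Since $\check{R}_{KO}=PR$ commutes with $\Delta(a)$ for every $a\in U_q(B_n)$ (because $P\Delta^{\mathrm{op}}P^{-1}=\Delta$), Schur's lemma forces $\check{R}_{KO}(z)=\sum_{l\ge0}\rho_l(z)P_l$ for scalars $\rho_l(z)$, with $P_l$ the projector onto $V_l$. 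It then remains to determine the $\rho_l(z)$, and for this it suffices to track how $\check{R}_{KO}$ acts on highest weight vectors, i.e. $\check{R}_{KO}\,v_l(x,y)=\rho_l(z)\,v_l(y,x)$, where I write $v_l(x,y)$ for the highest weight vector inside $\W(x)\otimes\W(y)$ and $z=x/y$.

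Next I would feed the affine Chevalley generators into the intertwining relation $\check{R}_{KO}\,\Delta(a)=\Delta(a)\,\check{R}_{KO}$, viewed as maps $\W(x)\otimes\W(y)\to\W(y)\otimes\W(x)$. Applying it to $v_l(x,y)$ with $a=f_0f_1^{(2)}\cdots f_{n-1}^{(2)}$ and using Lemma~\ref{lem:App B}(1), which sends $v_l$ to $-\tfrac{[l][l-1]}{[2]^2}(q^{2l-2}x^{-1}+q^{-1}y^{-1})v_{l-2}$, one side carries the scalar at level $l-2$ while the other carries the scalar at level $l$ with $x,y$ interchanged. Comparing the coefficient of $v_{l-2}(y,x)$ yields the recursion
\[
\frac{\rho_l(z)}{\rho_{l-2}(z)}=\frac{q^{2l-2}x^{-1}+q^{-1}y^{-1}}{q^{2l-2}y^{-1}+q^{-1}x^{-1}}=\frac{z+q^{2l-1}}{1+q^{2l-1}z}\qquad(l\ge2).
\]
This decouples into an even chain $\rho_0,\rho_2,\dots$ and an odd chain $\rho_1,\rho_3,\dots$.

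To link the two chains at the bottom I would apply the intertwining relation to $v_0(x,y)$ with $a=e_ne_{n-1}^{(2)}\cdots e_1^{(2)}e_0$, using Lemma~\ref{lem:App B}(2). Since the correction term $\Delta(f_n)v_0$ is produced by a generator of $U_q(B_n)$, it carries the factor $\rho_0$ on both sides and drops out of the relevant comparison; matching the coefficient of $v_1(y,x)$ then gives $\rho_1/\rho_0=(x+qy)/(y+qx)=(z+q)/(1+qz)$. Normalizing $\rho_0(z)=1$ and solving the two recursions produces exactly the stated products $\prod_{j=1}^{l/2}\frac{z+q^{4j-1}}{1+q^{4j-1}z}$ for even $l$ (set $l=2m$, so $2l-1=4m-1$) and $\prod_{j=0}^{(l-1)/2}\frac{z+q^{4j+1}}{1+q^{4j+1}z}$ for odd $l$ (set $l=2m+1$, so $2l-1=4m+1$).

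The main obstacle I anticipate is bookkeeping rather than conceptual: I must consistently track the two spectral parameters and their interchange under $P$, since $v_l(x,y)$ and $v_l(y,x)$ live in different tensor factors, and in the odd base case I must check that the extra $\Delta(f_n)v_0$ term in Lemma~\ref{lem:App B}(2) does not obstruct the comparison, which it does not precisely because $f_n\in U_q(B_n)$ commutes through $\check{R}_{KO}$. Existence and uniqueness up to scalar of $\check{R}_{KO}(z)$, needed for the Schur's lemma step, is guaranteed by the irreducibility of $\W(x)\otimes\W(y)$ for generic $x,y$, already invoked via \cite[Theorem 13]{KO}.
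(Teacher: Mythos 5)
Your proposal is correct and follows exactly the route the paper intends: the paper's entire proof of Proposition \ref{prop:spectral decomp-app} is the single sentence ``By using Lemma \ref{lem:App B}, we have the following,'' and your argument (Schur's lemma on the multiplicity-free $U_q(B_n)$-decomposition, the recursion $\rho_l/\rho_{l-2}=(z+q^{2l-1})/(1+q^{2l-1}z)$ from Lemma \ref{lem:App B}(1), and the linking relation $\rho_1/\rho_0=(z+q)/(1+qz)$ from Lemma \ref{lem:App B}(2), with the $\Delta(f_n)v_0$ term dropping out because $f_n\in U_q(B_n)$) is precisely the computation that sentence suppresses. Your eigenvalue bookkeeping checks out against the stated products, so there is nothing to add.
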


\section{Quantum $R$ matrix for $U_q(C^{(2)}(n+1))$ and $U_q(B^{(1)}(0,n))$}\label{app:R matrix for super}

In this appendix, we compare the quantum $R$ matrix for the $q$-oscillator
representation for $U_q(C^{(2)}(n+1))$ with the one for $U_q(D_{n+1}^{(2)})$ given in 
\cite{KO}. We also compare the quantum $R$ matrix for $U_q(B^{(1)}(0,n))$ with the one for $U_q(A^{(2)\dagger}_{2n})$ in Appendix \ref{app:Adagger}. We keep the notations in Appendix \ref{app:twistor}.

\subsection{Gauge transformation}

We take the following coproduct
\begin{equation}\label{eq:coproduct for quantum covering}
\begin{split}
&\Delta(k_i)=k_i\ot k_i,\\
&\Delta(e_i)=1\ot e_i+e_i\ot \sigma^{\frac{1-\pi}2 p(i)}k_i,\\
&\Delta(f_i)=f_i\ot\sigma^{\frac{1-\pi}2 p(i)}+k_i^{-1}\ot f_i,\\
\end{split}
\end{equation}
for $i\in I$, where $\sigma$ satisfies \eqref{eq:relation for sigma}.
We also take the same coproduct \eqref{eq:coproduct for quantum covering} for $\ov{u}_i$. 
Let $\Gamma$ be an operator acting on $\W^{\ot2}$ by
\begin{equation}
\Gamma\ket{\mb}\otimes\ket{\mb'}
=\ib^{\sum_{k,l}\varphi_{kl}m_km'_l}\ket{\mb}\otimes\ket{\mb'},
\end{equation}
for $\bm=(m_1,\dots,m_n)$ and $\bm'=(m'_1,\dots,m'_n)$.
Here we have the constraint $\varphi_{kl}+\varphi_{lk}=0$. 
Then by \cite{R} (see also \cite{OY}), 
\[\Delta^\Gamma(u)=\Gamma^{-1}\Delta(u)\Gamma\] 
gives another coproduct of $U_q(B_n)$ acting on $\W^{\ot 2}$. 
Take $\varphi_{kl}$ to be 1 for $k<l$.
We also set 
\begin{equation}
K\ket{\mb}=\ib^{c(\mb)}\ket{\mb},
\end{equation}
where
\[
c(\mb)=-\frac12\sum_km_k^2+\sum_k\left(k-n-\frac12\right)m_k.
\]
Set 
\begin{align*}
\gamma_i(\mb)&=\begin{cases}
-|\mb|+m_1&(i=0\text{ and for }U_q(C^{(2)}(n+1)))\\
-2|\mb|+2m_1&(i=0\text{ and for }U_q(B^{(1)}(0,n)))\\
m_i+m_{i+1}&(0<i<n)\\
-|\mb|+m_n&(i=n)
\end{cases},\\
\beta_i(\mb)&=\begin{cases}
m_1+n&(i=0\text{ and }U_q(C^{(2)}(n+1)))\\
2m_1+2n+1&(i=0\text{ and }U_q(B^{(1)}(0,n)))\\
-m_i+m_{i+1}&(0<i<n)\\
-m_n&(i=n)
\end{cases}.
\end{align*}
Let $\boldsymbol{\alpha}_0=\eb_1\text{ for }U_q(C^{(2)}(n+1))),2\eb_1\text{ for }
U_q(B^{(1)}(0,n)))$,
$\boldsymbol{\alpha}_i=-\eb_i+\eb_{i+1}$ $(0<i<n)$, and $\boldsymbol{\alpha}_n=-\eb_n$. 

\begin{lem}\label{app:formula-1}
The following formulas hold for $\bm$, $\bm'$, and $i\in I$:
\begin{enumerate}
\item $\Gamma^{-1}(1\ot e_i)\Gamma\, \ket{\mb}\ot \ket{\mb'}
=\ib^{-\gamma_i(\mb)}\ket{\mb}\ot e_i\ket{\mb'}$,
\item $\Gamma^{-1}(e_i\ot 1)\Gamma\,\ket{\mb}\ot\ket{\mb'}=\ib^{\gamma_i(\mb')}e_i\ket{\mb}\ot\ket{\mb'}$,
\item $\Gamma^{-1}(1\ot f_i)\Gamma\,\ket{\mb}\ot\ket{\mb'}
=\ib^{\gamma_i(\mb-\boldsymbol{\alpha}_i)}\ket{\mb}\ot f_i\ket{\mb'}$,
\item $\Gamma^{-1}(f_i\ot 1)\Gamma\, \ket{\mb}\ot\ket{\mb'}=\ib^{-\gamma_i(\mb'-\boldsymbol{\alpha}_i)} f_i\ket{\mb}\ot\ket{\mb'}$.
\end{enumerate}
\end{lem}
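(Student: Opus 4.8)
The plan is to exploit that $\Gamma$ acts diagonally on $\W^{\ot 2}$, so that conjugating an operator by $\Gamma$ only rescales its matrix entries by a ratio of phases. Write $\Gamma\ket{\mb}\ot\ket{\mb'}=\ib^{\Phi(\mb,\mb')}\ket{\mb}\ot\ket{\mb'}$ with $\Phi(\mb,\mb')=\sum_{k,l}\varphi_{kl}m_km'_l$. The two structural inputs I would record first are: (i) $\Phi$ is bilinear on $\Z^n\times\Z^n$ and antisymmetric in the sense $\varphi_{kl}=-\varphi_{lk}$ (here $\varphi_{kl}=\mathrm{sgn}(l-k)$); and (ii), read directly off the explicit action formulas in Appendix \ref{app:twistor}, each raising operator sends a basis vector to a scalar multiple of a single shifted basis vector, $e_i\ket{\mb'}=(\text{scalar})\ket{\mb'+\boldsymbol{\alpha}_i}$, and similarly $f_i\ket{\mb'}=(\text{scalar})\ket{\mb'-\boldsymbol{\alpha}_i}$, where $\boldsymbol{\alpha}_i$ is the vector fixed just before the statement.

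Granting (i) and (ii), the computation is immediate: since the scalar produced by $e_i$ commutes past the diagonal $\Gamma^{\pm1}$,
\[
\Gamma^{-1}(1\ot e_i)\Gamma\,\ket{\mb}\ot\ket{\mb'}=\ib^{\,\Phi(\mb,\mb')-\Phi(\mb,\mb'+\boldsymbol{\alpha}_i)}\ket{\mb}\ot e_i\ket{\mb'},
\]
and bilinearity collapses the exponent to $-\Phi(\mb,\boldsymbol{\alpha}_i)$, which depends on $\mb$ alone. Formula (2) is the same computation acting on the first factor, giving the exponent $-\Phi(\boldsymbol{\alpha}_i,\mb')$, which antisymmetry rewrites as $+\Phi(\mb',\boldsymbol{\alpha}_i)$. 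Formulas (3) and (4) are identical with $\boldsymbol{\alpha}_i$ replaced by $-\boldsymbol{\alpha}_i$, producing exponents $+\Phi(\mb,\boldsymbol{\alpha}_i)$ and $-\Phi(\boldsymbol{\alpha}_i,\mb')=\Phi(\mb',\boldsymbol{\alpha}_i)$.

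It then remains to identify $\Phi(\mb,\boldsymbol{\alpha}_i)$ with $\gamma_i(\mb)$, which is a short check against $\varphi_{kl}=\mathrm{sgn}(l-k)$: for $0<i<n$ one gets $m_i+m_{i+1}$; for $i=n$, using $\boldsymbol{\alpha}_n=-\eb_n$, one gets $-|\mb|+m_n$; and for $i=0$ the two normalizations $\boldsymbol{\alpha}_0=\eb_1$ and $\boldsymbol{\alpha}_0=2\eb_1$ give $-|\mb|+m_1$ and $-2|\mb|+2m_1$, matching $\gamma_0$ for $U_q(C^{(2)}(n+1))$ and $U_q(B^{(1)}(0,n))$ respectively. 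Finally, to reconcile the shifted arguments $\gamma_i(\mb-\boldsymbol{\alpha}_i)$ in (3) and $\gamma_i(\mb'-\boldsymbol{\alpha}_i)$ in (4) with the exponents obtained above, I note that each $\gamma_i$ is linear and satisfies $\gamma_i(\boldsymbol{\alpha}_i)=0$ — verified directly in all four cases — so the shift is invisible.

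The argument presents no genuine obstacle beyond this bookkeeping; the only points demanding care are the boundary nodes $i=0,n$, where one must use the correct $\boldsymbol{\alpha}_0$ for each type and track the $-|\mb|$ contributions arising because $\varphi_{k,1}$ and $\varphi_{k,n}$ have constant sign off the diagonal. I would therefore present the four formulas uniformly through $\Phi$, and relegate the case-by-case verification of $\Phi(\cdot,\boldsymbol{\alpha}_i)=\gamma_i(\cdot)$ together with $\gamma_i(\boldsymbol{\alpha}_i)=0$ to a single short display.
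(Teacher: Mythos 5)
Your method is exactly the routine verification the paper leaves implicit: $\Gamma$ is diagonal, each $e_i$ (resp.\ $f_i$) shifts the occupation vector by $+\boldsymbol{\alpha}_i$ (resp.\ $-\boldsymbol{\alpha}_i$), so conjugation contributes a phase $\ib^{\pm\Phi(\,\cdot\,,\boldsymbol{\alpha}_i)}$, and the case-by-case check $\Phi(\mb,\boldsymbol{\alpha}_i)=\gamma_i(\mb)$ together with $\gamma_i(\boldsymbol{\alpha}_i)=0$ finishes the proof; this is all correct, including your explanation of why the shifted arguments in (3) and (4) are harmless. One sign slip: in case (4) the exponent is $\Phi(\mb,\mb')-\Phi(\mb-\boldsymbol{\alpha}_i,\mb')=+\Phi(\boldsymbol{\alpha}_i,\mb')=-\Phi(\mb',\boldsymbol{\alpha}_i)=-\gamma_i(\mb')$, whereas you wrote $-\Phi(\boldsymbol{\alpha}_i,\mb')=\Phi(\mb',\boldsymbol{\alpha}_i)$, which would give $\ib^{+\gamma_i(\mb')}$ and contradict the stated $\ib^{-\gamma_i(\mb'-\boldsymbol{\alpha}_i)}$; performing the substitution $\boldsymbol{\alpha}_i\mapsto-\boldsymbol{\alpha}_i$ in your case (2) exponent correctly yields the extra minus sign.
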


\begin{lem}\label{app:formula-2}
The following formulas hold for $\bm$ and $i\in I$:
\begin{enumerate}
\item $K^{-1}e_iK\ket{\mb}=\ib^{\beta_i(\mb)}e_i{\ket\mb}$,
\item $K^{-1}f_iK\ket{\mb}=\ib^{-\beta_i(\mb-\boldsymbol{\alpha}_i)}f_i{\ket\mb}$.
\end{enumerate}
\end{lem}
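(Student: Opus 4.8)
The plan is to exploit that $K$ is diagonal in the basis $\{\ket{\mb}\}$, so that conjugation by $K$ only multiplies $e_i\ket{\mb}$ (resp.\ $f_i\ket{\mb}$) by a scalar determined by how far $e_i$ (resp.\ $f_i$) shifts the label $\mb$. First I would record the shift structure: from the explicit action on $\W$, each $e_i$ sends $\ket{\mb}$ to a multiple of $\ket{\mb+\boldsymbol{\alpha}_i}$ and each $f_i$ sends $\ket{\mb}$ to a multiple of $\ket{\mb-\boldsymbol{\alpha}_i}$, with $\boldsymbol{\alpha}_i$ as fixed above Lemma \ref{app:formula-1}. Since $K\ket{\mb}=\ib^{c(\mb)}\ket{\mb}$, this gives at once
\[
K^{-1}e_iK\ket{\mb}=\ib^{\,c(\mb)-c(\mb+\boldsymbol{\alpha}_i)}\,e_i\ket{\mb},\qquad
K^{-1}f_iK\ket{\mb}=\ib^{\,c(\mb)-c(\mb-\boldsymbol{\alpha}_i)}\,f_i\ket{\mb}.
\]
So both assertions follow once the exponents are identified. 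Writing $\mb'=\mb-\boldsymbol{\alpha}_i$ for statement (2), the exponent there equals $c(\mb'+\boldsymbol{\alpha}_i)-c(\mb')=-\bigl[c(\mb')-c(\mb'+\boldsymbol{\alpha}_i)\bigr]$, so (1) and (2) reduce to the single identity
\[
\beta_i(\mb)=c(\mb)-c(\mb+\boldsymbol{\alpha}_i)\qquad(i\in I),
\]
evaluated in (2) at $\mb-\boldsymbol{\alpha}_i$, which supplies the required sign $-\beta_i(\mb-\boldsymbol{\alpha}_i)$.

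Next I would prove this identity uniformly. Writing $(\cdot,\cdot)$ for the standard form with $(\eb_k,\eb_l)=\delta_{kl}$ and setting $\rho'_k=k-n-\tfrac12$, the function is $c(\mb)=-\tfrac12(\mb,\mb)+(\rho',\mb)$, so a direct expansion yields
\[
c(\mb)-c(\mb+\boldsymbol{\alpha}_i)=(\mb,\boldsymbol{\alpha}_i)+\tfrac12(\boldsymbol{\alpha}_i,\boldsymbol{\alpha}_i)-(\rho',\boldsymbol{\alpha}_i).
\]
It then remains to substitute the four shapes of $\boldsymbol{\alpha}_i$ and match the piecewise definition of $\beta_i$: for $0<i<n$ one finds $(\mb,\boldsymbol{\alpha}_i)=-m_i+m_{i+1}$, while $\tfrac12(\boldsymbol{\alpha}_i,\boldsymbol{\alpha}_i)=1$ cancels $(\rho',\boldsymbol{\alpha}_i)=1$; for $i=n$ one gets $-m_n+\tfrac12-\tfrac12=-m_n$; and for $i=0$ the two types enter only through $\boldsymbol{\alpha}_0=\eb_1$ or $2\eb_1$, giving $m_1+n$ and $2m_1+2n+1$ respectively.

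Since there is no conceptual difficulty here—the whole content is the diagonality of $K$ together with a four-line case check—the only thing requiring care is the bookkeeping of the half-integer shifts in $\rho'$ and of the doubled root $\boldsymbol{\alpha}_0=2\eb_1$ in the $B^{(1)}(0,n)$ case, where the quadratic term contributes $2m_1+2$ rather than the $m_1+\tfrac12$ of type $C^{(2)}(n+1)$. I would present the computation in the compact bilinear form above so that all four cases are dispatched by a single substitution, and I would stress that statement (2) needs no independent calculation once it has been absorbed into the $e_i$ identity via the translation $\mb\mapsto\mb-\boldsymbol{\alpha}_i$.
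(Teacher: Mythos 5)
Your proof is correct and is precisely the direct computation the paper leaves implicit (the lemma is stated there without proof): since $K$ is diagonal with eigenvalue $\ib^{c(\mb)}$ and $e_i$, $f_i$ shift $\mb$ by $\pm\boldsymbol{\alpha}_i$, both statements reduce to the identity $\beta_i(\mb)=c(\mb)-c(\mb+\boldsymbol{\alpha}_i)=(\mb,\boldsymbol{\alpha}_i)+\tfrac12(\boldsymbol{\alpha}_i,\boldsymbol{\alpha}_i)-(\rho',\boldsymbol{\alpha}_i)$, which you verify correctly in all four cases, including the doubled root $\boldsymbol{\alpha}_0=2\eb_1$ for $B^{(1)}(0,n)$. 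The reduction of (2) to (1) via the substitution $\mb\mapsto\mb-\boldsymbol{\alpha}_i$ is also sound.
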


\begin{prop}\label{prop:adjoint of comultiplications}
For $u_i$ $(i\in I$, $u=e,f,k)$, we have
\[
\Delta(\bar{u}_i)\ket{\mb}\ot\ket{\mb'}=\ib^{\Lambda_i(\mb+\mb')}
(K\ot K)^{-1}\Delta^\Gamma(u_i)(K\ot K)\ket{\mb}\ot\ket{\mb'},
\]
on $\W^{\ot 2}$. Here
\[
\Lambda_i(\mb)=\begin{cases}
m_i+m_{i+1}-(\delta_{i0}+\delta_{in})|\mb|-n\delta_{i0}&(u=e)\\
m_i+m_{i+1}+(\delta_{i0}+\delta_{in})(|\mb|+1)-2&(u=f)\\
2m_i-2m_{i+1}&(u=k)
\end{cases},
\]
except when $i=0$ and for $U_q(B^{(1)}(0,n))$, where
\[
\Lambda_0(\mb)=\begin{cases}
2m_1-2|\mb|-2n+1&(u=e)\\
2m_1-2|\mb|-2n+3&(u=f)\\
0&(u=k)
\end{cases}.
\]
Here we should understand $m_0=m_{n+1}=0$.
\end{prop}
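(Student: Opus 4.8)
The plan is to verify the asserted identity directly on each basis vector $\ket{\mb}\ot\ket{\mb'}$, treating separately the three generator types $u=e,f,k$ and the node types $i=0$ (with its two variants according to whether the algebra is $U_q(C^{(2)}(n+1))$ or $U_q(B^{(1)}(0,n))$), $0<i<n$, and $i=n$. The key structural observation is that, apart from the weight shift induced by $u_i$ itself, every operator intervening on both sides is diagonal in the basis $\{\ket{\mb}\ot\ket{\mb'}\}$: the gauge operator $\Gamma$, the operator $K$, the twistor data $T_i,\Upsilon_i$, and the elements $\sigma,k_i$ all act by explicit powers of $\ib$. Consequently both sides agree as operators up to a diagonal scalar, and the whole computation reduces to matching exponents of $\ib$.

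First I would expand the left-hand side using the super coproduct \eqref{eq:coproduct for quantum covering} with $\pi=-1$ together with the explicit actions of $\bar e_i,\bar f_i,\bar k_i,\sigma$ on $\W$ recorded in Appendix \ref{app:twistor}; this yields a sum of (at most) two terms, one from each summand of the coproduct, each being a diagonal $\ib$-factor times a shifted basis vector. Next I would compute the right-hand side by pushing $\Gamma$ and then $K\ot K$ through $\Delta(u_i)$, the $\pi=1$ coproduct of the plain $U_q(B_n)$ oscillator generators. By Lemma \ref{app:formula-1} conjugation by $\Gamma$ contributes factors $\ib^{\mp\gamma_i(\,\cdot\,)}$, and by Lemma \ref{app:formula-2} conjugation by $K\ot K$ contributes factors $\ib^{\pm\beta_i(\,\cdot\,)}$ together with the quadratic weights $c(\mb),c(\mb')$; since $\Gamma$, $K$ and $k_i$ are mutually commuting diagonal operators they may be reordered freely and evaluated on the appropriately shifted weights.

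The heart of the proof is then the comparison of exponents, and the main obstacle is to check that the two summands of the coproduct (the $1\ot u_i$ term and the $u_i\ot\sigma^{p(i)}k_i$ term) acquire \emph{the same} net power of $\ib$, so that this common power factors out as the single scalar $\ib^{\Lambda_i(\mb+\mb')}$ depending only on the total weight $\mb+\mb'$. This is a genuine consistency requirement, and it is exactly where the choices $\varphi_{kl}=1$ for $k<l$, the quadratic form $c(\mb)=-\tfrac12\sum_k m_k^2+\sum_k(k-n-\tfrac12)m_k$, and the linear functions $\gamma_i,\beta_i$ conspire: the quadratic parts of $c$ on the two tensor slots are arranged to cancel the cross terms produced by $\Gamma$ and by the weight shift $\mb\mapsto\mb-\boldsymbol{\alpha}_i$, leaving in each summand a function of $\mb+\mb'$ alone. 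I expect the interior case $0<i<n$ (where $p(i)=0$, so the super coproduct collapses to the ordinary one) to be the cleanest, and the node $i=n$ together with the two $i=0$ variants---where the odd parity and the $\sigma$-twist enter---to demand the most careful bookkeeping, matched respectively against $\Lambda_n$, $\Lambda_0$, and the separate formula for $\Lambda_0$ in type $B^{(1)}(0,n)$ stated above. The case $u=k$ is immediate, since $\Delta(k_i)=k_i\ot k_i$ involves no weight shift. Once the exponents are matched in every case, the proposition follows.
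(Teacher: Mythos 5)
Your proposal is correct and follows essentially the same route as the paper: the paper's proof likewise reduces the identity to Lemmas \ref{app:formula-1} and \ref{app:formula-2} together with a direct case-by-case computation on basis vectors, in which the $q$-dependent coefficients of the two coproduct summands visibly coincide on both sides and only the powers of $\ib$ need to be matched and shown to depend only on $\mb+\mb'$. The paper merely writes out two representative cases ($i=n$ and $i\ne n$ for $u=e$), exactly the bookkeeping you describe.
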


\begin{proof}
It follows from Lemmas \ref{app:formula-1} and \ref{app:formula-2}, and the following calculations. For instance, for $i=n$
\begin{align*}
\Delta(\bar{e}_n)|\bf m\rangle\ot|\bf m'\rangle 
&= (1\ot \bar{e}_n + \bar{e}_n\ot \sigma \bar{k}_n)|{\bf m}\rangle\ot|{\bf m}'\rangle \\
&=\kappa(\ib^{1-|\mb'|}[m'_n] |{\bf m}\rangle\ot|{\bf m}'-{\bf e}_n\rangle \\ 
& \quad +(-1)^{|\mb'|}\ib^{2-|\mb|+2m'_n}q^{-2m'_n-1}[m_n]|{\bf m}-{\bf e}_n\rangle\ot|{\bf m}'\rangle),\\
\Delta^\Gamma(e_n)|\bf m\rangle\ot|\bf m'\rangle 
&= (\Gamma^{-1}(1\ot e_n)\Gamma + \Gamma^{-1}(e_n\ot1)\Gamma\cdot(1\ot k_n))|{\bf m}\rangle\ot|{\bf m}'\rangle\\
&=\kappa(\ib^{|\mb|-m_n+1}[m'_n] |{\bf m}\rangle\ot|{\bf m}'-{\bf e}_n\rangle \\
&\quad +\ib^{-|\mb'|+m'_n+2}q^{-2m'_n-1}[m_n]|{\bf m}-{\bf e}_n\rangle\ot|{\bf m}'\rangle),
\end{align*}
and for $i\ne n$
\begin{align*}
\Delta(\bar{e}_i)|\bf m\rangle\ot|\bf m'\rangle 
&= (1\ot \bar{e}_i + \bar{e}_i\ot \bar{k}_i)|{\bf m}\rangle\ot|{\bf m}'\rangle \\
&=\ib^{2m'_{i+1}}[m'_i] |{\bf m}\rangle\ot|{\bf m}'-{\bf e}_i+{\bf e}_{i+1}\rangle \\ 
& \quad+\ib^{2m_{i+1}+2m'_i-2m'_{i+1}}q^{-2m'_i+2m'_{i+1}}[m_i]|{\bf m}-{\bf e}_i+{\bf e}_{i+1}\rangle\ot|{\bf m}'\rangle,\\
\Delta^\Gamma(e_i)|\bf m\rangle\ot|\bf m'\rangle 
&= (\Gamma^{-1}(1\ot e_i)\Gamma + \Gamma^{-1}(e_i\ot1)\Gamma\cdot(1\ot k_i))|{\bf m}\rangle\ot|{\bf m}'\rangle\\
&=\ib^{-m_i-m_{i+1}}[m'_i] |{\bf m}\rangle\ot|{\bf m}'-{\bf e}_i+{\bf e}_{i+1}\rangle \\ 
& \quad+\ib^{m'_i+m'_{i+1}}q^{-2m'_i+2m'_{i+1}}[m_i]|{\bf m}-{\bf e}_i+{\bf e}_{i+1}\rangle\ot|{\bf m}'\rangle.
\end{align*}
\end{proof}

For a quantum affine superalgebras such as $U_q(D^{(2)}_{n+1})$, $U_q(A^{(2)\dagger}_{2n})$, $U_q(C^{(2)}(n+1))$, and $U_q(B^{(1)}(0,n))$, a quantum $R$
matrix $R(z)$ is defined, if it exists, as an intertwiner satisfying 
\[
\check{R}(z)(\pi_x\ot\pi_y)\Delta(u)
=(\pi_y\ot\pi_x)\Delta(u)\check{R}(z),
\]
where $\check{R}(z)=PR(z)$, $P$ is the transposition of the tensor components and
$z=x/y$. We also note that the coproduct we use here is 
\eqref{eq:coproduct for quantum covering}.
For $U_q(D^{(2)}_{n+1})$ or $U_q(A^{(2)\dagger}_{2n})$, the existence 
of quantum $R$ matrices are proved in \cite{KO} or Appendix \ref{app:Adagger}.
We denote them by $\check{R}_{KO}(z)$. Let $\check{R}_{new}(z)$ be the 
quantum $R$ matrices for the quantum groups $U_q(C^{(2)}(n+1))$ or
$U_q(B^{(1)}(0,n))$. From Proposition \ref{prop:adjoint of comultiplications}, we have

\begin{prop}\label{prop:new R matrix}
For generic $x, y\in \Q(q)$, $\check{R}_{new}(z)$ and $\check{R}_{KO}(z)$ have
the following relation:
\[
\check{R}_{new}(z,-q)=(K\ot K)^{-1}\Gamma^{-1}\check{R}_{KO}(z,q)
\Gamma(K\ot K).
\]
\end{prop}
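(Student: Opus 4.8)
The plan is to verify that the operator $W^{-1}\check{R}_{KO}(z,q)\,W$, with $W=\Gamma(K\ot K)$, satisfies the defining intertwining relation for the quantum $R$ matrix of $U_q(C^{(2)}(n+1))$ (resp.~$U_q(B^{(1)}(0,n))$), and then to conclude by uniqueness together with a normalization check. Recall that $\check{R}_{KO}(z,q)$ is characterized by
\[
\check{R}_{KO}(z,q)\,(\pi_x\ot\pi_y)\Delta(u_i)=(\pi_y\ot\pi_x)\Delta(u_i)\,\check{R}_{KO}(z,q)\qquad(i\in I),
\]
for the generators $u_i$ of $U_q(D^{(2)}_{n+1})$ (resp.~$U_q(A^{(2)\dagger}_{2n})$), while $\check{R}_{new}(z,-q)$ is characterized by the same relation with $u_i$ replaced by $\bar{u}_i$. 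Writing $D_i$ for the diagonal operator acting on $\ket{\mb}\ot\ket{\mb'}$ by the scalar $\ib^{\Lambda_i(\mb+\mb')}$, Proposition \ref{prop:adjoint of comultiplications} can be rephrased as
\[
(\pi_x\ot\pi_y)\Delta(\bar{u}_i)=D_i\,W^{-1}(\pi_x\ot\pi_y)\Delta(u_i)\,W,
\]
and the same identity holds verbatim on $\W(y)\ot\W(x)$, since $\Gamma$ and $K$ do not depend on the spectral parameters.

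First I would record two commutation facts. Since $\Gamma$ and $K\ot K$ are both diagonal in the basis $\{\ket{\mb}\ot\ket{\mb'}\}$, so is $W$, and hence $W$ commutes with each diagonal $D_i$. Secondly, $\check{R}_{KO}$ commutes with $\Delta(k_j)=k_j\ot k_j$ for all $j$, so it preserves each joint weight space; because the $k_j$-eigenvalues determine $\mb$, the joint weight of $\ket{\mb}\ot\ket{\mb'}$ in fact determines $\mb+\mb'$ componentwise, so $\Lambda_i(\mb+\mb')$ is constant on weight spaces and $\check{R}_{KO}$ commutes with $D_i$. Setting $\check{R}:=W^{-1}\check{R}_{KO}(z,q)\,W$ and inserting the rewritten form of $\Delta(\bar{u}_i)$, these two facts give
\[
\begin{aligned}
\check{R}\,(\pi_x\ot\pi_y)\Delta(\bar{u}_i)
&=W^{-1}D_i\,\check{R}_{KO}\,(\pi_x\ot\pi_y)\Delta(u_i)\,W\\
&=W^{-1}D_i\,(\pi_y\ot\pi_x)\Delta(u_i)\,\check{R}_{KO}\,W\\
&=D_i\,W^{-1}(\pi_y\ot\pi_x)\Delta(u_i)\,\check{R}_{KO}\,W\\
&=(\pi_y\ot\pi_x)\Delta(\bar{u}_i)\,\check{R},
\end{aligned}
\]
where the first equality moves $W$ through $D_i$ and $D_i$ through $\check{R}_{KO}$, the second applies the intertwining relation for $\check{R}_{KO}$, and the last reads off the rewritten $\Delta(\bar{u}_i)$ on $\W(y)\ot\W(x)$. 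Thus $\check{R}$ intertwines the super coproducts exactly as $\check{R}_{new}(z,-q)$ must.

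Since $\W(x)\ot\W(y)$ is irreducible for generic $x,y$, such an intertwiner is unique up to a scalar function of $z$, so $\check{R}_{new}(z,-q)$ and $W^{-1}\check{R}_{KO}(z,q)\,W$ agree up to a scalar. To pin this scalar down I would evaluate both sides on the vacuum $\ket{\bf 0}\ot\ket{\bf 0}$, which is fixed with eigenvalue $1$ by $W$ (as $c({\bf 0})=0$ and $\Gamma\ket{\bf 0}\ot\ket{\bf 0}=\ket{\bf 0}\ot\ket{\bf 0}$) and on which both $R$ matrices are normalized to act as the identity in view of the empty-product conventions in the spectral decompositions; this yields the stated equality on the nose. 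The argument is uniform across the two pairs of types, the replacement $\pi\colon 1\mapsto -1$ accounting precisely for the change $q\mapsto -q$. The only genuinely delicate point is the second commutation fact: one must confirm that $\check{R}_{KO}$ preserves $\mb+\mb'$ componentwise, rather than merely the overall weight, so that the weight-dependent prefactors $D_i$ may be passed across it; everything else is bookkeeping built on Proposition \ref{prop:adjoint of comultiplications}.
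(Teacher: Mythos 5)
Your proof is correct and takes essentially the same route as the paper, which deduces the proposition directly from Proposition \ref{prop:adjoint of comultiplications} by conjugation with $\Gamma(K\ot K)$. You have merely made explicit the supporting details the paper leaves implicit: that the diagonal prefactors $\ib^{\Lambda_i(\mb+\mb')}$ commute both with $\Gamma(K\ot K)$ and with $\check{R}_{KO}(z,q)$ (the latter because the joint weight determines $\mb+\mb'$ componentwise), and that uniqueness plus the normalization on $\ket{{\bf 0}}\ot\ket{{\bf 0}}$ fixes the scalar.
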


\section{Irreducibility of $\W^{(s)}$}\label{app:irreducibility}

In this appendix, we prove Theorem \ref{thm:irreducibility of higher level osc}.
We adopt the arguments used in the finite-dimensional representations of the quantum affine algebras \cite{KKKO}. We assume that $X=C_n^{(1)}$ and $\W=\W_+$ since the proof for the other two cases are similar.

\subsection{Normalized $R$ matrix}
Let us use the following notations.

\begin{itemize}
\item ${\bf k}$ : the base field, which is the algebraic closure of $\Q(q)$ in $\bigcup_n \C(\!(q^{-1/n})\!)$,

\item $\W[z] = {\bf k}[z^{\pm 1}]\ot_{\bf k} \W(1)$ : the affinization of $\W(1)$, where $z$ is a formal variable (see \cite[Section 4.2]{Kas02}),

\item $\W[z_1]\, \widehat{\ot}\, \W[z_2]$, $\W[z_2]\, \widetilde{\ot}\, \W[z_1]$ : the completions of $\W[z_1]{\ot} \W[z_2]$ and $\W[z_2]{\ot} \W[z_1]$, respectively, where $z_1, z_2$ are formal commuting variables (see \cite[Section 7]{Kas02}),

\item ${\bf k}\llbracket\frac{z_1}{z_2}\rrbracket$ : the ring of formal power series in $\frac{z_1}{z_2}$ over ${\bf k}$, where we regard ${\bf k}(\tfrac{z_1}{z_2}) \subset {\bf k}\llbracket\tfrac{z_1}{z_2}\rrbracket$
under the identification $\frac{1}{1-c(z_1/z_2)}=\sum_{k\ge 0}c^k(\frac{z_1}{z_2})^k\in {\bf k}\llbracket\tfrac{z_1}{z_2}\rrbracket$ for $c\in {\bf k}$.
\end{itemize}

Let $R^{\rm univ}$ be the universal $R$ matrix for $U_q(C_n^{(1)})$. Then we have (cf.\cite[(7.6)]{Kas02})
\begin{equation*}
\xymatrixcolsep{4pc}\xymatrixrowsep{3pc}\xymatrix{
\W[z_1]\, \widehat{\ot}\, \W[z_2] \ \ar@{->}^{R^{\rm univ}}[r] &\ \W[z_2]\, \widetilde{\ot}\, \W[z_1]}.
\end{equation*}

\begin{lem}\label{lem:irreducible}
${\bf k}(\frac{z_1}{z_2})\ot_{{\bf k}[(\frac{z_1}{z_2})^{\pm 1}]}(\W[z_1]\ot \W[z_2])$ is an irreducible module over ${\bf k}(\frac{z_1}{z_2})\ot_{{\bf k}[(\frac{z_1}{z_2})^{\pm 1}]} U_q(C_n^{(1)})[z_1^{\pm 1},z_2^{\pm 1}]$.
\end{lem}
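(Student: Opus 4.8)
The plan is to follow the strategy of \cite{KKKO}: exhibit the generic tensor product as \emph{simultaneously cyclic and cocyclic} on the single vector $\ket{{\bf 0}}\ot\ket{{\bf 0}}$, and use the normalized $R$ matrix — which becomes an isomorphism after inverting $\frac{z_1}{z_2}$ — as the bridge between the two properties. Write $M=\W[z_1]$, $N=\W[z_2]$ (with $\W=\W_+$), put $A={\bf k}(\tfrac{z_1}{z_2})\ot_{{\bf k}[(\frac{z_1}{z_2})^{\pm1}]}U_q(C_n^{(1)})[z_1^{\pm1},z_2^{\pm1}]$, and set $w=\ket{{\bf 0}}\ot\ket{{\bf 0}}$. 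First I would record that the normalized $R$ matrix obtained from $R^{\rm univ}$ is invertible over ${\bf k}(\tfrac{z_1}{z_2})$: by the spectral decomposition \eqref{eq:spectral decomp}, $\check{R}$ acts on the $U_q(C_n)$-component $V_l^\ve$ by the scalar $\prod_{j=1}^{l/2}\tfrac{1-q^{4j-2}z}{z-q^{4j-2}}$ with $z=z_1/z_2$, which is a nonzero element of ${\bf k}(\tfrac{z_1}{z_2})$. Hence $\check{R}=\sum_l(\cdots)\,P_l^\ve$ has inverse $\sum_l(\cdots)^{-1}P_l^\ve$ and yields an $A$-module isomorphism ${\bf k}(\tfrac{z_1}{z_2})\ot(M\ot N)\xrightarrow{\sim}{\bf k}(\tfrac{z_1}{z_2})\ot(N\ot M)$; note also $\check{R}(w)=w$ since $w\in V_0^\ve$ (the $l=0$ eigenvalue is the empty product $1$).

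The heart of the argument is cyclicity: ${\bf k}(\tfrac{z_1}{z_2})\ot(M\ot N)=A\,w$. Each factor $\W$ is already cyclic over $U_q(C_n)$ on $\ket{{\bf 0}}$, since it is irreducible over $U_q(C_n)$, so it suffices to produce $M\ot\ket{{\bf 0}}$ inside $A\,w$ and then induct on the second tensor factor: applying $\Delta(f_n)=f_n\ot1+k_n\ot f_n$ to $v\ot\ket{{\bf 0}}$ gives $v\ot\ket{2\be_n}$ modulo $M\ot\ket{{\bf 0}}$, and iterating together with the diagonal $U_q(C_n)$-action fills out all of $M\ot N$. To produce $M\ot\ket{{\bf 0}}$ I would exploit the genericity of $\frac{z_1}{z_2}$: the coproducts $\Delta(e_0)=e_0\ot k_0^{-1}+1\ot e_0$ and $\Delta(f_0)$ act on $M\ot N$ with the spectral parameter $z_1$ entering the first tensor slot and $z_2$ the second, so over the field ${\bf k}(\tfrac{z_1}{z_2})$ one can form combinations that separate the action on the two factors; combined with the $U_q(C_n)$-cyclicity of each factor this places $M\ot\ket{{\bf 0}}$ in $A\,w$.

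For cocyclicity I would dualize. The polarization of Lemma \ref{lem:polarization1} supplies an $\eta$-contravariant form which identifies the graded dual of $N\ot M$ with a twist of $M\ot N$ and interchanges cyclic with cocyclic generation; thus the cyclicity just established yields that ${\bf k}(\tfrac{z_1}{z_2})\ot(N\ot M)$ is cogenerated by the swapped vector $\ket{{\bf 0}}\ot\ket{{\bf 0}}$. Since $\check{R}(w)=w$, the kernel/cokernel argument of \cite{KKKO} then shows that $\mathrm{Im}\,\check{R}$ is a simple $A$-module (it is the head of $M\ot N$ and the socle of $N\ot M$). But $\check{R}$ is an isomorphism over ${\bf k}(\tfrac{z_1}{z_2})$, so $\mathrm{Im}\,\check{R}$ is the whole of ${\bf k}(\tfrac{z_1}{z_2})\ot(N\ot M)$, whence both $N\ot M$ and $M\ot N$ are simple, proving Lemma \ref{lem:irreducible}.

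The main obstacle is the cyclicity step. Unlike the finite-dimensional good modules treated in \cite{KKKO}, $\W$ is infinite-dimensional and non-integrable: $\ket{{\bf 0}}$ is only a highest weight vector for $U_q(C_n)$, and $e_0$ does not annihilate it, so there is no affine highest weight vector to drive a triangular induction, and the weight grading alone cannot separate the two tensor factors (e.g.\ $\ket{2\be_n}\ot\ket{{\bf 0}}$ and $\ket{{\bf 0}}\ot\ket{2\be_n}$ share the same $U_q(C_n)$-weight). The separation must therefore be extracted from the explicit oscillator action of $e_0,f_0$ together with the genericity of $\frac{z_1}{z_2}$, and it is precisely here that the argument must be made carefully.
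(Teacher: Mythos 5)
The paper does not prove this lemma at all: its ``proof'' is the single line ``See \cite[Proposition 12]{KO}'', i.e.\ the irreducibility of the generic tensor product $\W(x)\ot\W(y)$ is imported wholesale from Kuniba--Okado. Your proposal is therefore attempting something the paper deliberately avoids, and while the overall architecture (cyclic on $\ket{{\bf 0}}\ot\ket{{\bf 0}}$, cocyclic via the contravariant form, bridge the two with an invertible intertwiner \`a la \cite{KKKO}) is a legitimate strategy, it has a genuine gap exactly where you flag one. The entire content of the statement is the cyclicity claim ${\bf k}(\tfrac{z_1}{z_2})\ot(M\ot N)=A\,w$, and your justification --- ``over the field ${\bf k}(\tfrac{z_1}{z_2})$ one can form combinations that separate the action on the two factors'' --- is not an argument. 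The separation cannot be done by taking ${\bf k}(\tfrac{z_1}{z_2})$-linear combinations of operators; what actually works is to produce, inside $A\,w$, two vectors in the same weight space of the form $c_1 z_1\,\ket{2\be_1}\ot\ket{{\bf 0}}+c_2 z_2\,\ket{{\bf 0}}\ot\ket{2\be_1}$ (from $\Delta(e_0)w$) and $d_1\,\ket{2\be_1}\ot\ket{{\bf 0}}+d_2\,\ket{{\bf 0}}\ot\ket{2\be_1}$ (from $\Delta(f_n)w$ pushed around by $U_q(A_{n-1})$), check the $2\times2$ determinant is a nonzero element of ${\bf k}(\tfrac{z_1}{z_2})$ because $z_1/z_2$ is transcendental, and then iterate. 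None of this is carried out, and it is precisely the computation that constitutes \cite[Proposition 12]{KO}. Once cyclicity and cocyclicity on the same vector $w$ are both in hand, irreducibility of $M\ot N$ follows immediately (every nonzero submodule contains $w$, and $w$ generates), so the detour through $\check{R}$, heads and socles is superfluous for this lemma.

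A second, milder issue is logical ordering. You invoke the spectral decomposition \eqref{eq:spectral decomp} for ``the normalized $R$ matrix obtained from $R^{\rm univ}$''. In the paper that identification is Lemma \ref{lem:nor R = KO R}, whose uniqueness argument relies on Lemma \ref{lem:irreducible}. You can avoid circularity by working throughout with the explicit intertwiner $\check{R}_+$ of \cite{KO}, whose spectral decomposition and hence invertibility over ${\bf k}(\tfrac{z_1}{z_2})$ are established independently, but as written the proposal risks using a later lemma to prove an earlier one.
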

\begin{proof}
See \cite[Proposition 12]{KO}.
\end{proof}

\begin{lem}\label{lem:invariance}
The spaces $\W[z_1]\, \widehat{\ot}\, \W[z_2]$ and $\W[z_2]\,\widetilde{\ot}\, \W[z_1]$ are invariant under the action of ${\bf k}\llbracket\frac{z_1}{z_2}\rrbracket$.
\end{lem}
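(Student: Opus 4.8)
The plan is to unwind the definitions of the two completions from \cite[Section 7]{Kas02} and to verify the module structure directly on each weight space. First I would record that $\W[z_1]\ot\W[z_2]$ is graded by the affine weight lattice, where both $z_1$ and $z_2$ carry the null-root degree and the classical weight of $\ket{\bm}$ is read off from $\bm$. The structural input that replaces finite-dimensionality (used in \cite{Kas02} for good modules) is that every classical weight space of $\W$ is at most one-dimensional, spanned by a single $\ket{\bm}$. Consequently, for each classical weight $\la$ the space $(\W\ot\W)_\la=\bigoplus_{\mu+\nu=\la}\W_\mu\ot\W_\nu$ is finite-dimensional over ${\bf k}$, the sum running over the finitely many admissible decompositions $\mu+\nu=\la$.

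Next I would isolate a single affine weight space. Writing $u=z_1/z_2$, which has classical weight $0$ and null-root degree $0$, the affine weight space of $\W[z_1]\ot\W[z_2]$ of classical weight $\la$ and total $z$-degree $N$ is
\begin{equation*}
\bigoplus_{a\in\Z}z_1^{a}z_2^{N-a}(\W\ot\W)_\la\ \cong\ z_2^{N}(\W\ot\W)_\la\ot_{\bf k}{\bf k}[u^{\pm1}],
\end{equation*}
a free ${\bf k}[u^{\pm1}]$-module of finite rank $\dim_{\bf k}(\W\ot\W)_\la$. In these terms the completion $\widehat{\ot}$ completes each affine weight space in the direction of increasing $u$-degree, i.e.\ allows series bounded below in the power of $u$, yielding $z_2^{N}(\W\ot\W)_\la\ot_{\bf k}{\bf k}(\!(u)\!)$; by the lemma's own assertion the completion $\widetilde{\ot}$ of the transposed product $\W[z_2]\ot\W[z_1]$ is of the same (bounded-below in $u$) type, consistent with the identification ${\bf k}(u)\subset{\bf k}\llbracket z_1/z_2\rrbracket$ fixed in the excerpt. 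The global completions are the direct sums of these completed affine weight spaces.

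Then I would check the ${\bf k}\llbracket z_1/z_2\rrbracket$-stability directly. Since $u$ is weight-$0$, multiplication by $u$ preserves each affine weight space and acts on the index by $a\mapsto a+1$, hence maps each completion into itself. For $f(u)=\sum_{k\ge0}c_k u^{k}\in{\bf k}\llbracket z_1/z_2\rrbracket$ and an element $\xi=\sum_{a\ge a_0}\xi_a z_1^{a}z_2^{N-a}$ of $\W[z_1]\widehat{\ot}\W[z_2]$, the coefficient of $z_1^{a}z_2^{N-a}$ in $f(u)\cdot\xi$ is the finite sum $\sum_{0\le k\le a-a_0}c_k\,\xi_{a-k}$, so $f(u)\cdot\xi$ is again a well-defined element bounded below in $u$-degree. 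Convergence is guaranteed precisely because each $(\W\ot\W)_\la$ is finite-dimensional, making the defining sums locally finite; the identical computation handles $\widetilde{\ot}$. This shows both completions are ${\bf k}\llbracket z_1/z_2\rrbracket$-modules.

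I expect the main obstacle to be bookkeeping rather than substance: one must confirm that the topology defining the completions in \cite{Kas02} — calibrated there to finite-dimensional modules — agrees weight-space-by-weight-space with the $u$-adic completion above, so that it is the finite rank of $(\W\ot\W)_\la$ over ${\bf k}[u^{\pm1}]$, and not finite-dimensionality over ${\bf k}$, that the convergence argument actually needs. Once that identification is in place, the module structure is immediate from the local finiteness just described.
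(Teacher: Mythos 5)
Your argument is correct and rests on the same mechanism as the paper's proof: multiplication by $(z_1/z_2)^k$ pushes each component further along the direction in which the completion is taken, and each component of the result receives only finitely many contributions, so arbitrary power series in $z_1/z_2$ act. The difference is purely organizational. The paper works directly with Kashiwara's definition $\W[z_2]\,\widetilde{\ot}\,\W[z_1]=\sum_{(\lambda,\mu)}\prod_{\beta\in Q_+}\W[z_2]_{\lambda-\beta}\ot\W[z_1]_{\mu+\beta}$, notes that $(z_1/z_2)^k$ sends the $\beta'$-component into the $(\beta'+k\delta)$-component, and deduces convergence from the purely root-lattice fact that $\beta=\beta'+k\delta$ has only finitely many solutions $(\beta',k)\in Q_+\times\Z_{\ge0}$; no finite-dimensionality of weight spaces enters. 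You instead re-express each affine weight space of the completion as ${\bf k}(\!(u)\!)\ot_{\bf k}(\W\ot\W)_\la$ with $u=z_1/z_2$, which does require the finite-dimensionality of $(\W\ot\W)_\la$ (so that only finitely many classical decompositions $\mu+\nu=\la$ occur and ``bounded below in $u$-degree'' is uniform over them), together with the identification of Kashiwara's completion with this $u$-adic one — exactly the bookkeeping you flag at the end. That identification is valid here for the reason you give, but it is an extra verification the paper's formulation sidesteps; in exchange, your description makes the ${\bf k}\llbracket u\rrbracket$-module (indeed ${\bf k}(\!(u)\!)$-module) structure of each weight space completely transparent, which the paper leaves implicit.
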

\begin{proof}
Let us consider $\W[z_2]\,\widetilde{\ot}\,\W[z_1]$. The proof for $\W[z_1]\, \widehat{\ot}\, \W[z_2]$ is the same.
It suffices to check that $\W[z_2]\,\widetilde{\ot}\,\W[z_1]$ is invariant under multiplication by ${\bf k}\llbracket\frac{z_1}{z_2}\rrbracket$.
Let $Q$ be the root lattice for $C_n^{(1)}$ and $Q_+$ the set of non-negative integral linear combinations of simple roots.
By definition of $\W[z_2]\, \widetilde{\ot}\, \W[z_1]$, we have
\[
\W[z_2]\, \widetilde{\ot}\,  \W[z_1]=\sum_{(\lambda,\mu)}F_{(\lambda,\mu)}(\W[z_2]\ot\W[z_1]), 
\]
where 
$F_{(\lambda,\mu)}(\W[z_2]\ot\W[z_1])=\prod_{\beta\in Q_+} \W[z_2]_{\lambda-\beta}\times \W[z_1]_{\mu+\beta}$. Here we understand the weights $\lambda-\beta$ and $\mu+\beta$ of $\W[z_i]$ ($i=1,2$) as elements in the affine weight lattice, say $P$ in \cite[Section 2.1]{Kas02}. We have
\[
(\tfrac{z_1}{z_2})^k F_{(\lambda,\mu)}(\W[z_2]\ot\W[z_1]) \subset F_{(\lambda,\mu)}(\W[z_2]\ot\W[z_1]),
\]
for $k\in\Z_{\ge 0}$, since for a given $\mu+\beta$ ($\beta\in Q_+$), there exist only finitely many $k\in \Z_{\ge 0}$ and $\beta'\in Q_+$ such that $\mu+\beta=\mu + \beta'+k\delta$, where $\delta$ is the null root of $C_n^{(1)}$. This proves the lemma.
\end{proof}

By Lemma \ref{lem:invariance}, we may regard
\begin{equation*}
\begin{split}
&{\bf k}(\tfrac{z_1}{z_2})\ot_{{\bf k}[(\frac{z_1}{z_2})^{\pm 1}]}(\W[z_1]\ot \W[z_2])
\ \subset \
{\bf k}\llbracket\tfrac{z_1}{z_2}\rrbracket \ot_{{\bf k}[(\frac{z_1}{z_2})^{\pm 1}]} \left(\W[z_1]\ot \W[z_2]\right)\
\subset\ \W[z_1]\ \widehat{\ot}\ \W[z_2],\\
&{\bf k}(\tfrac{z_1}{z_2})\ot_{{\bf k}[(\frac{z_1}{z_2})^{\pm 1}]}(\W[z_2]\ot \W[z_1])
\ \subset \
{\bf k}\llbracket\tfrac{z_1}{z_2}\rrbracket \ot_{{\bf k}[(\frac{z_1}{z_2})^{\pm 1}]} \left(\W[z_2]\ot \W[z_1]\right)\
\subset\ \W[z_2]\ \widetilde{\ot}\ \W[z_1].
\end{split}
\end{equation*}
Note that the weight of $z_i^k\ot\ket{{\bf m}}$ is in $-\tfrac{1}{2}\varpi_n +\Z_{\ge 0}\delta$ if and only if $\ket{{\bf m}}=\ket{{\bf 0}}$. Hence one can check without difficulty that
\begin{equation*}\label{eq:coefficint a}
R^{\rm univ} (|{\bf 0}\rangle\ot |{\bf 0}\rangle) = a(\tfrac{z_1}{z_2})\ot( |{\bf 0}\rangle\ot |{\bf 0}\rangle),
\end{equation*}
for some non-zero $a(\frac{z_1}{z_2})\in {\bf k}\llbracket \frac{z_1}{z_2}\rrbracket$, which is  invertible. 
Now, we define the normalized $R$ matrix by
\begin{equation*}
R^{\rm norm}_{z_1,z_2}= a(\tfrac{z_1}{z_2})^{-1}R^{\rm univ}.
\end{equation*}

\begin{lem}\label{lem:nor R = KO R}
The normalized $R$ matrix $R^{\rm norm}_{z_1,z_2}$ gives a ${\bf k}(\frac{z_1}{z_2})\ot_{{\bf k}[(\frac{z_1}{z_2})^{\pm 1}]} U_q(C_n^{(1)})[z_1^{\pm 1},z_2^{\pm 1}]$-linear map
\begin{equation*}\label{eq:normalized R}
\xymatrixcolsep{3pc}\xymatrixrowsep{3.5pc}\xymatrix{
{\bf k}(\frac{z_1}{z_2})\ot_{{\bf k}[(\frac{z_1}{z_2})^{\pm 1}]}(\W[z_1]\ot \W[z_2]) \ \ar@{->}^{R^{\rm norm}_{z_1,z_2}}[r] &\ {\bf k}(\frac{z_1}{z_2})\ot_{{\bf k}[(\frac{z_1}{z_2})^{\pm 1}]}(\W[z_2]\ot \W[z_1]) }, 
\end{equation*}
where $R^{\rm norm}_{z_1,z_2}(|{\bf 0}\rangle\ot |{\bf 0}\rangle)=|{\bf 0}\rangle\ot |{\bf 0}\rangle$.
Moreover, $R^{\rm norm}_{z_1,z_2}$ is a unique such map and hence
$$\check{R}_+(z_1,z_2)=R^{\rm norm}_{z_1,z_2},$$
where $\check{R}_+(z_1,z_2)$ is the quantum $R$ matrix in \eqref{eq:spectral decomp}.
\end{lem}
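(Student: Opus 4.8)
The plan is to characterize $R^{\rm norm}_{z_1,z_2}$ as the \emph{unique} map which is linear over the algebra
$A := {\bf k}(\tfrac{z_1}{z_2})\ot_{{\bf k}[(\frac{z_1}{z_2})^{\pm 1}]} U_q(C_n^{(1)})[z_1^{\pm 1},z_2^{\pm 1}]$ and which fixes $|{\bf 0}\rangle\ot|{\bf 0}\rangle$, and then to observe that $\check{R}_+(z_1,z_2)$ shares these two properties. First I would record the $A$-linearity of $R^{\rm norm}_{z_1,z_2}$. Since $R^{\rm univ}$ is the universal $R$ matrix, it intertwines $\Delta$ and $\Delta^{\rm op}$ and is therefore $U_q(C_n^{(1)})[z_1^{\pm 1},z_2^{\pm 1}]$-linear as a map $\W[z_1]\,\widehat{\ot}\,\W[z_2]\to \W[z_2]\,\widetilde{\ot}\,\W[z_1]$. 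Writing the action on ${\bf k}[z_1^{\pm 1},z_2^{\pm 1}]\ot\W\ot\W$ explicitly, each Chevalley generator acts as a finite ${\bf k}[z_1^{\pm 1},z_2^{\pm 1}]$-linear combination of fixed operators on $\W\ot\W$ (the powers of $z_1,z_2$ only recording the $\delta$-components of the weights); consequently multiplication by any element of ${\bf k}\llbracket\tfrac{z_1}{z_2}\rrbracket$, and by $z_1$, $z_2$ separately, commutes with the entire action. Hence multiplication by the invertible scalar $a(\tfrac{z_1}{z_2})^{-1}$ commutes with the action, and $R^{\rm norm}_{z_1,z_2}=a(\tfrac{z_1}{z_2})^{-1}R^{\rm univ}$ is again $A$-linear on the completions.

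Next I would use irreducibility both to transfer $R^{\rm norm}_{z_1,z_2}$ to the rational tensor products and to obtain uniqueness. By Lemma \ref{lem:invariance} the rational tensor product ${\bf k}(\tfrac{z_1}{z_2})\ot_{{\bf k}[(\frac{z_1}{z_2})^{\pm 1}]}(\W[z_1]\ot\W[z_2])$ embeds into $\W[z_1]\,\widehat{\ot}\,\W[z_2]$, so $R^{\rm norm}_{z_1,z_2}$ is defined on it. By Lemma \ref{lem:irreducible} this source is an irreducible $A$-module, hence cyclic: every vector $v$ equals $P\cdot(|{\bf 0}\rangle\ot|{\bf 0}\rangle)$ for some $P\in A$. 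Since $R^{\rm univ}(|{\bf 0}\rangle\ot|{\bf 0}\rangle)=a(\tfrac{z_1}{z_2})(|{\bf 0}\rangle\ot|{\bf 0}\rangle)$ gives $R^{\rm norm}_{z_1,z_2}(|{\bf 0}\rangle\ot|{\bf 0}\rangle)=|{\bf 0}\rangle\ot|{\bf 0}\rangle$, the $A$-linearity yields $R^{\rm norm}_{z_1,z_2}(v)=P\cdot(|{\bf 0}\rangle\ot|{\bf 0}\rangle)$, which lies in the target rational tensor product ${\bf k}(\tfrac{z_1}{z_2})\ot_{{\bf k}[(\frac{z_1}{z_2})^{\pm 1}]}(\W[z_2]\ot\W[z_1])$. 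This proves that $R^{\rm norm}_{z_1,z_2}$ restricts to the claimed map. The same cyclicity gives uniqueness at once: if $R'$ is any $A$-linear map between these rational modules with $R'(|{\bf 0}\rangle\ot|{\bf 0}\rangle)=|{\bf 0}\rangle\ot|{\bf 0}\rangle$, then $R'-R^{\rm norm}_{z_1,z_2}$ is $A$-linear and annihilates the cyclic generator, hence vanishes identically.

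Finally I would identify $R^{\rm norm}_{z_1,z_2}$ with $\check{R}_+(z_1,z_2)$. The operator $\check{R}_+(z_1,z_2)$ of \eqref{eq:spectral decomp} (with $z=z_1/z_2$) is, by its construction as a quantum $R$ matrix, an $A$-linear intertwiner $\W[z_1]\ot\W[z_2]\to\W[z_2]\ot\W[z_1]$; and since $|{\bf 0}\rangle\ot|{\bf 0}\rangle$ lies in the $l=0$ summand $V_0^+$ (recall $\W_+^{(2)}=V_0^+$ is generated by $|{\bf 0}\rangle\ot|{\bf 0}\rangle$) while the $l=0$ spectral coefficient is the empty product $1$, we get $\check{R}_+(z_1,z_2)(|{\bf 0}\rangle\ot|{\bf 0}\rangle)=|{\bf 0}\rangle\ot|{\bf 0}\rangle$. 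By the uniqueness just established, $\check{R}_+(z_1,z_2)=R^{\rm norm}_{z_1,z_2}$. The main obstacle is the middle step: $R^{\rm norm}_{z_1,z_2}$ is a priori only defined on the completion with matrix coefficients in ${\bf k}\llbracket\tfrac{z_1}{z_2}\rrbracket$, and one must ensure that it in fact carries the rational tensor product into the rational tensor product. It is precisely the cyclicity furnished by Lemma \ref{lem:irreducible}, combined with the normalization $R^{\rm norm}_{z_1,z_2}(|{\bf 0}\rangle\ot|{\bf 0}\rangle)=|{\bf 0}\rangle\ot|{\bf 0}\rangle$, that forces the image to be rational.
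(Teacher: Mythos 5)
Your proposal is correct and follows essentially the same route as the paper: the paper's own (very terse) proof likewise derives well-definedness and uniqueness from the irreducibility of the rational tensor product (Lemma \ref{lem:irreducible}) together with the normalization $R^{\rm norm}_{z_1,z_2}(|{\bf 0}\rangle\ot |{\bf 0}\rangle)=|{\bf 0}\rangle\ot |{\bf 0}\rangle$, and then concludes $\check{R}_+(z_1,z_2)=R^{\rm norm}_{z_1,z_2}$. Your write-up merely makes explicit the cyclicity argument and the check that the $l=0$ spectral coefficient is $1$, which the paper leaves implicit.
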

\begin{proof}
The well-definedness and uniqueness of $R^{\rm norm}_{z_1,z_2}$ follows from Lemma \ref{lem:irreducible} and $R^{\rm norm}_{z_1,z_2}(|{\bf 0}\rangle\ot |{\bf 0}\rangle)=|{\bf 0}\rangle\ot |{\bf 0}\rangle$. In particular, we have $R^{\rm norm}_{z_1,z_2}=\check{R}_+(z_1,z_2)$.
\end{proof} 
 
\subsection{Irreducibility of $\W^{(2)}$}
Let us prove first that $\W^{(2)}$ is irreducible.
Recall that $\W^{(2)}$ is the image of 
\begin{equation*}
\xymatrixcolsep{4pc}\xymatrixrowsep{3pc}\xymatrix{
\W(q^{-1})\ot \W(q) \ar@{->}^{R^{\rm norm}_{q^{-1},q}}[r] & \W(q)\ot \W(q^{-1})
},
\end{equation*}
which is well-defined by \eqref{eq:spectral decomp} and Lemma \ref{lem:nor R = KO R}. 
Let
\begin{equation*}
{\bf K}={\bf k}(\tfrac{z_1}{z_3})\ot{\bf k}(\tfrac{z_2}{z_3}),\quad
{\bf D}={\bf k}[(\tfrac{z_1}{z_3})^{\pm 1},(\tfrac{z_2}{z_3})^{\pm 1}].
\end{equation*}
Consider the following maps
\begin{equation*}
\xymatrixcolsep{-4pc}\xymatrixrowsep{4pc}\xymatrix{
{\bf K} \ot_{{\bf D}}(\W[z_1]\ot \W[z_2]\ot \W[z_3]) 
\ar@{->}_{{\rm id}\ot R^{\rm norm}_{z_2,z_3}}[dr] &  & 
{\bf K} \ot_{{\bf D}}(\W[z_3]\ot \W[z_1]\ot \W[z_2]) 
\\
 & {\bf K} \ot_{{\bf D}}(\W[z_1]\ot \W[z_3]\ot \W[z_2]) 
\ar@{->}_{R^{\rm norm}_{z_1,z_3}\ot {\rm id}}[ur] &
}.
\end{equation*} 
By the hexagon property of $R^{\rm univ}$, we have
\begin{equation}\label{eq:hexagon}
\begin{split}
(R^{\rm norm}_{z_1,z_3}\ot {\rm id})\circ ({\rm id}\ot R^{\rm norm}_{z_2,z_3})
&=(a(\tfrac{z_1}{z_3})^{-1}R^{\rm univ}_{1,3}\ot {\rm id})\circ (a(\tfrac{z_2}{z_3})^{-1} {\rm id}\ot R^{\rm univ}_{2,3})\\
&= a(\tfrac{z_1}{z_3})^{-1}a(\tfrac{z_2}{z_3})^{-1}(R^{\rm univ}_{1,3}\ot {\rm id})\circ ( {\rm id}\ot R^{\rm univ}_{2,3})\\
&= a(\tfrac{z_1}{z_3})^{-1}a(\tfrac{z_2}{z_3})^{-1}R^{\rm univ}_{12,3},
\end{split}
\end{equation}
where the second equality follows from the fact that the action of $z_i$ commutes with those of $e_j$ and $f_j$, and $R^{\rm univ}_{12,3}$ in the last equality is understood as the map
\begin{equation*}
\xymatrixcolsep{4pc}\xymatrixrowsep{3pc}\xymatrix{
(\W[z_1]\ot \W[z_{2}])\ot \W[z_3] \ \ar@{->}^{R^{\rm univ}}[r] &\ \W[z_3]\, \widetilde{\ot}\, (\W[z_1]\ot \W[z_{2}])}.
\end{equation*}
Put
\begin{equation*}\label{eq:R matrix}
R = a(\tfrac{z_1}{z_3})^{-1}a(\tfrac{z_2}{z_3})^{-1}R^{\rm univ}_{12,3}=(R^{\rm norm}_{z_1,z_3}\ot {\rm id})\circ ({\rm id}\ot R^{\rm norm}_{z_2,z_3}).
\end{equation*}
By \eqref{eq:spectral decomp}, we have a well-defined non-zero $U_q(C_n^{(1)})$-linear map ${\bf r}:=R\vert_{z_1=z_2^{-1}=z_3=q}$:
\begin{equation*}
\xymatrixcolsep{7pc}\xymatrixrowsep{3pc}\xymatrix{
\W(q)\ot\W(q^{-1})\ot\W(q)\ \ar@{->}^{{\bf r}}[r] &\ \W(q)\ot\W(q)\ot\W(q^{-1})
}.
\end{equation*} 
By the hexagon property \eqref{eq:hexagon}, we obtain the following. 
\begin{lem}\label{lem:renormal R level 2}
Let $S\subset \W(q)\ot \W(q^{-1})$ be a $U_q(C_n^{(1)})$-submodule. 
Then we have 
$${\bf r}(S \ot \W(q^{-1}))\subset \W(q^{-1})\ot S,$$ 
and the following diagram commutes:
\begin{equation*}
\xymatrixcolsep{7pc}\xymatrixrowsep{3pc}\xymatrix{
S \ot \W(q^{-1}) \ \ar@{->}^{{\bf r}\vert_{S\ot\W(q^{-1})}}[r]\ar@{^{(}->}[d] &  \W(q^{-1})\ot S \ar@{^{(}->}[d]\\
\W(q)\ot\W(q^{-1})\ot\W(q)\ \ar@{->}^{{\bf r}}[r]  &\ \W(q)\ot\W(q)\ot\W(q^{-1})
}
\end{equation*}\qed
\end{lem}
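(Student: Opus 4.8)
The plan is to recognize ${\bf r}$, up to an invertible scalar, as the braiding of the universal $R$ matrix between the \emph{composite} module occupying the first two tensor factors and the module in the third factor, and then to invoke the naturality of that braiding with respect to the inclusion $\iota\colon S\hookrightarrow \W(q)\ot\W(q^{-1})$. First I would recall from \eqref{eq:hexagon} that $R=a(\tfrac{z_1}{z_3})^{-1}a(\tfrac{z_2}{z_3})^{-1}R^{\rm univ}_{12,3}$, so that its specialization ${\bf r}=R\vert_{z_1=z_2^{-1}=z_3=q}$ is a nonzero scalar multiple of $R^{\rm univ}_{12,3}$ evaluated at the same point. Writing $M=\W(q)\ot\W(q^{-1})$ for the composite module sitting in the first two factors and $N=\W(q)$ for the third, the operator $R^{\rm univ}_{12,3}$ is precisely the braiding $M\ot N\to N\ot M$ carrying $N$ to the front; here I would freely use that the underlying vector spaces $\W(q)$ and $\W(q^{-1})$ are both $\W$, which accounts for the labels in the statement. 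The well-definedness of this specialization is guaranteed by the spectral decomposition \eqref{eq:spectral decomp}, which in particular shows $\check{R}_+$ to be regular at $z=1$.

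Next I would use that $R^{\rm univ}$ is a fixed element acting through the representation, hence commutes with $f\ot{\rm id}_N$ for every $U_q(C_n^{(1)})$-module homomorphism $f$; composing with the transposition $P$ this yields the naturality square
\[
\check{R}_{M,N}\circ(f\ot{\rm id}_N)=({\rm id}_N\ot f)\circ\check{R}_{M',N}
\]
for $f\colon M'\to M$. Applying this to $f=\iota\colon S\hookrightarrow M$ (so $M'=S$) produces exactly the commuting diagram asserted in the lemma, with ${\bf r}\vert_{S\ot\W(q^{-1})}$ equal to the same scalar multiple of $\check{R}_{S,N}$; reading off the target then gives ${\bf r}(S\ot\W(q^{-1}))\subset\W(q^{-1})\ot S$.

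The hard part will be the passage between the affinized, completed setting of Lemma \ref{lem:invariance}, where $R^{\rm univ}$ lives as an intertwiner on the completed tensor products, and the numerical specialization at $z_1=z_3=q$, $z_2=q^{-1}$. I must check that the scalar $a(\tfrac{z_1}{z_3})^{-1}a(\tfrac{z_2}{z_3})^{-1}$ specializes to a finite nonzero value, so that ${\bf r}$ genuinely is a nonzero multiple of the braiding, and that $S\ot\W(q^{-1})$ lies in the locus where the normalized $R$ matrices (equivalently $\check{R}_+$ via Lemma \ref{lem:nor R = KO R}) have no pole. Both follow from \eqref{eq:spectral decomp} together with the invertibility of $a$ in ${\bf k}\llbracket\tfrac{z_1}{z_3}\rrbracket$. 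Once this regularity is in hand, the naturality identity transfers verbatim to ${\bf r}$, the square commutes, and the containment ${\bf r}(S\ot\W(q^{-1}))\subset\W(q^{-1})\ot S$ follows at once.
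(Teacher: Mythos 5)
Your argument is correct and is essentially the paper's own: the lemma is stated with no proof beyond the preceding phrase ``by the hexagon property,'' and your identification of ${\bf r}$ (up to the scalar $a(\tfrac{z_1}{z_3})^{-1}a(\tfrac{z_2}{z_3})^{-1}$) with the specialization of $R^{\rm univ}_{12,3}$ for the composite module in the first two slots, followed by naturality of that braiding with respect to the inclusion $S\hookrightarrow \W(q)\ot\W(q^{-1})$, is exactly how the containment and the commuting square are meant to be obtained. Your explicit attention to the regularity of the specialization via \eqref{eq:spectral decomp}, which lets the formal identity from \eqref{eq:hexagon} descend to the numerical point $z_1=z_2^{-1}=z_3=q$, fills in the only step the paper leaves implicit.
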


\begin{prop}\label{prop:ireducibility level 2}
$\W^{(2)}$ is irreducible.
\end{prop}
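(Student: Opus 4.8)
The plan is to run the renormalized $R$ matrix argument of \cite{KKKO}, with the operator ${\bf r}$ and Lemma \ref{lem:renormal R level 2} as the two main tools; the whole point of not invoking the character computation of Section \ref{sec:main result} is that the identical scheme then applies verbatim to $U_q(C^{(2)}(n+1))$ and $U_q(B^{(1)}(0,n))$. Throughout write $M=\W(q)$ and $N=\W(q^{-1})$, so that $\check{R}_2=R^{\rm norm}_{q^{-1},q}\colon N\ot M\to M\ot N$ and $\W^{(2)}={\rm Im}\,\check{R}_2\subseteq M\ot N$. Since $R^{\rm norm}_{z_1,z_2}$ fixes $\ket{{\bf 0}}\ot\ket{{\bf 0}}$ by Lemma \ref{lem:nor R = KO R}, the vacuum lies in $\W^{(2)}$, and it will serve both as a cyclic generator and as a cyclic cogenerator.

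I would first reduce the statement to two assertions: (A) $\W^{(2)}=U_q(C_n^{(1)})\bigl(\ket{{\bf 0}}\ot\ket{{\bf 0}}\bigr)$, and (B) every nonzero $U_q(C_n^{(1)})$-submodule of $M\ot N$ contains $\ket{{\bf 0}}\ot\ket{{\bf 0}}$. Granting (A) and (B), any nonzero submodule $K\subseteq\W^{(2)}$ is in particular a nonzero submodule of $M\ot N$, hence contains the vacuum by (B), hence contains $U_q(C_n^{(1)})\bigl(\ket{{\bf 0}}\ot\ket{{\bf 0}}\bigr)=\W^{(2)}$ by (A); thus $K=\W^{(2)}$ and $\W^{(2)}$ is irreducible. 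Assertion (A) I would extract from generic irreducibility: by Lemma \ref{lem:irreducible} the module ${\bf k}(\tfrac{z_1}{z_2})\ot(\W[z_1]\ot\W[z_2])$ is irreducible, hence cyclic on the vacuum, and a specialization argument at $\tfrac{z_1}{z_2}=q^{-2}$ (using Lemma \ref{lem:invariance}) shows that the surjection $\check{R}_2\colon N\ot M\twoheadrightarrow\W^{(2)}$, which sends the vacuum to itself, has image generated by the image of the vacuum.

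The heart of the matter is (B), and this is where ${\bf r}$ enters. Given a nonzero submodule $K\subseteq M\ot N$, Lemma \ref{lem:renormal R level 2} (applied with $S=K$) provides ${\bf r}(K\ot M)\subseteq M\ot K$ together with a commuting square, while the factorization ${\bf r}=(R^{\rm norm}_{q,q}\ot{\rm id})\circ({\rm id}\ot\check{R}_2)$ and the hexagon identity \eqref{eq:hexagon} exhibit ${\bf r}$ as a single renormalized $R$ matrix that slides the $N$-factor to the right past an $M$-factor and is generically invertible over ${\bf K}$. Choosing $0\ne v\in K$ and feeding in $v\ot\ket{{\bf 0}}\in K\ot M$, I would analyze the element ${\bf r}(v\ot\ket{{\bf 0}})\in M\ot K$ through its weight components. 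Because $\ket{{\bf 0}}$ is the unique extremal (lowest) weight vector of $\W$, the universal $R$ matrix acts triangularly for the weight filtration on the completed tensor products, and the leading component of ${\bf r}(v\ot\ket{{\bf 0}})$ in $M\ot K$ is forced to be of the form $\ket{{\bf 0}}\ot w$ with $0\ne w\in K$; iterating this extraction, and combining it with the invertibility of ${\bf r}$, drives $w$ down to the vacuum, so that $\ket{{\bf 0}}\ot\ket{{\bf 0}}\in K$.

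The main obstacle is precisely this last weight/leading-term analysis. In contrast with the finite-dimensional situation of \cite{KKKO}, $M$ and $N$ are infinite-dimensional $q$-oscillator representations, so the computation must be carried out inside the completions $\W[z_1]\,\widehat{\ot}\,\W[z_2]$ and $\W[z_2]\,\widetilde{\ot}\,\W[z_1]$, keeping track of the ${\bf k}\llbracket\tfrac{z_1}{z_2}\rrbracket$-module structure (Lemma \ref{lem:invariance}) so that the power series occurring in $R^{\rm univ}$ really do converge in each weight space and so that $R^{\rm norm}$ has no pole at the specialization $z_1=z_2^{-1}=z_3=q$. Once the triangularity of $R^{\rm univ}$ with respect to the weight grading is set up carefully in this completed framework, the extraction of the vacuum is routine, and the argument transfers with only notational changes to $C^{(2)}(n+1)$ and $B^{(1)}(0,n)$.
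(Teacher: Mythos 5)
Your framework (the operator ${\bf r}$, Lemma \ref{lem:renormal R level 2}, and the scheme of \cite{KKKO}) is the right one, but the step that carries the whole proof --- your assertion (B), that a nonzero submodule $K$ of $\W(q)\ot\W(q^{-1})$ contains $\ket{{\bf 0}}\ot\ket{{\bf 0}}$ --- is not established by the argument you sketch. Concretely: (i) you lean on ``the invertibility of ${\bf r}$'', but at the specialization $z_1=z_2^{-1}=z_3=q$ the factor ${\bf r}_{2,3}=\check{R}_+(q^{-1},q)$ is, by \eqref{eq:spectral decomp} at $z=q^{-2}$, exactly the projection $P_0$ onto $V_0$ (every product with $l\ge 2$ contains the vanishing factor $j=1$), so ${\bf r}$ has a huge kernel and generic invertibility over ${\bf k}(\tfrac{z_1}{z_2})$ is of no help; (ii) the triangularity of $R^{\rm univ}$ does not isolate $\ket{{\bf 0}}$ in the way you need, because the quasi-$R$-matrix involves all positive \emph{affine} root vectors, $\ket{{\bf 0}}$ satisfies $e_0\ket{{\bf 0}}\ne 0$, and the imaginary root vectors preserve the classical weight; even granting a leading component of the form $\ket{{\bf 0}}\ot w$, you have only produced another element $w\in K$, and there is no decreasing parameter that makes your ``iteration'' terminate at the vacuum; (iii) your reduction (A) is also unproved as stated, since cyclicity of the generic module of Lemma \ref{lem:irreducible} on the vacuum does not descend to a specialization without controlling denominators --- (A) is true, but the correct (and immediate) reason is again \eqref{eq:spectral decomp} at $z=q^{-2}$, which gives ${\rm Im}\,\check R_2=V_0^+=U_q(C_n)(\ket{{\bf 0}}\ot\ket{{\bf 0}})$.

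The paper's proof needs no weight analysis and no extraction of the vacuum. The whole point of the factorization ${\bf r}=({\bf r}_{1,3}\ot{\rm id})\circ({\rm id}\ot{\bf r}_{2,3})$ with ${\bf r}_{1,3}={\rm id}$ is that ${\bf r}$ leaves the first tensor factor of $\W(q)\ot\W(q^{-1})\ot\W(q)$ untouched. Writing $0\ne s=\sum_i x_i\ot y_i\in S$ with the $x_i$ linearly independent, the containment ${\bf r}(s\ot m)=\sum_i x_i\ot\check R_2(y_i\ot m)\in\W(q)\ot S$ from Lemma \ref{lem:renormal R level 2} then forces $\check R_2(y_i\ot m)\in S$ for \emph{every} $m\in\W(q)$, simply by projecting along the first factor. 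Consequently the submodule $\check R_2^{-1}(S)$ of $\W(q^{-1})\ot\W(q)$ contains $y_i\ot\W(q)$ for some $y_i\ne 0$, and the irreducibility of the level one module $\W(q^{-1})$ (this is where irreducibility of $\W(q^{\pm1})$ enters, not any cyclicity of $\W^{(2)}$) together with the coproduct formula shows that this submodule is all of $\W(q^{-1})\ot\W(q)$; hence ${\rm Im}\,\check R_2=\W^{(2)}\subseteq S$, which gives irreducibility at once. You should replace your leading-term/iteration step by this projection-plus-irreducibility argument.
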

\begin{proof}
Note that $\W(q^{\pm 1})$ is irreducible, and  
${\bf r}= ({\bf r}_{1,3}\ot {\rm id})\ot ({\rm id}\ot {\bf r}_{2,3})$, 
where  
${\bf r}_{2,3}=R^{\rm norm}_{z_2,z_3}\vert_{z_2^{-1}=z_3=q}$ and ${\bf r}_{1,3}=R^{\rm norm}_{z_1,z_3}\vert_{z_1=z_3=q}={\rm id}$. 
Hence, we may apply the same arguments as in \cite[Theorem 3.12]{KKKO} to show that if $S$ is a non-zero submodule of $\W(q)\ot \W(q^{-1})$, then $S$ includes ${\rm Im}(R^{\rm norm}_{q^{-1},q})$. This implies that $\W^{(2)}$ is irreducible.
\end{proof}

\subsection{Proof of Theorem \ref{thm:irreducibility of higher level osc}}
Fix $s\ge 2$.
Let $z_1,\dots,z_s$ be formal commuting variables. Consider
$$
\W[z_1]\ot\dots \ot \W[z_s].
$$
For $a\in \Q(q)^\times$, let $x_i=aq^{2i-1-s}$ and $\W_i = \W(x_i)$ for $1\le i\le s$. 
We have a map ${\bf r}_{i,j}:= R^{\rm norm}_{x_i,x_j}: \W_i\ot \W_j \longrightarrow \W_j\ot \W_i$
for $1\le i<j\le s$ and
\begin{equation*}
\xymatrixcolsep{4pc}\xymatrixrowsep{3pc}\xymatrix{
\W_1\ot \dots\ot  \W_s \ar@{->}^{{\bf r}}[r] & \W_s \ot \dots\ot \W_1},
\end{equation*}
which is the composition of ${\bf r}_{i,j}$ associated to a reduced expression of $w_0\in \mf{S}_s$.  

Let us prove that $\W^{(s)}={\rm Im}({\bf r})$ is irreducible.
Use induction on $s$. It is true for $s=2$ by Proposition \ref{prop:ireducibility level 2}.
Suppose that $s\geq 3$. Let ${\bf r}=\mathbf{r}_s$ be the map in the statement and let $\mathbf{r}_{s-1}$ be the map corresponding to the first $s-1$ factors.
We have the following commutative diagram:
\begin{equation*}
\xymatrixcolsep{5pc}\xymatrixrowsep{2pc}
\xymatrix{
\W_1\ot\cdots\ot \W_s \ar@{->}[r]^{\mathbf{r}_s} \ar@{->}[d]^{\mathbf{r}_{s-1}\ot {\rm id}_{\W_s}} & \ \W_s\ot\cdots\ot \W_1 \\
{\rm Im}(\mathbf{r}_{s-1})\ot \W_s \ar@{^{(}->}[d] & \\
\W_{s-1}\ot\cdots\ot \W_1\ot \W_s  \ar@{->}[ruu]_{\quad \hat{\mathbf{r}}_{s-1}\circ\dots\circ\hat{\mathbf{r}}_1} & \\
}
\end{equation*}
where 
$\hat{\mathbf{r}}_i= {\rm id}^{\otimes s-i-1}\ot \mathbf{r}_{i,s} \ot {\rm id}^{i-1}$ for $1\leq i\leq s-1$. 
Note that $\mathbf{r}_s \neq 0$ since ${\bf r}(\ket{{\bf 0}}^{\ot s})=\ket{{\bf 0}}^{\ot s}$. 
Thus $\mathbf{r}_{s-1}$ has a nonzero image $\W^{(s-1)}$, which is irreducible by the induction hypothesis. 
Applying the hexagon property repeatedly, we have the following commutative diagram:
\begin{equation*}
\xymatrixcolsep{6pc}\xymatrixrowsep{3pc}\xymatrix{
\W^{(s-1)} \ot \W_s \ar@{^{(}->}[d] \ar@{->}^{\mathbf{r}_{\W^{(s-1)},\W_s}}[r] &  \W_s\ot \W^{(s-1)} \ar@{^{(}->}[d] \\
\W_{s-1}\ot\cdots\ot \W_1\ot \W_s \ar@{->}^{\hat{\mathbf{r}}_{s-1}\circ\cdots\circ\hat{\mathbf{r}}_1}[r] & \W_{s}\ot\cdots\ot \W_2\ot \W_1
}.
\end{equation*}
Here the map $\mathbf{r}_{\W^{(s-1)},\W_s}$ is given by
\begin{equation*}
\mathbf{r}_{\W^{(s-1)},\W_s}= c R^{\rm univ}_{1\dots s-1,s}\vert_{z_i=aq^{2i-1-s}},
\end{equation*}
where $c$ is an element in $\bigotimes_{i<j}{\bf k}(\frac{z_i}{z_j})$, and $R^{\rm univ}_{1\dots s-1,s}$ is the universal $R$ matrix 
\begin{equation*}
\xymatrixcolsep{4pc}\xymatrixrowsep{3pc}\xymatrix{
(\W[z_1]\ot\dots\ot \W[z_{s-1}])\ot \W[z_s] \ \ar@{->}^{R^{\rm univ}}[r] &\ \W[z_s]\, \widetilde{\ot}\, (\W[z_1]\ot\dots\ot \W[z_{s-1}])}.
\end{equation*}
Thus the image of $\mathbf{r}_s$ is equal to that of $\mathbf{r}_{\W^{(s-1)},\W_s}$.

Now, let $S$ be a non-zero submodule of $\W_s\ot\W^{(s-1)}$. 
Put ${\bf r}'={\bf r}_{s,s}\circ\mathbf{r}_{\W^{(s-1)},\W_s}$.
Then as in Lemma \ref{lem:renormal R level 2}, we can check the following commutative diagram:
\begin{equation*}
\xymatrixcolsep{7pc}\xymatrixrowsep{3pc}\xymatrix{
S \ot \W_s \ \ar@{->}^{{\bf r}'\vert_{S\ot\W_s}}[r]\ar@{^{(}->}[d] &  \W_s\ot S \ar@{^{(}->}[d]\\
\W_s\ot\W^{(s-1)}\ot\W_s\ \ar@{->}^{{\bf r}'}[r]  &\ \W_s\ot\W_s\ot\W^{(s-1)}
}
\end{equation*}
Again by the same arguments as in \cite[Theorem 3.12]{KKKO}, we conclude that $\W^{(s)}\subset S$, which implies that $\W^{(s)}$ is irreducible. This completes the proof.

\begin{rem}{\rm
The universal $R$ matrix for $C^{(2)}(n+1)$ and $B^{(1)}(0,n)$ can be found in \cite{H99}.
}
\end{rem}

{\small

}

\end{document}